\DeclareMathAlphabet{\mathpzc}{OT1}{pzc}{m}{it}
\numberwithin{equation}{section}
\DeclareMathOperator{\ind}{ind}
\renewcommand{\epsilon}{\varepsilon}
\def\({\mathopen{}\left(}
\def\){\right)\mathclose{}}
\def\<{\mathopen{}\left<}
\def\>{\right>\mathclose{}}
\definecolor{gold}{rgb}{0.85,.66,0}
\definecolor{cherry}{rgb}{0.9,.1,.2}
\definecolor{burgundy}{rgb}{0.8,.2,.2}
\definecolor{orangered}{rgb}{0.85,.3,0}
\definecolor{orange}{rgb}{0.85,.4,0}
\definecolor{olive}{rgb}{.45,.4,0}
\definecolor{lime}{rgb}{.6,.9,0}
\definecolor{green}{rgb}{.2,.7,0}
\definecolor{grey}{rgb}{.4,.4,.2}
\definecolor{brown}{rgb}{.4,.3,.1}
\def\makeautorefname#1#2{\AtBeginDocument{\expandafter\def\csname#1autorefname\endcsname{#2}}}
\newcommand{\mynewtheorem}[2]{
  \newaliascnt{#1}{equation}          
  \newtheorem{#1}[#1]{#2}
  \aliascntresetthe{#1}
  \makeautorefname{#1}{#2}
}
\numberwithin{substep}{step}
\numberwithin{subcase}{case}
\theoremstyle{remark}
\theoremstyle{definition}
\newtheorem*{convention*}{Convention}
\newtheorem*{conventions*}{Conventions}
\theoremstyle{introthm}
\newtheorem{introthm}{Theorem}
\newcommand\bbZ{{\mathbb Z}}
\newcommand\bbR{{\mathbb R}}
\newcommand\bbC{{\mathbb C}}
\title[Lagrangian Floer theory in divisor complement]{Monotone Lagrangian Floer theory in smooth divisor complements: II}
\author{Aliakbar Daemi, Kenji Fukaya}
\thanks{The work of the first author was supported by NSF Grant DMS-1812033. The work of the second author was supported by NSF Grant DMS-1406423 and the Simons Foundation through its Homological Mirror Symmetry Collaboration grant.}
\date{}
\begin{document}
\address{Department of Mathematics and Statistics, Washington University in St. Louis, MO 63130}\email{adaemi@wustl.edu}
\address{Simons Center for Geometry and Physics, State University of New York, Stony Brook, NY 11794-3636, USA} \email{kfukaya@scgp.stonytbrook.edu}

\begin{abstract}
In \cite{part1:top}, the first part of the present series of papers, 
we studied the moduli spaces of holomorphic discs and strips into an open symplectic manifold, isomorphic to the complement of a smooth divisor in a closed symplectic manifold. In particular, we introduced a compactification of this moduli space, which is called the {\it RGW compactification}. The goal of this paper is to show that the RGW compactifications admit Kuranishi structures. This result provides the crucial ingredient for the main construction of \cite{part1:top,part3:FH}: Floer homology for monotone Lagrangians in a smooth divisor complement.
\end{abstract}
\maketitle
{
  \hypersetup{linkcolor=black}
  \tableofcontents
}

\section{Introduction}
\label{sec:kura}

In this series of papers, the authors study Lagrangian intersection Floer homology of a pair of monotone Lagrangians in an open symplectic manifold, which is isomorphic to a divisor complement. At the heart of the construction, there is a compactification of the moduli spaces of holomorphic discs and strips satisfying Lagrangian boundary condition. The main purpose of this sequel to \cite{part1:top} is to show that this compactification, called the {\it RGW compactification}, admits a {\it Kuranishi structure}. The virtual count of the elements of these Kuranishi spaces is used in \cite{part3:FH} to define the desired Lagrangian Floer homology.

To be more detailed, let $(X,\omega)$ be a symplectic manifold and $\mathcal D$ be a symplectic submanifold of $X$ with codimension $2$. 
Using a compatible almost complex structure of $\mathcal D$ and a unitary connection of the normal 
bundle, we defined in \cite[Subsections 3.1 and 3.2]{part1:top} a compatible almost complex structure 
in a neighborhood of $\mathcal D$ in $X$, which is invariant under a {\it partial $\bbC_*$-action}. Then we extend this into an $\omega$-compatible almost complex structure $J$ to $X$.


Let $L_0$ and $L_1$ be compact orientable and transversal Lagrangians in $X\setminus \mathcal D$. For any $\beta \in \Pi_2(X;L_i)$ with $\beta \cap \mathcal D = 0$, let $\mathcal M_{k+1}^{\rm reg}(L_i;\beta)$ be the moduli space of $J$-holomorphic disks of homology class $\beta$ with $k+1$ boundary marked points and Lagrangian boundary condition associated to $L_i$. In \cite{part1:top}, we introduced the RGW compactification $\mathcal M_{k+1}^{\rm RGW}(L_i;\beta)$ of $\mathcal M_{k+1}^{\rm reg}(L_i;\beta)$. (See \cite[Section 3]{part1:top} for the definition of this moduli space as a set and  \cite[Section 4]{part1:top} for the definition of topology on this moduli space. Note that this compactification is different from the stable map compactification.) We also defined the RGW compactification $\mathcal M_{k_1,k_0}^{\rm RGW}(L_1,L_0;p,q;\beta)$ of the moduli space $\mathcal M_{k_1,k_0}^{\rm reg}(L_1,L_0;p,q;\beta)$ of $J$-holomorphic strips of homology class $\beta$ with boundary marked points and Lagrangian boundary condition associated to $L_0$, $L_1$ in a similar way as in \cite[Section 3]{part1:top}.

\begin{introthm}\label{Kura-exists}
	The moduli spaces $\mathcal M_{k+1}^{\rm RGW}(L_i;\beta)$ and 
	$\mathcal M_{k_1,k_0}^{\rm RGW}(L_1,L_0;p,q;\beta)$ admit Kuranishi structures.
\end{introthm}

A topological space with a Kuranishi structure is locally modeled by the vanishing locus of an equation defined on a manifold, or more generally an orbifold. These orbifolds and equations for different points need to satisfy some compatibility conditions. (See \cite[Definition A.1.1]{fooobook2} for a more precise definition of Kuranishi structures.) Given a point of a space with Kuranishi structure, the zeros of the corresponding equation might be cut down transversally. In that case, our space looks like an orbifold in a neighborhood of such point. The main point is that such equations might not be transversal to zero and we might end up with a space which is not as regular as an orbifold. Nevertheless, a Kuranishi structure would be sufficient to have some of the interesting properties of smooth orbifolds. For example, it makes sense to talk about a space with Kuranishi structure which has boundary and corners. In fact, the Kuranishi structures of Theorem \ref{Kura-exists} have boundary and corners, which can be described in terms of similar moduli spaces. We will construct a system of Kuranishi structures which are compatible at the boundary and the corners in \cite{part3:FH}.

One of the novel features of the RGW compactification is that it has some strata which consist of obstructed objects by default. To make this point more clear, we make a comparison with the stable map compactification. In the stable map compactification of the moduli space of holomorphic discs, each stratum is described by a fiber product of the moduli spaces of holomorphic discs and strips. If each of the moduli spaces appearing in such a fiber product consists of Fredholm regular elements and the fiber product is transversal, then the moduli space in a neighborhood of this stratum consists of regular objects and hence it is a smooth orbifold in this neighborhood. However, the situation in the case of the RGW compactification looks significantly different. There are strata of the compactification which belong to the singular locus of the moduli space, even if each element of the associated fiber product is Fredholm regular and the fiber product is cut down transversely. This subtlety is the main point that our treatment diverges from the proof of the analogues of Theorem \ref{Kura-exists} for the stable map compactification of the moduli spaces of holomorphic discs and strips. (See \cite{fooobook2,foootech,fooo:tech2-1,fooo:tech2-2} for such results in the context of the stable map compactification.)

We resolve the above issue by introducing the notion of {\it inconsistent solutions} to the Cauchy-Riemann equation. Under the assumption of the previous paragraph, the space of inconsistent solutions forms a smooth orbifold. Moreover, the elements of the moduli spaces $\mathcal M_{k+1}^{\rm RGW}(L_i;\beta)$ and $\mathcal M_{k_1,k_0}^{\rm RGW}(L_1,L_0;p,q;\beta)$ can be regarded as the zero sets of appropriate equations on the moduli space of inconsistent solutions. We treat these equations as extra terms for Kuranishi maps. We believe this approach could be also useful for the analysis of the relative Gromov-Witten invariants in symplectic category. We also believe that this idea as well as some of the arguments provided in this paper can be generalized to study various conjectures proposed in \cite[Section 6]{part3:FH}.

The main steps of the construction of the Kuranishi structures required for the proof of Theorem \ref{Kura-exists} are parallel to the ones for the stable map compactification. Throughout the paper, we point out relevant references for the corresponding results in the context of the stable map compactification. At the same time, we try to make our exposition as self-contained as possible. One of the exceptions is the exponential decay result of \cite{foooexp} where the same arguments can be used to deal with the exponential decay result which we need for this paper.
(However, in our application to prove \cite[Theorem 1]{part1:top}, we do not use the smoothness of our
Kuranishi structure and so one can avoid using \cite{foooexp}.)

\begin{remark}
The fact that certain strata of the RGW compactification consists of obstructed objects by default
(that is to say, such a strata are singular) was observed inndependetly by M. Tehrani \cite{T}.
A similar gluing analysis is studied in \cite{LS} which considers only the case that the neck region is connected.
In such a case, the main point of the concern of this paper does not appear.
B. Parker \cite{Pa} studies a related problem (in the case of pseudo-holomorphic map from 
curves without boundary) in a different way. For example, the `Kuranishi structure' obtained 
is different from those in the sense of \cite{fooonewbook}. In fact the Kuranishi neighborhood obtained in 
\cite{Pa} is an exploded manifold (or rather its orbifold analogue), which may not be a manifold or an orbifold.
By a similar reason the method of this paper does not give a Kuranishi structure for 
log compactification in \cite{T}.
\end{remark}

\vspace{20pt}\noindent
{\bf Outline of Contents.}
In order to make the main ideas of the construction more clear, we devote the first part of the paper to the construction of a Kuranishi chart around a special point in the RGW compactification of moduli spaces of discs. This special point, described in Section \ref{subsec:gluing1}, belongs to a stratum of the moduli space which is always obstructed. Motivated by this example, we introduce the notion of inconsistent solutions in Section \ref{sub:statement}. The stratum of this special example is given by the fiber product of a moduli space of discs and two moduli spaces of spheres. In Sections \ref{sub:Fred} and \ref{sub:Obst}, we study the deformation theory of the elements of the moduli space within this stratum. A Kuranishi chart for each element of this stratum is constructed in Section \ref{sub:kuraconst0}. The main analytical results required for the construction of the Kuranishi chart is verified in Section \ref{sub:proofmain}. 

In Section \ref{sub:kuraconst}, we explain how the method of the first part of the paper can be used to construct a Kuranishi chart around any point of the RGW moduli space. Section \ref{sub:kuracont} is devoted to showing that these Kuranishi charts are compatible with each other using appropriate coordinate changes. This completes 
 the proof of Theorem \ref{Kura-exists}.

\section{A Special Point of the Moduli Spaces of Discs}
\label{subsec:gluing1}

In the first half of the paper, we focus on the analysis of a special case. We hope that this allows the main features of our construction stand out. The special case can be described as follows. Let $\Sigma$ be a surface with nodal singularities, which has three irreducible components $\Sigma_{\rm d}$, $\Sigma_{\rm s}$ and $\Sigma_{\rm D}$. The irreducible component $\Sigma_{\rm d}$ is a disc and the remaining ones are spheres. The components $\Sigma_{\rm d}$, $\Sigma_{\rm s}$ and $\Sigma_{\rm D}$ are respectively called the {\it disc component}, the {\it sphere component} and the {\it divisor component}. The divisor component $\Sigma_{\rm D}$ intersects $\Sigma_{\rm d}$ and $\Sigma_{\rm s}$ at the points, $z_{\rm d}$ and $z_{\rm s}$, respectively. When we want to emphasize that we consider these points as elements of $\Sigma_{\rm D}$, we denote them by $z_{\rm D, d}$ and $z_{\rm D, s}$. There is no intersection between $\Sigma_{\rm d}$ and $\Sigma_{\rm s}$.

We are given a $J$-holomorphic map $u : (\Sigma,\partial \Sigma) \to (X,L)$, where $J$ is given as in \cite[Subsection 3.2]{part1:top}. The restriction of this map to $\Sigma_{\rm d}$, $\Sigma_{\rm s}$, $\Sigma_{\rm D}$ are denoted by $u_{\rm d}$, $u_{\rm s}$, $u_{\rm D}$. We assume that the image of $u_{\rm D}$ is contained in the divisor ${\mathcal D}$. The images of $\Sigma_{\rm d}$, $\Sigma_{\rm s}$ intersect $\mathcal D$ only at the points $z_{\rm d}$ and $z_{\rm s}$ with multiplicities $p_1$ and $p_2$, respectively. 
Here $p_1,p_2$ are positive integers. (See \cite[Lemma 3.8]{part1:top}.)
Following \cite[Section 3]{part1:top}, we also associate a {\it level function} $\lambda$ that  evaluates to $0$ at the components $\Sigma_{\rm d}$ and $\Sigma_{\rm s}$ and to $1$ at $\Sigma_{\rm D}$. We assume that there is one boundary marked point $z_0$ on $\Sigma_{\rm d}$. 

We also assume that the homology class $(u_{\rm D})_*([\Sigma_{\rm D}])$ satisfies the following identity (compare with \cite[Condition (3.28)]{part1:top}):
\[
  p_1 + p_2 + c_1(\mathcal N_{\mathcal D}(X)) \cap (u_{\rm D})_*([\Sigma_{\rm D}]) = 0.
\]
This condition implies that there exists a meromorphic section\footnote{
Using the $U(1)$ connection, the pulled back bundle 
$(u_{\rm D})_*([\Sigma_{\rm D}])$ has a canonical holomorphic structure.
We use this to define meromorphicity.
} $\frak s$ of $(u_{\rm D})^*\mathcal N_DX$ such that
$\frak s$ has a pole of order $p_1$ (resp. $p_2$) at $z_{\rm d}$, (resp. $z_{\rm s}$), and $\frak s$ has no other pole or zero.
The choice of this section $\frak s$ is unique up to a multiplicative constant in $\bbC_*$. We fix one such section $\frak s$ and define:
\begin{equation}\label{U-D}
  U_{\rm D}: \Sigma_{\rm D}\setminus \{z_{\rm d},z_{\rm s}\}\to 
  \mathcal N_{\mathcal D}(X) \setminus {\mathcal D} = \bbR \times S\mathcal N_{\mathcal D}(X)
\end{equation}
where $U_{\rm D}(z)$, for $z\in \Sigma_{\rm D}\setminus \{z_{\rm d},z_{\rm s}\}$, is defined to be $(u_{\rm D}(z),\frak s(z))$.
This map is $J$-holomorphic by \cite[Lemma 3.7]{part1:top}.

The nodal curve $\Sigma$ and the {\it detailed ribbon tree} corresponding to $u$ are sketched in Figures \ref{FIgsec6-1}, \ref{Figuresec6-2}. These data define an element of $\mathcal M^{\rm RGW}_{1}(L;\beta)$
for an appropriate choice of $\beta \in H_2(X,L;\bbZ)$. See \cite[Section 3]{part1:top} for the definitions of detailed ribbon trees and moduli spaces $\mathcal M^{\rm RGW}_{k+1}(L;\beta)$. Constructing a Kuranishi neighborhood for this element of $\mathcal M^{\rm RGW}_{1}(L;\beta)$ is the main goal of the first half of the paper.

\begin{figure}[h]
\centering
\includegraphics[scale=0.5]{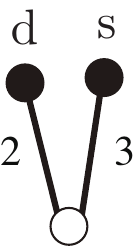}
\caption{Detailed tree of the element we study.}
\label{FIgsec6-1}
\end{figure}

\begin{figure}[h]
\centering
\includegraphics[scale=0.5]{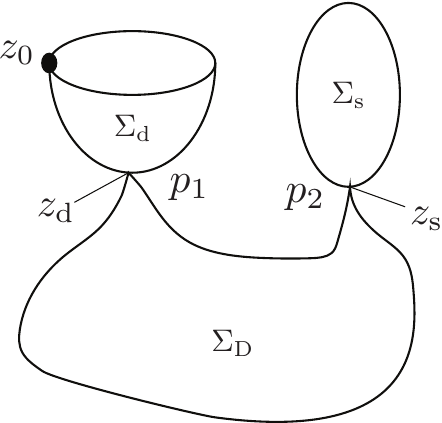}
\caption{The source curves of $(u_{\rm d},u_{\rm D},u_{\rm s})$}
\label{Figuresec6-2}
\end{figure}

\section{Fredholm theory of the Irreducible Components}
\label{sub:Fred}

In this section, we shall be concerned with the deformations of the restrictions of the map $u$ to the irreducible components $\Sigma_{\rm d}$, $\Sigma_{\rm s}$ and $\Sigma_{\rm D}$. We will see that the deformation theory of each irreducible component is governed by a Fredholm operator. 

Throughout this section, we use cylindrical coordinates both for the target and the source. There is a neighborhood $\frak U$ of the divisor ${\mathcal D}$ with a {\it partial $\bbC_*$-action} such that the almost complex structure $J$ on $\frak U$ is invariant under the  partial $\bbC_*$-action. (See \cite[Subsection 3.1]{part1:top} for the definition of partial $\bbC_*$-actions and \cite[Subsections 3.1 and 3.2]{part1:top} for the existence of $\frak U$.) We may assume that the open set $\frak U$ is chosen such that its closure minus $\mathcal D$ is diffeomorphic to:
\begin{equation}\label{subset61}
[0,\infty)_{\tau} \times S\mathcal N_{\mathcal D}(X)
\end{equation}
where $S\mathcal N_{\mathcal D}(X)$ is the unit $S^1$-bundle associated to the normal bundle $\mathcal N_{\mathcal D}(X)$ of $\mathcal D$ in $X$. We use $\tau$ to denote the standard coordinate on $[0,\infty)$. The 1-form $\theta=-d\tau\circ J$ determines a connection 1-form for the $S^1$-bundle $S\mathcal N_{\mathcal D}(X)$. 
Let $g'$ be a metric on $\mathcal D$ which is fixed for the rest of the paper. We also fix a metric $g$ on $X \setminus {\mathcal D}$ such that its restriction to \eqref{subset61} is given by:
\begin{equation}\label{g-cylinder-end}
  g{\vert}_{[0,\infty)_{\tau} \times S\mathcal N_{\mathcal D}(X)}=d\tau^2+\theta^2+g'.
\end{equation}
In particular, $g$ is invariant with respect to the partial $\bbC_* \cong (-\infty,\infty) \times S^1$-action on \eqref{subset61}, where $(-\infty,\infty)$ acts (partially) by translation along the factor $[0,\infty)_{\tau} $ and the action of $S^1$ is induced by the obvious circle action on $S\mathcal N_{\mathcal D}(X)$. We also fix another metric $g_{NC}$ on $X\setminus \mathcal D$ whose restriction to \eqref{subset61} has the following form:
\begin{equation}\label{g-smooth}
  g_{NC}{\vert}_{[0,\infty)_{\tau} \times S\mathcal N_{\mathcal D}(X)}=e^{-2\tau}(d\tau^2+\theta^2)+g'.
\end{equation}
This non-cylindrical metric extends to $\mathcal D$ to give a smooth metric on $X$ which is also denoted by $g_{NC}$.

\begin{remark}
	We do not make any assumption on compatibility of the metric $g'$
	with the almost complex structure or the symplectic structure.
\end{remark}

The unitary connection on the normal bundle $\mathcal N_{\mathcal D}X$ determines the decomposition
\begin{equation} \label{decom-tan-bdle}
  TX{\vert}_{\frak U}=\underbrace{\bbR\oplus \bbR}_\bbC \oplus \pi^*(T\mathcal D),
\end{equation}
where the first factor is given by the action of $(-\infty,\infty)\times \{1\} \subset (-\infty,\infty) \times S^1$, the second factor is given by the action of $ \{1\}\times S^1 \subset (-\infty,\infty) \times S^1$, and the third factor is given by the vectors orthogonal to the first two factors. Note that the last factor and the direct sum of the first two factors determine complex subspaces of $TX{\vert}_{\frak U}$.  

\subsection{The Disk Component}
\label{subsub:disk}
The surface $\Sigma_{\rm d}$ can be identified uniquely with the standard unit disc $D^2\subset \bbC$ such  that $z_0$ and $z_{\rm d}$ are mapped to $1$ and $0$.
The map $u_{\rm d} : D^2 \to X$ induces a map from $D^2 \setminus \{0\}$ to $X \setminus \mathcal D$, which we also denote by $u_{\rm d}$. We identify $D^2 \setminus \{0\}$ with $[0,\infty)_{r_1} \times S^1_{s_1}$ and denote the standard coordinates on the $[0,\infty)$ and $S^1$ factors with $r_1$ and $s_1$. Namely, the point $(r_1,s_1)\in [0,\infty) \times S^1$ is mapped to $\exp(-r_1-\sqrt{-1} s_1)$. (Here and in what follows, $S^1$ is  identified with $\bbR/2\pi\bbZ$.)

\begin{lemma}
There exist $R_{\rm d}\in \bbR$ and $x_{\rm d} \in S\mathcal N_{\mathcal D}(X)$ such that:
\begin{equation}\label{form62}
d_{C^m}( u_{\rm d}(r_1, s_1),(p_1r_1+R_{\rm d},p_1 s_1+x_{\rm d}))
\le C_m e^{-\delta_1r_1}
\end{equation}
for some $C_m,\delta_1 > 0$. The  constant $\delta_1$ is independent of $m$. 
\end{lemma}
Here we regard $(p_1r_1+R_{\rm d},p_1 s_1+x_{\rm d})$ as an element of $[0,\infty)_{\tau}\times S\mathcal N_{\mathcal D}(X)$ using the partial action of $(-\infty,\infty)\times S^1$ on $[0,\infty)_{\tau}\times S\mathcal N_{\mathcal D}(X)$. The expression on the left hand side of \eqref{form62} is the $C^m$ distance between the following two maps from $[0,\infty)_{r_1} \times S^1_{s_1}$ to $X\setminus \mathcal D$
\[
  (r_1, s_1) \mapsto u_{\rm d}(r_1, s_1) \hspace{1cm}(r_1, s_1) \mapsto (p_1r_1+R_{\rm d},p_1 s_1+x_{\rm d})
\]
Note that there exists $R'_{\rm d}>0$ such that $u_{\rm d}( s_1,r_1)$ is an element of \eqref{subset61} for $r_1 > R'_{\rm d}$. The $C^m$ norm is defined with respect to the cylindrical metric on $D^2\setminus \{0\}$ and the metric $g$ on $X\setminus \mathcal D$. 
\begin{proof}
The claim is a consequence of \cite[Lemma 3.7]{part1:top} and the fact that the multiplicity of the intersection of $u_d$ and $\mathcal D$ at $z_{\rm d}$ is $p_1$. 
\end{proof}

\begin{definition}\label{sections-V}
	We define $C^{\infty}(([0,\infty)\times S^1, \{0\}\times S^1);(u_{\rm d}^*TX,u_{\rm d}^*TL))$ to be the space of
	all smooth sections $V$ of $u_{\rm d}^*TX$ on the space $[0,\infty)\times S^1$ with the 
	boundary condition
	\[
	  V(0,s_1) \in T_{u_{\rm d}(0,s_1)}L.
	\]	
\end{definition}

We extend each vector $v \in  T_{u_{\rm d}(z_{\rm d})}{\mathcal D}$ to a vector field defined  on a neighborhood of $u_{\rm d}(z_{\rm d})$ in ${\mathcal D}$. Let $\hat v$ be the horizontal lift of this vector field using the decomposition in \eqref{decom-tan-bdle} to a neighborhood of $u_{\rm d}(z_{\rm d})$ in $X$.  We may assume that the map $v \mapsto \hat v$ is linear. Using  \eqref{decom-tan-bdle}, we can also obtain a vector field $[\frak r_{\infty},\frak s_{\infty}]$ on $X\setminus \mathcal D$ for each $(\frak r_{\infty},\frak s_{\infty}) \in \bbR \times \bbR$. These vector fields are also $(-\infty,\infty) \times S^1$-invariant.

\begin{definition}\label{defn6262}
	Let $C^{\infty}_0(([0,\infty)\times S^1, \{0\}\times S^1);(u_{\rm d}^*TX,u_{\rm d}^*TL))^+$
	be the space of all triples $(V,(\frak r_{\infty},\frak s_{\infty}),v)$ such that 
	$V \in C^{\infty}(([0,\infty)\times S^1, \{0\}\times S^1);(u_{\rm d}^*TX,u_{\rm d}^*TL))$, 
	$(\frak r_{\infty},\frak s_{\infty}) \in \bbR \times \bbR$, $v \in T_{u_{\rm d}(z_{\rm d})}{\mathcal D}$, and
	\[
	V - [\frak r_{\infty},\frak s_{\infty}] -\hat v
	\]
	has compact support. We define a weighted Sobolev norm on this vector space as follows:
	\begin{equation}\label{form6464}
        \aligned
        &\Vert(V,(\frak r_{\infty},\frak s_{\infty}),v)
        \Vert_{W^2_{m,\delta}}^2 \\
        =
        &\Vert V\Vert_{L^2_{m}([0,R'_{\rm d}]\times S^1)}^2 \\
        &+
        \sum_{j=0}^m\int_{[R'_{\rm d},\infty)\times S^1} e^{\delta r_1} 
        \vert \nabla^j(V - [\frak r_{\infty},\frak s_{\infty}] -\hat v)\vert^2 d r_1d  s_1
        \\
        &+ \vert(\frak r_{\infty},\frak s_{\infty})\vert^2 + \vert v\vert^2.
        \endaligned
        \end{equation}
	Later we shall be concerned with the case that $\delta >0$ is a sufficiently small positive number and 
	$m$ is a sufficiently large 
        positive integer.
        We denote by
        \[
          W^2_{m,\delta}(\Sigma_{\rm d} \setminus \{z_{\rm d}\};(u_{\rm d}^*TX,u_{\rm d}^*TL))
        \]
        the completion of 
        $C^{\infty}_0(([0,\infty)_{ r_1} \times S^1_{ s_1},\{0\}\times S^1_{s_1});(u_{\rm d}^*TX,u^*_{\rm d}TL))^+$ 
        with respect to the norm $\Vert \cdot \Vert_{W^2_{m,\delta}}$.
        This completion is a Hilbert space and is independent of how we extend the vectors $v$ to $\hat v$.
\end{definition}

\begin{definition}\label{coker-weighted-sob}
	Let $C^{\infty}_0([0,\infty) \times S^1;u_{\rm d}^*TX \otimes \Lambda^{0,1})$
	be the space of all smooth sections with compact supports, and define a weighted Sobolev 
	norm on it by:
	\[
	\Vert V\Vert_{L^2_{m,\delta}}^2= \Vert V\Vert_{L^2_{m}([0,R'_{\rm d}]\times S^1)}^2
	+\sum_{j=0}^m\int_{[R'_{\rm d},\infty)\times S^1} e^{\delta r_1} \vert \nabla^j(V)\vert^2 d r_1d  s_1.
	\]
	The completion of $C^{\infty}_0([0,\infty) \times S^1;u_{\rm d}^*TX \otimes \Lambda^{0,1})$ with 
	respect to the norm $\Vert \cdot \Vert_{L^2_{m,\delta}}$ is denoted by
	\begin{equation}\label{hilb6566}
		L^2_{m,\delta}(\Sigma_{\rm d} \setminus \{z_{\rm d}\};u_{\rm d}^*TX \otimes \Lambda^{0,1}).
	\end{equation}
\end{definition}
\begin{lemma}\label{lem6363}
Linearization of the Cauchy-Riemann equation at $u_{\rm d}$ gives a first order differential operator
\[
  \aligned D_{u_{\rm d}}\overline \partial :
  &C_0^{\infty}(([0,\infty) \times S^1,\{0\}\times S^1);(u_{\rm d}^*TX,u^*_{\rm d}TL)) 
  \\&\to C^{\infty}_0([0,\infty) \times S^1;u_{\rm d}^*TX \otimes \Lambda^{0,1}),
  \endaligned
\]
which has the following properties.

	\begin{enumerate}
	\item The operator $D_{u_{\rm d}}\overline \partial$ induces a continuous linear map
		\begin{equation}\label{fredholmmap1}
            		\aligned
            		D_{u_{\rm d}}\overline \partial :
            		&W^2_{m+1,\delta}(\Sigma_{\rm d} \setminus \{z_{\rm d}\};(u_{\rm d}^*TX,u_{\rm d}^*TL))  \\
            		&\to
            		L^2_{m,\delta}(\Sigma_{\rm d} \setminus \{z_{\rm d}\};u_{\rm d}^*TX \otimes \Lambda^{0,1}).
            		\endaligned
		\end{equation}
		In particular, for an element
		\[(V,(\frak r_{\infty},\frak s_{\infty}),v) \in C^{\infty}_0(([0,\infty)\times S^1, \{0\}\times S^1);(u_{\rm d}^*TX,u_{\rm d}^*TL))^+\]
		we have $D_{u_{\rm d}}\overline \partial(V,(\frak r_{\infty},\frak s_{\infty}),v)=D_{u_{\rm d}}\overline \partial(V)$.
	\item \eqref{fredholmmap1} is a Fredholm operator.
	\item The index of the operator \eqref{fredholmmap1} is equal to the virtual dimension of the moduli space
	$\mathcal M^{\rm reg,d}_1(\beta_{\rm d};(p_1))$\footnote{See \cite[Definition 3.57]{part1:top}
	for the definition of this moduli space. Here $(p_1)$ stands for the multiplicity number $p_1$.} which contains $u_{\rm d}$.
\end{enumerate}
\end{lemma}
\begin{proof}
	(1) is a consequence of (\ref{form62}).
	(We choose $\delta$ to be smaller than the constant $\delta_1$ in \eqref{form62}.)
	The differential operator $D_{u_{\rm d}}\overline \partial$ is 
	asymptotic to an operator of the form
	\[
	  \frac{\partial}{\partial r_1} + P
	\]
	as $ r_1$ goes to infinity. Furthermore, $P = J \partial/\partial s_1$ and the kernel of this operator
	can be identified with $\bbR \oplus \bbR \oplus  T_{u_{\rm d}(z_{\rm d})}{\mathcal D}$.
	Part (2) is a consequence of this observation and general results about elliptic operators on manifolds with cylindrical ends \cite[Theorem (3.10)]{APS:I}. 

	To prove Part (3), we need to show that the Fredholm index of two elliptic operators defined agree with each other:
	one is the linearized Cauchy-Riemann operator on $D^2$ and the other one is 
	the linearized Cauchy-Riemann operator on $[0,\infty)\times S^1$. We relate the indices of these operators to the indices
	of two other operators with smaller indices. First consider the subspace 
	\begin{equation}\label{domin-decay}
	  L^2_{m+1,\delta}(\Sigma_{\rm d} \setminus \{z_{\rm d}\};(u_{\rm d}^*TX,u_{\rm d}^*TL))
	 \end{equation}
	of 
	$W^2_{m+1,\delta}(\Sigma_{\rm d} \setminus \{z_{\rm d}\};(u_{\rm d}^*TX,u_{\rm d}^*TL))$.
	This subspace consists of the closure of elements 
	$(V,(\frak r_{\infty},\frak s_{\infty}),v)$ with extra condision that $v=0$.
	In particular, its codimension is equal to $\dim(\mathcal D)$.
	Let $D'_{u_{\rm d}}\overline \partial$ be the elliptic complex given by the variation of \eqref{fredholmmap1} where 
	we replace the domain with \eqref{domin-decay}. In particular, we have
	\[
	  \ind(D'_{u_{\rm d}}\overline \partial)=\ind(D_{u_{\rm d}}\overline \partial)-\dim(\mathcal D).
	\]
	Let $L^2_m(D^2;u_{\rm d};X,L)$ be the space of $L^2_m$ sections of $u_{\rm d}^*TX$ whose boundary values are
	in $u_{\rm d}\vert_{\partial D^2}^*TL$.
	Consider the subspace of this space consisting of elements whose derivatives 
	up to order $p_1-1$ vanish at the origin, and denote this space by $L^2_m(D^2;u_{\rm d};X,L;(p_1))$.
	Then the linearization of the Cauchy-Riemann operator defines an operator
	\begin{equation}\label{38chanedtodisk}
		D''_{u_{\rm d}}\overline \partial : L^2_m(D^2;u_{\rm d};X,L;(p_1)) \to 
		L^2_{m-1}(D^2;u_{\rm d}^*TX \otimes \Lambda^{0,1}),
	\end{equation}	
	This elliptic complex controls the deformation theory of a 
	subspace of $\mathcal M^{\rm reg,d}_1(\beta_{\rm d};(p_1))$ given by $J$-holomorphic curves that the intersection point
	with the divisor is constrained to be the fixed point $u_{\rm d}(D^2) \cap \mathcal D$. In particular, we have
	\[
	  \ind(D''_{u_{\rm d}}\overline \partial)={\rm virdim}(\mathcal M^{\rm reg,d}_1(\beta_{\rm d};(p_1)))-\dim(\mathcal D).
	\]
	
	To prove the claim in Part (3), it suffices to show that the indices of the operators 
	$D'_{u_{\rm d}}\overline \partial$ and $D''_{u_{\rm d}}\overline \partial$ agree with each other. Any element in the kernel (resp. 
	the cokernel) of $D''_{u_{\rm d}}\overline \partial$ determines an element of the 
	the kernel (resp. the cokernel) of $D'_{u_{\rm d}}\overline \partial$ by a reparametrization of the domain. 
	In the other direction, we may use removability of singularity. Consequently, the kernels and the cokernels of these 
	two operators are in correspondence with each other.
\end{proof}


\subsection{The Sphere Component}
\label{subsub:sphere}
In this part, we study the linearization of the problem governing the map $u_{\rm s}$. This can be done similar to the case of $u_{\rm d}$. We take a compact subset $K_{\rm s}$ of $\Sigma_{\rm s} \setminus \{z_{\rm s}\}$
such that $u_{\rm s}(\Sigma_{\rm s} \setminus K_{\rm s})$ is contained in \eqref{subset61}. 
We may assume that $\Sigma_{\rm s} \setminus K_{\rm s}$ is a disk.
We take a coordinate $( r_2,s_2) 
\in \bbR \times S^1$ of $\Sigma_{\rm s} \setminus (K_{\rm s}\cup \{z_{\rm s}\})$ such that 
$( r_2,s_2)$ is identified with 
$\exp(- r_2-\sqrt{-1}s_2) \in D^2 \setminus \{0\} \cong 
\Sigma_{\rm s} \setminus (K_{\rm s} \cup \{z_{\rm s}\})$. In the same way as in \eqref{form62}, we have the inequality
\begin{equation}\label{form62rev}
d_{C^m}(u_{\rm s}( r_2,s_2),(p_2 r_2+R_{\rm s},p_2 s_2+x_{\rm s}))
\le C_m e^{-\delta_1  r_2},
\end{equation}
for a constant $R_{\rm s}$ and $x_{\rm s} \in S\mathcal N_{\mathcal D}(X)$.

\begin{definition}{\rm (Compare to \cite[Lemma 7.1.5]{fooobook2}.)}\label{defn64444}
	Let
	\[
	  C^{\infty}_0(\Sigma_{\rm s} \setminus \{z_{\rm s}\};u_{\rm s}^*TX)^+
	\] 
	be the space of 
	all triples $(V,(\frak r_{\infty},\frak s_{\infty}),v)$ such that 
	$V\in C^{\infty}(\Sigma_{\rm s} \setminus \{z_{\rm s}\};u_{\rm s}^*TX)$, 
	$(\frak r_{\infty},\frak s_{\infty}) \in \bbR \times \bbR$, $v \in T_{u_{\rm s}(z_{\rm s})}{\mathcal D}$
	and
	\[
	  V - [\frak r_{\infty},\frak s_{\infty}] -\hat v
	\]
	is compactly supported.\footnote
	{Note that $(\frak r_{\infty},\frak s_{\infty})$ and $v$ determine a vector fields 
	$[\frak r_{\infty},\frak s_{\infty}]$ and $\hat v$ on \eqref{subset61} 
	in the same way as in the last subsection.}
	Analogous to \eqref{form6464}, we define a Sobolev norm on this space as follows:
	\begin{equation}\label{form6464-2}
		\aligned
                &\Vert(V,(\frak r_{\infty},\frak s_{\infty}),v)
                \Vert_{W^2_{m,\delta}}^2 \\
                =
                &\Vert V\Vert_{L^2_{m}(K_{\rm s})}^2 \\
                &+
                \sum_{j=0}^m\int_{[0,\infty)\times S^1} e^{\delta r_2} 
                \vert \nabla^j(V - [\frak r_{\infty},\frak s_{\infty}] -\hat v)\vert^2 d r_2d s_2
                \\
                &+ \vert(\frak r_{\infty},\frak s_{\infty})\vert^2 + \vert v\vert^2.
                \endaligned
        \end{equation}
        We shall be concerned with the case that $\delta$ is a sufficiently small positive number and 
        $m$ is a sufficiently large positive integer. We denote by
        \[
          W^2_{m,\delta}(\Sigma_{\rm s} \setminus \{z_{\rm s}\};u_{\rm s}^*TX)
        \]
        the completion of 
        $C^{\infty}_0(\Sigma_{\rm s} \setminus \{z_{\rm s}\};u_{\rm s}^*TX)^+$ with respect to the norm 
        $\Vert \cdot \Vert_{W^2_{m,\delta}}$.
        This completion is a Hilbert space.

	We can also define the Hilbert space:
	\[
	  L^2_{m,\delta}(\Sigma_{\rm s} \setminus \{z_{\rm s}\};u_{\rm s}^*TX \otimes \Lambda^{0,1}),
	\]
	in the same way as in \eqref{hilb6566}.
\end{definition}

\begin{lemma}
        \begin{enumerate}
        \item The linearization of the Cauchy-Riemann equation at $u_{\rm s}$
	        defines a continuous linear map
                \begin{equation}\label{fredholmmap2ss}
                		D_{u_{\rm s}}\overline \partial :
                		W^2_{m+1,\delta}(\Sigma_{\rm s} \setminus \{z_{\rm s}\};u_{\rm s}^*TX) \to
	                L^2_{m,\delta}(\Sigma_{\rm s} \setminus \{z_{\rm s}\};u_{\rm s}^*TX \otimes \Lambda^{0,1}).
                \end{equation}
        \item \eqref{fredholmmap2ss} is a Fredholm operator.
        \item The index of the operator \eqref{fredholmmap2ss} is equal to 4 plus the virtual dimension of the 
        		moduli space $\mathcal M^{\rm reg,s}(\beta_{\rm s};(p_2))$\footnote{
		See \cite[Definition 3.60]{part1:top}.
	        $(p_2)$ stands for the multiplicity number $p_2$.} which contains $u_{\rm s}$.
        \end{enumerate}
\end{lemma}
The proof is similar to the proof of Lemma \ref{lem6363}.
The number $4$, that appears in Item (3), is the dimension of the group of automorphisms of $(S^2,z_{\rm s})$.

\subsection{The Divisor Component}
\label{subsub:divisor}
Finally, we analyze the deformation theory of $u_{\rm D}$.  Note that $u_{\rm D}$ is a map to the symplectic manifold ${\mathcal D}$. So we firstly describe a Fredholm theory for the deformation of $u_{\rm D}$ as a map to ${\mathcal D}$. This is a standard task in Gromov-Witten theory. We have a Fredholm operator
\begin{equation} \label{CR-uD}
D_{u_{\rm D}}\overline \partial : L^2_{m+1}(\Sigma_{\rm D};u_{\rm D}^*T{\mathcal D})
\to 
L^2_{m}(\Sigma_{\rm D};u_{\rm D}^*T{\mathcal D}\otimes \Lambda^{0,1}).
\end{equation}
To perform gluing analysis, we compare this Fredholm operator with another Fredholm operator associated to the  map $U_{\rm D}$ in \eqref{U-D}. 

\begin{definition}\label{defn66666}
	As in previous two subsections, we extend 
	any $v_{\rm d} \in T_{u_{\rm d}(z_{\rm d})}\mathcal D$, 
	$v_{\rm s} \in T_{u_{\rm s}(z_{\rm s})}\mathcal D$, to vector fields 
	$\hat v_{\rm d}$, $\hat v_{\rm s}$
	on open neighborhoods of  the fibers of $\bbR_{\tau} \times S\mathcal N_{\mathcal D}(X)$ over $u_{\rm d}(z_{\rm d})$ and $u_{\rm s}(z_{\rm s})$.
	For any $(\frak r_{\infty},\frak s_{\infty}) \in \bbR \times \bbR$,
	we can also define a vector field $[\frak r_{\infty},\frak s_{\infty}]$ in a neighborhood of any of the points $u_{\rm d}(z_{\rm d})$ and $u_{\rm s}(z_{\rm s})$,
	as in the last two subsections. 
	Let
	\[
	  C^{\infty}_0(\Sigma_{\rm D} \setminus \{z_{\rm d}, z_{\rm s}\};
	  U_{\rm D}^*T(\bbR_{\tau} \times S\mathcal N_{\mathcal D}(X) ))^+
	\]
	be the space of all 5-tuples $(V,(\frak r_{\infty,\rm d},\frak s_{\infty,\rm d}),
	(\frak r_{\infty,\rm s},\frak s_{\infty,\rm s}),v_{\rm d},v_{\rm s})$ such that
	\[V\in C^{\infty}(\Sigma_{\rm D} \setminus \{z_{\rm s}\} \setminus \{z_{\rm d}\};U_{\rm D}^*T(\bbR_{\tau} \times S\mathcal N_{\mathcal D}(X) ))\] 
	and that:
	\begin{enumerate}
		\item[(i)] The restriction of $V - [\frak r_{\infty,\rm d},\frak s_{\infty,\rm d}] -\hat v_{\rm d}$ 
		to a punctured neighborhood of $z_{\rm d}$ in $\Sigma_{\rm D} \setminus \{z_{\rm d}, z_{\rm s}\}$
		vanishes;
		\item[(ii)] The restriction of $V - [\frak r_{\infty,\rm s},\frak s_{\infty,\rm s}] -\hat v_{\rm s}$ 
			to a punctured neighborhood of $z_{\rm s}$ in $\Sigma_{\rm D} \setminus \{z_{\rm d}, z_{\rm s}\}$
			vanishes.
	\end{enumerate}
	We define a weighted  Sobolev norm on this space as follows:
	\begin{equation}\label{form64641rev}
                \aligned
                &\Vert(V,(\frak r_{\infty,\rm d},\frak s_{\infty,\rm d}),
                (\frak r_{\infty,\rm s},\frak s_{\infty,\rm s}),v_{\rm d},v_{\rm s})
                \Vert_{W^2_{m,\delta}}^2 \\
                =
                &\Vert V\Vert_{L^2_{m}(K_{\rm D})}^2 \\
                &+
                \sum_{j=0}^m\int_{[0,\infty)\times S^1} e^{\delta r_1} 
                \vert \nabla^j(V - [\frak r_{\infty,\rm d},\frak s_{\infty,\rm d}] -\hat v_{\rm d})\vert^2 d r_1d  s_1
                \\
                &+
                \sum_{j=0}^m\int_{[0,\infty)\times S^1} e^{\delta r_2} 
                \vert \nabla^j(V - [\frak r_{\infty,\rm s},\frak s_{\infty,\rm s}] -\hat v_{\rm s})\vert^2 d r_2d s_2
                \\
                &+ \vert(\frak r_{\infty,\rm d},\frak s_{\infty,\rm d})\vert^2  
                + \vert(\frak r_{\infty,\rm s},\frak s_{\infty,\rm s})\vert^2 +\vert v_{\rm d}\vert^2+\vert v_{\rm s}\vert^2.
                \endaligned
        \end{equation}
        In order to clarify the notation in \eqref{form64641rev}, the following comments are in order.
        We take a compact subset $K_{\rm D} \subset \Sigma_{\rm D} \setminus \{z_{\rm d}, z_{\rm s}\}$
        such that 
        $\Sigma_{\rm D} \setminus K_{\rm D}$ is the union of two discs.
        We fix coordinates $(r_1,s_1) \in [0,\infty) \times \bbR/2\pi\bbZ$ and $(r_2,s_2) \in [0,\infty) \times \bbR/2\pi\bbZ$
        on the complement of the origins of these two discs. That is, we identify $[0,\infty) \times \bbR/2\pi\bbZ$ with $D^2\setminus \{0\}$ using
        $(r_i,s_i) \mapsto \exp(-(r_i+\sqrt{-1} s_i))$.

        We denote the completion of 
        $C^{\infty}_0(\Sigma_{\rm D} \setminus \{z_{\rm d}, z_{\rm s}\};
        U_{\rm D}^*T(\bbR_{\tau} \times S\mathcal N_{\mathcal D}(X) 
        ))^+$ with respect to the norm $\Vert\cdot \Vert_{W^2_{m,\delta}}$ by:
        $$
        H_1:=W^2_{m,\delta}(\Sigma_{\rm D} \setminus \{z_{\rm d},z_{\rm s}\};
        U_{\rm D}^*T(\bbR_{\tau} \times S\mathcal N_{\mathcal D}(X))).
        $$
        This completion is a Hilbert space.

        We also define a Hilbert space 
        \[
          H_2:=L^2_{m,\delta}(\Sigma_{\rm D} \setminus \{z_{\rm d},z_{\rm s}\};U_{\rm D}^*T(\bbR_{\tau} \times S\mathcal N_{\mathcal D}(X)) \otimes \Lambda^{0,1}),
        \]
        in the same way as in \eqref{hilb6566}.
\end{definition}

We have a short exact sequence of holomorphic bundles on $\Sigma_{\rm D} \setminus \{z_{\rm d},z_{\rm s}\}$ as follows:
\[
  0 \to \underline {\bbC} \to U_{\rm D}^*T(\bbR_{\tau} \times S\mathcal N_{\mathcal D}(X)) \to u_{\rm D}^*T\mathcal D \to 0.
\]
Here the first map is defined by the $\bbC_*$-action. This short exact sequence induces a diagram of the following form:
\begin{equation} \label{diagram}
	\begin{tikzcd}
		0 \ar[r]&A_1 \ar[r]\ar[d,"f"]&H_1 \ar[r]\ar[d,"g"]&B_1 \ar[r]\ar[d,"h"]&0\\
		0 \ar[r]&A_2\ar[r]&H_2\ar[r]&B_2\ar[r]&0
	\end{tikzcd}
\end{equation}
where we have:
\[
  A_1=W^2_{m+1,\delta}(\Sigma_{\rm D} \setminus \{z_{\rm d},z_{\rm s}\};\underline {\bbC}), \hspace{1cm}
   A_2=L^2_{m,\delta}(\Sigma_{\rm D} \setminus \{z_{\rm d},z_{\rm s}\};\Lambda^{0,1}),
\]
and 
\[
  B_1=W^2_{m+1,\delta}(\Sigma_{\rm D} \setminus \{z_{\rm d},z_{\rm s}\};u_{\rm D}^*T\mathcal D), \hspace{.5cm}
   B_2=L^2_{m,\delta}(\Sigma_{\rm D} \setminus \{z_{\rm d},z_{\rm s}\};u_{\rm D}^*T\mathcal D \otimes \Lambda^{0,1}).
\]
The spaces $A_1$ and $B_1$ are defined similar to $H_1$ in an obvious way. In the same way as in the proof of Lemma \ref{lem6363}, we can show that the linearization of the Cauchy-Riemann equation at $U_{\rm D}$ defines a continuous linear map:
\begin{equation}\label{fredholmmap2ssrev}
        \aligned
	D_{U_{\rm D}}\overline \partial :
	&W^2_{m+1,\delta}(\Sigma_{\rm D} \setminus \{z_{\rm d},z_{\rm s}\};U_{\rm D}^*T(\bbR_{\tau} 
	\times S\mathcal N_{\mathcal D}(X))) \\        
	&\to L^2_{m,\delta}(\Sigma_{\rm D} \setminus \{z_{\rm d},z_{\rm s}\};U_{\rm D}^*T(\bbR_{\tau} 
	\times S\mathcal N_{\mathcal D}(X)) \otimes \Lambda^{0,1}).\endaligned
\end{equation}
which is the map $g$ in \eqref{diagram}. The map $f$ is the standard Cauchy-Riemann operator  and $h$ is the linearized Cauchy-Riemann operator associated to the map $u_{\rm D}$. The diagram  in \eqref{diagram} commutes and each row of the diagram forms an exact sequence.


\begin{lemma}
        \begin{enumerate}
        \item The operator in \eqref{fredholmmap2ssrev} is Fredholm.
        \item The kernel and the cokernel of the operator $h$ in \eqref{diagram} can be identified with the kernel and the cokernel of $D_{u_{\rm D}}\overline\partial$ in \eqref{CR-uD}.
 		Moreover, \eqref{diagram} induces a short exact sequence of the following form:
	        \[
	        0 \to \bbC\to {\rm Ker} D_{U_{\rm D}}\overline\partial \to {\rm Ker} D_{u_{\rm D}} \overline\partial  \to 0.
        		\]        
	        and an isomorphism
		\[
	          {\rm CoKer} D_{u_{\rm D}}\overline\partial \cong {\rm CoKer} D_{U_{\rm D}}\overline\partial.
		\] 
        \end{enumerate}
\end{lemma}
\begin{proof}
	The proof of the claim in (1) is similar to the proof of Lemma \ref{lem6363}.
	Identification of the kernels and cokernels of the operators $h$ and $D_{u_{\rm D}}\overline\partial$ in \eqref{CR-uD} is straightforward. Similarly, 
	the kernels and cokernels of the operators $f$
	and the Cauchy-Riemann operator associated to the trivial bundle on the sphere $\Sigma_{\rm D}$ can be identified with each other. The latter operator is surjective and its kernel is a copy of $\bbC$, consists of constant sections
	of the trivial bundle. Using this observation, the remaining claims in part (2) follow from the 
	diagram chase of the diagram \eqref{diagram}.
\end{proof}
\section{Stabilization of the Source Curves and the Obstruction Bundles}
\label{sub:Obst}

The operators $D_{u_{\rm d}}\overline \partial$, $D_{u_{\rm s}}\overline \partial$, $D_{u_{\rm D}}\overline \partial$ are not necessarily surjective. If these operators are not surjective, then the deformation theories of $u_{\rm d}, u_{\rm s}, u_{\rm D}$ are obstructed. Following a general idea due to Kuranishi, we introduce {\it obstruction spaces}.
\begin{definition}\label{defn6868}
            A triple $E = (E_{\rm d},E_{\rm s},E_{\rm D})$ of finite dimensional vector spaces that 
            \[
            \aligned
            E_{\rm d} &\subset C^{\infty}(\Sigma_{\rm d} \setminus \{z_{\rm d}\};u_{\rm d}^*TX \otimes \Lambda^{0,1}),
            \\
            E_{\rm s} &\subset C^{\infty}(\Sigma_{\rm s} \setminus \{z_{\rm s}\};u_{\rm s}^*TX \otimes \Lambda^{0,1}),
            \\
            E_{\rm D} &\subset C^{\infty}(\Sigma_{\rm D}\setminus \{z_{\rm d},z_{\rm s}\};u_{\rm D}^*T{\mathcal D}
            \otimes \Lambda^{0,1})
            \endaligned
            \]
            is called an {\it obstruction space} for $u$ if it satisfies the following properties.
            \begin{enumerate}
            	\item Elements of $E_{\rm d}$, $E_{\rm s}$, $E_{\rm D}$ have compact supports away from 
            		$z_{\rm d}$, $z_{\rm s}$, $\{z_{\rm d},z_{\rm s}\}$, respectively;
            	\item $ {\rm Im} (D_{u_{\rm d}}\overline \partial) + E_{\rm d}= 
            		L^2_{m,\delta}(\Sigma_{\rm d} \setminus \{z_{\rm d}\};u_{\rm d}^*TX \otimes \Lambda^{0,1})$;
            	\item ${\rm Im} (D_{u_{\rm s}}\overline \partial) + E_{\rm s}
            		= L^2_{m,\delta}(\Sigma_{\rm s} \setminus \{z_{\rm s}\};u_{\rm s}^*TX \otimes \Lambda^{0,1})$;
            	\item ${\rm Im}(D_{u_{\rm D}}\overline \partial) + E_{\rm D} =
            		L^2_{m}(\Sigma_{\rm D};u_{\rm D}^*T{\mathcal D}\otimes \Lambda^{0,1})$.         
            \end{enumerate}
\end{definition}
For the purpose of the gluing analysis, we need our obstruction spaces satisfy the {\it mapping transversality condition} defined as follows.

\begin{definition}\label{defn6969}
	Let 
	\[
	  {\mathcal{EV}}_{\rm d} :
	  W^2_{m+1,\delta}(\Sigma_{\rm d} \setminus \{z_{\rm d}\};(u_{\rm d}^*TX,u_{\rm d}^*TL)) 
	\to T_{u_{\rm d}(z_{\rm d})}\mathcal D
	\]
	be the continuous linear map that associates to a triple $(V,(\frak r_{\infty},\frak s_{\infty}),v)$ the vector $v$.
	The map
	\[
	{\mathcal{EV}}_{\rm s} : W^2_{m+1,\delta}(\Sigma_{\rm s} \setminus \{z_{\rm s}\};u_{\rm s}^*TX) 
	\to T_{u_{\rm s}(z_{\rm s})}\mathcal D,
	\]
	is defined similarly. Finally, let:
	\[
	{\mathcal{EV}}_{\rm D} := ({\rm EV}_{\rm D,d},{\rm EV}_{\rm D,s}) :
	L^2_{m+1}(\Sigma_{\rm D};u_{\rm D}^*T{\mathcal D})
	\to  T_{u_{\rm d}(z_{\rm d})}\mathcal D \oplus T_{u_{\rm s}(z_{\rm s})}\mathcal D.
	\]
	be the map that associates to $V\in L^2_{m+1}(\Sigma_{\rm D};u_{\rm D}^*T{\mathcal D})$ the pair of vectors $(V(z_{\rm d}),V(z_{\rm s}))$.
	We say that an obstruction space $E=(E_{\rm s}, E_{\rm d}, E_{\rm D})$ satisfies the 
	{\it mapping transversality condition}, if the following map is surjective:
        \begin{equation}\label{form61616161}
                \aligned
                ({\mathcal{EV}}_{\rm d}+{\mathcal{EV}}_{\rm D,d},&{\mathcal{EV}}_{\rm s}+{\mathcal{EV}}_{\rm D,s}): \\
                &
                (D_{u_{\rm d}}\overline \partial)^{-1}E_{\rm d}
                \oplus
                (D_{u_{\rm s}}\overline \partial)^{-1}E_{\rm s}
                \oplus (D_{u_{\rm D}}\overline \partial)^{-1}E_{\rm D} \\
                &\to 
                T_{u_{\rm d}(z_{\rm d})}\mathcal D \oplus T_{u_{\rm s}(z_{\rm s})}\mathcal D.
                \endaligned        
        \end{equation}
\end{definition}

We shall use obstruction spaces to define Kuranishi neighborhoods of 
the elements represented by $u_{\rm d}$, $u_{\rm s}$, $u_{\rm D}$, respectively. 
Note that at this stage we are studying three irreducible components separately. The process of gluing them will be discussed in the next stage.
\par\smallskip

We next introduce the notion of {\it added marked points} and {\it transversal submanifolds}.
The primary purpose of these auxiliary data is to fix coordinate on the domain of maps close to $u$.
Note that the domain of $u$ is unstable, i.e., there are non-trivial automorphisms of the domain.
This automorphism group gives rise to an ambiguity when we want to fix coordinate for the domain of $u$ and nearby maps.
We remove this ambiguity using added marked points and transversal submanifolds. 
This is a standard technique which is used, for example, in \cite[appendix]{FO}. (See also \cite[Section 20]{foootech}, \cite[Subsection 9.3]{fooo:const1}.) 

The source curve $\Sigma_{\rm d}$ of $u_{\rm d}$ comes with one interior nodal point $z_{\rm d}$ and one boundary marked point $z_0$. The group of isometries of $\Sigma_{\rm d}$ preserving $z_{\rm d}$ and $z_0$ is trivial and hence $\Sigma_{\rm d}$ together with these marked points is stable. Moreover, this source curve does not have any deformation parameter. However, the source curve $\Sigma_{\rm s}$ of $u_{\rm s}$ comes with only one interior nodal point $z_{\rm s}$. Therefore, it is unstable and we add two extra marked points $w_{{\rm s},1}$, $w_{{\rm s},2}$ such that it becomes stable with no deformation parameter. Similarly, the source curve $\Sigma_{\rm D}$ of $u_{\rm D}$ comes with two interior nodal points
$z_{\rm s}$, $z_{\rm d}$ and is unstable. We add one marked point $w_{\rm D}$ so that it becomes stable without any deformation parameter. The transversal submanifolds in the following definition are used for the purpose of killing the extra freedom of moving the auxiliary marked points.

\begin{definition}\label{conds610}
	Suppose $\mathcal N_{{\rm s},1}$, $\mathcal N_{{\rm s},2}$ are codimension $2$ smooth submanifolds of $X$ and $\mathcal N_{\rm D}$ is a codimension $2$ smooth submanifold of $\mathcal D$.
	We say the data $\Xi$ of the extra marked points $w_{{\rm s},1}$, $w_{{\rm s},2}$, $w_{\rm D}$ as above and the submanifolds $\mathcal N_{{\rm s},1}$, $\mathcal N_{{\rm s},2}$ and $\mathcal N_{\rm D}$ form {\it stabilization
	data} for $u$, if they satisfy the following properties.
\begin{enumerate}
	\item For $i=1,2$, there exists an open neighborhood $\mathcal V_{\rm s}(w_{{\rm s},i})$ of $w_{{\rm s},i}$ 
		such that $\mathcal V_{\rm s}(w_{{\rm s},i}) \cap u_{\rm s}^{-1}(\mathcal N_{{\rm s},i}) = \{w_{{\rm s},i}\}$ and 
		$u_{\rm s}\vert_{\mathcal V_{\rm s}(w_{{\rm s},i})}$ is transversal to $\mathcal N_{{\rm s},i}$ at $w_{{\rm s},i}$.
	\item There exists an open neighborhood $\mathcal V_{\rm D}(w_{\rm D})$ of $w_{\rm D}$ such that
		$\mathcal V_{\rm D}(w_{\rm D}) \cap u^{-1}_{\rm D}(\mathcal N_{\rm D}) = \{w_{\rm D}\}$ and 
		$u_{\rm D}\vert_{\mathcal V_{\rm D}(w_{\rm D})}$ is transversal to $\mathcal N_{\rm D}$ at $w_{\rm D}$.
	\end{enumerate}
	We say a pair $\Upsilon:=(\Xi,E)$ of stabilization data $\Xi$ together with an obstruction space $E$ as in Definition \ref{defn6868} provide {\it stabilization and obstruction data} for $u$. 
\end{definition}

To define Kuranishi neighborhoods for the components of $u$, we need to transfer the obstruction space $E$ of $u$ to nearby maps. This is done using {\it target parallel transportation}.
Let  $u'_{\rm d} : (\Sigma_{\rm d} \setminus \{z_{\rm d}\},\partial \Sigma_{\rm d}) \to (X \setminus \mathcal D,L)$
(resp. $u'_{\rm s} : \Sigma_{\rm s} \setminus \{z_{\rm s}\} \to X \setminus \mathcal D$, $u'_{\rm D} : \Sigma_{\rm D} \to \mathcal D$) be an $L^2_{m+1,loc}$ map such that:
\begin{equation}\label{form617}
	\aligned 
	d(u_{\rm d}(x),u'_{\rm d}(x)) \le \epsilon
	\quad &\text{(resp. $d(u_{\rm s}(x),u'_{\rm s}(x)) \le \epsilon$},\\
	&\qquad\text{ $d(u_{\rm D}(x),u'_{\rm D}(x)) \le \epsilon$.)}
	\endaligned
\end{equation}
for any $x \in {\rm Int}(\Sigma_{\rm d} \setminus \{z_{\rm d}\})$ (resp. $x \in \Sigma_{\rm s}\setminus \{z_{\rm s}\}$, $x \in \Sigma_{\rm D}$). Here $d$ is defined with respect to the metric $g$ on $X\setminus \mathcal D$ or the metric $g'$ on $\mathcal D$, introduced at the beginning of Section \ref{sub:Fred}. We wish to define:
%
\begin{align}
&E_{\rm d}(u'_{\rm d}) \subset L^2_m(\Sigma_{\rm d} \setminus \{z_{\rm d}\};(u_{\rm d}')^*TX \otimes \Lambda^{0,1})\nonumber\\
&E_{\rm s}(u'_{\rm s}) \subset L^2_m(\Sigma_{\rm s} \setminus \{z_{\rm s}\};(u_{\rm s}')^*TX \otimes \Lambda^{0,1})\label{form618}\\
&E_{\rm D}(u'_{\rm D}) \subset L^2_m(\Sigma_{\rm D};(u_{\rm D}')^*T\mathcal D \otimes \Lambda^{0,1})\nonumber
\end{align}
which are finite dimensional subspaces consisting of elements with compact supports. 
To define target parallel 
transportation, we need to impose an additional constraint on $E_{\rm d}$, $E_{\rm s}$, $E_{\rm D}$.
\begin{definition}\label{cond4.4}
	Given an obstruction space $E = (E_{\rm d},E_{\rm s},E_{\rm D})$, if for any 
	$x \in \Sigma_{\rm d}$ (resp. $x \in \Sigma_{\rm s}$, $x \in \Sigma_{\rm D}$)
	in the support of an element of $E_{\rm d}$ (resp. $E_{\rm s}$, $E_{\rm D}$), the map
	$u_{\rm d}$ (resp. $u_{\rm s}$, $u_{\rm D}$) is an immersion at $x$, then $E$ is called {\it support-immersive}. 
\end{definition}
This condition in particular implies that $E_{\rm d}$ (resp. $E_{\rm s}$, $E_{\rm D}$) is zero, if $u$ is constant on $\Sigma_{\rm d}$ (resp. $\Sigma_{\rm s}$, $\Sigma_{\rm D}$). Using the fact that $\Sigma$ has genus $0$, we can always take $E_{\rm d}$, $E_{\rm s}$, $E_{\rm D}$ satisfying this additional condition. In the following, we assume that our obstruction spaces satisfy the mapping transversality condition of Definition \ref{defn6969} and are support-immersive in the sense of Definition \ref{cond4.4}.\footnote{There are several other methods to define (\ref{form618}), where
we do not need to assume Condition \ref{cond4.4}.}

In the following definition, ${\rm Supp} (E_{\rm d})$ (resp. ${\rm Supp} (E_{\rm s})$, ${\rm Supp} (E_{\rm D})$)
denotes the union of the supports of the elements of  $E_{\rm d}$ (resp. $E_{\rm s}$, $E_{\rm D}$).

\begin{definition}\label{defn481}
	 For any triple of maps $u'_{\rm d}$, $u'_{\rm s}$, $u'_{\rm D}$ as above, let $I^{\rm t}_{\rm d} : {\rm Supp} (E_{\rm d}) \to \Sigma_{\rm d}$ 
	(resp.  $I^{\rm t}_{\rm s} : {\rm Supp} (E_{\rm s}) \to \Sigma_{\rm s}$, 
	$I^{\rm t}_{\rm D} : {\rm Supp} (E_{\rm D}) \to \Sigma_{\rm D}$) be the maps that is defined as follows.
	 For $x \in {\rm Supp} (E_{\rm d})$ (resp. $x \in {\rm Supp} (E_{\rm s})$, $x \in {\rm Supp} (E_{\rm D})$), the point $I^{\rm t}_{\rm d}(x)$ (resp. $I^{\rm t}_{\rm s}(x)$, $I^{\rm t}_{\rm D}(x)$) is 
	the unique point which satisfies the following two conditions.
	\begin{enumerate}
	\item The distance between $x$ and $I^{\rm t}_{\rm d}(x)$ 
		(resp. $I^{\rm t}_{\rm s}(x)$, $I^{\rm t}_{\rm D}(x)$) is smaller than the constant $\epsilon$.
		We choose $\epsilon$ small enough such that \eqref{form617} and this condition imply that
		$$ \aligned d(u_{\rm d}(x),u'_{\rm d}(I^{\rm t}_{\rm d}(x))) \le o,
		   \quad &\text{(resp. $d(u_{\rm s}(x),u'_{\rm s}(I^{\rm t}_{\rm s}(x))) \le o$},\\
		  &\qquad\text{ $d(u_{\rm D}(x),u'_{\rm D}(I^{\rm t}_{\rm D}(x))) \le o$.)}
		  \endaligned
		$$
		where $o$ is a constant smaller than 
		the injectivity radii of $X \setminus \mathcal D$ and $\mathcal D$.
		\item Condition (1) implies that 
		there exists a unique minimal geodesic $\gamma_{\rm d} : [0,1] \to X\setminus \mathcal D$ 
		(resp. $\gamma_{\rm s} : [0,1] \to X\setminus \mathcal D$, $\gamma_{\rm D} : [0,1] \to \mathcal D$) 
		joining $u_{\rm d}(x)$ to $u'_{\rm d}(I^{\rm t}_{\rm d}(x))$
		(resp. $u_{\rm s}(x)$ to $u'_{\rm s}(I^{\rm t}_{\rm s}(x))$, 
		$u_{\rm D}(x)$ to $u'_{\rm D}(I^{\rm t}_{\rm D}(x))$.)\footnote{
		Here the geodesics are defined with respect to the metric $g$ on $X\setminus \mathcal D$  
		and the metric $g'$ on $\mathcal D$.} We require that the vector $(d\gamma_{\rm d}/dt)(0)$
		(resp. $(d\gamma_{\rm s}/dt)(0)$, $(d\gamma_{\rm D}/dt)(0)$)
		is perpendicular to the image of $u_{\rm d}$ 
		(resp. $u_{\rm s}$, $u_{\rm D}$) at $t=0$.
	\end{enumerate}
	Here ${\rm t}$ in $I^{\rm t}_{\rm d}$, $I^{\rm t}_{\rm s}(x)$ and $I^{\rm t}_{\rm D}(x)$ stands for target. 
\end{definition}

We fix a unitary connection on $T (X\setminus \mathcal D)$, whose restriction to $\frak U$ is given by the direct sum of the trivial connection on $\underline \bbC$ and a unitary connection on $T\mathcal D$. In particular, this connection is invariant with respect to the partial $\bbC_*$-action. The parallel transport along the geodesics $\gamma_{\rm d}$, $\gamma_{\rm s}$ with respect to this unitary connection induces complex linear maps:
\[
  T_{u_{\rm d}(x)}X \to T_{u'_{\rm d}(I^{\rm t}_{\rm d}(x))}X,
  \quad
  T_{u_{\rm s}(x)}X \to T_{u'_{\rm s}(I^{\rm t}_{\rm s}(x))}X.
\]
We thus obtain bundle maps:
\[
  u_{\rm d}^*TX \to (u'_{\rm d}\circ I^{\rm t}_{\rm d})^*TX, \quad
  u_{\rm s}^*TX \to (u'_{\rm s}\circ I^{\rm t}_{\rm s})^*TX.
\]
By differentiating and projecting to the $(0,1)$ part, we also obtain bundle maps:
\[
  d^{0,1}I^{\rm t}_{\rm d} : \Lambda^{0,1} \to (I^{\rm t}_{\rm d})^* \Lambda^{0,1},\quad 
  d^{0,1}I^{\rm t}_{\rm s} : \Lambda^{0,1} \to (I^{\rm t}_{\rm s})^*\Lambda^{0,1}.
\]
We may assume that these maps are isomorphisms by choosing $\epsilon$ to be small enough. Taking tensor product gives rise to the maps:
\begin{align*}
&u_{\rm d}^*TX\otimes \Lambda^{0,1} \to (u'_{\rm d}\circ I^{\rm t}_{\rm d})^*TX\otimes (I^{\rm t}_{\rm d})^*\Lambda^{0,1}, \\
&u_{\rm s}^*TX\otimes \Lambda^{0,1} \to  (u'_{\rm s}\circ I^{\rm t}_{\rm s})^*TX\otimes (I^{\rm t}_{\rm s})^*\Lambda^{0,1}.
\end{align*}
which induce linear maps:
\begin{align}
\mathcal{PAL} : 
&L^2_m({\rm Supp}E_{\rm d};u_{\rm d}^*TX \otimes \Lambda^{0,1})
\to
L^2_{m,loc}(\Sigma_{\rm d} \setminus \{z_{\rm d}\};(u_{\rm d}')^*TX \otimes \Lambda^{0,1}), \label{PAL1}\\
\mathcal{PAL} : 
&L^2_m({\rm Supp}E_{\rm s};u_{\rm s}^*TX \otimes \Lambda^{0,1})
\to
L^2_{m,loc}(\Sigma_{\rm s} \setminus \{z_{\rm s}\};(u_{\rm s}')^*TX \otimes \Lambda^{0,1}).\label{PAL2}
\end{align}

We now define
\begin{equation}\label{newform620}
	E_{\rm d}(u'_{\rm d})= \mathcal{PAL}(E_{\rm d}),\quad E_{\rm s}(u'_{\rm s})=\mathcal{PAL}(E_{\rm s}).
\end{equation}
\begin{remark}
	Since $\Sigma_{\rm d}$, $\Sigma_{\rm s}$ and $\Sigma_{\rm D}$ with the added marked points are stable, 
	we can use the identity map instead of $I_{\rm d}^{\rm t}$, $I_{\rm s}^{\rm t}$ and $I_{\rm D}^{\rm t}$. 	
	(Note that adding marked points is essential, otherwise we need to fix representatives for the components $\Sigma_{\rm s}$, $\Sigma_{\rm D}$, and then the identity maps depend on these representatives.)
	This approach of using the identity map after adding marked points is employed in \cite{FO,fooobook2} and many other places in the literature. 
	We call our choice here the {\it target parallel transportation}. (A similar method is used 
	in \cite[page 250, Condition 4.3.27]{foooast}.)
	This method works better for the construction of \cite{part3:FH}.  The advantage of 
	target parallel transport lies in the fact that it is more canonical and independent of the choice of 
	domain coordinate. This fact is useful to obtain a system of Kuranishi structures which are compatible at the 
	boundary and corners.
	For example, the maps $I_{\rm d}^{\rm t}$, 
	$I_{\rm S}^{\rm t}$, $I_{\rm D}^{\rm t}$  do {\it not} change when we slightly 
	perturb added marked points or transversals.
\end{remark}

We now define (Kuranishi) neighborhoods of $u_{\rm d}$, $u_{\rm s}$, $u_{\rm D}$
as follows.

\begin{definition}
	We denote by $\mathcal U_{\rm d}$  (resp. $\mathcal U_{\rm s}$) the set of all $L^2_{m+1,loc}$
	maps $u'_{\rm d} : (\Sigma_{\rm d} \setminus \{z_{\rm d}\},\partial \Sigma_{\rm d}) \to (X \setminus \mathcal D,L)$
	(resp.  $u_{\rm s}' : \Sigma_{\rm s}\setminus \{z_{\rm s}\} \to X\setminus\mathcal D$) with the following properties:
	\begin{enumerate}
	\item The $C^2$-distance between $u_{\rm d}$ and $u'_{\rm d}$ (resp. $u_{\rm s}$ and $u'_{\rm s}$) is less than
		$\epsilon$.
	\item The equation 
		\[
		  \overline \partial u_{\rm d}' \in E_{\rm d}(u'_{\rm d}),\quad \text{(resp. $\overline \partial u_{\rm s}' \in E_{\rm s}(u'_{\rm s})$)}
		\]
		is satisfied.
	\item There exists $p \in \mathcal D$ such that
		\[
		  \lim_{x \to z_{\rm d}} u_{\rm d}'(x) = p,\quad \text{(resp. $\lim_{x \to z_{\rm s}} u_{\rm s}'(x) = p$)}.
		\]
	\item In the latter case,  $u_{\rm s}'(w_{{\rm s},1}) \in \mathcal N_{{\rm s},1}$ 
	and $u_{\rm s}'(w_{{\rm s},2}) \in \mathcal N_{{\rm s},2}$.
	\end{enumerate}
	We define $\mathcal U^+_{\rm s}$ to be the set of maps $u_{\rm s}'$ satisfying (1), (2) and (3), but not necessarily (4).
\end{definition} 
Note that standard regularity results imply that elements of $\mathcal U_{\rm d}$ and $\mathcal U_{\rm s}$ are smooth.

In the same way as in the case of $u'_{\rm d}$, $u'_{\rm s}$, for $u'_{\rm D} : \Sigma_{\rm D} \to \mathcal D$  with $d(u_{\rm D}(x),u'_{\rm D}(x)) \le \epsilon$, we define: 
\begin{equation}\label{PAL-D}
	\mathcal{PAL} : L^2_m({\rm Supp}E_{\rm D};u_{\rm D}^*T\mathcal D \otimes \Lambda^{0,1})\to L^2_m(\Sigma_{\rm D};(u_{\rm D}')^*T\mathcal D \otimes \Lambda^{0,1})
\end{equation}
using the map $I^{\rm t}_{\rm D}$ and parallel transport with respect to the chosen unitary connection on $T\mathcal D$. 
We also define:
\begin{equation}\label{formula621}
	E_{\rm D}(u'_{\rm D})= \mathcal{PAL}(E_{\rm D}).
\end{equation}

\begin{definition}
	We denote by $\mathcal U_{\rm D}$ the set of $L^2_m$ maps $u_{\rm D}' : \Sigma_{\rm D} \to \mathcal D$ 
	with the following properties:
	\begin{enumerate}
		\item The $C^2$-distance between $u_{\rm D}$ and $u'_{\rm D}$ is less than $\epsilon$.
		\item The equation 
			\begin{equation}
				\overline \partial u_{\rm D}' \in E_{\rm D}(u'_{\rm D})
			\end{equation}
			is satisfied.
		\item $u_{\rm D}'(w_{{\rm D}}) \in \mathcal N_{{\rm D}}$.
	\end{enumerate}
	We define $\mathcal U^+_{\rm D}$ to be the set of maps $u_{\rm D}'$ satisfying (1) and (2), but not necessarily (3).
\end{definition}
We define maps:
\[
  {\rm ev}_{\rm d} : \mathcal U_{\rm d} \to \mathcal D, \quad{\rm ev}_{\rm s} : \mathcal U_{\rm s} \to \mathcal D,  \quad({\rm ev}_{\rm D,d},{\rm ev}_{\rm D,s}) : 
  \mathcal U_{\rm D} \to \mathcal D \times \mathcal D,
\]
by
 \begin{align*}
  &{\rm ev}_{\rm d}(u_{\rm d}') := u_{\rm d}'(z_{\rm d}),\quad {\rm ev}_{\rm s}(u_{\rm s}') := u_{\rm s}'(z_{\rm s}), \\
  &{\rm ev}_{\rm d}(u_{\rm D}') := u_{\rm D}'(z_{\rm d}),\quad{\rm ev}_{\rm s}(u_{\rm D}') := u_{\rm D}'(z_{\rm s}).
\end{align*}
We summarize their properties as follows.
\begin{lemma}\label{smooth-nbhd-comp}
	If $\epsilon$ is small enough, then we have:
	\begin{enumerate}
		\item $\mathcal U_{\rm d}$, $\mathcal U_{\rm D}$, $\mathcal U_{\rm s}$ are smooth manifolds.
		\item The maps  ${\rm ev}_{\rm d}$, ${\rm ev}_{\rm D,d}$, ${\rm ev}_{\rm D,s}$, ${\rm ev}_{\rm s}$ are smooth.
		\item The fiber product 
			\begin{equation}\label{form6210}
				\mathcal U_{\rm d} \,\,{}_{{\rm ev}_{\rm d}}\times_{{\rm ev}_{\rm D,d}}  \mathcal U_{\rm D}\,\,{}_{{\rm ev}_{\rm D,s}}\times_{{\rm ev}_{\rm s}}\mathcal U_{\rm s}
			\end{equation}
			is transversal.
	\end{enumerate}
\end{lemma}
\begin{proof}
	Part (1) is a consequence of the implicit function theorem using the assumptions in Definition \ref{defn6868}.
	Part (2) follows from the way we set up Fredholm theory.
	Part (3) follows from the surjectivity of the map \eqref{form61616161}.
\end{proof}
The fiber product \eqref{form6210} describes a Kuranishi neighborhood of any element  $[\Sigma,z_0,u]$ of the stratum of $\mathcal M^{\rm RGW}_1(L;\beta)$, consisting of objects with the combinatorial data given in Section \ref{subsec:gluing1}. Next, we include the gluing construction and construct a Kuranishi neighborhood of $[\Sigma,z_0,u]$ in the moduli space $\mathcal M^{\rm RGW}_1(L;\beta)$. Let $D^2$ be the unit disk in the complex plane and $D^2(r)$ denote $r\cdot D^2$. We fix coordinate charts:
\begin{align}\label{newform624}
  \phantom{i + j + k}
  &\begin{aligned}
   &\varphi_{\rm d} : {\rm Int}(D^2) \to \Sigma_{\rm d}, \quad &\varphi_{\rm D,\rm d} : {\rm Int}(D^2) \to \Sigma_{\rm D}, \\
      &\varphi_{\rm D,\rm s} : {\rm Int}(D^2) \to \Sigma_{\rm D}, \quad&\varphi_{\rm s} : {\rm Int}(D^2) \to \Sigma_{\rm s},
  \end{aligned}
\end{align}
which are bi-holomorphic maps onto the image and $\varphi_{\rm d}(0) = z_{\rm d}$, $\varphi_{\rm D,\rm d}(0) = z_{\rm D,\rm d}$, $\varphi_{\rm D,\rm s}(0) = z_{\rm D,\rm s}$, $\varphi_{\rm s}(0) = z_{\rm s}$. We assume that the marked points $w_{\rm D}$, $w_{{\rm s},i}$ do not belong to the image of the above coordinate charts. 
Moreover we assume that the image of $\varphi_{\rm D,\rm d}$ is disjoint from 
the image of $\varphi_{\rm D,\rm s}$.

For $\sigma_1,\sigma_2 \in D^2\backslash \{0\}$, we form the disk $\Sigma(\sigma_1,\sigma_2)$ as follows. 
Consider the disjoint union:
\begin{equation}\label{form622}
\aligned
(\Sigma_{\rm d} \setminus \varphi_{\rm d}(D^2(\vert\sigma_1\vert)))
&\sqcup
(\Sigma_{\rm D} \setminus (\varphi_{\rm D,\rm d}(D^2(\vert\sigma_1\vert)) 
\cup \varphi_{\rm D,\rm s}(D^2(\vert\sigma_2\vert))) \\
&\sqcup(\Sigma_{\rm s} \setminus \varphi_{\rm s}(D^2(\vert\sigma_2\vert))).
\endaligned
\end{equation}
and define the equivalence relation $\sim$ on \eqref{form622} as follows:
\begin{enumerate}
	\item[(gl-i)] If $z_1z_2 = \sigma_1$, $z_1,z_2 \in D^2$, then $\varphi_{\rm d}(z_1) \sim \varphi_{\rm D,\rm d}(z_2)$.
	\item[(gl-ii)] If $z_1z_2 = \sigma_2$, $z_1,z_2 \in D^2$, then $\varphi_{\rm s}(z_1) \sim \varphi_{\rm D,\rm s}(z_2)$.
\end{enumerate}
Then $\Sigma(\sigma_1,\sigma_2)$ is the quotient space of (\ref{form622}) by this equivalence relation. 
See Figure \ref{Figuresec6-3} below. The above definition can be extended to the case that $\sigma_1$ or $\sigma_2$ vanishes. For example, if $\sigma_2=0$, then \eqref{form622} is replaced with:
\begin{equation}\label{form6220}
	(\Sigma_{\rm d} \setminus \varphi_{\rm d}(D^2(\vert\sigma_1\vert))) \sqcup (\Sigma_{\rm D} \setminus \varphi_{\rm D,\rm d}(D^2(\vert\sigma_1\vert))) \sqcup
	\Sigma_{\rm s} .
\end{equation}
 where we use the identification in {\rm (gl-i)}, and the identification in {\rm (gl-ii)} is replaced with $\varphi_{\rm s}(0) \sim \varphi_{\rm D,\rm s}(0)$.
\begin{figure}[h]
\centering
\includegraphics[scale=0.6]{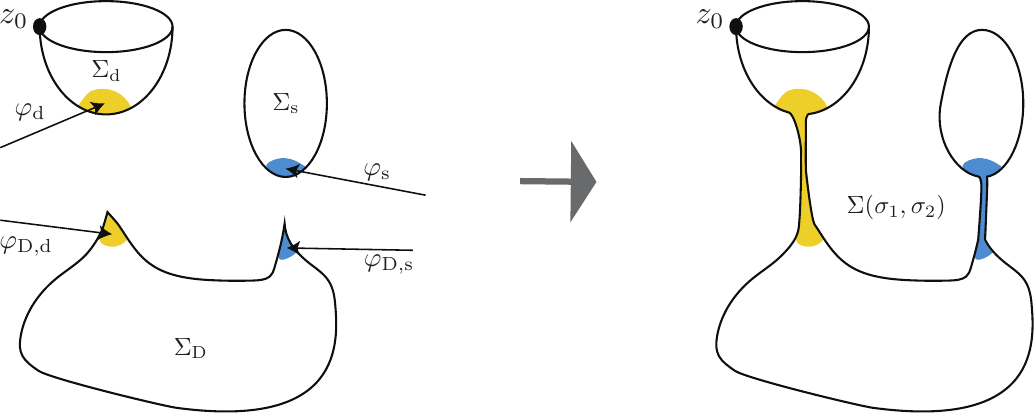}
\caption{$\Sigma(\sigma_1,\sigma_2)$}
\label{Figuresec6-3}
\end{figure}

We also define:
\begin{equation}\label{metaform626}
	\aligned
	\Sigma_{\rm d}(\sigma_1) &= \Sigma_{\rm d} \setminus \varphi_{\rm d}(D^2(\vert\sigma_1\vert)), \\
	\Sigma_{\rm s}(\sigma_2) &= \Sigma_{\rm s} \setminus \varphi_{\rm s}(D^2(\vert\sigma_2\vert)), \\
	\Sigma_{\rm D}(\sigma_1,\sigma_2)
	&=\Sigma_{\rm D} \setminus (\varphi_{\rm D,\rm d}(D^2(\vert\sigma_1\vert)) \cup \varphi_{\rm D,\rm s}(D^2(\vert\sigma_2\vert))).
	\endaligned
\end{equation}
By construction, there exist bi-holomorphic embeddings:
\begin{equation}
	\aligned
	&I_{\rm d} : \Sigma_{\rm d}(\sigma_1) \to \Sigma(\sigma_1,\sigma_2), \quad
	I_{\rm s} : \Sigma_{\rm s}(\sigma_2) \to \Sigma(\sigma_1,\sigma_2), \\
	&I_{\rm D} : \Sigma_{\rm D}(\sigma_1,\sigma_2) \to \Sigma(\sigma_1,\sigma_2).
	\endaligned
\end{equation}
Let $u'_{\rm d} : \Sigma_{\rm d}(\sigma_1) \to X \setminus \mathcal D$, $u'_{\rm s} : \Sigma_{\rm s}(\sigma_2) \to X \setminus \mathcal D$, $U'_{\rm D} : \Sigma_{\rm D}(\sigma_1,\sigma_2) \to \frak U\subset X \setminus \mathcal D$
be $L^2_{m+1}$ maps such that $u_{\rm d}'$, $u_{\rm s}'$, $u'_{\rm D}:=\pi \circ U'_{\rm D}$ are close to the restrictions of $u_{\rm d}$, $u_{\rm s}$, $u_{\rm D}$ in the same sense as in \eqref{form617}. We define:
\begin{align}\label{Ed-s}
  \phantom{i + j + k}
  &\begin{aligned}
   &E_{\rm d}(u'_{\rm d}) \subset L^2_m(\Sigma_{\rm d}(\sigma_1);(u_{\rm d}')^*TX \otimes \Lambda^{0,1}) \\
&E_{\rm s}(u'_{\rm s}) \subset L^2_m(\Sigma_{\rm s}(\sigma_2);(u_{\rm s}')^*TX \otimes \Lambda^{0,1})
  \end{aligned}
\end{align}
similar to \eqref{newform620}, using target parallel transportations. Next, we define:
\begin{equation}
	E_{\rm D}( U'_{\rm D}) \subset L^2_m(\Sigma_{\rm D}(\sigma_1,\sigma_2);(U_{\rm D}')^*TX \otimes \Lambda^{0,1})
\end{equation}
as follows.
Since $u'_{\rm D}$ is close to $u_{\rm D}$, we can use the same construction as in \eqref{formula621} to define:
\[
  E'_{\rm D}(u'_{\rm D})\subset L^2_m(\Sigma_{\rm D}; (u_{\rm D}')^*T\mathcal D \otimes \Lambda^{0,1})
\]
Then the decomposition in \eqref{decom-tan-bdle} allows us to define:
\begin{equation}\label{ED}
E_{\rm D}(U'_{\rm D})
\subset
L^2_m(\Sigma_{\rm D};(U_{\rm D}')^*TX \otimes \Lambda^{0,1}).
\end{equation}
By construction, we have isomorphisms
\begin{equation}\label{form63000}
	\mathcal P_{\rm d} : E_{\rm d}(u'_{\rm d}) \to E_{\rm d}, \hspace{.5cm}
	\mathcal P_{\rm s} : E_{\rm s}(u'_{\rm s}) \to E_{\rm s}, \hspace{.5cm}
	\mathcal P_{\rm D} : E_{\rm D}(u'_{\rm D}) \to E_{\rm D}.
\end{equation}

Recall that we fixed a codimension 2 submanifold $\mathcal N_{\rm D} \subset \mathcal D$.
We define $\widehat{\mathcal N}_{\rm D} \subset X$ to be its inverse image in the tubular neighborhood $\frak U$ of $\mathcal D$ in
$X$ by the projection map $\pi$. In the following definition $\epsilon$ is the same constant as in Lemma \ref{smooth-nbhd-comp}. We may make this constant smaller as we move through the paper whenever it is necessary.
\begin{definition}\label{defnn614}
	We denote by $\mathcal U_0$ the set of all triples $(u',\sigma_1,\sigma_2)$ where 
	$\sigma_1, \sigma_2 \in D^2(\epsilon)$.
	In the case that $\sigma_1$ and $\sigma_2$ are non-zero, $(u',\sigma_1,\sigma_2)$ 
	needs to satisfy the following properties:
\begin{enumerate}
	\item $u' : \Sigma(\sigma_1,\sigma_2) \to X\backslash \mathcal D$ is a smooth map.
	\item Let: 
	\[
	u'_{\rm d} := u' \circ I_{\rm d},\quad u'_{\rm s} := u' \circ I_{\rm s},\quad U'_{\rm D} = u' \circ I_{\rm D}.
	\]
	Then the $C^2$ distance of $u'_{\rm d}$ (resp. $u'_{\rm s}$) with the restriction of $u_{\rm d}$ (resp. $u_{\rm s}$) to $\Sigma_{\rm d}(\sigma_1)$ (resp. $\Sigma_{\rm s}(\sigma_2)$) 
	is less than $\epsilon$.
	The maps $U'_{\rm D}$ and $U_{\rm D}$ are also $C^2$-close to each other in the sense 
	that the image of $U_{\rm D}'$ is contained 
	in the open set $\frak U$ and there is a constant $r$ such that the $C^2$ distance of $U'_{\rm D}$ and ${\rm Dil}_r\circ U_{\rm D}$, restricted to $\Sigma_{\rm D}(\sigma_1,\sigma_2)$, 
	is less than $\epsilon$.\footnote{
	The $C^2$ distance in part (2) of the definition are defined with respect to the metric $g$ on $X\backslash \mathcal D$ and the metric on 
	$\mathcal N_{\mathcal D}(X)\setminus \mathcal D$ which has the form in \eqref{g-cylinder-end}.}
\item
	(Modified non-linear Cauchy-Riemann equation) $u'_{\rm d}$, $u'_{\rm s}$, $U'_{\rm D}$ satisfy the equations:
\begin{equation}\label{eq630}
\overline \partial u_{\rm d}' \in E_{\rm d}(u'_{\rm d}), 
\quad
\overline \partial u_{\rm s}' \in E_{\rm s}(u'_{\rm s}),
\quad
\overline \partial U_{\rm D}' \in E_{\rm D}(U'_{\rm D}).
\end{equation}
\item
(Transversal constraints) We also require:
\begin{equation}\label{eq631}
u'(w_{\rm D}) \in \widehat{\mathcal N}_{\rm D},
\quad
u'(w_{\rm s,1}) \in \mathcal N_{{\rm s},1},
\quad
u'(w_{\rm s,2}) \in {\mathcal N}_{{\rm s},2}.
\end{equation}
	Here we use $I_{\rm D}$, $I_{\rm s}$ to regard $w_{\rm D}$, $w_{{\rm s},i}$ as elements of $\Sigma(\sigma_1,\sigma_2)$.
	In the case that one of the constants $\sigma_1$ and $\sigma_2$ vanishes, the other one is also zero,
	and $u'$ is an element of the fiber product \eqref{form6210}.
\end{enumerate}
\end{definition}

One might hope that the intersection of the space $\mathcal U_0$
with $(\sigma_1,\sigma_2) = (\sigma^0_1,\sigma^0_2)$ for 
each given $(\sigma^0_1,\sigma^0_2)$ is cut down transversely by \eqref{eq630} and \eqref{eq631}, and hence the space $\mathcal U_0$ could be used to define a Kuranishi neighborhood of $[\Sigma,z_0,u]$ in  $\mathcal M^{\rm RGW}_1(L;\beta)$. However, this naive expectation does not hold. Roughly speaking, if that would hold, then one should obtain a solution for any element of the fiber product \eqref{form6210} close to $[\Sigma,z_0,u]$ and any small values of $\sigma_1$, $\sigma_2$. On the other hand, as a consequence of \cite[Remark 4.69]{part1:top},
the stratum in \eqref{form6210} has real codimension $2$ in our case, which is a contradiction. Note that this is in contrast with the stable map compactification, where a fiber product of the form \eqref{form6210} has codimension $4$. To resolve this issue, we introduce a space $\mathcal U$ larger than $\mathcal U_0$ such that $\mathcal U$ is a smooth manifold and $\mathcal U_0$ is cut out from $\mathcal U$ by an equation of the following form:
\begin{equation}\label{equation631}
	\sigma_1^{p_1} = c\sigma_2^{p_2}.
\end{equation}
Here $c$ is a complex valued function on $\mathcal U$ which never vanishes.
The space $\mathcal U$ is realized as the moduli space of {\it inconsistent solutions}, which will be defined in the next section.
Note that the set of solutions of \eqref{equation631} has a singularity at the locus $\sigma_1 = \sigma_2 = 0$. 

\section{Inconsistent Solutions and the Main Analytical Result}
\label{sub:statement}

In this section, we discuss the main step where the construction of the Kuranishi chart in our situation is different from the case of the stable map compactification. 
\begin{definition}\label{defn615inconsis}
	For $\sigma_1,\sigma_2\in D^2(\epsilon)$, an {\it inconsistent solution} is a 7-tuple \[(u_{\rm d}',u_{\rm s}',U_{\rm D}',\sigma_1,\sigma_2,\rho_1,\rho_2)\] 
	satisfying the following properties.
	\begin{enumerate}
		\item $u_{\rm d}' : \Sigma_{\rm d}(\sigma_1) \to X \setminus \mathcal D$, $u_{\rm s}' : \Sigma_{\rm s}(\sigma_2) \to X \setminus \mathcal D$,
			$U_{\rm D}' : \Sigma_{\rm D}(\sigma_1,\sigma_2) \to \mathcal N_{\mathcal D}(X)\setminus \mathcal D$. 
			The $C^2$ distances of $u_{\rm d}'$, $u_{\rm s}'$ and $U_{\rm D}'$ with $u_{\rm d}$, $u_{\rm s}$ and $U_{\rm D}$ are less than $\epsilon$.\footnote{
			Here we use the same convention as in Definition \ref{defnn614} to defined the $C^2$-distances.}
		\item The following equations are satisfied:
			\begin{equation}\label{eq630-def}
				\overline \partial u_{\rm d}' \in E_{\rm d}(u'_{\rm d}), \quad\overline \partial u_{\rm s}' \in E_{\rm s}(u'_{\rm s}),\quad\overline \partial U_{\rm D}' \in E_{\rm D}( U'_{\rm D}).
			\end{equation}
			Here $E_{\rm d}(u'_{\rm d})$, $E_{\rm d}(u'_{\rm s})$ and $E_{\rm d}(U'_{\rm D})$ are defined as in \eqref{Ed-s} and \eqref{ED} using target parallel transport.
		\item We require the following transversal constraints: 
			\begin{equation}\label{eq631-p}
				\pi \circ U_{\rm D}'(w_{\rm D}) \in\mathcal N_{\rm D},\quad 
				u_{\rm s}'(w_{\rm s,1}) \in \mathcal N_{{\rm s},1},\quad u_{\rm s}'(w_{\rm s,2}) \in {\mathcal N}_{{\rm s},2}.
			\end{equation}
		\item Let $z_1,z_2 \in D^2$.
			\begin{enumerate}
			\item If $z_1z_2 = \sigma_1$, then:
			\[
			  u_{\rm d}'(\varphi_{\rm d}(z_1)) =({\rm Dil}_{\rho_1} \circ U_{\rm D}')(\varphi_{\rm D,1}(z_2)).
			\]
			In particular, we assume that the left hand side is contained in the open neighborhood $\frak U$ of $\mathcal D$.
			\item If $z_1z_2 = \sigma_2$, then:
			\[
			   u_{\rm s}'(\varphi_{\rm s}(z_1)) =  ({\rm Dil}_{\rho_2} \circ U_{\rm D}')(\varphi_{\rm D,2}(z_2)).
			\]
			\end{enumerate}
	\end{enumerate}
	We say two inconsistent solutions $(u^{(j)}_{\rm s},u^{(j)}_{\rm d},U^{(j)}_{\rm D},\sigma^{(j)}_1,\sigma^{(j)}_2,\rho^{(j)}_1,\rho^{(j)}_2)$, 
	$j=1,2$, are {\it equivalent} if the following holds:
	\begin{enumerate}
	\item[(i)] $u^{(1)}_{\rm d} =u^{(2)}_{\rm d}$, $u^{(1)}_{\rm s} =u^{(2)}_{\rm s}$, $\sigma^{(1)}_1 = \sigma^{(2)}_1$, $\sigma^{(1)}_2 = \sigma^{(2)}_2$.
	\item[(ii)] There exists a nonzero complex number $c$ such that:
		\[
		  U^{(2)}_{\rm D} =  {\rm Dil}_{1/c} \circ U^{(1)}_{\rm D},\qquad \rho^{(2)}_1 = c\rho^{(1)}_1, \qquad\rho^{(2)}_2 = c\rho^{(1)}_2.
		\]
	\end{enumerate}
	We will write $\mathcal U$ for the set of all equivalence classes of inconsistent solutions.
\end{definition}
\begin{remark}
	In the above definition, we include the case that $\sigma_1$ or $\sigma_2$ is $0$ in the following way:
	\begin{enumerate}
	\item If $\sigma_1 = 0$ (resp. $\sigma_2 = 0$), then the condition (4) (a) (resp. (b)) is replaced by the condition that  
	$u_{\rm d}'(\varphi_{\rm d}(0)) = \pi \circ U_{\rm D}'(\varphi_{\rm D,1}(0))$ 
	(resp. $u_{\rm s}'(\varphi_{\rm s}(0)) =\pi \circ  U_{\rm D}'(\varphi_{\rm D,1}(0))$);
	\item If $\sigma_1 = 0$ (resp. $\sigma_2 = 0$), then $\rho_1 = 0$ (resp. $\rho_2 = 0$).
	\end{enumerate}
	In the case that exactly one of $\sigma_1$ and $\sigma_2$ is zero, 
	the source curve $\Sigma(\sigma_1,\sigma_2)$ has only one node.
	Such source curves do not appear in $\mathcal U_0$.
	However, there are elements of this form in $\mathcal U$.
\end{remark}
Below we state our main analytic results about $\mathcal U$:
\begin{prop}\label{prop617}
	If $\epsilon$ is small enough, then the moduli space $\mathcal U$ is a smooth manifold diffeomorphic to\footnote{See Remark \ref{rem619} for the 
	definition of the smooth structure of $D^2(\epsilon)$.}: 
	\begin{equation}\label{form633}
		(\mathcal U_{\rm d} \,\,{}_{{\rm ev}_{\rm d}}\times_{{\rm ev}_{\rm D,d}}  \mathcal U_{\rm D}
		\,\,{}_{{\rm ev}_{\rm D,s}}\times_{{\rm ev}_{\rm s}}\mathcal U_{\rm s})
		\times D^2(\epsilon) \times D^2(\epsilon).
	\end{equation}
	The diffeomorphism has the following properties:
	\begin{enumerate}
	\item This diffeomorphism identifies the projection to the factor $D^2(\epsilon) \times D^2(\epsilon)$ with:
		\[
		  [u'_{\rm s},u'_{\rm d},u'_{\rm D},\sigma_1,\sigma_2,\rho_1,\rho_2] \mapsto (\sigma_1,\sigma_2).
		\]
	\item There exists $\hat\rho_i : \mathcal U \to \bbC$ such that  
		any element $q$ of $\mathcal U$ has a representative whose $\rho_i$ component 
		is equal to $\hat\rho_i(q)$. The functions $\hat\rho_i$ are smooth. Moreover, there exists a homeomorphism:
		\begin{equation}\label{rho1rho2hito}
			\mathcal U_0 \cong \{ \frak y \in \mathcal U \mid \hat\rho_1(\frak y) = \hat\rho_2(\frak y)\}.
		\end{equation}
		This homeomorphism is given as follows. Let:
		\[
		  \frak y = [u'_{\rm d},u'_{\rm D},u'_{\rm s},\sigma_1,\sigma_2,\hat\rho_1,\hat\rho_2]
		\]
		be an element of the right hand side of \eqref{rho1rho2hito} with $c = \hat\rho_1 = \hat\rho_2$.
		Then we can glue the three maps $u'_{\rm d},u'_{\rm s},{\rm Dil}_{c}\circ U'_{\rm D}$ as in {\rm (gl-i),(gl-ii)} to obtain 
		a map $u' : \Sigma(\sigma_1,\sigma_2) \to X$. This gives the desired element of the left hand side of \eqref{rho1rho2hito}.
\end{enumerate}
\end{prop}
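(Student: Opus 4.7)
The plan is to construct the diffeomorphism by a pre-gluing plus Newton's iteration argument adapted to the enlarged parameter space including $(\rho_1,\rho_2)$. Given a representative $(u'_{\rm d}, u'_{\rm D}, u'_{\rm s})$ of an element of the fiber product in \eqref{form6210}, a choice of lift $U'_{\rm D}$ of $u'_{\rm D}$ into $\mathcal N_{\mathcal D}(X)\setminus \mathcal D$ (which is determined up to the $\C_*$-action), and gluing parameters $(\sigma_1,\sigma_2)\in D^2(\epsilon)\times D^2(\epsilon)$, I would first produce an approximate inconsistent solution by cutting off the maps on the neck regions $\Sigma_{\rm d}(\sigma_1), \Sigma_{\rm s}(\sigma_2), \Sigma_{\rm D}(\sigma_1,\sigma_2)$ and pasting them via the identifications \text{(gl-i),(gl-ii)}. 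The key observation is that the matching conditions (4)(a) and (4)(b) in Definition \ref{defn615inconsis} are not tautologically satisfied on the approximate solution; the discrepancy defines auxiliary parameters $\rho_1^{\rm app}$ and $\rho_2^{\rm app}$ \emph{independently} of each other, which is precisely what distinguishes the inconsistent setup from $\mathcal U_0$.

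Next, I would linearize the three modified Cauchy-Riemann equations \eqref{eq630-def} together with the transversal constraints \eqref{eq631-p} and the independent matching equations around the pre-glued map. By Lemma \ref{lem6363} and its analogues in Sections 3.2 and 3.3, and by Lemma \ref{smooth-nbhd-comp}, the linearization on the three components is surjective after including the obstruction spaces $E_{\rm d}, E_{\rm s}, E_{\rm D}$ and the fiber-product evaluations are transversal. The weighted Sobolev norms in Definitions \ref{defn6262}, \ref{defn64444} and \ref{defn66666} are chosen precisely so that the asymptotic operators match across the cylindrical necks; consequently a standard right-inverse gluing argument (of the Floer--Hofer--Salamon / FOOO type) produces a uniformly bounded right inverse for the linearized operator at the pre-glued map. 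Applying the implicit function theorem then yields a unique (modulo the $\C_*$-equivalence of Definition \ref{defn615inconsis}) inconsistent solution depending smoothly on the input data; normalizing the $\C_*$-action by, for instance, fixing $|\rho_1|^2+|\rho_2|^2$ up to phase and declaring the resulting values to be $(\hat\rho_1,\hat\rho_2)$, gives smooth functions on $\mathcal U$.

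The hard part will be the analytic estimates underlying the implicit function theorem: I must establish uniform invertibility of the linearized operator as $(\sigma_1,\sigma_2)\to 0$, control the nonlinear error on the neck regions where the pre-gluing is inaccurate (using the cylindrical metric $g$ on $X\setminus\mathcal D$ and the exponential decay \eqref{form62}, \eqref{form62rev}), and verify that the additional constraints coming from the dilation parameters $\rho_1,\rho_2$ do not destroy the Fredholm picture — this is where the passage from $u_{\rm D}$ to $U_{\rm D}$ through the short exact sequence preceding \eqref{diagram} is essential, because the cokernel of the linearization is unchanged while the kernel gains the $\C$-factor that accommodates the dilation freedom. These analytical results are the content of Section \ref{sub:proofmain}, and can be established in parallel with the exponential-decay input of \cite{foooexp}.

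Finally, for properties (1) and (2): property (1) follows immediately from the construction, since the gluing parameters $(\sigma_1,\sigma_2)$ are part of the input data and pass through the implicit function theorem unchanged. For property (2), when $\hat\rho_1(\frak y)=\hat\rho_2(\frak y)=c$, replacing $U'_{\rm D}$ by ${\rm Dil}_c\circ U'_{\rm D}$ makes the matching conditions (4)(a) and (4)(b) coincide with the two gluing identifications \text{(gl-i), (gl-ii)} of \eqref{form622}, and the three maps can be spliced to produce $u':\Sigma(\sigma_1,\sigma_2)\to X$ satisfying Definition \ref{defnn614}; conversely, any element of $\mathcal U_0$ determines via restriction and $I_{\rm d},I_{\rm s},I_{\rm D}$ an inconsistent solution with $\hat\rho_1=\hat\rho_2$. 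The continuity at $\sigma_1\sigma_2=0$ follows by extending the pre-gluing construction to the degenerate case as in the Remark following Definition \ref{defn615inconsis}.
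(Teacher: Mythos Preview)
Your proposal is correct and follows essentially the same approach as the paper: pre-gluing to an approximate inconsistent map, Newton's iteration with a glued right inverse, and the $\C_*$-freedom absorbing the $\R\oplus\R$ summand of the asymptotic kernel so that the Bott--Morse gluing goes through. Two inessential differences are worth noting: the paper first carries out the gluing over the larger fiber product $\mathcal U_{\rm d}\times_{\mathcal D}\mathcal U^+_{\rm D}\times_{\mathcal D}\mathcal U^+_{\rm s}$ (dropping the transversal constraints \eqref{eq631-p} at the auxiliary marked points) and only afterward cuts down to $\mathcal U$ via an evaluation map ${\rm EVw}_{\sigma_1,\sigma_2}$ into the transversals $\mathcal N_{\rm D}\times\mathcal N_{{\rm s},1}\times\mathcal N_{{\rm s},2}$, and it fixes the $\C_*$-gauge concretely by declaring $\hat\rho_1=\sigma_1^2$ rather than your vaguer $|\rho_1|^2+|\rho_2|^2$ normalization (which, as stated, only removes the modulus and would still need a phase choice).
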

\begin{remark}
	We can take our diffeomorphism so that its restriction to 
	$(\mathcal U_{\rm d} \,\,{}_{{\rm ev}_{\rm d}}\times_{{\rm ev}_{\rm D,d}}  \mathcal U_{\rm D}
	\,\,{}_{{\rm ev}_{\rm D,s}}\times_{{\rm ev}_{\rm s}}\mathcal U_{\rm s})\times \{(0,0)\}$
	is the obvious one. We can also specify the choice of $\hat\rho_i$ in (2) above by requiring
	\begin{equation}\label{formula634}
		\hat\rho_1 = \sigma_1^{p_1}.
	\end{equation}
	From now on, we will take this choice unless otherwise mentioned explicitly. The proof we will give implies that:
	\[
	  \hat{\rho}_2(\frak y) = f(\frak y) \sigma_2^{p_2},
	\]
	where $f$ is a nonzero smooth function.
\end{remark}

The next proposition is the exponential decay estimate similar to those in the case of the stable map compactification. (See \cite{foooexp} for the detail of the proof of this exponential decay estimate in the case of the stable map compactification.) To state our exponential decay estimate, we need to introduce some notations.
We define $T_i \in [0,\infty)$, $\theta_i \in \bbR/2\pi\sqrt{-1}\bbZ$ by the formula:
\begin{equation}\label{form635}
\sigma_i = \exp(-(T_i+\sqrt{-1}\theta_i)).
\end{equation}
The exponential decay estimate is stated in terms of $T_i$ and $\theta_i$.

Let $\xi = (u^{\xi}_{\rm d},u^{\xi}_{\rm D},u^{\xi}_{\rm s})$ be an element of the fiber product \eqref{form6210}. The triple $(\xi,\sigma_1=\exp(-(T_1+\sqrt{-1}\theta_1)),\sigma_2=\exp(-(T_2+\sqrt{-1}\theta_2)))$ determines an element of $\mathcal U$, which is denoted by:
\begin{equation}\label{form637777new}
	\aligned
	&\frak x(\xi,T_1,T_2,\theta_1,\theta_2)\\ 
	&=(u^{\xi}_{\rm d}(T_1,T_2,\theta_1,\theta_2;\cdot),u^{\xi}_{\rm D}(T_1,T_2,\theta_1,\theta_2;\cdot),u^{\xi}_{\rm s}(T_1,T_2,\theta_1,\theta_2;\cdot),\\
	&\qquad\sigma_1,\sigma_2,\rho_1(\xi,T_1,T_2,\theta_1,\theta_2),
	\rho_2(\xi,T_1,T_2,\theta_1,\theta_2))
	\endaligned
\end{equation}
Here we fix the representative by requiring (\ref{formula634}), namely, $\rho_1(\xi,T_1,T_2,\theta_1,\theta_2) = \sigma_1^2$.
Let $\frak R_2 \in [0,\infty)$, $\eta_2 \in \bbR/2\pi\bbZ$ be functions of $\xi$, $T_1,T_2,\theta_1,\theta_2$ given by
$$
\rho_2(\xi,T_1,T_2,\theta_1,\theta_2)
= \exp(-(\frak R_2+\sqrt{-1} \eta_2)).
$$
\begin{prop}\label{prop618}
	\begin{enumerate}
	\item Let $u_\circ^{\xi}$ be one of $u^{\xi}_{\rm d}$, $u^{\xi}_{\rm s}$,  $u^{\xi}_{\rm D}$. Then for any compact subset $K$ of 
		$\Sigma_{\rm s} \setminus \{z_{\rm s}\}$ (resp. $\Sigma_{\rm d} \setminus \{z_{\rm d}\}$, 
		$\Sigma_{\rm D} \setminus \{z_{\rm D,s},z_{\rm D,d}\}$) we have the following exponential decay estimates:
		\begin{equation}
			\left\Vert
			\frac{\partial^{m_1}}{\partial T_1^{m_1}}\frac{\partial^{m'_1}}{\partial \theta_1^{m'_1}}
			\frac{\partial^{m_2}}{\partial T_2^{m_2}}
			\frac{\partial^{m'_2}}{\partial \theta_2^{m'_2}}u_\circ^{\xi}\right\Vert_{L^2_{\ell}(K)}
			\le
			C \exp(-c\upsilon_1T_1 - c\upsilon_2T_2).
		\end{equation}
	Here $\upsilon_1 = 0$ if $m_1=m'_1 =0$. Otherwise $\upsilon_1 = 1$. 
	Similarly, $\upsilon_2$ equals to $0$ if $m_2=m'_2 =0$ and is equal to $1$ otherwise.
	Here $C,c$ are positive constants depending on $K$, $\ell$, $m_1,m'_1,m_2,m'_2$. 
	The same estimate holds for the derivatives of $u_\circ^{\xi}$ with respect to $\xi$.
	\item We also have the following estimates
\begin{align}\label{}
  \phantom{i + j + k}
  &\begin{aligned}
   &\left\vert \frac{\partial^{m_1}}{\partial T_1^{m_1}}\frac{\partial^{m'_1}}{\partial \theta_1^{m'_1}}\frac{\partial^{m_2}}{\partial T_2^{m_2}}
   \frac{\partial^{m'_2}}{\partial \theta_2^{m'_2}}{(\frak R_2 - p_2T_2)}\right\vert\le C \exp(-c\upsilon_1T_1 - c\upsilon_2T_2) \\
      &\left\vert \frac{\partial^{m_1}}{\partial T_1^{m_1}}\frac{\partial^{m'_1}}{\partial \theta_1^{m'_1}}\frac{\partial^{m_2}}{\partial T_2^{m_2}}
	\frac{\partial^{m'_2}}{\partial \theta_2^{m'_2}}{(\eta_2 - p_2\theta_2)}\right\vert\le C \exp(-c\upsilon_1T_1 - c\upsilon_2T_2).
  \end{aligned}
\end{align}
	Here $\upsilon_1$ and $\upsilon_2$ are defined as in the first part. 
	The same estimate holds for the derivatives of $\frak R_2$, $\eta_2$ with respect to $\xi$.
	\end{enumerate}
\end{prop}

We will discuss the proofs of Propositions \ref{prop617}, \ref{prop618} in Section \ref{sub:proofmain}.

\begin{remark}\label{rem619}
	We follow \cite[Subsection A1.4]{fooobook2}, \cite[Section 8]{foooexp}, \cite[Subsection 9.1]{fooo:const1} to use a smooth 
	structure on $D^2$ different from the standard one as follows.
	For $z \in D^2$, let $T,\theta$ be defined by the following identity:
	\[
	  z= \exp(-(T+\sqrt{-1}\theta)).
	\]
	We define a homeomorhism from a neighborhood of the origin in $D^2$ to $D^2$  as follows:
	\[
	z\mapsto w = \frac{1}{T} \exp(-\sqrt{-1}\theta).
	\]	
	We define a smooth structure on $D^2$, temporarily denoted by $D^2_{\text{new}}$, such that $z \mapsto w$ becomes a diffeomorphism from $D^2_{\text{new}}$ to $D^2$ with 
	the standard smooth structure.
	This new smooth structure $D^2_{\text{new}}$ is used to define a smooth structure on the factors $D^2$ in \eqref{form633}. 
	(We drop the term `new' from $D^2_{\text{new}}$ hereafter.)
	The Proposition \ref{prop618} implies smoothness of various maps at the origin of $D^2$ with respect to the new smooth structure. 
	See for example \cite[Lemma 22.6]{foootech}, \cite[Subsection 8.2]{foooexp}, \cite[Section 10]{fooo:const1} for further discussions related to this point.
\end{remark}

\section{Kuranishi Charts: a Special Case}
\label{sub:kuraconst0}

In this section we use Propositions \ref{prop617} and \ref{prop618} to obtain a Kuranishi chart at the point $[\Sigma,z_0,u] \in \mathcal M_1^{\rm RGW}(L;\beta)$. By definition, a Kuranishi chart of a point $p$ in a space $M$ consists of $(V_p,\Gamma_p, \mathcal E_p, \frak s_p,\psi_p)$ where $V_p$, the {\it Kuranishi neighborhood}, is a smooth manifold containing a distinguished point $\tilde p$, $\Gamma_p$, the {\it isotropy group}, is a finite group acting on $V_p$, $\mathcal E_p$, {the \it obstruction bundle} is a vector bundle over $V_p$ and $\frak s_p$, the {\it Kuranishi map}, is a section of $\mathcal E_p$ over $V$. Moreover, the action of $\Gamma_p$ at $\tilde p$ is trivial and the action of this group on $V_p$ is lifted to $\mathcal E_p$. The section $\frak s_p$ is $\Gamma_p$-equivariant and vanishes at $\tilde p$. Finally, $\psi_p$ is a homeomorphism from $\frak s_p^{-1}(0)/\Gamma_p$ to a neighborhood of $[\Sigma,z_0,u]$ in $\mathcal M_1^{\rm RGW}(L;\beta)$, which maps $\tilde p$ to $p$.

In the present case, we define the Kuranishi neighborhood to be the manifold $\mathcal U$ in Proposition \ref{prop617}, and define the isotropy group to be the trivial one. The obstruction bundle $\mathcal E$ on $\mathcal U$ is a trivial bundle whose fiber is 
\begin{equation}
E_{\rm d} \oplus E_{\rm D} \oplus E_{\rm s} \oplus \bbC.
\end{equation}
The Kuranishi map 
$$
\frak s = (\frak s_{\rm d},\frak s_{\rm D},\frak s_{\rm s},\frak s_{\rho}) : 
\mathcal U \to E_{\rm d} \oplus E_{\rm D} \oplus E_{\rm s} \oplus \bbC
$$
is defined by
\begin{equation}\label{ob-map}
\aligned
& 
\frak s_{\rm d}(\xi,T_1,T_2,\theta_1,\theta_2)
= \mathcal P_{\rm d}(\overline\partial u_{\rm d}^{\xi}(T_1,T_2,\theta_1,\theta_2;\cdot)) \\
&\frak s_{\rm D}(\xi,T_1,T_2,\theta_1,\theta_2)
= \mathcal P_{\rm D}(\overline\partial u_{\rm D}^{\xi}(T_1,T_2,\theta_1,\theta_2;\cdot)) \\
&
\frak s_{\rm s}(\xi,T_1,T_2,\theta_1,\theta_2)
= \mathcal P_{\rm s}(\overline\partial u_{\rm s}^{\xi}(T_1,T_2,\theta_1,\theta_2;\cdot))
\\
&\frak s_{\rho}(\xi,T_1,T_2,\theta_1,\theta_2)
= \sigma_1^{p_1} - \hat\rho_2(\xi,T_1,T_2,\theta_1,\theta_2).
\endaligned
\end{equation}
Here $\mathcal P_{\rm s}, \mathcal P_{\rm D}, \mathcal P_{\rm d}$ are as in \eqref{form63000}. The maps $u_{\rm s}^{\xi}$, $u_{\rm D}^{\xi}$, $u_{\rm d}^{\xi}$ are as in \eqref{form637777new}. Therefore,
$\overline\partial u_{\rm s}^{\xi}(T_1,T_2,\theta_1,\theta_2;\cdot)
\in E_{\rm s}( u_{\rm s}^{\xi}(T_1,T_2,\theta_1,\theta_2;\cdot))$
is a consequence of (\ref{eq630}). Since $E_{\rm s}( u_{\rm s}^{\xi}(T_1,T_2,\theta_1,\theta_2;\cdot))$
is in the domain of $\mathcal P_{\rm s}$, the first map is well-defined. Similarly, we can show that the second and the third maps are also well-defined. The last map is equivalent to $\hat\rho_1 - \hat\rho_2$, because of \eqref{formula634}.
\begin{lemma}
The map $\frak s$ is smooth.
\end{lemma}
\begin{proof}
	Proposition \ref{prop617} (1) implies that $\sigma_1$ is a smooth function.
	Proposition \ref{prop617} (2) implies that $\hat\rho_2$ is a smooth function.
	Therefore $\frak s_{\rho}$ is smooth.
	Smoothness of the maps $\frak s_{\rm s},\frak s_{\rm d},\frak s_{\rm D}$ for non-zero values of 
	$\sigma_1$ and $\sigma_2$ is a consequence of standard elliptic regularity.
	Smoothness for $\sigma_i=0$ follows from part (1) of Proposition \ref{prop618}.
	For similar results in the context of the stable map compactification, see 
	\cite[Lemma 22.6]{foootech}, \cite[Theorem 8.25]{foooexp}, \cite[Proposition 10.4]{fooo:const1},
	\cite[Section 26]{foootech} and \cite[Section 12]{fooo:const1}.
	The first three references concerns the $C^m$ property of the relevant maps whereas the last two discuss smoothness.
\end{proof}
We finally construct the parametrization map
\[
  \psi : \frak s^{-1}(0) \to \mathcal M_1^{\rm RGW}(L;\beta).
\]
Let $\frak x= [u'_{\rm d},u'_{\rm D},u'_{\rm s},\sigma_1,\sigma_2,\rho_1,\rho_2]\in \mathcal U$ be an element such that $\frak s(\frak x) = 0$. Firstly, let $\sigma_1$ and $\sigma_2$ be both non-zero. Equation $\frak s_{\rho}(\frak x) = 0$ implies that $\rho_1 = \rho_2$. Therefore, we can glue $u'_{\rm d},u'_{\rm D},u'_{\rm s}$, as in Proposition \ref{prop617} (2), to obtain 
$u' : \Sigma(\sigma_1,\sigma_2) \to X \setminus \mathcal D$. We use $\frak s_{\rm d}(\frak x) = \frak s_{\rm D}(\frak x) =\frak s_{\rm s}(\frak x) =0$ to conclude that $u'$ is $J$-holomorphic. We define $\psi(\frak x) \in \mathcal M_1^{\rm RGW}(L;\beta)$ to be the element determined by $u'$ and $z_0 \in \partial D_{\rm d} \subset \partial \Sigma(\sigma_1,\sigma_2)$.
In the case that $\sigma_1=0$, $\rho_1$ vanishes by definition. Equation $\frak s(\frak x) = 0$ implies that $\rho_2$ is also zero. We can also conclude from Definition \ref{defn615inconsis} that $\sigma_2=0$. Finally the first three equations in \eqref{ob-map} imply that $\frak x$ determines an element of $\mathcal M_1^{\rm RGW}(L;\beta)$ in the stratum described in Section \ref{subsec:gluing1}. The case that $\sigma_2=0$ can be treated similarly. It is easy to see that $\psi$ is a homeomorphism to a neighborhood of $[\Sigma,z_0,u]$
in $\mathcal M_1^{\rm RGW}(L;\beta)$. Given Propositions \ref{prop617} and \ref{prop618}, we thus proved the following result:
\begin{prop}
	$(\mathcal U,\mathcal E,\frak s,\psi)$ provides a Kuranshi chart for the 
	moduli space $\mathcal M_1^{\rm RGW}(L;\beta)$ at $[\Sigma,z_0,u]$.
\end{prop}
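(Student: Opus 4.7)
The strategy is to verify that every structural requirement in the definition of a Kuranishi chart is met by the tuple $(\mathcal U, \mathcal E, \frak s, \psi)$, appealing to Propositions \ref{prop617} and \ref{prop618} for the analytic content and to the preceding discussion for the algebraic/geometric assembly. By Proposition \ref{prop617}, the Kuranishi neighborhood $\mathcal U$ is a smooth manifold; the isotropy group is the trivial group since $(\Sigma,z_0)$ together with the auxiliary marked points $w_{{\rm s},1}, w_{{\rm s},2}, w_{\rm D}$ has no nontrivial automorphisms; the bundle $\mathcal E = \mathcal U \times (E_{\rm d} \oplus E_{\rm D} \oplus E_{\rm s} \oplus \C)$ is manifestly a smooth vector bundle; and the section $\frak s$ is smooth by the lemma preceding the statement. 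Thus only the homeomorphism property of $\psi$ remains to be verified.

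To prove that $\psi$ is well-defined on $\frak s^{-1}(0)$, I would split the argument according to whether the gluing parameters $(\sigma_1, \sigma_2)$ vanish. When both are nonzero, the vanishing of $\frak s_{\rho}$ gives $\hat\rho_1 = \hat\rho_2$, so the representative with $\rho_1 = \rho_2 = c$ allows the maps $u'_{\rm d}, u'_{\rm s}, {\rm Dil}_c \circ U'_{\rm D}$ to be glued via (gl-i), (gl-ii) into a single continuous map $u': \Sigma(\sigma_1, \sigma_2) \to X$; the vanishing of $\frak s_{\rm d}, \frak s_{\rm s}, \frak s_{\rm D}$ then ensures $\overline\partial u' = 0$, and the transversal constraints of Definition \ref{defn615inconsis}(3) place $u'$ in an actual neighborhood of $[\Sigma, z_0, u]$ in $\mathcal M_1^{\rm RGW}(L;\beta)$. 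For the degenerate case $\sigma_1 = 0$, Definition \ref{defn615inconsis} forces $\rho_1 = 0$, the equation $\frak s_{\rho} = 0$ yields $\rho_2 = 0$, and (since we are working near the codimension-$2$ stratum) $\sigma_2 = 0$ as well; the data then assemble into an element of the fiber product \eqref{form6210}, which by Lemma \ref{smooth-nbhd-comp} parametrizes the distinguished stratum described in Section \ref{subsec:gluing1}. The symmetric case $\sigma_2 = 0$ is identical.

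To verify that $\psi$ is a homeomorphism onto a neighborhood, I would proceed as follows. Continuity follows from the continuous dependence of the gluing operation on $(\sigma_1, \sigma_2, \rho, u'_{\rm d}, u'_{\rm s}, U'_{\rm D})$, which is built into Proposition \ref{prop617}. Injectivity follows from the fact that the gluing reconstruction from a nearby $J$-holomorphic map $u' : \Sigma(\sigma_1,\sigma_2) \to X$ to a triple of restrictions together with the dilation factor $c$ is essentially unique up to the equivalence relation in Definition \ref{defn615inconsis}, and we have normalized the representative by $\hat\rho_1 = \sigma_1^2$. For surjectivity onto a neighborhood, I would use a standard implicit-function-theorem type argument: given a nearby RGW-stable map, restrict to the three irreducible components, apply the Fredholm theory of Section \ref{sub:Fred} and the transversality of \eqref{form61616161} to identify it with a point of the fiber product \eqref{form6210} (or a gluing thereof), then recover the inconsistent solution data by undoing the gluing via the coordinates $\varphi_{\rm d}, \varphi_{\rm s}, \varphi_{\rm D,d}, \varphi_{\rm D,s}$. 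Continuity of the inverse then follows from the smoothness of the diffeomorphism in Proposition \ref{prop617}(1).

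The main obstacle I would anticipate is verifying that the inverse map is well-defined and continuous at the degenerate stratum $\sigma_1 = \sigma_2 = 0$: this is precisely where the codimension-$2$ phenomenon (contrasted with codimension $4$ in the stable map case) plays a subtle role, and where the new smooth structure on $D^2$ described in Remark \ref{rem619} becomes essential. However, the exponential decay of Proposition \ref{prop618}(2), which controls $\frak R_2 - 3T_2$ and $\eta_2 - 3\theta_2$ uniformly, ensures that the equation $\hat\rho_1 = \hat\rho_2$ (equivalently $\sigma_1^2 = f\cdot \sigma_2^3$) cuts out the image of $\psi$ in a controlled way, making the homeomorphism statement quantitative. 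Once these pieces are assembled, no further argument is required.
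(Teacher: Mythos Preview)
Your proposal is correct and follows essentially the same approach as the paper. In fact, the paper does not give a separate proof for this proposition: it is stated as the culmination of the preceding discussion, which verifies smoothness of $\mathcal U$ (Proposition \ref{prop617}), smoothness of $\frak s$ (the preceding lemma), and constructs $\psi$ via the same case analysis on $\sigma_1,\sigma_2$ that you outline, concluding with the remark that ``it is easy to see that $\psi$ is a homeomorphism.'' Your treatment is more explicit on the homeomorphism property (injectivity, surjectivity, continuity of the inverse at the degenerate stratum), but this simply fills in what the paper leaves to the reader.
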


\section{Proof of the Main Analytical Result}
\label{sub:proofmain}

The purpose of this section is to prove Proposition \ref{prop617}.
The proofs are similar to the arguments in \cite{foooexp}. However, there is one novel point, which is related to the fact that we need the notion of inconsistent solutions.  In this section, we go through the construction of the required family of inconsistent solutions, emphasizing on this novel point. Then the estimates claimed in Proposition \ref{prop618} can be proved in the same way as in \cite[Section 6]{foooexp}.

Throughout this section, we use a different convention for our figures to sketch pseudo-holomorphic curves in $X$. In our figures in this section (e.g. Figure \ref{Figuresec6-4}), we regard the divisor $\mathcal D$ as a vertical line on the right. This is in contrast with our convention in Figure \ref{Figuresec6-3} and \cite{part1:top}, where we regard the divisor as a horizontal line on the bottom. Our new convention is more consistent with the previous literature, especially \cite{foooexp}.

\subsection{Cylindrical Coordinates}
\label{subsub:cylindricalcoordinate}
In \eqref{newform624}, we fix coordinate charts on $\Sigma_{\rm d}$, $\Sigma_{\rm s}$, $\Sigma_{\rm D}$ near the nodal points and parametrized by the disc ${\rm Int} (D^2)$. In this section, it is convenient to use a cylindrical coordinates on the domain of these coordinate charts. Thus we modify the definition of the maps in \eqref{newform624} as follows:
\[
  \begin{array}{ccc}
 	 \varphi_{\rm d}:[0,\infty) \times S^1 \to \Sigma_{\rm d},&\hspace{1cm}&
	 \varphi_{\rm D,d}:(-\infty,0]  \times S^1 \to \Sigma_{\rm D},\\
	 \varphi_{\rm s}:[0,\infty) \times S^1 \to \Sigma_{\rm s},&\hspace{1cm}&
	 \varphi_{\rm D,s}:(-\infty,0]  \times S^1 \to \Sigma_{\rm D},
 \end{array}	 
\]
where
\[
  \begin{array}{ccc}
 	 \varphi_{\rm d}(r_1',s_1'),&\hspace{1.5cm}&
	 \varphi_{\rm D,d}(r_1'',s_1''),\\
	 \varphi_{\rm s}(r_2',s_2'),&\hspace{1.5cm}&
	 \varphi_{\rm D,s}(r_2'',s_2''),
 \end{array}	 
\]
for $(r'_i,s'_i) \in [0,\infty) \times S^1$, $(r''_i,s''_i) \in (-\infty,0] \times S^1$, is defined to be what we denoted by
\begin{equation}\label{form643}
  \begin{array}{cc}
 	 \varphi_{\rm d}(\exp(-(r'_1+\sqrt{-1} s'_1))),&
	 \varphi_{\rm D,d}(\exp(r''_1+\sqrt{-1} s''_1)),\\
	 \varphi_{\rm s}(\exp(-(r'_2+\sqrt{-1} s'_2))),&
	 \varphi_{\rm D,s}(\exp(r''_2+\sqrt{-1} s''_2)),
 \end{array}	 
\end{equation}
in Section \ref{sub:Obst}.

The equations $z_1z_2 = \sigma_1$ or $z_1z_2 = \sigma_2$
appearing in (gl-i) and (gl-ii)\footnote{See the discussion about the construction of $\Sigma(\sigma_1,\sigma_2)$ around \eqref{form6220}.} can be rewritten as:
\begin{equation}\label{form644}
	\aligned
	r''_1 = r'_1 - 10T_1, \qquad s''_1 = s'_1 - \theta_1,  \\
	r''_2 = r'_2 - 10T_2, \qquad s''_2 = s'_2 - \theta_2, 
\endaligned
\end{equation}
where\footnote{We use the coefficient $10$ here to be consistent with \cite{foooexp}. Otherwise, they are not essential.}
\begin{equation}
\sigma_i = \exp(-(10T_i + \sqrt{-1}\theta_i)).
\end{equation}
We define
\begin{equation}\label{r1r2s1s2}
	r_i = r'_i - 5T_i = r''_i + 5T_i, \quad s_i = s'_i - \theta_i/2 = s''_i + \theta_i/2.
\end{equation}
We also slightly change our convention for the polar coordinate of $\rho_i$ of Definition \ref{defn615inconsis} ($i=1,2$) and define $\frak R_i$, $\eta_i$ as follows:
\[
  \rho_i = \exp(-(10\frak R_i + \sqrt{-1}\eta_i)).
\]
See Figure \ref{Figuresec6-4} below and compare with \cite[(6.2) and (6.3)]{foooexp}\footnote{In \cite{foooexp}, the letter $\tau$ is used for the variables that we denote by $r_i$ here. In this paper, we use $\tau$ to denote the $\bbR$ factor appearing in the target space.}.
\begin{figure}[h]
\centering
\includegraphics[scale=0.6]{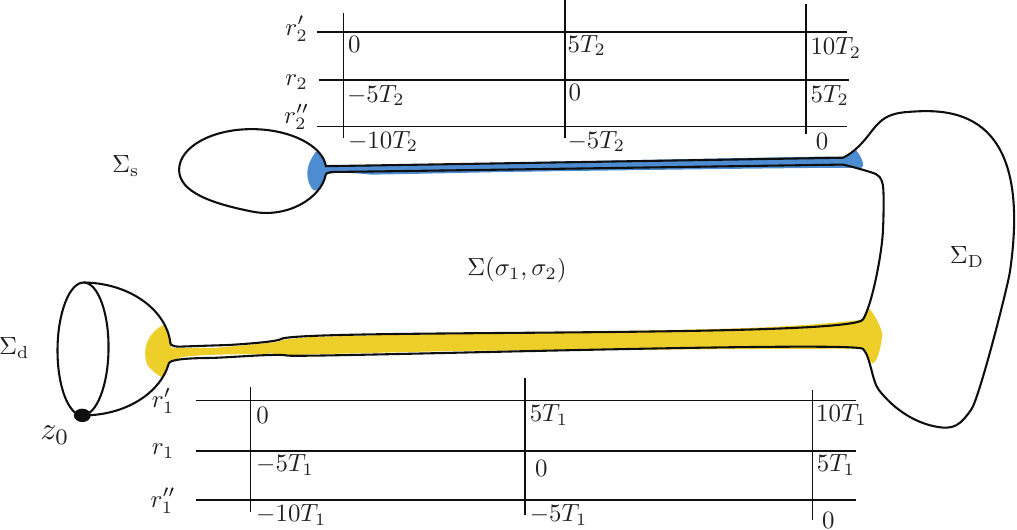}
\caption{$r_i,r'_i,r''_i$}
\label{Figuresec6-4}
\end{figure}

\subsection{Bump Functions}
\label{subsub:Bump}

For the purpose of constructing approximate solutions (pre-gluing) and for each step of the Newton's iteration used to solve our variant of non-linear Cauchy-Riemann equation, we use bump functions. Here we review various bump functions that we need. We may use the maps $\varphi_{\rm d}$, $\varphi_{\rm s}$, $\varphi_{\rm D,d}$ and $\varphi_{\rm D,s}$ to regard the following spaces as subspaces of $\Sigma(\sigma_1,\sigma_2)$:
$$
\aligned
\mathcal X_{i,T_i}= [-1,1]_{r_i} \times S^1_{s_i} 
&= [5T_i-1,5T_i+1]_{r'_i} \times S^1_{s'_i} \\
&= [-5T_i-1,-5T_i+1]_{r''_i} \times S^1_{s''_i},
\endaligned
$$
$$
\aligned
\mathcal A_{i,T_i} = [-T_i-1,-T_i+1]_{r_i} \times S^1_{s_i} 
&= [4T_i-1,4T_i+1]_{r'_i} \times S^1_{s'_i} \\
&= [-6T_i-1,-6T_i+1]_{r''_i} \times S^1_{s''_i},
\endaligned
$$
$$
\aligned
\mathcal B_{i,T_i} = [T_i-1,T_i+1]_{r_i} \times S^1_{s_i} 
&= [6T_i-1,6T_i+1]_{r'_i} \times S^1_{s'_i} \\
&= [-4T_i-1,-4T_i+1]_{r''_i} \times S^1_{s''_i}.
\endaligned
$$
Using $\varphi_{\rm d}$ (resp. $\varphi_{\rm s}$), the spaces $\mathcal X_{1,T_1}$, $\mathcal A_{1,T_1}$, 
$\mathcal B_{1,T_1}$ (resp. $\mathcal X_{2,T_2}$, $\mathcal A_{2,T_2}$, $\mathcal B_{2,T_2}$) can be identified with subspaces of $\Sigma_{\rm d} \setminus \{z_{\rm d}\}$ (resp. $\Sigma_{\rm s} \setminus \{z_{\rm s}\}$). Similarly, the map $\varphi_{\rm D,d}$ (resp $\varphi_{\rm D,s}$) allows us to regard $\mathcal X_{1,T_1}$, $\mathcal A_{1,T_1}$, 
$\mathcal B_{1,T_1}$, $\mathcal X_{2,T_2}$, $\mathcal A_{2,T_2}$, $\mathcal B_{2,T_2}$ as subspaces of $\Sigma_{\rm D} \setminus \{z_{\rm d},z_{\rm s}\}$. (See Figure \ref{Figuresec6-5} below.)
\begin{figure}[h]
\centering
\includegraphics[scale=0.6]{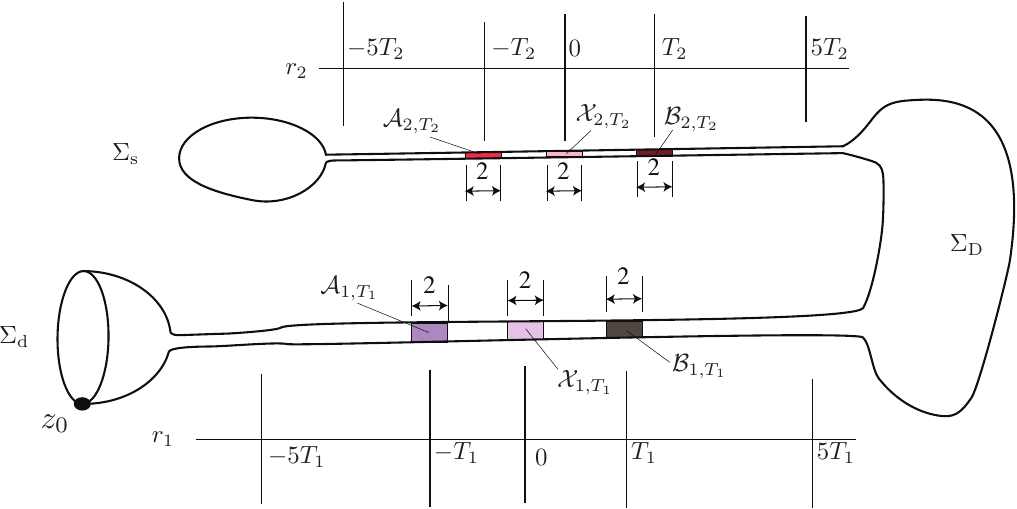}
\caption{$\mathcal X_{i,T_i}$, $\mathcal A_{i,T_i}$, $\mathcal B_{i,T_i}$}
\label{Figuresec6-5}
\end{figure}

We fix a non-increasing smooth function $\chi : \bbR \to [0,1]$ such that
$$
\chi(r)=
\begin{cases}
1   &{r< -1} \\
0 & {1 < r},
\end{cases}
$$
and $\chi(-r) = 1 - \chi(r)$.
We now define
\begin{equation}\label{form646new}
\aligned
&\chi_{i,\mathcal X}^{\leftarrow}(r_i,s_i) = \chi(r_i), 
\qquad
&\chi_{i,\mathcal X}^{\rightarrow}(r_i,s_i) = \chi(-r_i), \\
&\chi_{i,\mathcal A}^{\leftarrow}(r_i,s_i) = \chi(r_i+T_i), 
\qquad
&\chi_{i,\mathcal A}^{\rightarrow}(r_i,s_i) = \chi(-(r_i+T_i)),
\\
&\chi_{i,\mathcal B}^{\leftarrow}(r_i,s_i) = \chi(r_i-T_i), 
\qquad
&\chi_{i,\mathcal B}^{\rightarrow}(r_i,s_i) = \chi(-(r_i-T_i)).
\endaligned
\end{equation}
The functions $\chi_{1,\mathcal X}^{\leftarrow}$, $\chi_{1,\mathcal A}^{\leftarrow}$ and $\chi_{1,\mathcal B}^{\leftarrow}$ can be extended to smooth functions on $\Sigma_d$ which are locally constant outside of the spaces $\mathcal X_{1,T_1}$, $\mathcal A_{1,T_1}$ and $\mathcal B_{1,T_1}$, respectively. We use the same notations to denote these extensions. Similarly,  we can define functions $\chi_{2,\mathcal X}^{\leftarrow}$, $\chi_{2,\mathcal A}^{\leftarrow}$ and $\chi_{2,\mathcal B}^{\leftarrow}$ on $\Sigma_{\rm s}$. These functions can be also regarded as functions defined on $\Sigma(\sigma_1,\sigma_2)$ in the obvious way.

We use $\chi_{i,\mathcal X}^{\rightarrow}$ (resp. $\chi_{i,\mathcal A}^{\rightarrow}$ and $\chi_{i,\mathcal B}^{\rightarrow}(r_i,s_i)$), for $i=1,2$, to define a smooth function $\chi_{\mathcal X}^{\rightarrow}$ (resp. $\chi_{\mathcal A}^{\rightarrow}$ and $\chi_{\mathcal B}^{\rightarrow}$) on $\Sigma(\sigma_1,\sigma_2)$ as follows.
On the neck regions where the coordinate $(r_i,s_i)$, for $i=1$ or $2$, is defined, we set $\chi_{\mathcal X}^{\rightarrow}$ (resp. $\chi_{\mathcal A}^{\rightarrow}$, $\chi_{\mathcal B}^{\rightarrow}$) to be the function $\chi_{i,\mathcal X}^{\rightarrow}(r_i,s_i)$ (resp. $\chi_{i,\mathcal A}^{\rightarrow}(r_i,s_i)$ and $\chi_{i,\mathcal B}^{\rightarrow}(r_i,s_i)$)
given in \eqref{form646new}. This function is defined to be locally constant on the complement of the above space. 
See Figures \ref{Figuresec6-6} and  \ref{Figuresec6-7}.
\begin{figure}[h]
\centering
\includegraphics[scale=0.6]{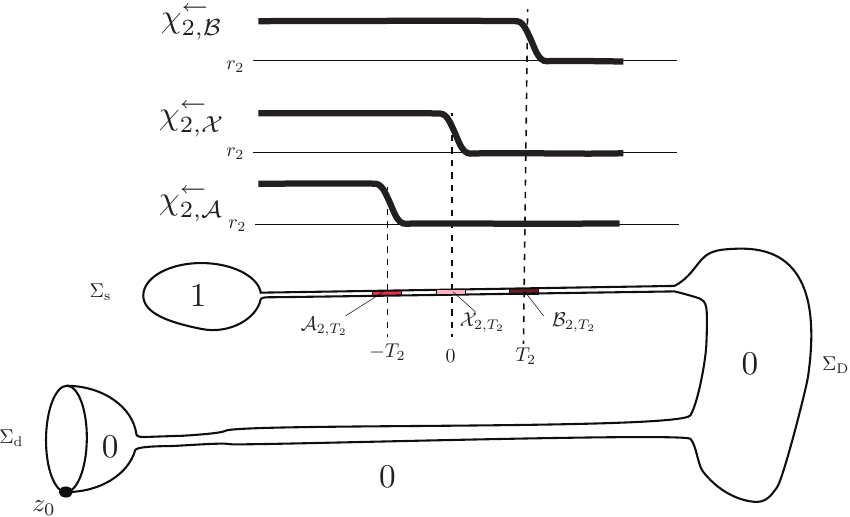}
\caption{$\chi_{2,\mathcal X}^{\leftarrow}$, $\chi_{2,\mathcal A}^{\leftarrow}$, $\chi_{2,\mathcal B}^{\leftarrow}$}
\label{Figuresec6-6}
\end{figure}
\begin{figure}[h]
\centering
\includegraphics[scale=1.2]{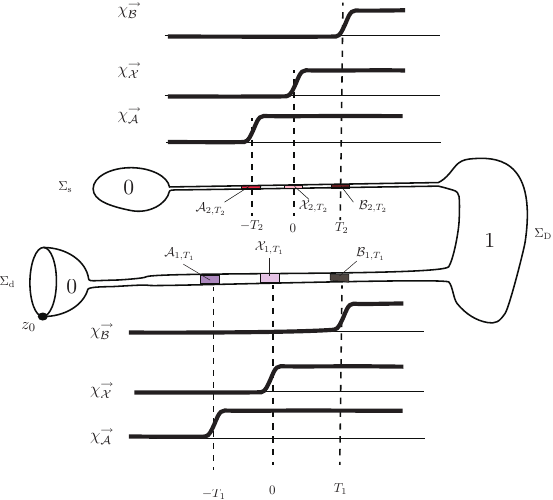}
\caption{ $\chi_{\mathcal X}^{\rightarrow}$ $\chi_{\mathcal A}^{\rightarrow}$,
$\chi_{\mathcal B}^{\rightarrow}$}
\label{Figuresec6-7}
\end{figure}

Note that the supports of the first derivatives of 
$\chi_{i,\mathcal X}^{\leftarrow}$, $\chi_{i,\mathcal A}^{\leftarrow}$, 
$\chi_{i,\mathcal B}^{\leftarrow}$
are subsets of $\mathcal X_{i,T_i}$, $\mathcal A_{i,T_i}$, 
$\mathcal B_{i,T_i}$, respectively.
The supports of the first derivatives of 
$\chi_{\mathcal X}^{\rightarrow}$ $\chi_{\mathcal A}^{\rightarrow}$,
$\chi_{\mathcal B}^{\rightarrow}$
are subsets of $\mathcal X_{1,T_1} \cup \mathcal X_{2,T_2}$, $\mathcal A_{1,T_1} \cup \mathcal A_{2,T_2}$, 
$\mathcal B_{1,T_1} \cup \mathcal B_{2,T_2}$, respectively.

\subsection{Weighted Sobolev Norms}
\label{subsub:Wsobolev}

In Section \ref{sub:Fred}, we define weighted Sobolev norms on several function spaces on $\Sigma_{\rm d}$, $\Sigma_{\rm s}$, $\Sigma_{\rm D}$. Here we use weighted Sobolev norms to define a function space on $\Sigma(\sigma_1,\sigma_2)$.
Since $\Sigma(\sigma_1,\sigma_2)$ is compact and the weight functions that we will define are smooth, the resulting weighted Sobolev norm is equivalent to the usual Sobolev norm. In other words, the ratio between the two norms is bounded as long as we fix $\sigma_1,\sigma_2$. However, the ratio depends on $\sigma_1,\sigma_2$ and is unbounded as $\sigma_1,\sigma_2$ goes to zero. Therefore, using weighted Sobolev norm is crucial to show that various estimates are independent of $\sigma_1,\sigma_2$.

We decompose $\Sigma(\sigma_1,\sigma_2)$ as follows:
\begin{align*}
	\Sigma(\sigma_1,\sigma_2)=
	&(\Sigma_{\rm d} \setminus {\rm Im}\varphi_{\rm d})\cup (\Sigma_{\rm s} \setminus {\rm Im}\varphi_{\rm s})
	  \cup 
	  (\Sigma_{\rm D} \setminus ({\rm Im}\varphi_{\rm D,d} \cup {\rm Im}\varphi_{\rm D,s}))\\
	  &\cup ([-5T_1,5T_1]_{r_1} \times S^1_{s_1}) \cup ([-5T_2,5T_2]_{r_2} \times S^1_{s_2}).
\end{align*}
Here we identify $[-5T_1,5T_1]_{r_1} \times S^1_{s_1}$ and
$[-5T_2,5T_2]_{r_2} \times S^1_{s_2}$ with their images in $\Sigma(\sigma_1,\sigma_2)$.
We also introudce the following notations for various subspaces of $\Sigma(\sigma_1,\sigma_2)$: (See Figures \ref{Figuresec6-8}, \ref{Figuresec6-9} and \ref{Figuresec6-10}.)
\begin{equation}\label{formu647}
\aligned
\Sigma_{\rm d}^-(\sigma_1,\sigma_2) &= \Sigma_{\rm d} \setminus {\rm Im}\varphi_{\rm d}, \\
\Sigma_{\rm d}^+(\sigma_1,\sigma_2) &= \Sigma_{\rm d} \setminus \varphi_{\rm d}(D^2(\vert \sigma_1\vert)), \\
\Sigma_{\rm s}^-(\sigma_1,\sigma_2) &= \Sigma_{\rm s} \setminus {\rm Im}\varphi_{\rm s}, \\
\Sigma_{\rm s}^+(\sigma_1,\sigma_2) &= \Sigma_{\rm s} \setminus \varphi_{\rm s}(D^2(\vert \sigma_2\vert)), \\
\Sigma_{\rm D}^-(\sigma_1,\sigma_2) &= \Sigma_{\rm D} \setminus ({\rm Im}\varphi_{\rm D,d}\cup  {\rm Im}\varphi_{\rm D,s}), \\
\Sigma_{\rm D}^+(\sigma_1,\sigma_2) &= \Sigma_{\rm D} \setminus (\varphi_{\rm D,d}(\vert \sigma_1\vert)
\cup  {\rm Im}\varphi_{\rm D,s}(\vert \sigma_2\vert)).
\endaligned
\end{equation}
\begin{figure}[h]
\includegraphics[scale=0.6]{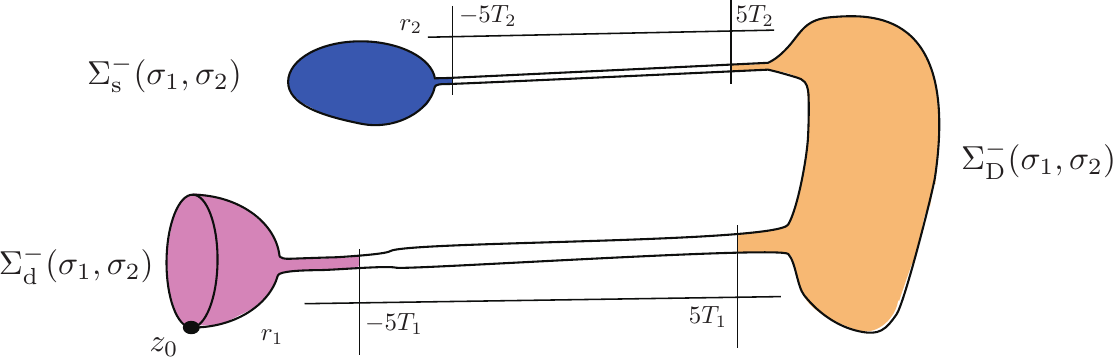}
\caption{$\Sigma_{\rm d}^-(\sigma_1,\sigma_2)$, $\Sigma_{\rm s}^-(\sigma_1,\sigma_2)$,
$\Sigma_{\rm D}^-(\sigma_1,\sigma_2)$}
\label{Figuresec6-8}
\end{figure}
\begin{figure}[h]
\includegraphics[scale=0.6]{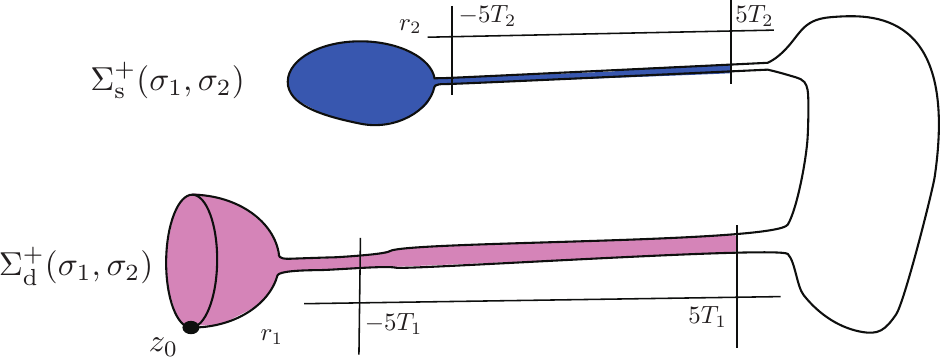}
\caption{$\Sigma_{\rm d}^+(\sigma_1,\sigma_2)$, $\Sigma_{\rm s}^+(\sigma_1,\sigma_2)$
}
\label{Figuresec6-9}
\end{figure}
\begin{figure}[h]
\includegraphics[scale=0.6]{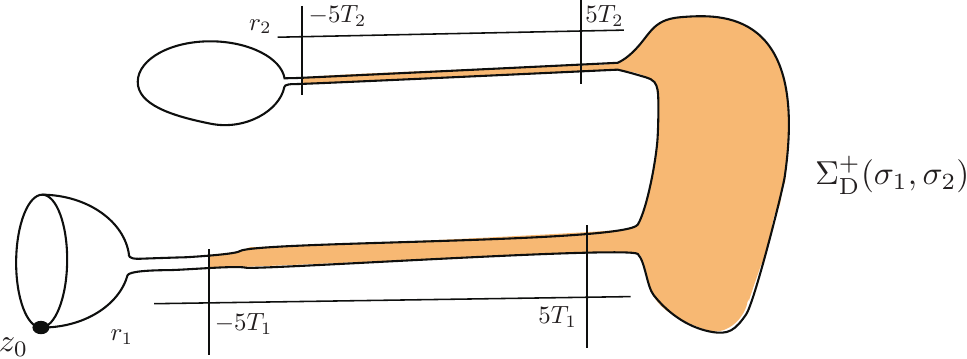}
\caption{$\Sigma_{\rm D}^+(\sigma_1,\sigma_2)$}
\label{Figuresec6-10}
\end{figure}
Note that $\Sigma^+_{\rm d}(\sigma_1,\sigma_2)$, $\Sigma^+_{\rm s}(\sigma_1,\sigma_2) $ and $\Sigma^+_{\rm D}(\sigma_1,\sigma_2)$ are respectively equal to the spaces $\Sigma_{\rm d}(\sigma_1)$, $\Sigma_{\rm s}(\sigma_2)$ and
$\Sigma_{\rm D}(\sigma_1,\sigma_2)$ defined in \eqref{metaform626}.

\begin{lemma} \label{formula648}  
	There exists a smooth function   $e^{\sigma_1,\sigma_2}_{\delta} :  \Sigma(\sigma_1,\sigma_2) \to [0,\infty)$  
	satisfying the following properties (see Figure \ref{Figuresec6-11}): 
	\begin{itemize}
            	\item[(i)] If $x\in \Sigma^-_{\rm d}(\sigma_1,\sigma_2)\cup 
            			\Sigma^-_{\rm s}(\sigma_1,\sigma_2)
            			\cup \Sigma^-_{\rm D}(\sigma_1,\sigma_2)$, then $e^{\sigma_1,\sigma_2}_{\delta}(x)= 1$;
            	\item[(ii)] If $r_i \in [1-5T_i,-1]$, then $e^{\sigma_1,\sigma_2}_{\delta}(r_i,s_i)=e^{\delta (r_i+5T_i)}$;
            	\item[(iii)] If $r_i \in [1,5T_i-1]$, then $e^{\sigma_1,\sigma_2}_{\delta}(r_i,s_i)=e^{\delta (-r_i+5T_i)}$; 
            	\item[(iv)] If $\vert \vert r_i\vert-5T_i\vert \le 1$, then $e^{\sigma_1,\sigma_2}_{\delta}(r_i,s_i)\in [1,10]$;
            	\item[(v)] If $\vert r_i\vert \le 1$, then $e^{\sigma_1,\sigma_2}_{\delta}(r_i,s_i)\in [e^{5T_i\delta},10e^{5T_i\delta}]$.
	\end{itemize}
\end{lemma}
\begin{figure}[h]
\includegraphics[scale=0.8]{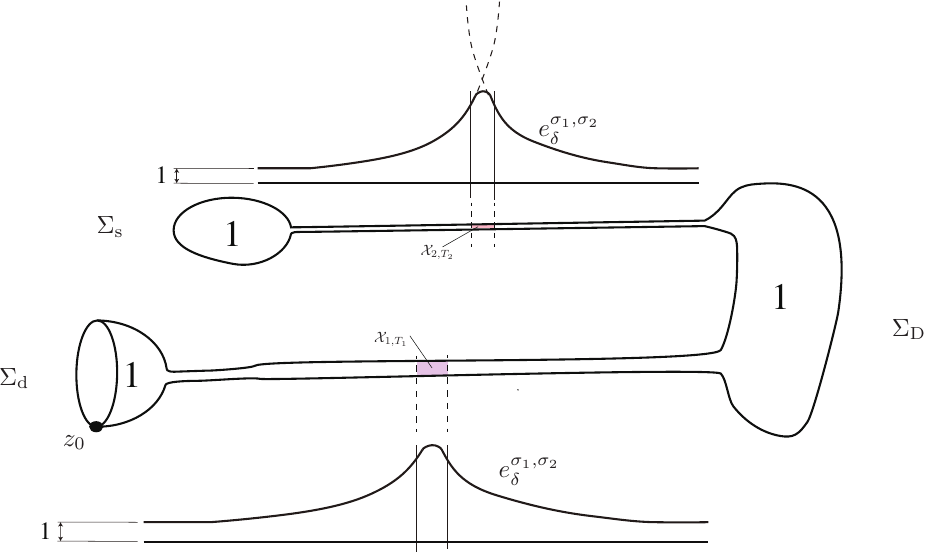}
\caption{$e^{\sigma_1,\sigma_2}_{\delta}$}
\label{Figuresec6-11}
\end{figure}

We fix a smooth map $u' : \Sigma(\sigma_1,\sigma_2)  \to X \setminus \mathcal D$
and assume that the diameters of:
\begin{equation}\label{neck-img}
  u'([-5T_1,5T_1]_{r_1} \times S^1_{s_1})\qquad \text{and} \qquad u'([-5T_2,5T_2]_{r_2} \times S^1_{s_2})
\end{equation}
with respect to the metric $g$ are less than a given positive real number $\kappa$. We require that the above sets are contained in $\frak U$, introduced in the beginning of Section \ref{subsec:gluing1}, where the partial $\bbC_*$-action is defined. Assuming $\kappa$ is small enough, to any:
\[
  V \in L^2_{m}(\Sigma(\sigma_1,\sigma_2);u^{\prime *}TX).
\]
we associate sections $\hat v_1$ and $\hat v_2$ of $u^{\prime *}TX$ over the subspaces
$[-5T_1,5T_1]_{r_1} \times S^1_{s_1}$ and $[-5T_2,5T_2]_{r_2} \times S^1_{s_2}$ in the following way.

Let $(0,0)_i \in [-5T_i,5T_i]_{r_i} \times S^1_{s_i}$ 
be the point whose $r_i,s_i$ coordinates are $0$.
By choosing $m$ to be greater than $1$, the following vector is well-defined:
\begin{equation}\label{defnvivi}
v_i = V((0,0)_i) \in T_{u'((0,0)_i)}X.
\end{equation}
Suppose $v_i = v_{i,\bbR} + v_{i,S^1} + v_{i,\rm D}$ is the decomposition of this vector with respect to \eqref{decom-tan-bdle}. If $\kappa$ is small enough, we can assume that the distance between any two points of the projection of \eqref{neck-img} to $\mathcal D$ is less than the injectivity radius of $\mathcal D$. In particular, we can extend $v_{i,\rm D}$ to a vector field $\hat v_{i,\rm D}$ in a neighborhood of $(0,0)_i$ using parallel transport along geodesics based at $u'((0,0)_i)$ with respect to the unitary connection on $T\mathcal D$, which we fixed before. Then the vector $\hat v_i$ is defined to be:
\begin{equation}\label{form651651newnew}
  \hat v_i = v_{i,\bbR} + v_{i,S^1} + \hat v_{i,\rm D}.
\end{equation}

Now we define 
\begin{equation}\label{form6464rev}
\aligned
&\Vert V
\Vert_{W^2_{m,\delta}}^2 \\
=
&\Vert V\Vert_{L^2_{m}((\Sigma_{\rm d} \setminus {\rm Im}\varphi_{\rm d})
\cup 
(\Sigma_{\rm s} \setminus {\rm Im}\varphi_{\rm s})
\cup 
(\Sigma_{\rm D} \setminus ({\rm Im}\varphi_{\rm D,d} \cup {\rm Im}\varphi_{\rm D,s}))}^2 \\
&+
\sum_{i=1}^2\sum_{j=0}^m\int_{[-5T_i,5T_i]_{r_i}\times S^1_{s_i}} e^{\delta}_{\sigma_1,\sigma_2}(r_i,s_i) 
\vert \nabla^j(V -  \hat v_i)\vert^2 dr_id s_i
\\
&+ \vert v_1 \vert^2 + \vert v_2 \vert^2.
\endaligned
\end{equation}
We use the cylindrical metric on $\Sigma(\sigma_1,\sigma_2)$ and the metric $g$ on $X \setminus \mathcal D$ to define norms in the first and the second lines of the right hand side of \eqref{form6464rev}. This definition is analogous to \eqref{form6464}.
The space of all $V$ as above with finite $\Vert\cdot\Vert_{W^2_{m,\delta}}$ norm which satisfies the boundary condition:
\[
  \hspace{3cm} V(z) \in T_{u'(z)}L \hspace{1cm} \forall z \in \partial \Sigma(\sigma_1,\sigma_2)
\]
forms a Hilbert space, which we denoted by: 
\begin{equation}\label{fcspace652}
W^2_{m,\delta}((\Sigma(\sigma_1,\sigma_2),\partial \Sigma(\sigma_1,\sigma_2));
(u^{\prime *}TX,u'\vert_{\partial}^*TL)).
\end{equation}

Next, let:
\[
  V \in L^2_{m}(\Sigma(\sigma_1,\sigma_2);u^{\prime *}TX\otimes \Lambda^{0,1})
\]
and define:
\begin{equation}\label{formula650}
\Vert V
\Vert_{L^2_{m,\delta}}^2 
=
\sum_{j=0}^m\int_{\Sigma(\sigma_1,\sigma_2)} e^{\delta}_{\sigma_1,\sigma_2}(z) 
\vert \nabla^jV(z)\vert^2 {\rm vol}_{\Sigma(\sigma_1,\sigma_2)}.
\end{equation}
We use the cylindrical metric on $\Sigma(\sigma_1,\sigma_2)$ and the metric $g$ on $X \setminus \mathcal D$ to define the norm and the volume element ${\rm vol}_{\Sigma(\sigma_1,\sigma_2)}$.
The set of all such $V$ with $\Vert V
\Vert_{L^2_{m,\delta}} < \infty$ forms  a Hilbert space, which we denote by
\begin{equation}
L^2_{m,\delta}(\Sigma(\sigma_1,\sigma_2);u^{\prime *}TX\otimes \Lambda^{0,1}).
\end{equation}
As a topological vector space, this is the same space as the standard space of Sobolev $L^2_m$ sections. However, the ratio between the above $L^2_{m,\delta}$ norm and the standard Sobolev $L^2_m$ norm 
is unbounded while $\sigma_1,\sigma_2$ go to $0$.

Finally, we can use the above Sobolev spaces, to define the linearization of the non-linear Cauchy-Riemann equation at $u'$, which is a Fredholm operator:
\begin{equation}\label{newform651}
	\aligned
	D_{u'}\overline{\partial} : 
	&W^2_{m+1,\delta}((\Sigma(\sigma_1,\sigma_2),\partial \Sigma(\sigma_1,\sigma_2));
	(u^{\prime *}TX,u'\vert_{\partial}^*TL)) \\
	&\to L^2_{m,\delta}(\Sigma(\sigma_1,\sigma_2);u^{\prime *}TX\otimes \Lambda^{0,1}).
	\endaligned
\end{equation}

\subsection{Pre-gluing}
\label{subsub:preglue}
Suppose $\xi = (u_{\rm d}^{\xi},u_{\rm D}^{\xi},u_{\rm s}^{\xi})$ is an element of the following space\footnote{Recall that $\Sigma_{\rm d}$ together with the marked points $z_0$ and $z_{\rm d}$ is already source stable and we did not need to introduce auxiliary marked points on this space. This is the reason that the first factor is $\mathcal U_{\rm d}$, rather than $\mathcal U_{\rm d}^+$.}:
\begin{equation}\label{fib-prod-str}
  \mathcal U_{\rm d} \,\,{}_{{\rm ev}_{\rm d}}\times_{{\rm ev}_{\rm D,d}}  \mathcal U^+_{\rm D}
   \,\,{}_{{\rm ev}_{\rm D,s}}\times_{{\rm ev}_{\rm s}}\mathcal U^+_{\rm s}
\end{equation}
In this subsection, for each choice of $\sigma_1$ and $\sigma_2$, we shall construct an approximate inconsistent solution and approximate the error for this approximate solution.

By assumption, the pull back bundle $(u_{\rm D}^{\xi})^*\mathcal N_{\mathcal D}(X)$ has a meromorphic section $\frak s^\xi$
which has poles of order $p_1$ and $p_2$ at $z_{\rm d}$ and $z_{\rm s}$, respectively. As in \eqref{U-D}, $\frak s^\xi$ gives rise to a map 
\begin{equation}\label{form653}
	U_{\rm D}^{\xi} : \Sigma_{\rm D} \setminus \{z_{\rm d},z_{\rm s}\} \to \mathcal N_{\mathcal D}(X)\backslash \mathcal D.
\end{equation}
A priori, the section $\frak s^\xi$ is well-defined up to the action of $\bbC_*$ and for each $\xi$ in \eqref{fib-prod-str}, we fix one such section such that $U_{\rm D}^{\xi}$ depends smoothly on $\xi$. Later we will pin down the choice of sections such that \eqref{formula634} is satisfied.
Recall that a neighborhood of the zero section in $\mathcal N_{\mathcal D}(X)$ is identified with the neighborhood $\frak U$ of $\mathcal D$ in $X$. For now, we assume that the section $\frak s^\xi$ is chosen such that the image of $U_{\rm D}^{\xi}$ on the domain $\Sigma_{\rm D}^+(\sigma_1,\sigma_2)$ belongs to this neighborhood of the zero section of $\mathcal N_{\mathcal D}(X)$. Recall that $\Sigma_{\rm D}^+(\sigma_1,\sigma_2)$ is defined in \eqref{formu647}.

Next, we shall glue the three maps $u^{\xi}_{\rm d}$, $u^{\xi}_{\rm s}$, $U_{\rm D}^{\xi}$ by a partition of unity. One should beware that the output of this construction is an approximate inconsistent solution. In particular, it will not be a globally well-defined map from $\Sigma(\sigma_1,\sigma_2)$ to $X$. In order to describe this process, we need to fix an {\it exponential} map.

In the following, we need a map 
\begin{equation} \label{eq653}
 {\rm Exp}:T(X\setminus \mathcal D)\to\frak N(\Delta)
\end{equation}
for a neighborhood $\frak N(\Delta)$ of the diagonal $\Delta$ in $(X\setminus \mathcal D)\times (X\setminus \mathcal D)$ that satisfies certain properties. Before stating the required properties for this map, we need to define partial $\bbC_*$ actions on a pair of a complex manifold and a submanifold. Recall that we defined partial $\bbC_*$ actions for a pair of an almost complex manifold $Y$ and a submanifold $D$ of (complex) codimension $1$ in \cite[Subsection 3.2]{part1:top}. This notion can be generalized to the case of complex submanifolds of arbitrary codimension in an obvious way. For example, the derivative of the partial $\bbC_*$ action for the pair $(X,\mathcal D)$ determines a partial $\bbC_*$ action for the pair $(TX,T\mathcal D)$. Moreover, the product of two copies of partial $\bbC_*$ actions for the pair $(X,\mathcal D)$ induces a partial $\bbC_*$ action on $(X\times X,\mathcal D\times \mathcal D)$. Now we are ready to state the properties of ${\rm Exp}$:
\begin{itemize}
	\item[(i)] For $p \in X\setminus \mathcal D$ and $V\in T_p(X\setminus \mathcal D)$, the first component 
	of ${\rm Exp}(p,V)$ is $p$.
	\item[(ii)] ${\rm Exp}$	 maps $(p,0)\in T_p(X\setminus \mathcal D)$ to 
			$(p,p)\in (X\setminus \mathcal D)\times (X\setminus \mathcal D)$. Moreover, 
			at the point $(p,0)$, the derivative of ${\rm Exp}$  in the fiber direction given by 
			$T_p(X\setminus \mathcal D)\subset T(X\setminus \mathcal D)$
			is equal to $(0,{\rm id})$ where ${\rm id}$ is the identity map from 
			$T_p(X\setminus \mathcal D)$ to itself.
	\item[(iii)] The map \eqref{eq653} is equivariant with respect to the partial 
			$\bbC_*$ actions on the domain and the target defined above.
                        %
                        %
	\item[(iv)] For a positive real number $\kappa$, let $D_{\kappa}TL$ denote the tangent vectors 
			to $L$ whose norms are smaller than
			$\kappa$. There is $\kappa$ such that:
			\[{\rm Exp}(D_{\kappa}TL) \subset L \times L.\]
\end{itemize} 
Let ${\rm exp}$ be the exponential map with respect to the metric $g$. The map $({\rm id},{\rm exp})$, defined on a neighborhood of the zero section of $T(X\setminus \mathcal D)$, satisfies (i)-(iii). We can modify this map and extend it to a map on $T(X\setminus \mathcal D)$ which satisfies (iv). We denote the inverse of \eqref{eq653} by
$$
{\rm E} : \frak N(\Delta) \to T(X\setminus \mathcal D).
$$

We now define $\rho_{i,(0)}^{\xi} \in \bbC_*$ ($i=1,2$) as follows.
We define the compositions 
$$
u^{\xi}_{\rm d} \circ \varphi_{\rm d} : D^2 \to X\setminus \mathcal D,
\qquad
\pi \circ u^{\xi}_{\rm d} \circ \varphi_{\rm d} : D^2 
\to \mathcal D.
$$
We take a (holomorphic) trivialization 
$\Pi:(\pi \circ u^{\xi}_{\rm d} \circ \varphi_{\rm d})^*\mathcal N_{\mathcal D}(X) \to \bbC$ of the pullback of the normal bundle $\mathcal N_{\mathcal D}(X)$ 
in a neighborhood of $z_{\rm d}$. Note that 
$u^{\xi}_{\rm d}(z_{\rm d}) \in \mathcal D$ is in a small neighborhood of
$u_{\rm d}^{0}(z_{\rm d})$. Therefore, $u^{\xi}_{\rm d} \circ \varphi_{\rm d}$ 
induces a holomorphic function
$$
{\Pi} \circ u^{\xi}_{\rm d} \circ \varphi_{\rm d} : D^2(o) \to \bbC
$$
for a small $o>0$. By assumption ${\Pi} \circ u^{\xi}_{\rm d} \circ \varphi_{\rm d}$ 
has a zero of order $p_1$ at $z_{\rm d}$.
We define $c^{\xi}_{\rm d} \in \bbC_*$ by
\begin{equation}\label{shiki655}
({\Pi} \circ u^{\xi}_{\rm d} \circ \varphi_{\rm d})(z) = c^{\xi}_{\rm d}z^{p_1} + f(z) z^{p_1+1}
\end{equation}
where $f(z)$ is holomorphic at $0$.

Using the trivialization $\Pi$, we may regard the meromorphic section $\frak s^\xi\circ \varphi_{\rm d,D}$ as a meromorphic 
{\it function} which has a pole of order $2$ at $z_{\rm d}$. In particular, there is a constant $c^{\xi}_{\rm D,d} \in \bbC_*$ such that $\Pi\circ \frak s^\xi \circ \varphi_{\rm d,D}: D^2(o) \setminus \{0\} \to \bbC$ has the following form:
\begin{equation}\label{shiki656}
	(\Pi\circ \frak s^\xi \circ \varphi_{\rm D,d})(w) = c^{\xi}_{\rm D,d} w^{-{p_1}} + {g(w)}{w}^{-{p_1}+1},
\end{equation}
where $g$ is holomorphic at $0$. We now define:
\begin{equation}
\rho_{1,(0)}^{\xi}(\sigma_1,\sigma_2) =  \frac{c^{\xi}_{\rm d}\sigma_1^{p_1}}{c^{\xi}_{\rm D,d}}.
\end{equation}
Note that $\rho_{1,(0)}^{\xi}$ is independent of the choice of the trivialization of $\mathcal N_{\mathcal D}(X)$, because an alternative choice affects the numerator and the denominator of the right hand side by multiplying with the same number. The constant $\rho_{1,(0)}^{\xi}$ has the property that if $zw = \sigma_1$, then:
\begin{equation}\label{newnewform6555}
	(u^{\xi}_{\rm d} \circ \varphi_{\rm d})(z)\sim
	({\rm Dil}_{\rho_{1,(0)}^{\xi}}\circ U^{\xi}_{\rm D} \circ \varphi_{{\rm D},{\rm d}})(w) 
\end{equation}
where $\sim$ means the coincidence of the lowest order term.

We define $\rho_{2,(0)}^{\xi}$ in a similar way using the behavior of $u_{\rm s}$ and $u_{\rm D,s}$ in a neighborhood of $z_{\rm s}$. Namely, we replace \eqref{shiki655} and \eqref{shiki656} by:
\begin{equation}\label{shiki655rev}
({\Pi} \circ u^{\xi}_{\rm s} \circ \varphi_{\rm s})(z) = c^{\xi}_{\rm s}z^{p_2} + f(z) z^{p_2+1},
\end{equation}
\begin{equation}\label{shiki656rev}
(\Pi\circ \frak s^\xi \circ \varphi_{\rm D,s})(w) = c^{\xi}_{\rm D,s} w^{-{p_2}} + {g(w)}{w^{-{p_2}+1}},
\end{equation}
respectively and define:
\begin{equation}\label{newold664}
\rho_{2,(0)}^{\xi}(\sigma_1,\sigma_2) = \frac{c^{\xi}_{\rm s}\sigma_2^{p_2}}{c^{\xi}_{\rm D,s}}.
\end{equation} 
\par
Now we define a map
$$
u^{\prime, \xi,i}_{\sigma_1,\sigma_2,(0)} : \Sigma(\sigma_1,\sigma_2) \to X
$$
as follows. Roughly speaking, $u^{\prime, \xi,i}_{\sigma_1,\sigma_2,(0)} $ is obtained 
by gluing the three maps $u_{\rm d}^{\xi}$, $u_{\rm s}^{\xi}$, ${\rm Dil}_{\rho_{i,(0)}^{\xi}} \circ U_{\rm D}^{\xi}$, 
using bump functions $\chi_{i,\mathcal X}^{\leftarrow}$, $\chi_{\mathcal X}^{\rightarrow}$.
From now on, we write $\rho_{i,(0)}^{\xi}$ instead of $\rho_{i,(0)}^{\xi}(\sigma_1,\sigma_2)$ when the dependence on $\sigma_i$ is clear.
\begin{definition}
\begin{enumerate}
	\item If $z \in \Sigma_{\rm d}^-(\sigma_1,\sigma_2)$, then:
		\[
		  u^{\prime, \xi,1}_{\sigma_1,\sigma_2,(0)}(z) = u^{\prime, \xi,2}_{\sigma_1,\sigma_2,(0)}(z) = u_{\rm d}^{\xi}(z).
		\]
	\item If $z \in \Sigma_{\rm s}^-(\sigma_1,\sigma_2)$, then
		\[
		  u^{\prime, \xi,1}_{\sigma_1,\sigma_2,(0)}(z) = u^{\prime, \xi,2}_{\sigma_1,\sigma_2,(0)}(z) = u_{\rm s}^{\xi}(z).
		\]
	\item If $z \in \Sigma_{\rm D}^-(\sigma_1,\sigma_2)$, then:
		\[
		  u^{\prime, \xi,i}_{\sigma_1,\sigma_2,(0)}(z) = ({\rm Dil}_{\rho_{i,(0)}^{\xi}} \circ U_{\rm D}^{\xi})(z)
		\]
		for $i=1,2$.
	\item Suppose $z = (r_1,s_1) \in [-5T_1,5T_1]_{r_1} \times S^1_{s_1}$.
		We  define
		\[
		  u^{\prime, \xi,i}_{\sigma_1,\sigma_2,(0)}(z) \\
		= {\rm Exp_2}\left(u_{\rm d}^{\xi}(z),
		\chi_{\mathcal X}^{\rightarrow}(z)  {\rm E}(u_{\rm d}^{\xi}(z),({\rm Dil}_{\rho_{i,(0)}^{\xi}} \circ U_{\rm D}^{\xi})(z))
		\right).
		\]
		Here $ {\rm Exp_2}$ denotes the composition of $ {\rm Exp}$ and projection map from $(X\setminus \mathcal D)\times (X\setminus \mathcal D)$ to the second factor.
		\item
		Suppose $z = (r_2,s_2) \in [-5T_2,5T_2]_{r_2} \times S^1_{s_2}$.
		We  define
		\[
		  u^{\prime, \xi,i}_{\sigma_1,\sigma_2,(0)}(z) \\= {\rm Exp_2}\left(u_{\rm s}^{\xi}(z),
		\chi_{\mathcal X}^{\rightarrow}(z)  {\rm E}(u_{\rm s}^{\xi}(z),({\rm Dil}_{\rho_{i,(0)}^{\xi}} \circ U_{\rm D}^{\xi})(z))\right).
		\]
\end{enumerate}
\end{definition}
\begin{remark}
	In part (4), if $r_1$ is close to $-5T_1$, then the right hand side is $u^{\xi}_{\rm d}$, and if $r_1$ is close to $5T_1$ then the right hand side is 
		${\rm Dil}_{\rho_{i,(0)}^{\xi}} \circ U_{\rm D}^{\xi}$. A similar property holds for the definition in part (5). In particular, our definition is well-defined.
\end{remark}
\noindent{\bf (Step 0-3) (Error estimate)}
\footnote{The enumeration of the steps of this paper is the same as those in 
\cite[Section A1.4]{fooobook2} and \cite{foooexp}.}

The next lemma provides an estimate of  $\overline{\partial}u^{\prime, \xi,i}_{\sigma_1,\sigma_2,(0)}$ modulo the obstruction space
\[
  (E_{\rm d} \oplus E_{\rm s} \oplus E_{\rm D})(u^{\prime, \xi,i}_{\sigma_1,\sigma_2,(0)}).
\]
In the case that $\rho_{1,(0)}^{\xi} \ne \rho_{2,(0)}^{\xi}$, we need to restrict the domain in the following way 
to obtain an appropriate estimate.
We put
\begin{equation}
\Sigma(\sigma_1,\sigma_2)_{i}^-
= 
\begin{cases}
\Sigma(\sigma_1,\sigma_2)
\setminus ([-5T_2,5T_2]_{r_2} \times S^1_{s_2})
&\text{if $i=1$}, \\
\Sigma(\sigma_1,\sigma_2)
\setminus ([-5T_1,5T_1]_{r_1} \times S^1_{s_1})
&\text{if $i=2$}.
\end{cases}
\end{equation}
We consider the $L^2_{m,\delta}$ norm of the restriction of maps to $\Sigma(\sigma_1,\sigma_2)_{i}^-$ 
and denote it by $L^{2,i,-}_{m,\delta}$.

\begin{lemma}\label{lem623}
	There exist constants $\delta_1$, $C_m$ (for any integer $m$) and vectors 
	$\frak e_{{\rm d},(0)}^{\xi} \in E_{\rm d}(u^{\prime, \xi,i}_{\sigma_1,\sigma_2,(0)})$, $\frak e_{{\rm s},(0)}^{\xi} \in E_{\rm s}(u^{\prime, \xi,i}_{\sigma_1,\sigma_2,(0)})$,
	$\frak e_{{\rm D},(0)}^{\xi,i} \in E_{\rm D}(u^{\prime, \xi,i}_{\sigma_1,\sigma_2,(0)})$ such that $\delta_1$, $C_m$ are independent of $\sigma_1$, $\sigma_2$, $\xi$, and we have the following inequalities:
	\begin{enumerate}
		\item \[\Vert \overline{\partial}u^{\prime, \xi,1}_{\sigma_1,\sigma_2,(0)}  - \frak e_{{\rm d},(0)}^{\xi} - \frak e_{{\rm s},(0)}^{\xi}
			- \frak e_{{\rm D},(0)}^{\xi} \Vert_{L^{2,1,-}_{m,\delta}}\le C_m e^{-\delta_1T_1}.\]
		\item \[\Vert \overline{\partial}u^{\prime, \xi,2}_{\sigma_1,\sigma_2,(0)}  - \frak e_{{\rm d},(0)}^{\xi} - \frak e_{{\rm s},(0)}^{\xi}
			- \frak e_{{\rm D},(0)}^{\xi} \Vert_{L^{2,2,-}_{m,\delta}}\le C_m e^{-\delta_1T_2}.\]
	\end{enumerate}
\end{lemma}
We can be more specific about the value of the constant $\delta_1$ as in \eqref{form62} and \eqref{form62rev}. However, the actual choices do not matter for the details of our construction. So we do not give an exact value for this constant.
\begin{proof}
        We define:
        \begin{equation}\label{form6662}
        \aligned
        \frak e_{{\rm d},(0)}^{\xi} &:= \overline{\partial} u^{\xi}_{\rm d} \in E_{\rm d}(u^{\prime, \xi,i}_{\sigma_1,\sigma_2,(0)}), 
        \\
        \frak e_{{\rm s},(0)}^{\xi} &:= \overline{\partial} u^{\xi}_{\rm s} \in E_{\rm s}(u^{\prime, \xi,i}_{\sigma_1,\sigma_2,(0)}),
        \\
        \frak e_{{\rm D},(0)}^{\xi,i} &:= \overline{\partial} ({\rm Dil}_{\rho_{i,(0)}^{\xi}} \circ U^{\xi}_{\rm D}) \in E_{\rm D}(u^{\prime, \xi,i}_{\sigma_1,\sigma_2,(0)}).
        \endaligned
        \end{equation}
        Then by construction the support of 
        $\overline{\partial}u^{\prime, \xi,1}_{\sigma_1,\sigma_2,(0)}  
        - \frak e_{{\rm d},(0)}^{\xi} - \frak e_{{\rm s},(0)}^{\xi}
        - \frak e_{{\rm D},(0)}^{\xi}$
        is contained in 
        $([-5T_1,5T_1]_{r_1} \times S^1_{s_1}) \cup ([-5T_2,5T_2]_{r_2} \times S^1_{s_2})$.
        Therefore, it suffices to estimate 
        $\overline{\partial}u^{\prime, \xi,i}_{\sigma_1,\sigma_2,(0)}$
        on $[-5T_i,5T_i]_{r_i} \times S^i_{s_i}$. Below we discuss the case $i=1$. The other case is similar. 
        
        Let $z = \varphi_{\rm d}(r'_1,s'_1)$ be the coordinate on $\Sigma_{\rm d}$ used to denote points in a neighborhood of $z_{\rm d}$ and 
        $w = \varphi_{\rm D,d}(r''_1,s''_1)$ be the coordinate on $\Sigma_{\rm D}$ used to denote points in a neighborhood of $z_{\rm d}$.
        In order to obtain $\Sigma(\sigma_1,\sigma_2)$, the equation:
        $$
        zw = \sigma_1
        $$ 
        is used to glue $\Sigma_{\rm d}$ and $\Sigma_{\rm D}$.
        Note that the supports of the derivatives of the bump functions $\chi_{1,\mathcal X}^{\leftarrow}$, $\chi_{\mathcal X}^{\rightarrow}$
        are in $\mathcal X_{1,T_1}$. (Here we look at the restriction of the function 
        $\chi_{\mathcal X}^{\rightarrow}$ to $\Sigma(\sigma_1,\sigma_2)_{i}^-$. 
        Otherwise, part of the support of the derivate of this function is contained in $\mathcal X_{2,T_2}$.)
        Therefore, the support of $\overline{\partial}u^{\prime, \xi,i}_{\sigma_1,\sigma_2,(0)}$ is contained in the same subspace.
	
	Firstly we wish to show that the maps $f_1:=u^{\xi}_{\rm d}$ and $f_2:={\rm Dil}_{\rho_{1,(0)}^{\xi}} \circ U^{\xi}_{\rm D}$, as maps from
	from $\mathcal X_{1,T_1}$ to $\frak U\setminus \mathcal D\subset X\setminus \mathcal D$, 
	are close to each other in the $C^m$ metric. In fact, analogues of the inequalities in 
	\eqref{form62} and \eqref{form62rev} show that there are constants $C_m'$ and $\delta_0$ independent of $\sigma_1$, $\sigma_2$ and $\xi$ such that:
	\begin{equation} \label{C1-C2-dis}
	  d_{C^m}(f_1,f_2)\leq C_m'e^{- 5 \delta_0 T_1}
	\end{equation}
	where $d_{C^m}$ is computed with respect to the cylindrical metric $g$. To be a bit more detailed, this inequality holds because the leading terms of $f_1$ and $f_2$ agree with each other, and 
	$f_1$ and $f_2$ are both holomorphic. 
	
	Let $h_1,\,h_2:\mathcal X_{1,T_1} \to \frak U$ be maps such that their $C^0$ distance is less than or equal to a constant $\kappa$. If $\kappa$ is small enough,
	then the following map is well defined:
	\[
	  F(h_1,h_2)= {\rm Exp_2}\left(h_1,\chi_{\mathcal X}^{\rightarrow}\cdot {\rm E}(h_1,h_2)\right).
	\]
	Clearly there is a constant $K$ such that:
	\[
	  \Vert\overline \partial F(h_1,h_2)-\overline \partial F(h_1,h_2')\Vert_{L^2_m}\leq K \cdot d_{C^{m+1}}(h_2,h_2')
	\]
	Since $F(f_1,f_1)=f_1$, the above inequality together with \eqref{C1-C2-dis} implies that  there is a constant $C_m$ such that:
	\[
	  \Vert\overline{\partial}u^{\prime, \xi,1}_{\sigma_1,\sigma_2,(0)}\Vert_{L^2_m(\mathcal X_{1,T_1})} \leq C_me^{- 5 \delta_0 T_1}
	\]
	Therefore, if we pick $\delta$ and $\delta_1$ such that $\delta+\delta_1<5\delta_0$, then the desired inequality holds.
\end{proof}

\subsection{Why Inconsistent Solutions?}
\label{subsub:linsol1}

We already hinted at the necessity of inconsistent solutions at the end of Section \ref{sub:Obst}. In this section we elaborate on this point with an eye toward modifying the approximate solution of the previous section to a solution. We firstly sketch our approach for this modification which is based on {\it  Newton's iteration method}. Next, we explain the main point where the proof in the case of the RGW compactification diverges from the case of the stable map compactification. The discussion of this subsection is informal, and the actual proof will be carried out in the next two subsections.

Suppose $u^{\prime, \xi,1}_{\sigma_1,\sigma_2,(0)}$ and $u^{\prime, \xi,2}_{\sigma_1,\sigma_2,(0)}$ are the approximate solutions of the previous subsection associated to the element $\xi = (u_{\rm d}^{\xi},u_{\rm D}^{\xi},u_{\rm s}^{\xi})$ of \eqref{fib-prod-str}. We assume that $\sigma_1$ and $\sigma_2$ are chosen such that $\rho_{1,(0)}^{\xi}(\sigma_1,\sigma_2) = \rho_{2,(0)}^{\xi}(\sigma_1,\sigma_2)$. In particular, $u^{\prime, \xi,1}_{\sigma_1,\sigma_2,(0)}=u^{\prime, \xi,2}_{\sigma_1,\sigma_2,(0)}$ and we denote these maps by $u'$. Lemma \ref{lem623} gives the following estimate:
$$
\Vert\overline\partial u'\Vert_{L^2_{m,\delta}(\Sigma(\sigma_1,\sigma_2))/E(u')} \le C e^{-c \delta_1 \min\{T_1,T_2\}}.
$$
Here $E(u') = E_{\rm d}(u') \oplus E_{\rm s}(u') \oplus E_{\rm D}(u')$, and the norm on the left hand side is the induced norm on the quotient space. The next step would be to find: 
\[
  V \in W^2_{m+1,\delta}((\Sigma(\sigma_1,\sigma_2),\partial \Sigma(\sigma_1,\sigma_2));
  (u^{\prime *}TX,u'\vert_{\partial}^*TL)).
\]
which satisfies the equation:
\begin{equation}\label{form670new}
(D_{u'}\overline{\partial}) V + \overline\partial u' \equiv 0
\mod E(u').
\end{equation}
and 
$$
\Vert V\Vert_{W_{m+1,\delta}^2} \le C  \Vert\overline\partial u'\Vert_{L^2_m(\Sigma(\sigma_1,\sigma_2))/E(u')}.
$$
Then we could define our first modified approximate solution as follows:
$$
u''(z)={\rm Exp}(u'(z),V(z))
$$
This modified solution would satisfy the following inequality:
$$
\Vert\overline\partial u''\Vert_{L^2_{m,\delta}(\Sigma(\sigma_1,\sigma_2))/E(u'')} \le
\mu \Vert\overline\partial u'\Vert_{L^2_{m,\delta}(\Sigma(\sigma_1,\sigma_2))/E(u')}
$$
for a fixed $0<\mu<1$ if $\sigma_1, \sigma_2$ are sufficiently small 
(or equivalently, $T_1$, $T_2$ are sufficiently large).

We could then continue to obtain $u^{(i)}$ such that
\[
  \Vert\overline\partial u^{(i)}\Vert_{L^2_m(\Sigma(\sigma_1,\sigma_2))/E(u^{(i)}{\vert})} \le
  \mu^{i} \Vert\overline\partial u'\Vert_{L^2_m(\Sigma(\sigma_1,\sigma_2))/E(u')}
\]
and for fixed constants $C$ and $c$, the $W^2_{m+1,\delta}$-distance between $u^{(i)}$ and $u^{(i+1)}$ is bounded by $C\mu^{i} e^{-c \delta_1 \min\{T_1,T_2\}}$. Then $u^{(i)}$ would be convergent to a map $u$, and it would be the required solution of the equation:
\begin{equation}\label{eq671}
	\overline \partial u \equiv 0 \mod E(u).
\end{equation}
This is the standard Newton's iteration method to solve a nonlinear equation using successive solutions to the linearized equation. However, the RGW compactification is singular at the starting point of our construction, the element $\zeta$ of \eqref{fib-prod-str}.
So we cannot expect the above Newton's iteration method works without some adjustments. We fix our approach by thickening the solution set of \eqref{eq671} to the set of inconsistent solutions. 

The main reason that we will work with this larger moduli space lies in the step that we find the solution $V$ of the equation \eqref{form670new}. To solve this equation, we need to find a right inverse to the following operator modulo $E(u')$:
\[\aligned D_{u'}\overline{\partial} : &W^2_{m+1,\delta}((\Sigma(\sigma_1,\sigma_2),\partial \Sigma(\sigma_1,\sigma_2));
  (u^{\prime *}TX,u'\vert_{\partial}^*TL)) \\
  &\to L^2_{m,\delta}(\Sigma(\sigma_1,\sigma_2);u^{\prime *}TX\otimes \Lambda^{0,1})/E(u').
  \endaligned
\]
The standard approach to construct this right inverse is to glue the right inverses of the linearized operators $D_{u_{\rm d}}\overline{\partial}$, $D_{u_{\rm s}}\overline{\partial}$ and 
$D_{U_{\rm D}}\overline{\partial}$. The linearized operator $D_{u'}\overline{\partial}$ over the cylinder $[-5T_i,5T_i]_{r_i} \times S^1_{s_i}$ is modeled by an operator of the form 
$$
\frac{\partial}{\partial r_i} + P_{r_i}.
$$
The relevant operators $P_{r_i}$ in our setup have non-trivial kernel and our gluing construction is of ``Morse-Bott'' type. As it was clarified by Mrowka's Mayer-Vietoris principle \cite{Mr}, to have a well-behaved gluing problem we need to assume certain `mapping transversality conditions'.

To be more specific, the zero eigenspace of the operator $P_{r_i}$ can be identified with: 
\begin{equation}\label{form672}
	\aligned
	(\bbR\oplus \bbR)\oplus &T_{u_{\rm d}(z_{\rm d})}\mathcal D    \qquad \text{if $i=1$}, \\
	(\bbR\oplus \bbR)\oplus  &T_{u_{\rm s}(z_{\rm s})}\mathcal D  \qquad \text{if $i=2$}.
	\endaligned
\end{equation}
Here $\bbR \oplus \bbR$ is the tangent space to $\bbC_*$. The mapping transversality condition we introduced in Definition \ref{defn6969} concerns the summand  $T_{u_{\rm d}(z_{\rm d})}\mathcal D$. Therefore, it is {\it not} sufficient for the Mayer-Vietoris principle in our setup. However, working with inconsistent solutions allows us to enlarge the tangent spaces and obtain the required transversality condition. A byproduct of using inconsistent solutions is that we might end up with inconsistent solutions throughout Newton's iterations, even if the starting approximate solution has $\rho_1 = \rho_2$.


\subsection{Inconsistent Maps and Linearized Equations}
\label{subsub:linsol2}
In Section \ref{sub:statement}, the notion of holomorphic maps was extended to inconsistent solutions of the Cauchy-Riemann equation. It is also convenient to define generalizations of maps from $\Sigma(\sigma_1,\sigma_2)$ to $X\backslash \mathcal D$:
\begin{definition}\label{defn625625}
	A 7-tuple $\frak u'=(u'_{\rm d},u'_{\rm s},U'_{\rm D},\sigma_1,\sigma_2,\rho_1,\rho_2)$ is 
	an {\it inconsistent map} if it satisfies only parts (1) and (4) of Definition \ref{defn615inconsis}. 
	In other words, we do not require that the 7-tuple satisfies the Cauchy-Riemann equation in \eqref{eq630-def}
	and the constraint in \eqref{eq631-p}.
	We define equivalence of inconsistent maps in the same way as in Definition \ref{defn615inconsis}.
\end{definition}

An example of inconsistent maps can be constructed using the maps:
\[
  \hspace{3cm}u^{\prime, \xi,i}_{\sigma_1,\sigma_2,(0)} : \Sigma(\sigma_1,\sigma_2) \to X\hspace{1cm} i=1,\,2
\]
of Subsection \ref{subsub:preglue} which are associated to an element $\xi= (u_{\rm d}^{\xi},u_{\rm D}^{\xi},u_{\rm s}^{\xi})$ of \eqref{fib-prod-str}. We use these two maps to define:
$$
    \aligned
    u^{\xi,\prime}_{{\rm d},\sigma_1,\sigma_2,(0)}
    &:=
    u^{\prime,\xi,1}_{\sigma_1,\sigma_2,(0)}\vert_{\Sigma_{\rm d}^+(\sigma_1,\sigma_2)}\\
    u^{\xi,\prime}_{{\rm s},\sigma_1,\sigma_2,(0)}
    &:=
    u^{\prime,\xi,2}_{\sigma_1,\sigma_2,(0)}\vert_{\Sigma_{\rm s}^+(\sigma_1,\sigma_2)}
    \\
    U^{\xi,\prime}_{{\rm D},\sigma_1,\sigma_2,(0)}
    &:=
    \begin{cases}
            {\rm Dil}_{1/\rho_{1,(0)}^{\xi}} \circ u^{\prime,\xi,1}_{\sigma_1,\sigma_2,(0)}
            &\text{on 
            $\Sigma_{\rm d}^+(\sigma_1,\sigma_2) \cap 
            \Sigma_{\rm D}^+(\sigma_1,\sigma_2)$} \\
            {\rm Dil}_{1/\rho_{2,(0)}^{\xi}} \circ  u^{\prime,\xi,2}_{\sigma_1,\sigma_2,(0)}
            &\text{on 
            $\Sigma_{\rm s}^+(\sigma_1,\sigma_2) \cap 
            \Sigma_{\rm D}^+(\sigma_1,\sigma_2)$} \\
            {\rm Dil}_{1/\rho_{1,(0)}^{\xi}} \circ u^{\prime,\xi,1}_{\sigma_1,\sigma_2,(0)}
            &\text{on $\Sigma_{\rm D}^-(\sigma_1,\sigma_2)$}
    \end{cases}
    \endaligned
$$
Note that ${\rm Dil}_{1/\rho_{1,(0)}^{\xi}} \circ u^{\prime,\xi,1}_{\sigma_1,\sigma_2,(0)} ={\rm Dil}_{1/\rho_{2,(0)}^{\xi}} \circ u^{\prime,\xi,2}_{\sigma_1,\sigma_2,(0)}$ on $\Sigma_{\rm D}^-(\sigma_1,\sigma_2)$. The following lemma is obvious from the construction:
\begin{lemma}\label{lem62611}
	The 7-tuple: 
	\[\frak u^{\xi,\prime}_{\sigma_1,\sigma_2,(0)}:=
	(u^{\xi,\prime}_{{\rm d},\sigma_1,\sigma_2,(0)},u^{\xi,\prime}_{{\rm s},\sigma_1,\sigma_2,(0)},U^{\xi,\prime}_{{\rm D},\sigma_1,\sigma_2,(0)},\sigma_1,\sigma_2,
	\rho_{1,(0)}^{\xi},\rho_{2,(0)}^{\xi})\] 
	is an inconsistent map
\end{lemma}
The inconsistent map $\frak u^{\xi,\prime}_{\sigma_1,\sigma_2,(0)}$ of Lemma \ref{lem62611} is the approximate solution at the 0-th step. In order to obtain an actual inconsistent solution, we keep modifying this approximate solution into better approximate solutions. To be more detailed, we firstly use $\frak u^{\xi,\prime}_{\sigma_1,\sigma_2,(0)}$ and our bump functions to obtain a triple 
\[\frak u^{\xi,\prime\prime}_{\sigma_1,\sigma_2,(0)}=(u^{\xi,\prime\prime}_{{\rm d},\sigma_1,\sigma_2,(0)},u^{\xi,\prime\prime}_{{\rm s},\sigma_1,\sigma_2,(0)},U^{\xi,\prime\prime}_{{\rm D},\sigma_1,\sigma_2,(0)})\] 
such that
\begin{equation}\label{uprime20}
	u^{\xi,\prime\prime}_{{\rm d},\sigma_1,\sigma_2,(0)}:\Sigma_{\rm d} \setminus \{z_{\rm d}\} \to X \setminus \mathcal D,\hspace{1cm}
	u^{\xi,\prime\prime}_{{\rm s},\sigma_1,\sigma_2,(0)}:\Sigma_{\rm s} \setminus \{z_{\rm s}\} \to X \setminus \mathcal D,
\end{equation}
\begin{equation}\label{uprime21}
	U^{\xi,\prime\prime}_{{\rm D},\sigma_1,\sigma_2,(0)}:\Sigma_{\rm D} \setminus \{z_{\rm d},z_{\rm s}\} \to \mathcal N_{\mathcal D}(X).
\end{equation}
are close to $(u_{\rm d}^{\xi},u_{\rm s}^{\xi},U_{\rm D}^{\xi})$. In fact, the smaller the values of $\sigma_1$ and $\sigma_2$ are, the closer $\frak u^{\xi,\prime\prime}_{\sigma_1,\sigma_2,(0)}$ is to $\xi$. Thus we can exploit this to conclude that an appropriate version of the Cauchy-Riemann operator associated to $\frak u^{\xi,\prime\prime}_{\sigma_1,\sigma_2,(0)}$  has a right inverse. (See Lemma \ref{lem63631}.) This allows us to find a modified inconsistent map $\frak u^{\xi,\prime}_{\sigma_1,\sigma_2,(1)}$. We repeat the same process to construct a sequence of inconsistent maps $\{\frak u^{\xi,\prime}_{\sigma_1,\sigma_2,(i)}\}$ which are approximate solutions and they converge to an inconsistent solution. This sequence of modified inconsistent solution is constructed using Newton's iteration, and it also has some components of the  ``alternating method''.\footnote{See, for example, \cite[Sublemma 8.6]{fconnsum}.
Application of alternating method for gluing analysis of this kinds 
is initiated by Donaldson \cite{Don}. He applied alternating method directly to a nonlinear equation.} In this method we solve the equation in various pieces and glue them together.


In order to carry out the above plan, we need to introduce norms to quantify the distance between two inconsistent maps and to measure how good an approximate solution is. Such norms are given in the following Definition \ref{tang-incon-map}:

        Let 
        $\frak u' =
        (u'_{{\rm d}},u'_{{\rm s}},U'_{{\rm D}},\sigma_1,\sigma_2,\rho_1,
        \rho_2)
        $
        be an inconsistent map.
        We consider a triple ${\rm V} = (V_{\rm d},V_{\rm s},V_{\rm D})$ with 
        \begin{align*}
        &V_{\rm d}\in L^2_{m+1}(\Sigma_{\rm d}^+(\sigma_1,\sigma_2),(u'_{\rm d})^*TX), \\
        &V_{\rm s}\in L^2_{m+1}(\Sigma_{\rm s}^+(\sigma_1,\sigma_2),(u'_{\rm s})^*TX), \\
        &V_{\rm D}\in L^2_{m+1}(\Sigma_{\rm D}^+(\sigma_1,\sigma_2),(U'_{\rm D})^*(T\mathcal N_{\mathcal D}(X))).
        \end{align*}
        We assume $V_{\rm d}(\frak z) \in T_{u'_{\rm d}(\frak z)}L$ if $\frak z \in 
        \partial \Sigma_{\rm d}^+(\sigma_1,\sigma_2)$.
        Moreover, we assume that there exist $(a_{\rm d},b_{\rm d}), (a_{\rm s},b_{\rm s})\in \bbR\oplus \bbR$ such that
        $$
        \aligned
        V_{\rm d} - V_{\rm D} &= (a_{\rm d},b_{\rm d}) \qquad \text{on $[-5T_1,5T_1]_{r_1} \times 
        S^1_{s_1}$} \\
        V_{\rm s} - V_{\rm D} &= (a_{\rm s},b_{\rm s}) \qquad \text{on $[-5T_2,5T_2]_{r_2} \times 
        S^2_{s_2}$}
        \endaligned
        $$
        Here we regard $\bbR\oplus \bbR$ as the vector field on the neighborhood $\frak U$ of $\mathcal D$ given by the $\bbC_*$ action.
\begin{definition}\label{tang-incon-map}
        Define $v_i = V_{\rm D}((0,0)_i)$ where $(0,0)_i$ is the same as in 
        \eqref{defnvivi}. We then define $\hat v_i$ in the same way as in \eqref{form651651newnew}. We now define
        $\Vert {\rm V}\Vert^2_{W^{2,\sim}_{m,\delta}}$ as follows:
        $$
        \aligned
        &\Vert V_{{\rm d}} \Vert^2_{L^2_m(\Sigma_{\rm d}^-(\sigma_1,\sigma_2))}
        +
        \Vert V_{{\rm s}}\Vert^2_{L^2_m(\Sigma_{\rm s}^-(\sigma_1,\sigma_2))}
        +
        \Vert V_{{\rm D}}\Vert^2_{L^2_m(\Sigma_{\rm D}^-(\sigma_1,\sigma_2))}
        \\
        &+
        \sum_{j=0}^m
        \int_{[-5T_1,5T_1]_{r_1} \times S^1_{s_1}} 
        e_{\delta}^{\sigma_1,\sigma_2} 
        \left\vert \nabla^j (V_{\rm D} - \hat v_1)\right\vert^2 dr_1ds_1
        \\
        &+\sum_{j=0}^m
        \int_{[-5T_2,5T_2]_{r_2} \times S^1_{s_2}} 
        e_{\delta}^{\sigma_1,\sigma_2} 
        \left\vert  \nabla^j (V_{\rm D} - \hat v_2)\right\vert^2 dr_2ds_2
        \\
        &+\vert v_1\vert^2+\vert v_2\vert^2.
        \endaligned
        $$
        Let ${\rm V} = (V_{\rm d},V_{\rm s},V_{\rm D})$, 
        ${\rm V}' = (V'_{\rm d},V'_{\rm s},V'_{\rm D})$ be as above.
        We say they are equivalent if $V_{\rm d} = V'_{\rm d}$, 
        $V_{\rm s} = V'_{\rm s}$ and $V_{\rm D} - V'_{\rm D} \in \bbR\oplus \bbR$, where $\bbR\oplus \bbR$ is the set of vector fields generated by the $\bbC_*$ action.
        We put
        $$
        \Vert {\rm V}\Vert^2_{W^{2}_{m,\delta}}
        = 
        \inf \{\Vert {\rm V}'\Vert^2_{W^{2,\sim}_{m,\delta}} 
        \mid \text{${\rm V}'$ is equivalent to ${\rm V}$}. \}
        $$
\end{definition}

Note in the above definition we take infimum over representatives. The ambiguity of 
the choice of representatives is determined by the shift of several components
by real numbers. If those shifts are very big the norm $\Vert V\Vert$ becomes 
far from minimum. Therefore, the domain we need to take infimum is 
bounded. It implies that even though we take infimum this norm is 
positive, as long as $V\neq 0$.

\begin{definition}\label{defbn7-8}
	For $j=1,2$, let $\frak u'_{(j)}$ be an inconsistent map. 
	We assume that there is a representative
	$(u'_{{\rm d},(j)},u'_{{\rm s},(j)},U'_{{\rm D},(j)},\sigma_1,\sigma_2,\rho_{1,(j)},\rho_{2,(j)})$ for 
	$\frak u'_{(j)}$ such that the triple $(u'_{{\rm d},(1)},u'_{{\rm s},(1)},U'_{{\rm D},(1)})$ 
	is $C^0$-close to $(u'_{{\rm d},(2)},u'_{{\rm s},(2)},U'_{{\rm D},(2)})$. 
	Define  $V_{\rm d}$, $V_{\rm s}$, $V_{\rm D}$ by the following properties:
	\begin{align}\label{vector-exp}
		\phantom{i + j + k}
  		&\begin{aligned}
			&{\rm Exp}(u'_{{\rm d},(1)},V_{\rm d}) = u'_{{\rm d},(2)}, \\
			  &{\rm Exp}(u'_{{\rm s},(1)},V_{\rm s}) = u'_{{\rm s},(2)}, \\
			  &{\rm Exp}(U'_{{\rm D},(1)},V_{\rm D}) = U'_{{\rm D},(2)}. 
		\end{aligned}
	\end{align}
	Let ${\rm V} = (V_{\rm d},V_{\rm s},V_{\rm D})$, and define:
	\[
	  d_{W^{2}_{m,\delta}}(\frak u'_{(1)},\frak u'_{(2)})= \inf\{\Vert {\rm V}\Vert_{W^{2}_{m,\delta}}\}.
	\]
	where the infimum is taken over all representatives for $\frak u'_{(1)}$ and $\frak u'_{(2)}$
	which are close enough to each other in the $C^0$ metric such that 
	the vectors in \eqref{vector-exp} exist.
	Therefore, $d_{W^{2}_{m,\delta}}$ is a well defined distance between two equivalence 
	classes of inconsistent maps.
	Note that $d_{W^{2}_{m,\delta}}(\frak u'_{(1)},\frak u'_{(2)})$
	is strictly positive if $\frak u'_{(1)} \ne \frak u'_{(2)}$.
	We can prove this fact in the same way as in Definition \ref{tang-incon-map}.
\end{definition}

For any inconsistent map $\frak u'=(u'_{\rm d},u'_{\rm s},U'_{\rm D},\sigma_1,\sigma_2,\rho_1,\rho_2)$, we may use a similar parallel transport construction as in Definition \ref{defn615inconsis} to define obstruction spaces for $\frak u'$. That is to say, we define maps $\mathcal {PAL}$ as in \eqref{PAL1} and \eqref{PAL2}. Then the images of $E_{\rm d}$ and $E_{\rm s}$ with respect to these maps give rise to the obstruction spaces $E_{\rm d}(u'_{\rm d})$ and $E_{\rm s}(u'_{\rm s})$. Similarly, we define $E_{\rm D}(U'_{\rm D})$ by replacing $u'_{\rm D}$ with $\pi\circ U'_{\rm D}$ in \eqref{PAL-D} and using the decomposition \eqref{decom-tan-bdle}. We will write $E(\frak u')$ for the direct sum of the vector spaces $E_{\rm d}(u'_{\rm d})$ and $E_{\rm s}(u'_{\rm s})$ and $E_{\rm D}(U'_{\rm D})$. Note that $E_{\rm d}(u'_{\rm d})$, $E_{\rm s}(u'_{\rm s})$ and $E_{\rm D}(U'_{\rm D})$ are identified with $E_{\rm d}$ and $E_{\rm s}$ and $E_{\rm D}$. Therefore, we drop $u'_{\rm d}$, $u'_{\rm s}$ and $U'_{\rm D}$ from our notation for these obstruction spaces if it does not make any confusion.

\begin{definition}
	Let $\frak u' =(u'_{{\rm d}},u'_{{\rm s}},U'_{{\rm D}},\sigma_1,\sigma_2,\rho_1,\rho_2)$
	be an inconsistent map and $\frak e = (\frak e_{\rm d},\frak e_{\rm s},\frak e_{\rm D})\in E_{\rm d}\oplus E_{\rm s} \oplus E_{\rm D}$. 
	Then we define $\Vert \overline \partial \frak u' - \frak e\Vert^2_{L^2_{m,\delta}}$ to be the following sum:
	\begin{align*}
	&\Vert \overline\partial u'_{{\rm d}} - \frak e_{\rm d}\Vert^2_{L^2_m(\Sigma_{\rm d}^-(\sigma_1,\sigma_2))}
	+\Vert \overline\partial u'_{{\rm s}} - \frak e_{\rm s}\Vert^2_{L^2_m(\Sigma_{\rm s}^-(\sigma_1,\sigma_2))} \\
	&+\Vert \overline\partial U'_{{\rm D}} - \frak e_{\rm D}\Vert^2_{L^2_m(\Sigma_{\rm D}^-(\sigma_1,\sigma_2))}\\
	&+\sum_{j=0}^m\int_{[-5T_1,5T_1]_{r_1} \times S^1_{s_1}} e_{\delta}^{\sigma_1,\sigma_2} \left\vert \nabla^j \overline\partial U'_{{\rm D}}\right\vert^2 dr_1ds_1\\
	&+\sum_{j=0}^m\int_{[-5T_2,5T_2]_{r_2} \times S^1_{s_2}} e_{\delta}^{\sigma_1,\sigma_2} \left\vert \nabla^j \overline\partial U'_{{\rm D}}\right\vert^2 dr_2ds_2.
	\end{align*}
\end{definition}
We remark that the first 3 terms in the above definition are the Sobolev norms of $\overline \partial \frak u' - \frak e$ in the {\it thick part}. The fourth and the fifth terms are its weighted Sobolev norms in the neck region. 
Because of our choice of cylindrical metrics on $\frak U$, the partial $\bbC_*$-action induces isometries  and preserves the almost complex structure. Therefore, the above sum is well-defined and only depends on the equivalence class of $\frak u'$.

The process of the modifications of our approximate solutions are performed by finding solutions to the linearization of the modified Cauchy-Riemann equations in \eqref{eq630-def}. Since our equation has terms induced by the obstruction bundle, the linearized operator has an extra term in addition to $D_{u'}\overline{\partial}$. The equations in \eqref{eq630-def} can be regarded as an equation for an inconsistent map $\frak u' =(u'_{{\rm d}},u'_{{\rm s}},U'_{{\rm D}},\sigma_1,\sigma_2,\rho_1,\rho_2)$ and $(\frak e_{\rm d}, \frak e_{\rm s}, \frak e_{\rm D})\in E_{\rm d}\oplus E_{\rm s} \oplus E_{\rm D}$:
\begin{equation}\label{eq630-rep}
	\overline \partial u_{\rm d}'- \frak e_{\rm d}=0, \quad
	\overline \partial u_{\rm s}' -\frak e_{\rm s}=0, \quad
	\overline \partial U_{\rm D}' -\frak e_{\rm D}=0.
\end{equation}
Suppose ${\rm V} = (V_{\rm d},V_{\rm s},V_{\rm D})$ is an element of the Hilbert space introduced in Definition \ref{tang-incon-map}. For each real number $\tau$ with ${\vert}\tau{\vert}<1$,  let $\frak u^\tau$ be given by the triple $(u^\tau_{{\rm d}},u^\tau_{{\rm s}},U^\tau_{{\rm D}})$ defined as:
	\begin{align}\label{}
		\phantom{i + j + k}
  		&\begin{aligned}
		  &u^\tau_{{\rm d}}:={\rm Exp}(u'_{{\rm d}},\tau V_{\rm d}), \hspace{.6cm}
		  u^\tau_{{\rm s}}:={\rm Exp}(u'_{{\rm s}},\tau V_{\rm s}), \hspace{.6cm} \\
		  &U^\tau_{{\rm D}}:={\rm Exp}(U'_{{\rm D}},\tau V_{\rm D}). 
		\end{aligned}
	\end{align}
We use parallel transport along minimal geodesics to obtain:
\[
  \mathcal{PAL}^\tau_{u'_{\rm d}} : L^2_{m,\delta}(\Sigma_{\rm d}^+(\sigma_1,\sigma_2);
  u_{\rm d}^{\prime *}TX\otimes \Lambda^{0,1})\xrightarrow{\cong} 
  L^2_{m,\delta}(\Sigma_{\rm d}^+(\sigma_1,\sigma_2);u^{\tau *}_{\rm d}
  TX\otimes \Lambda^{0,1}).
\]
and maps $\mathcal{PAL}^\tau_{u'_{\rm s}}$ and $\mathcal{PAL}^\tau_{U'_{\rm D}}$. Then for $\frak e= (\frak e_{\rm d}, \frak e_{\rm s}, \frak e_{\rm D})\in E_{\rm d}\oplus E_{\rm s} \oplus E_{\rm D}$, we define:
\begin{equation} \label{der-ob-bun}
	\aligned
	(D_{u_{\rm d}'}E)(\frak e_{\rm d},V_{\rm d})= &
	\left.\frac{d}{d\tau}\right\vert_{\tau=0} ((\mathcal{PAL}^\tau_{u'_{\rm d}})^{-1}(\frak e_{\rm d})), \\
	(D_{u_{\rm s}'}E)(\frak e_{\rm s},V_{\rm s})= &
	\left.\frac{d}{d\tau}\right\vert_{\tau=0} ((\mathcal{PAL}^\tau_{u'_{\rm s}})^{-1}(\frak e_{\rm s})), \\
	(D_{U_{\rm D}'}E)(\frak e_{\rm D},V_{\rm D})=&
	\left.\frac{d}{d\tau}\right\vert_{\tau=0} ((\mathcal{PAL}^\tau_{U'_{\rm D}})^{-1}(\frak e_{\rm D})). \\
	\endaligned
\end{equation}
We also reserve the following notation for the triple given by the above vectors:
\begin{equation} \label{der-ob-bun-triple}
	(D_{\frak u'}E)(\frak e,{\rm V})=
	((D_{u_{\rm d}'}E)(\frak e_{\rm d},V_{\rm d}),
	(D_{u_{\rm s}'}E)(\frak e_{\rm s},V_{\rm s}),
	(D_{U_{\rm D}'}E)(\frak e_{\rm D},V_{\rm D}))
\end{equation}
The linearizations of the Cauchy-Riemann equations in \eqref{eq630-rep} at $(\frak u',\frak e)$ evaluated at $\rm V$ as above and $\frak f\in E_{\rm d}\oplus E_{\rm s} \oplus E_{\rm D}$ have the following form:
\begin{equation}\label{lin-eq630-rep}
	D_{\frak u'}\overline{\partial}({\rm V}) - (D_{\frak u'}E)(\frak e,{\rm V})-\frak f.
\end{equation}
where:
\[
  D_{\frak u'}\overline{\partial}({\rm V})=(D_{u_{\rm d}'}\overline{\partial}(V_{\rm d}),
  D_{u_{\rm s}'}\overline{\partial}(V_{\rm s}),D_{U_{\rm D}'}\overline{\partial}(V_{\rm D}) ).
\]

\subsection{Newton's Iteration}
\label{subsub:newton}

Now we are ready to carry out the strategy which is discussed in the previous subsection. In the following, we use the maps constructed in Subsection \ref{subsub:preglue}.
\par\smallskip
\noindent{\bf (Step 0-4) (Separating error terms into three parts)}
\par
We firstly fix notations for the error terms of our first approximation $\frak u^{\xi,\prime}_{\sigma_1,\sigma_2,(0)}$:
\begin{equation}\label{error-first-approx}
	\aligned
	{\rm Err}^{\xi}_{{\rm d},\sigma_1,\sigma_2,(0)}
	&= \chi_{1,\mathcal X}^{\leftarrow}(\overline \partial u^{\prime, \xi,1}_{\sigma_1,\sigma_2,(0)}- \frak e_{{\rm d},(0)}^{\xi}),\\
	{\rm Err}^{\xi}_{{\rm s},\sigma_1,\sigma_2,(0)}
	&= \chi_{2,\mathcal X}^{\leftarrow}(\overline \partial u^{\prime, \xi,2}_{\sigma_1,\sigma_2,(0)} - \frak e_{{\rm s},(0)}^{\xi}), \\
	{\rm Err}^{\xi}_{{\rm D},\sigma_1,\sigma_2,(0)}
	&= \chi_{\mathcal X}^{\rightarrow}(\overline \partial u^{\prime, \xi,1}_{\sigma_1,\sigma_2,(0)}- 
	\frak e_{{\rm D},(0)}^{\xi,1}).
	\endaligned
\end{equation}
where $\frak e_{{\rm d},(0)}^{\xi}$, $\frak e_{{\rm s},(0)}^{\xi}$, $\frak e_{{\rm D},(0)}^{\xi,i}$ are defined in \eqref{form6662}.

\par\smallskip
\noindent{\bf (Step 1-1) (Approximate solution for linearization)}
\par
Next we define:
\begin{equation}\label{new677770}
	\frak u^{\xi,\prime\prime}_{\sigma_1,\sigma_2,(0)}=(u^{\xi,\prime\prime}_{{\rm d},\sigma_1,\sigma_2,(0)},
	u^{\xi,\prime\prime}_{{\rm s},\sigma_1,\sigma_2,(0)}, 
	U^{\xi,\prime\prime}_{{\rm D},\sigma_1,\sigma_2,(0)})
\end{equation}
whose entries have the form given in \eqref{uprime20} and \eqref{uprime21}.
Let:
\begin{equation}
\aligned
&u^{\xi}_{\rm d}(z_{\rm d}) = p^{\xi}_{{\rm d},\sigma_1,\sigma_2,(0)} = p_{{\rm D,d},\sigma_1,\sigma_2,(0)},
\\
&u^{\xi}_{\rm s}(z_{\rm s}) = p^{\xi}_{{\rm s},\sigma_1,\sigma_2,(0)} = p_{{\rm D,s},\sigma_1,\sigma_2,(0)}.
\endaligned\end{equation}
We take $c_{\rm d}^{\xi}$, $c_{\rm s}^{\xi}$, $c_{\rm D,d}^{\xi}$, $c_{\rm D,s}^{\xi}$
as in (\ref{shiki655}), (\ref{shiki655rev}), (\ref{shiki656}), (\ref{shiki656rev}),
respectively.
We regard $c_{\rm d}^{\xi} z^{p_1}$ as an element of the fiber of $\mathcal N_{\mathcal D}(X)$ at $p^{\xi}_{{\rm d},\sigma_1,\sigma_2,(0)}$ and hence as an element of $X \setminus \mathcal D$. We define:
\begin{equation}\label{new682new}
	\aligned
	&u^{\xi,\prime\prime}_{{\rm d},\sigma_1,\sigma_2,(0)}(r_1,s_1) \\
	&:=
	{\rm Exp}(c_{\rm d}^{\xi} z^{p_1},\chi_{1,\mathcal B}^{\leftarrow}(r_1-T_1,s_1){\rm E}(c_{\rm d}^{\xi} z^{p_1},u^{\prime, \xi,1}_{\sigma_1,\sigma_2,(0)}(r_1,s_1)))
	\endaligned
\end{equation}
if $(r_1,s_1) \in [-5T_1,\infty)_{r_1} \times S^1_{s_1} \subset \Sigma_{\rm d}\setminus\{z_{\rm d}\}$.
If $\frak z \in \Sigma_{\rm d}\setminus\{z_{\rm d}\}$ is an element in the complement of $[-5T_1,\infty)_{r_1} \times S^1_{s_1}$,
then we define:
$$
u^{\xi,\prime\prime}_{{\rm d},\sigma_1,\sigma_2,(0)}(\frak z) :=u^{\prime, \xi,1}_{\sigma_1,\sigma_2,(0)}(\frak z).
$$
This completes the definition of $u^{\xi,\prime\prime}_{{\rm d},\sigma_1,\sigma_2,(0)}$ as a map from $\Sigma_{\rm d}\setminus\{z_{\rm d}\}$ to $X\setminus \mathcal D$.

Similarly, we define:
\begin{equation}\label{new683new}
	\aligned
	&u^{\xi,\prime\prime}_{{\rm s},\sigma_1,\sigma_2,(0)}(r_1,s_1) \\
	&:={\rm Exp}(c_{\rm s}^{\xi} z^{p_2},\chi_{2,\mathcal B}^{\leftarrow}(r_2-T_2,s_2){\rm E}(c_{\rm s}^{\xi} z^{p_2},u^{\prime, \xi,2}_{\sigma_1,\sigma_2,(0)}(r_2,s_2)))
	\endaligned
\end{equation}
if $(r_2,s_2) \in [-5T_2,\infty)_{r_2} \times S^1_{s_2} \subset \Sigma_{\rm s}\setminus\{z_{\rm s}\}$. Here we regard $c_{\rm s}^{\xi} z^{p_2}$ as an element of the fiber of $\mathcal N_{\mathcal D}(X)$ at $p^{\xi}_{{\rm s},\sigma_1,\sigma_2,(0)}$ and hence as an element of $X \setminus \mathcal D$. If $\frak z \in \Sigma_{\rm s}\setminus\{z_{\rm s}\}$ is an element in the complement of $[-5T_2,\infty)_{r_2} \times S^1_{s_2}$,
then we define:
\[
  u^{\xi,\prime\prime}_{{\rm s},\sigma_1,\sigma_2,(0)}(\frak z) :=u^{\prime, \xi,2}_{\sigma_1,\sigma_2,(0)}(\frak z).
\]
The next lemma is easy to prove.

\begin{lemma}\label{conds626}
	If the constants $T_1$, $T_2$ are sufficiently large, then $u''_{\rm d} :  (\Sigma_{\rm d}\setminus\{z_{\rm d}\},\partial \Sigma_{\rm d}) \to (X\setminus \mathcal D,L)$ 
	(resp. $u''_{\rm s} :  \Sigma_{\rm s}\setminus\{z_{\rm s}\} \to X\setminus \mathcal D$) satisfies the following properties:
	\begin{enumerate}
		\item $u''_{\rm d}$ (resp. $u''_{\rm s}$) maps  $[3T_1,\infty)_{r_1} \times S^1_{s_1}$ (resp. $[3T_2,\infty)_{r_2} \times S^1_{s_2}$) to $\frak U$.
			There exist $p_{\rm d}, p_{\rm s} \in \mathcal D$  such that the restriction of $\pi \circ u''_{\rm d}$ 
			(resp. $\pi \circ u''_{\rm s}$) to $[3T_1,\infty)_{r_1} \times S^1_{s_1}$ (resp. $[3T_2,\infty)_{r_2} \times S^1_{s_2}$) is a constant map to $p_{\rm d}$ (resp. $p_{\rm s}$).
		\item After an appropriate  trivialization of the 
		pull back of the normal bundle $\mathcal N_{\mathcal D}(X)$ at the points $p_{\rm d}, p_{\rm s}$,
			there exist $c_{\rm d}, c_{\rm s} \in \bbC_*$ such that the restriction of $u''_{\rm d} \circ \varphi_{\rm d}$ to $[3T_1,\infty)_{r_1} \times S^1_{s_1}$ 
			(resp. $u''_{\rm s} \circ \varphi_{\rm s}$ to $[3T_2,\infty)_{r_2} \times S^1_{s_2}$) is
			\begin{equation}\label{form6733}
				(u''_{\rm d} \circ \varphi_{\rm d})(z)=c_{\rm d} z^{p_1},\quad\text{(resp. $(u''_{\rm s} \circ \varphi_{\rm s})(z)= c_{\rm s} z^{p_2}$).}
			\end{equation}
	\end{enumerate}
\end{lemma}

Next, we define the map $U^{\xi,\prime\prime}_{{\rm D},\sigma_1,\sigma_2,(0)}:\Sigma_{\rm D} \setminus \{z_{\rm d},z_{\rm s}\} \to \mathcal N_{\mathcal D}(X)$. A trivialization of the fibers of $\mathcal N_{\mathcal D}(X)$ at the points $p^{\xi}_{{\rm d},\sigma_1,\sigma_2,(0)}$ and $p^{\xi}_{{\rm s},\sigma_1,\sigma_2,(0)}$ allow us to identify $c_{\rm D,d}^{\xi} w^{-{p_1}}$ and $c_{\rm D,s}^{\xi} w^{-{p_2}}$ with elements of $\mathcal N_{\mathcal D}(X)\setminus \mathcal D = \bbR_{\tau} \times S\mathcal N_{\mathcal D}(X)$. We define:
\begin{equation}\label{new685new}
	\aligned
	&U^{\xi,\prime\prime}_{{\rm D},\sigma_1,\sigma_2,(0)}(r_1,s_1) \\
	&={\rm Exp}(c_{\rm D,d}^{\xi} w^{-{p_1}}, \chi_{\mathcal A}^{\rightarrow}(r_1+T_1,s_1)\cdot\\
	&\qquad\qquad\qquad\qquad{\rm E}(c_{\rm D,d}^{\xi} w^{-{p_1}},(({\rm Dil}_{\rho_{1,(0)}^{\xi}})^{-1}  \circ u^{\prime, \xi,1}_{\sigma_1,\sigma_2,(0)})(r_1,s_1)))
	\endaligned
\end{equation}
if $(r_1,s_1) \in (-\infty,5T_1]_{r_1} \times S^1_{s_1} \subset\Sigma_{\rm D} \setminus \{z_{\rm d},z_{\rm s}\}$, and:
\begin{equation}\label{new686new}
	\aligned
	&U^{\xi,\prime\prime}_{{\rm D},\sigma_1,\sigma_2,(0)}(r_2,s_2) \\
	&={\rm Exp}(c_{\rm D,s}^{\xi} w^{-{p_2}},\chi_{\mathcal A}^{\rightarrow}(r_2+T_2,s_2)\cdot\\
	&\qquad\qquad\qquad\qquad{\rm E}(c_{\rm D,s}^{\xi} w^{-{p_2}},(({\rm Dil}_{\rho_{2,(0)}^{\xi}})^{-1}  \circ u^{\prime, \xi,2}_{\sigma_1,\sigma_2,(0)})(r_2,s_2)))
	\endaligned
\end{equation}
if $(r_2,s_2) \in (-\infty,5T_2]_{r_2} \times S^1_{s_2} \subset \Sigma_{\rm D} \setminus \{z_{\rm d},z_{\rm s}\}$.
If  $\frak z$ is an element of $\Sigma_{\rm D} \setminus \{z_{\rm d},z_{\rm s}\}$, that does not belong to the above cylinders, then we define: 
$$
  U^{\xi,\prime\prime}_{{\rm D},\sigma_1,\sigma_2,(0)}(\frak z) :=({\rm Dil}_{1/\rho_{1,(0)}^{\xi}}  \circ u^{\prime, \xi,1}_{\sigma_1,\sigma_2,(0)})(\frak z).
$$
Note that we can equivalently use the term $({\rm Dil}_{1/\rho_{2,(0)}^{\xi}}  \circ u^{\prime, \xi,2}_{\sigma_1,\sigma_2,(0)})(\frak z)$ on the right hand side of the above definition. 
We remark that the `highest order' terms of the maps $({\rm Dil}_{\rho_{i,(0)}^{\xi}})^{-1}  \circ u^{\prime, \xi,i}_{\sigma_1,\sigma_2,(0)}$ and $U_{\rm D}^{\xi}$ agree with each other on $[-5T_i,5T_i]_{r_i} \times S^2_{s_i}$. Similarly, $U_{\rm D}^{\xi}(\varphi_{\rm D,d}(w))$ (resp. $U_{\rm D}^{\xi}(\varphi_{\rm D,s}(w))$) and $c_{\rm D,d}^{\xi} w^{-2}$ (resp. $c_{\rm D,s}^{\xi} w^{-3}$) have the same highest order terms on $[-5T_i,5T_i]_{r_i} \times S^2_{s_i}$.

The following lemma can be verified in a straightforward way.
\begin{lemma}\label{conds627}
	If the constants $T_1$, $T_2$ are sufficiently large, then $U''_{\rm D} : \Sigma_{\rm D} \setminus \{z_{\rm d},z_{\rm s}\} \to X\setminus \mathcal D$ satisfies the following properties:
	\begin{enumerate}
		\item There exist $p_{\rm D,d}, p_{\rm D,s} \in \mathcal D$ such that the restriction of $\pi \circ U''_{D}$ to $(-\infty,-3T_1]_{r_1} \times S^1_{s_1}$ (resp. 
 			$(-\infty,-3T_2]_{r_2} \times S^1_{s_2}$) is a constant map to $p_{\rm D,d}$ (rsep. $p_{\rm D,d}$).
		\item  There exist $c_{\rm D,d}, c_{\rm D,s} \in \bbC_*$ such that the restriction of  
			$U''_{\rm D} \circ \varphi_{\rm D,d}$ to $(-\infty,-3T_1]_{r_1} \times S^1_{s_1}$ 
			(resp. $U''_{\rm D} \circ \varphi_{\rm D,s}$ to $(-\infty,-3T_2]_{r_2} \times S^1_{s_2}$) is
			\begin{equation}\label{form6744}
				(U''_{\rm D,d} \circ \varphi_{\rm D,d})(w)= c_{\rm D,d} w^{-{p_1}},\quad\text{(resp. $(U''_{\rm D} \circ \varphi_{\rm D,s})(w)=c_{\rm D,s} w^{-{p_2}}$).}
			\end{equation}
	\end{enumerate}
\end{lemma}

Let $\frak u''=(u''_{\rm d},u''_{\rm s},U''_{\rm D})$ be a triple of maps satisfying the properties in Lemmas \ref{conds626}, \ref{conds627}. We also assume:
\begin{equation}\label{form67554}
	p_{\rm d} = p_{\rm D,d},\qquad p_{\rm s} = p_{\rm D,s}.
\end{equation}

\begin{definition}\label{defn62888}
	Let $W^{2 \sim}_{m,\delta}(\frak u'',U''_{\rm D};TX)$ be the set of all 
	$\bf V=({\bf V}_{\rm d},{\bf V}_{\rm s},{\bf V}_{\rm D})$ satisfying the following properties:
	\begin{enumerate}
            	\item ${\bf V}_{\rm d} = (V_{\rm d},(\frak r_{\infty,\rm d},\frak s_{\infty,\rm d}),v_{\rm d})\in W^2_{m,\delta}(\Sigma_{\rm d} \setminus \{z_{\rm d}\};((u''_{\rm d})^*TX,(u_{\rm d}'')^*TL))$.
            		(This function space is introduced in Definition \ref{defn6262}.)
            	\item ${\bf V}_{\rm s} = (V_{\rm s},(\frak r_{\infty,\rm s},\frak s_{\infty,\rm s}),v_{\rm s})\in W^2_{m,\delta}(\Sigma_{\rm s} \setminus \{z_{\rm s}\};(u''_{\rm s})^*TX)$.
            		(This function space is introduced in Definition \ref{defn64444}.)
            	\item The tuple ${\bf V}_{\rm D} = (V_{\rm D},(\frak r_{\infty,\rm D,d},\frak s_{\infty,\rm D,d}),(\frak r_{\infty,\rm D,s},\frak s_{\infty,\rm D,s})),v_{\rm D,d},v_{\rm D,s})$ is an element of $
            		W^2_{m,\delta}(\Sigma_{\rm D} \setminus \{z_{\rm d},z_{\rm s}\};(U_{\rm D}'')^*T(\bbR_{\sigma} \times S\mathcal N_{\mathcal D}(X))).$
            		(This function space is introduced in Definition \ref{defn66666}.)
            	\item We assume 
            		\[
            		  v_{\rm d} = v_{\rm D,d},\qquad v_{\rm s} = v_{\rm D,s}.
            		\]
	\end{enumerate}
	The space $W^{2 \sim}_{m,\delta}(\frak u'';TX)$ is a linear subspace of finite codimension of the direct sum of 
	three Hilbert spaces defined in Definitions \ref{defn6262},  \ref{defn64444},  \ref{defn66666}. Therefore, it is also a Hilbert space.
	
	We regard $\bbR \oplus \bbR$ as the subspace of $W^2_{m,\delta}(\Sigma_{\rm D} \setminus \{z_{\rm d},z_{\rm s}\};(U_{\rm D}'')^*T(\bbR \times S\mathcal N_{\mathcal D}(X)))$
	given by constant sections with values in $\bbR\oplus \bbR\subset T(\bbR_{\sigma} \times S\mathcal N_{\mathcal D}(X))$. Thus $\bbR \oplus \bbR$ can be also regarded as a subspace of 
	$W^{2 \sim}_{k,\delta}(\frak u'';TX)$. We define 
	$W^{2}_{m,\delta}(\frak u'';TX)$ to be the quotient 
	space of $W^{2 \sim}_{m,\delta}(\frak u'';TX)$ by this copy of $\bbR \oplus \bbR$.
\end{definition}

\begin{remark}
	We do {\it not} assume $\frak r_{\infty,\rm d} = \frak r_{\infty,\rm D,d}$ or $\frak r_{\infty,\rm s} = \frak r_{\infty,\rm D,s}$.
	The fact that we might have $\frak r_{\infty,\rm d} \ne \frak r_{\infty,\rm D,d}$ or $\frak r_{\infty,\rm s} \ne \frak r_{\infty,\rm D,s}$ is related to the shift of $\rho_1$, $\rho_2$, 
	which we mentioned in Subsection \ref{subsub:linsol1}.
\end{remark}

\begin{definition}
	Let $L^{2}_{m,\delta}(\frak u'';TX\otimes \Lambda^{0,1})$ be the direct sum of the three Hilbert spaces:
	  \begin{align*}
	  &L^2_{m,\delta}(\Sigma_{\rm d} \setminus \{z_{\rm d}\};(u''_{\rm d})^*TX \otimes \Lambda^{0,1}) \\
	  &\oplus L^2_{m,\delta}(\Sigma_{\rm s} \setminus \{z_{\rm s}\};(u''_{\rm s})^*TX \otimes \Lambda^{0,1}) \\
	  &\oplus L^2_{m,\delta}(\Sigma_{\rm D} \setminus \{z_{\rm d},z_{\rm s}\};(U''_{\rm D})^*T(\bbR_{\tau} \times S\mathcal N_{\mathcal D}(X)) \otimes \Lambda^{0,1}),
	  \end{align*}
	introduced in  Definitions \ref{coker-weighted-sob},  \ref{defn64444}, \ref{defn66666}.
	The three operators (\ref{fredholmmap1}), (\ref{fredholmmap2ss}), (\ref{fredholmmap2ssrev}) together induce a Fredholm operator:
	\begin{equation}\label{form67575}
		D_{\frak u''}\overline\partial : W^{2}_{m+1,\delta}(\frak u'';TX)\to L^{2}_{m,\delta}(\frak u'';TX\otimes \Lambda^{0,1}).
	\end{equation}
\end{definition}

\begin{remark}
	If $u''_{\rm d},u''_{\rm s},U''_{\rm D}$ are $C^1$-close to $u_{\rm d}^{\xi}$, $u_{\rm s}^{\xi}$, $U_{\rm D}^{\xi}$, then 
	the surjectivity of (\ref{fredholmmap1}), (\ref{fredholmmap2ss}), (\ref{fredholmmap2ssrev}) (for $u_{\rm d}^{\xi}$, $u_{\rm s}^{\xi}$, $U_{\rm D}^{\xi}$)
	modulo $E_{\rm d}(u_{\rm d}^{\xi}) \oplus E_{\rm s}(u_{\rm s}^{\xi}) \oplus E_{\rm D}(u_{\rm D}^{\xi})$ and the mapping transversality condition of Definition \ref{defn6969}
	imply that (\ref{form67575}) is surjective modulo the obstruction space 
	$E_{\rm d}(u_{\rm d}'') \oplus E_{\rm s}(u_{\rm s}'') \oplus E_{\rm D}(u_{\rm D}'')$. (See also Lemma \ref{lem63631}.)  
\end{remark}

\begin{lemma}\label{lem63030}
	The triple:
	\[
	  {\rm Err}^{\xi}_{\sigma_1,\sigma_2,(0)}:=({\rm Err}^{\xi}_{{\rm d},\sigma_1,\sigma_2,(0)}, 
	  {\rm Err}^{\xi}_{{\rm s},\sigma_1,\sigma_2,(0)},
	  {\rm Err}^{\xi}_{{\rm D},\sigma_1,\sigma_2,(0)})
	 \] 
	determines an element of $L^{2}_{m,\delta}(\frak u^{\xi,\prime\prime}_{\sigma_1,\sigma_2,(0)};TX\otimes \Lambda^{0,1})$. The terms above are defined in \eqref{error-first-approx}.
\end{lemma}
\begin{proof}
	It follows from the fact that the map $u^{\xi,\prime\prime}_{{\rm d},\sigma_1,\sigma_2,(0)}$ (resp.
	$u^{\xi,\prime\prime}_{{\rm s},\sigma_1,\sigma_2,(0)}$, 
	$U^{\xi,\prime\prime}_{{\rm D},\sigma_1,\sigma_2,(0)}$)
	coincides with $u^{\prime, \xi,1}_{\sigma_1,\sigma_2,(0)}$ (resp. $u^{\prime, \xi,2}_{\sigma_1,\sigma_2,(0)}$, 
	$({\rm Dil}_{\rho_{1,(0)}^{\xi}})^{-1}  \circ u^{\prime, \xi,1}_{\sigma_1,\sigma_2,(0)}$) on the support of ${\rm Err}^{\xi}_{{\rm d},\sigma_1,\sigma_2,(0)}$
	(resp. ${\rm Err}^{\xi}_{{\rm s},\sigma_1,\sigma_2,(0)}$, ${\rm Err}^{\xi}_{{\rm D},\sigma_1,\sigma_2,(0)}$).
\end{proof}

\begin{lemma}\label{lem63631}
        Let the linear operator 
        \[L:W^{2}_{m+1,\delta}(\frak u^{\xi,\prime\prime}_{\sigma_1,\sigma_2,(0)};TX)\oplus 
        E_{\rm d}\oplus E_{\rm s}\oplus E_{\rm D}\to 
        L^{2}_{m,\delta}(\frak u^{\xi,\prime\prime}_{\sigma_1,\sigma_2,(0)};TX\otimes \Lambda^{0,1})\]
        be given as follows:
        \[
          L({\bf V},\frak f)=D_{\frak u^{\xi,\prime\prime}_{\sigma_1,\sigma_2,(0)}}\overline\partial({\bf V})
          -(D_{\frak u^{\xi,\prime\prime}_{\sigma_1,\sigma_2,(0)}}E)(
          \frak e_{(0)}^{\xi},{\bf V})-\frak f
        \]
        where the compoents of $\frak e_{(0)}^{\xi}:=
        (\frak e_{{\rm d},(0)}^{\xi},\frak e_{{\rm s},(0)}^{\xi},\frak e_{{\rm D},(0)}^{\xi,1})$ 
        are defined in \eqref{form6662}, and the term 
        $(D_{\frak u^{\xi,\prime\prime}_{\sigma_1,\sigma_2,(0)}}E)(\frak e_{(0)}^{\xi},{\bf V})$
        is defined similar to the corresponding term in \eqref{lin-eq630-rep}.
        If $\sigma_1$ and $\sigma_2$ are small enough, then there is a continuous operator 
        \[Q:L^{2}_{m,\delta}(\frak u^{\xi,\prime\prime}_{\sigma_1,\sigma_2,(0)};TX\otimes \Lambda^{0,1})
        \to W^{2}_{m+1,\delta}(\frak u^{\xi,\prime\prime}_{\sigma_1,\sigma_2,(0)};TX)\oplus 
        E_{\rm d}\oplus E_{\rm s}\oplus E_{\rm D}\] 
        which is a right inverse to $L$. Let $(\overline Q,Q_{\rm d},Q_{\rm s},Q_{\rm D})$
        be the components of $Q$ with respect to the decomposition of the target of $Q$.
        There is also constant $C$, independent of 
        $\sigma_1$, $\sigma_2$ and $\xi$, such that for any 
        $z\in L^{2}_{m,\delta}(\frak u^{\xi,\prime\prime}_{\sigma_1,\sigma_2,(0)};TX\otimes \Lambda^{0,1})$:
        \begin{equation} \label{estimate}
          \Vert \overline  Q(z) \Vert_{W^2_{m+1,\delta}}+{\vert}Q_{\rm d}(z){\vert}+{\vert}Q_{\rm s}(z){\vert}+{\vert}Q_{\rm D}(z){\vert}\le
          C \Vert z\Vert_{L^{2}_{m,\delta}}.
        \end{equation}
	Moreover, we can make this choose of $Q$ unique by demanding that its image is $L^2$-orthogonal\footnote{
	We use the $L^2$ norm on the target of $Q$ given by $W^2_{m,\delta}$ with $m=0$ and $\delta=0$.} 
	to the subspace ${\rm ker}(L)$.\footnote{The last condition is 
	similar to \cite[Definition 5.9]{foooexp}.}
\end{lemma}       
\begin{proof}
	Using Definition \ref{defn6868} (2), (3), (4) and Definition \ref{defn6969}, we can construct a continuous operator:
	\[ 
	    \aligned
            Q_0 = (Q_{0,\rm d},Q_{0,\rm s},Q_{0,\rm D}, Q_{0,E}):
            &L^2_{m}(\Sigma_{\rm d}\setminus\{z_{\rm d}\};u_{\rm d}^*TX\otimes \Lambda^{0,1}) \\
            &\quad \oplus 
            L^2_{m}(\Sigma_{\rm s}\setminus\{z_{\rm s}\};u_{\rm s}^*TX\otimes \Lambda^{0,1})  \\
            &\quad \oplus 
            L^2_{m}(\Sigma_{\rm D}\setminus\{z_{\rm d},z_{\rm s}\};U_{\rm D}^*TX\otimes \Lambda^{0,1}) \\
            &\to W^2_{m+1,\delta}(\Sigma_{\rm d} \setminus \{z_{\rm d}\};(u_{\rm d}^*TX,u_{\rm d}^*TL)) \\
            &\quad \oplus W^2_{m+1,\delta}(\Sigma_{\rm s} \setminus \{z_{\rm s}\};u_{\rm s}^*TX)\\
            & \quad \oplus W^2_{m+1,\delta}(\Sigma_{\rm D} \setminus \{z_{\rm d},z_{\rm s}\};U_{\rm D}^*T(\bbR_{\tau} 
            \times S\mathcal N_{\mathcal D}(X))) \\
            &\qquad \oplus E_{\rm d} \oplus  E_{\rm s} \oplus E_{\rm D}
            \endaligned
        \] 
        such that:
        \[
	  \mathcal{EV}_{0,\rm d}\circ Q_{0,\rm d} = \mathcal{EV}_{0,\rm d}\circ Q_{0,\rm D},\quad
	  \mathcal{EV}_{0,\rm s}\circ Q_{0,\rm s} = \mathcal{EV}_{0,\rm d}\circ Q_{0,\rm D}.
	\]
	and:
	\[
	  (D_{u_{\rm d}}\overline \partial Q_{0,\rm d},D_{u_{\rm s}}\overline \partial Q_{0,\rm s}, D_{U_{\rm D}}\overline \partial Q_{0,\rm D})=Q_{0,E}.
	\]

Note that since $u^{\xi,\prime\prime}_{{\rm d},\sigma_1,\sigma_2,(0)}$, 
$u^{\xi,\prime\prime}_{{\rm s},\sigma_1,\sigma_2,(0)}$, 
$U^{\xi,\prime\prime}_{{\rm D},\sigma_1,\sigma_2,(0)}$
are respectively close to $u_{\rm d}$, $u_{\rm s}$, $U_{\rm D}$,
we can identify the function spaces 
$W^2_{m+1,\delta}(\Sigma_{\rm d} \setminus \{z_{\rm d}\};(u_{\rm d}^*TX,u_{\rm d}^*TL))$
etc. with 
$W^2_{m+1,\delta}(\Sigma_{\rm d} \setminus \{z_{\rm d}\};((u^{\xi,\prime\prime}_{{\rm d},\sigma_1,\sigma_2,(0)})^*TX,u_{\rm d}^*TL))$ etc.
and 
$L^2_{m,\delta}(\Sigma_{\rm d} \setminus \{z_{\rm d}\};(u_{\rm d}^*TX,u_{\rm d}^*TL))$
etc. with 
$L^2_{m,\delta}(\Sigma_{\rm d} \setminus \{z_{\rm d}\};((u^{\xi,\prime\prime}_{{\rm d},\sigma_1,\sigma_2,(0)})^*TX,u_{\rm d}^*TL))$ etc.
by target parallel transport.
\par
Using this identification we obtain a map

	\[
	  Q_1 :  L^{2}_{m,\delta}(\frak u^{\xi,\prime\prime}_{\sigma_1,\sigma_2,(0)};TX\otimes \Lambda^{0,1})
          \to W^{2}_{m+1,\delta}(\frak u^{\xi,\prime\prime}_{\sigma_1,\sigma_2,(0)};TX)\oplus E_{\rm d}\oplus E_{\rm s}\oplus E_{\rm D}
	\]
	such that
	\begin{equation}
		 \Vert (L \circ Q_1)(z) - z  \Vert  \le C e^{-c\delta_1 \min \{T_1,T_2\}} \Vert z \Vert ,\qquad
		 \Vert Q_1(z) \Vert \le  C' \Vert z \Vert.
	\end{equation}
	Here $c, C,C' > 0$ are constants independent of $\sigma_1$ and $\sigma_2$.
	Thus for $\sigma_1$ and $\sigma_2$ small enough, we may define
	\[
	  Q_2 = \sum_{k=0}^{\infty} (-1)^k Q_1 \circ ({\rm id} - L \circ Q_1)^k.
	\]
	Then we have
	\begin{equation}
		(L \circ Q_2)(z) ={\rm id}, \qquad \Vert Q_2(z) \Vert \le  C'' \Vert z\Vert.
	\end{equation}
	(This formula is used there to estimate 
	derivatives of the right inverse with respect to the gluing parameter.)
	The operator $Q_2$ has the required properties except the last one.
	To obtain the right inverse which also satisfies the last condition, we compose $Q_2$ with projection to the orthogonal complement of
	the finite dimensional space ${\rm ker}(L)$.
\end{proof}  

\begin{remark}
The stable map compactification case of Lemma \ref{lem63631} 
is \cite[Lemma 5.7]{foooexp}.
The function space $W^{2}_{m+1,\delta}(\frak u^{\xi,\prime\prime}_{\sigma_1,\sigma_2,(0)};TX)$ is a subspace of the direct sum of the function spaces 
of the three irreducible components. By this reason the fact we are working on 
inconsistent maps does not affect the proof of Lemma \ref{lem63631}.
\end{remark}

Let $z:={\rm Err}^{\xi}_{\sigma_1,\sigma_2,(0)}$. By Lemma \ref{lem63030}, we know that $z$ belongs to the target of $L$. Therefore, $\overline Q(z)$ determines a triple as follows:
\begin{equation*}\label{Vssx}
  {\bf V}_{\sigma_1,\sigma_2,(1)}^{\xi} = ({\bf V}^{\xi}_{\rm d,\sigma_1,\sigma_2,(1)},
  {\bf V}^{\xi}_{\rm s,\sigma_1,\sigma_2,(1)},{\bf V}^{\xi}_{\rm D,\sigma_1,\sigma_2,(1)})\in
  W^2_{m+1,\delta}(\frak u^{\xi,\prime\prime}_{\sigma_1,\sigma_2,(0)};TX)
\end{equation*}
Moreover, we have:
\[
  \Delta\frak e_{{\rm d},\sigma_1,\sigma_2,(1)}^{\xi}:=Q_{\rm d}(z),\hspace{.6cm}
  \Delta\frak e_{{\rm s},\sigma_1,\sigma_2,(1)}^{\xi}:=Q_{\rm s}(z),\hspace{.6cm}
  \Delta\frak e_{{\rm D},\sigma_1,\sigma_2,(1)}^{\xi}:=Q_{\rm D}(z).
\]
Lemmas \ref{lem623} and \ref{lem63631} imply that:
\[
  \Vert {\bf V}^{\xi}_{\sigma_1,\sigma_2,(1)}\Vert_{W^2_{m+1,\delta}}\le C e^{-c\delta_1 \min \{T_1,T_2\}}
\]
and 
\[
  \vert \Delta\frak e_{{\rm d},\sigma_1,\sigma_2,(1)}^{\xi} \vert, 
  \vert \Delta\frak e_{{\rm s},\sigma_1,\sigma_2,(1)}^{\xi} \vert,
  \vert \Delta\frak e_{{\rm D},\sigma_1,\sigma_2,(1)}^{\xi} \vert
  \le C e^{-c\delta_1 \min \{T_1,T_2\}}.
\]
In summary, we obtained a solution of the linearized equation with appropriate decay properties.

\par\smallskip

\noindent{\bf (Step 1-2) (Gluing solutions)}
\par
In this step we will use ${\bf V}_{\sigma_1,\sigma_2,(1)}^{\xi}$ to obtain an improved approximate inconsistent solution. Suppose the entries of ${\bf V}_{\sigma_1,\sigma_2,(1)}^{\xi}$ are given as follows:
$$
\aligned
&{\bf V}_{{\rm d},\sigma_1,\sigma_2,(1)}^{\xi}
= 
(V^{\xi}_{{\rm d},\sigma_1,\sigma_2,(1)},(\frak r^{\xi}_{\infty,{\rm d},\sigma_1,\sigma_2,(1)},
\frak s^{\xi}_{\infty,{\rm d},\sigma_1,\sigma_2,(1)}),v^{\xi}_{\infty,{\rm d},\sigma_1,\sigma_2,(1)}),
\\
&{\bf V}_{{\rm s},\sigma_1,\sigma_2,(1)}^{\xi}
= 
(V^{\xi}_{{\rm s},\sigma_1,\sigma_2,(1)},(\frak r^{\xi}_{\infty,{\rm s},\sigma_1,\sigma_2,(1)},
\frak s^{\xi}_{\infty,{\rm s},\sigma_1,\sigma_2,(1)}),v^{\xi}_{\infty,{\rm s},\sigma_1,\sigma_2,(1)}),
\\
&{\bf V}_{{\rm D},\sigma_1,\sigma_2,(1)}^{\xi}
= 
(V^{\xi}_{{\rm D},\sigma_1,\sigma_2,(1)},(\frak r^{\xi}_{\infty,{\rm D,d},\sigma_1,\sigma_2,(1)},
\frak s^{\xi}_{\infty,{\rm D,d},\sigma_1,\sigma_2,(1)}),
\\
&\qquad\qquad\qquad\qquad\qquad\qquad(\frak r^{\xi}_{\infty,{\rm D,s},\sigma_1,\sigma_2,(1)},
\frak s^{\xi}_{\infty,{\rm D,s},\sigma_1,\sigma_2,(1)}),
\\
&\qquad\qquad\qquad\qquad\qquad\qquad v^{\xi}_{\infty,{\rm D,d},\sigma_1,\sigma_2,(1)},
v^{\xi}_{\infty,{\rm D,s},\sigma_1,\sigma_2,(1)}).
\endaligned
$$
We also have the following identities:
\[
  v^{\xi}_{\infty,{\rm d},\sigma_1,\sigma_2,(1)}=v^{\xi}_{\infty,{\rm D,d},\sigma_1,\sigma_2,(1)},
  \hspace{1cm}v^{\xi}_{\infty,{\rm s},\sigma_1,\sigma_2,(1)}
  =v^{\xi}_{\infty,{\rm D,s},\sigma_1,\sigma_2,(1)}.
\]
by Definition \ref{defn62888} (4).
We also define:
\begin{equation}
\aligned
\Delta\frak r^{\xi}_{\infty,{\rm d},\sigma_1,\sigma_2,(1)}
&= 
\frak r^{\xi}_{\infty,{\rm D,d},\sigma_1,\sigma_2,(1)}
-
\frak r^{\xi}_{\infty,{\rm d},\sigma_1,\sigma_2,(1)}, \\
\Delta \frak s^{\xi}_{\infty,{\rm d},\sigma_1,\sigma_2,(1)}
&= 
\frak s^{\xi}_{\infty,{\rm D,d},\sigma_1,\sigma_2,(1)}
-
\frak s^{\xi}_{\infty,{\rm d},\sigma_1,\sigma_2,(1)},
\endaligned
\end{equation}

\begin{equation}
\aligned
\Delta\frak r^{\xi}_{\infty,{\rm s},\sigma_1,\sigma_2,(1)}
&= 
\frak r^{\xi}_{\infty,{\rm D,s},\sigma_1,\sigma_2,(1)}
-
\frak r^{\xi}_{\infty,{\rm s},\sigma_1,\sigma_2,(1)}, \\
\Delta \frak s^{\xi}_{\infty,{\rm s},\sigma_1,\sigma_2,(1)}
&= 
\frak s^{\xi}_{\infty,{\rm D,s},\sigma_1,\sigma_2,(1)}
-
\frak s^{\xi}_{\infty,{\rm s},\sigma_1,\sigma_2,(1)}.
\endaligned
\end{equation}

\begin{definition}\label{defn633}
        We define $u^{\xi,\prime}_{{\rm d},\sigma_1,\sigma_2,(1)} : \Sigma_{\rm d}^+(\sigma_1,\sigma_2) 
        \to X \setminus \mathcal D$ as follows.
        \begin{enumerate}
        \item
        If  $\frak z \in \Sigma_{\rm d}^-(\sigma_1,\sigma_2)$ then
        $$
        u^{\xi,\prime}_{{\rm d},\sigma_1,\sigma_2,(1)}(\frak z)
        =
        {\rm Exp}(u^{\xi,\prime\prime}_{{\rm d},\sigma_1,\sigma_2,(0)}(\frak z),V^{\xi}_{{\rm d},\sigma_1,\sigma_2,(1)})
        $$
        \item
        If $\frak z = (r_1,s_1) \in [-5T_1,5T_1]_{r_1} \times S^1_{s_1}$ then
        $$
        u^{\xi,\prime}_{{\rm d},\sigma_1,\sigma_2,(1)}(\frak z)
        =
        {\rm Exp}(u^{\xi,\prime\prime}_{{\rm d},\sigma_1,\sigma_2,(0)}(r_1,s_1),\diamondsuit)
        $$
        where
        $$
        \aligned
        \diamondsuit
        =  
        &\chi_{1,\mathcal B}^{\leftarrow}(r_1,s_1)
        (V^{\xi}_{{\rm d},\sigma_1,\sigma_2,(1)}
        - (\frak r^{\xi}_{\infty,{\rm d},\sigma_1,\sigma_2,(1)},\frak s^{\xi}_{\infty,{\rm d},\sigma_1,\sigma_2,(1)})
        - \hat v^{\xi}_{\infty,{\rm d},\sigma_1,\sigma_2,(1)}) \\
        &+\chi_{\mathcal A}^{\rightarrow}(r_1,s_1)
        (V^{\xi}_{{\rm D,d},\sigma_1,\sigma_2,(1)}
        - (\frak r^{\xi}_{\infty,{\rm D,d},\sigma_1,\sigma_2,(1)},\frak s^{\xi}_{\infty,{\rm D,d},\sigma_1,\sigma_2,(1)})
        - \hat v^{\xi}_{\infty,{\rm D},{\rm d},\sigma_1,\sigma_2,(1)})
        \\
        &+ (\frak r^{\xi}_{\infty,{\rm d},\sigma_1,\sigma_2,(1)},\frak s^{\xi}_{\infty,{\rm d},\sigma_1,\sigma_2,(1)})
        + \hat v^{\xi}_{\infty,{\rm d},\sigma_1,\sigma_2,(1)}.
        \endaligned
        $$
        Here and in Items (4), (6), (7), we extend $v^{\xi}_{\infty,{\rm d},\sigma_1,\sigma_2,(1)}$ to $\hat v^{\xi}_{\infty,{\rm d},\sigma_1,\sigma_2,(1)}$ in the same way as in Definition \ref{defn6262}.
        \end{enumerate}
        \par
        We define $u^{\xi,\prime}_{{\rm s},\sigma_1,\sigma_2,(1)} : \Sigma_{\rm s}^+(\sigma_1,\sigma_2) 
        \to X \setminus \mathcal D$ as follows.
        \begin{enumerate}
        \item[(3)]
        If  $\frak z \in \Sigma_{\rm s}^-(\sigma_1,\sigma_2)$ then
        $$
        u^{\xi,\prime}_{{\rm s},\sigma_1,\sigma_2,(1)}(\frak z)
        =
        {\rm Exp}(u^{\xi,\prime\prime}_{{\rm s},\sigma_1,\sigma_2,(0)}(\frak z),V^{\xi}_{{\rm s},\sigma_1,\sigma_2,(1)})
        $$
        \item[(4)]
        If $\frak z = (r_2,s_2) \in [-5T_2,5T_2]_{r_2} \times S^1_{s_2}$ then
        $$
        u^{\xi,\prime}_{{\rm s},\sigma_1,\sigma_2,(1)}(\frak z)
        =
        {\rm Exp}(u^{\xi,\prime\prime}_{{\rm s},
        \sigma_1,\sigma_2,(0)}(r_2,s_2),\clubsuit)
        $$
        where
        $$
        \aligned
        \clubsuit 
        =  
        &\chi_{2,\mathcal B}^{\leftarrow}(r_2,s_2)
        (V^{\xi}_{{\rm s},\sigma_1,\sigma_2,(1)}
        - (\frak r^{\xi}_{\infty,{\rm s},\sigma_1,\sigma_2,(1)},\frak s^{\xi}_{\infty,{\rm s},\sigma_1,\sigma_2,(1)})
        - \hat v^{\xi}_{\infty,{\rm s},\sigma_1,\sigma_2,(1)}) \\
        &+\chi_{\mathcal A}^{\rightarrow}(r_2,s_2)
        (V^{\xi}_{{\rm D,s},\sigma_1,\sigma_2,(1)}
        - (\frak r^{\xi}_{\infty,{\rm D,s},\sigma_2,\sigma_2,(1)},\frak s^{\xi}_{\infty,{\rm D,s},\sigma_1,\sigma_2,(1)})
        - \hat v^{\xi}_{\infty,{\rm D},{\rm s},\sigma_2,\sigma_2,(1)})
        \\
        &+ (\frak r^{\xi}_{\infty,{\rm s},\sigma_1,\sigma_2,(1)},\frak s^{\xi}_{\infty,{\rm s},\sigma_1,\sigma_2,(1)})
        + \hat v^{\xi}_{\infty,{\rm s},\sigma_1,\sigma_2,(1)}.
        \endaligned
        $$
        \end{enumerate}
        We next define 
        $U^{\xi,\prime}_{{\rm D},\sigma_1,\sigma_2,(1)} : \Sigma_{\rm D}^+(\sigma_1,\sigma_2) 
        \to X \setminus \mathcal D$ as follows.
        \begin{enumerate}
        \item[(5)]
        If  $\frak z \in \Sigma_{\rm D}^-(\sigma_1,\sigma_2)$ then:
        $$
        U^{\xi,\prime}_{{\rm D},\sigma_1,\sigma_2,(1)}(\frak z)
        =
        {\rm Exp}(U^{\xi,\prime\prime}_{{\rm D},\sigma_1,\sigma_2,(0)}(\frak z),V^{\xi}_{{\rm D},\sigma_1,\sigma_2,(1)})
        $$
        \item[(6)]
        If $\frak z = (r_1,s_1) \in [-5T_1,5T_1]_{r_1} \times S^1_{s_1}$ then:
        $$
        U^{\xi,\prime}_{{\rm D},\sigma_1,\sigma_2,(1)}(\frak z)
        =
        {\rm Exp}(U^{\xi,\prime\prime}_{{\rm D},\sigma_1,\sigma_2,(0)}(r_1,s_1),\heartsuit)
        $$
        where:
        $$
        \aligned
        \heartsuit
        =  
        &\chi_{1,\mathcal B}^{\leftarrow}(r_1,s_1)
        (V^{\xi}_{{\rm d},\sigma_1,\sigma_2,(1)}
        - (\frak r^{\xi}_{\infty,{\rm d},\sigma_1,\sigma_2,(1)},\frak s^{\xi}_{\infty,{\rm d},\sigma_1,\sigma_2,(1)})
        - \hat v^{\xi}_{\infty,{\rm d},\sigma_1,\sigma_2,(1)}) \\
        &+\chi_{\mathcal A}^{\rightarrow}(r_1,s_1)
        (V^{\xi}_{{\rm D,d},\sigma_1,\sigma_2,(1)}
        - (\frak r^{\xi}_{\infty,{\rm D,d},\sigma_1,\sigma_2,(1)},\frak s^{\xi}_{\infty,{\rm D,d},\sigma_1,\sigma_2,(1)})
        - \hat v^{\xi}_{\infty,{\rm D},{\rm d},\sigma_1,\sigma_2,(1)})
        \\
        &+ (\frak r^{\xi}_{\infty,{\rm D,d},\sigma_1,\sigma_2,(1)},\frak s^{\xi}_{\infty,{\rm D,d},\sigma_1,\sigma_2,(1)})
        + \hat v^{\xi}_{\infty,{\rm D}{\rm d},\sigma_1,\sigma_2,(1)}.
        \endaligned
        $$
        We remark that: 
        \begin{equation}\label{formdiahear}
        \heartsuit-\diamondsuit = (\Delta\frak r^{\xi}_{\infty,{\rm d},\sigma_1,\sigma_2,(1)},
        \Delta \frak s^{\xi}_{\infty,{\rm d},\sigma_1,\sigma_2,(1)}).
        \end{equation}
        \item[(7)]
        If $\frak z = (r_2,s_2) \in [-5T_2,5T_2]_{r_2} \times S^1_{s_2}$ then:
        $$
        U^{\xi,\prime}_{{\rm D},\sigma_1,\sigma_2,(1)}(\frak z)
        =
        {\rm Exp}(U^{\xi,\prime\prime}_{{\rm D},\sigma_1,\sigma_2,(0)(r_2,s_2)},\spadesuit)
        $$
        where:
        $$
        \aligned
        \spadesuit
        =  
        &\chi_{2,\mathcal B}^{\leftarrow}(r_2,s_2)
        (V^{\xi}_{{\rm s},\sigma_1,\sigma_2,(1)}
        - (\frak r^{\xi}_{\infty,{\rm s},\sigma_1,\sigma_2,(1)},\frak s^{\xi}_{\infty,{\rm s},\sigma_1,\sigma_2,(1)})
        - \hat v^{\xi}_{\infty,{\rm s},\sigma_1,\sigma_2,(1)}) \\
        &+\chi_{\mathcal A}^{\rightarrow}(r_2,s_2)
        (V^{\xi}_{{\rm D,s},\sigma_1,\sigma_2,(1)}
        - (\frak r^{\xi}_{\infty,{\rm D,s},\sigma_2,\sigma_2,(1)},\frak s^{\xi}_{\infty,{\rm D,s},\sigma_1,\sigma_2,(1)})
        - \hat v^{\xi}_{\infty,{\rm D},\rm s,\sigma_2,\sigma_2,(1)})
        \\
        &+ (\frak r^{\xi}_{\infty,{\rm D,s},\sigma_1,\sigma_2,(1)},\frak s^{\xi}_{\infty,{\rm D,s},\sigma_1,\sigma_2,(1)})
        + \hat v^{\xi}_{\infty,{\rm D},\rm s,\sigma_1,\sigma_2,(1)}.
        \endaligned
        $$
        We remark that: 
        \begin{equation}\label{speclub}
        \spadesuit-\clubsuit = (\Delta\frak r^{\xi}_{\infty,{\rm s},\sigma_1,\sigma_2,(1)},
        \Delta \frak s^{\xi}_{\infty,{\rm s},\sigma_1,\sigma_2,(1)}).
        \end{equation}
        \end{enumerate}
        Let:
        \begin{equation}
 	       \aligned
		\rho_{1,(1)}^{\xi,\Delta} &= \exp(-(\Delta\frak r^{\xi}_{\infty,{\rm d},\sigma_1,\sigma_2,(1)}
		+\sqrt{-1}\Delta \frak s^{\xi}_{\infty,{\rm d},\sigma_1,\sigma_2,(1)})) \in \bbC_* \\
		\rho_{2,(1)}^{\xi,\Delta} &= \exp(-(\Delta\frak r^{\xi}_{\infty,{\rm s},\sigma_1,\sigma_2,(1)}
		+\sqrt{-1}\Delta \frak s^{\xi}_{\infty,{\rm s},\sigma_1,\sigma_2,(1)})) \in \bbC_*
        		\endaligned
        \end{equation}
        and
        \begin{equation}
		\rho_{i,(1)}^{\xi} = \rho_{i,(0)}^{\xi}\rho_{i,(1)}^{\xi,\Delta} \in \bbC_*,
        \end{equation}
        for $i=1,2$. Finally, we define:
        \begin{equation}
        \frak u^{\xi,\prime}_{\sigma_1,\sigma_2,(1)}: =
        (u^{\xi,\prime}_{{\rm d},\sigma_1,\sigma_2,(1)},u^{\xi,\prime}_{{\rm s},\sigma_1,\sigma_2,(1)},U^{\xi,\prime}_{{\rm D},\sigma_1,\sigma_2,(1)},\sigma_1,\sigma_2,\rho_{1,(1)}^{\xi},
        \rho_{2,(1)}^{\xi}).
        \end{equation}
\end{definition}
\begin{lemma}
	The 7-tuple $\frak u^{\xi,\prime}_{\sigma_1,\sigma_2,(1)}$
	 is an inconsistent map in the sense of Definition \ref{defn625625}.
\end{lemma}
\begin{proof}
This is a consequence of Lemma \ref{lem62611} and 
(\ref{formdiahear}), (\ref{speclub}).
\end{proof}
\begin{remark}
	We remark that if we change ${\bf V}^{\xi}_{{\rm D,s},\sigma_1,\sigma_2,(1)}$ 
	by an element of $\bbR \oplus \bbR$ (the tangent vector generated by the $\bbC_*$ action), 
	then $V^{\xi}_{{\rm D,s},\sigma_1,\sigma_2,(1)}$, 
	$(\frak r^{\xi}_{\infty,{\rm D,d},\sigma_2,\sigma_2,(1)},
	\frak s^{\xi}_{\infty,{\rm D,d},\sigma_1,\sigma_2,(1)})$ and 
	$(\frak r^{\xi}_{\infty,{\rm D,s},\sigma_2,\sigma_2,(1)},
	\frak s^{\xi}_{\infty,{\rm D,s},\sigma_1,\sigma_2,(1)})$  change
	by the same amount. Therefore, $\diamondsuit$ and $\clubsuit$ do not change.
	On the other hand, $\heartsuit$ and $\spadesuit$ change by the same element in $\bbR \oplus \bbR$.
	This implies that the equivalence class of 
	$\frak u^{\xi,\prime}_{\sigma_1,\sigma_2,(1)}$ is fixed among all representatives for 
	${\bf V}^{\xi}_{{\rm D,s},\sigma_1,\sigma_2,(1)}$.
\end{remark}
\par\smallskip
\noindent{\bf (Step 1-3) (Error estimate)}

$\frak u^{\xi,\prime}_{\sigma_1,\sigma_2,(1)}$ is our next approximate solution. Lemma \ref{lem636} quantifies to what extent this inconsistent map improves the previous approximate solution $\frak u^{\xi,\prime}_{\sigma_1,\sigma_2,(0)}$.

\begin{lemma}\label{lem636}
	There is a constant $C$ and for any positive number $\mu$, there is a constant $\eta$ such that the following holds. If $\sigma_1,\sigma_2$ 
	are smaller than $\eta$, 
	then there exists $\frak e_{\sigma_1,\sigma_2,(1)}^{\xi}= (\frak e_{{\rm d},\sigma_1,\sigma_2,(1)}^{\xi}, \frak e_{{\rm s},\sigma_1,\sigma_2,(1)}^{\xi},
	\frak e_{{\rm D},\sigma_1,\sigma_2,(1)}^{\xi})$ with $\frak e_{{\rm d},\sigma_1,\sigma_2,(1)}^{\xi} \in E_{\rm d}$,
	$\frak e_{{\rm s},\sigma_1,\sigma_2,(1)}^{\xi} \in E_{\rm s}$, 
	$\frak e_{{\rm D},\sigma_1,\sigma_2,(1)}^{\xi} \in E_{\rm D}$
	such that the following holds:
	\begin{enumerate}
		\item \begin{equation}
				\Vert \overline \partial\frak u^{\xi,\prime}_{\sigma_1,\sigma_2,(1)} - \frak e_{\sigma_1,\sigma_2,(1)}^{\xi}
				\Vert_{L^2_{m,\delta}}\le\mu
				\Vert \overline \partial\frak u^{\xi,\prime}_{\sigma_1,\sigma_2,(0)} - \frak e_{(0)}^{\xi}\Vert_{L^2_{m,\delta}}
			\end{equation}
			where $\frak e_{(0)}^{\xi}= (\frak e_{{\rm d},(0)}^{\xi}, \frak e_{{\rm s},(0)}^{\xi}, \frak e_{{\rm D},(0)}^{\xi})$ is as in \eqref{form6662}.
		\item $$\Vert \frak e_{\sigma_1,\sigma_2,(1)}^{\xi} - \frak e_{(0)}^{\xi}\Vert \le \mu C.$$ 
			The square of the left hand side, by definition, is the sum of the squares of the factors associated to ${\rm d}$, ${\rm s}$,  ${\rm D}$.\footnote{
			This estimate is provided for the first step of Newton's iteration. In the $\frak i$-th step, a similar estimate appears where 
			$\mu C$ is replaced by $\mu^{\frak i}C$. It is 
			important that $C$ is independent of $\frak i$.}
	\end{enumerate}
\end{lemma}
We define:
\begin{equation}
	\aligned
	\frak e_{\rm{d},\sigma_1,\sigma_2,(1)}^{\xi}
	&=
	\frak e_{\rm{d},\sigma_1,\sigma_2,(0)}^{\xi}
	+\Delta \frak e_{\rm{d},\sigma_1,\sigma_2,(0)}^{\xi} \\
	\frak e_{\rm{s},\sigma_1,\sigma_2,(1)}^{\xi}
	&=
	\frak e_{\rm{s},\sigma_1,\sigma_2,(0)}^{\xi}+\Delta \frak e_{\rm{s},\sigma_1,\sigma_2,(0)}^{\xi} \\
	\frak e_{\rm{D},\sigma_1,\sigma_2,(1)}^{\xi}
	&=\frak e_{\rm{D},\sigma_1,\sigma_2,(0)}^{\xi}+\Delta \frak e_{\rm{D},\sigma_1,\sigma_2,(0)}^{\xi}.
	\endaligned
\end{equation}
The proof of the estimates in the lemma is based on Lemma \ref{lem63631}. That is to say, we use the estimate \eqref{estimate} of Lemma \ref{lem63631} and the fact that ${\rm V}_{\sigma_1,\sigma_2,(1)}^{\xi}$ is given by solving the linearized equation.
The details of this estimate is similar to the proof of 
\cite[Proposition 5.17]{foooexp} and is omitted. In particular, to estimate the effect of the bump function appearing in Definition \ref{defn633} (2), (4), (6) and (7), we use the `drop of the weight' argument, which is explained in detail in \cite[right above Remark 5.21]{foooexp}.

Lemma \ref{lem6399} below concerns the estimate of the difference 
between $\frak u^{\xi,\prime}_{\sigma_1,\sigma_2,(1)}$
and $\frak u^{\xi,\prime}_{\sigma_1,\sigma_2,(0)}$.

\begin{lemma}\label{lem6399}
	Let $\sigma_1$, $\sigma_2$ be small enough such that Lemma \ref{lem63631} holds. 
	There is a fixed constant\footnote{It is important that we can take the same constant for all the steps of inductive construction of 
	Newton's iteration. The dependence of those constants to various 
	choices are studied in detail in \cite{foooexp}. So we do not repeat it here.} $C$, independent of $\sigma_1$ and $\sigma_2$, such that:
	$$
	  d_{W^{2}_{m+1,\delta}}(\frak u^{\xi,\prime}_{\sigma_1,\sigma_2,(1)},\frak u^{\xi,\prime}_{\sigma_1,\sigma_2,(0)})\le
	  C  \Vert \overline \partial\frak u^{\xi,\prime}_{\sigma_1,\sigma_2,(0)} - \frak e_{\sigma_1,\sigma_2,(0)}^{\xi}\Vert_{L^2_{m,\delta}}.
	$$
\end{lemma}	
\begin{proof}
	This is a consequence of Lemma \ref{lem63631} and definitions.
\end{proof}

\par\smallskip
\noindent{\bf (Step 1-4) (Separating error terms into three parts)}
\par
Define
\begin{equation}
\aligned
{\rm Err}^{\xi}_{{\rm d},\sigma_1,\sigma_2,(1)}
&= \chi_{1,\mathcal X}^{\leftarrow}
(\overline \partial u^{\xi,\prime}_{{\rm d},\sigma_1,\sigma_2,(1)}- \frak e_{{\rm d},\sigma_1,\sigma_2,(1)}^{\xi}),\\
{\rm Err}^{\xi}_{{\rm s},\sigma_1,\sigma_2,(1)}
&= \chi_{2,\mathcal X}^{\leftarrow}
(\overline \partial u^{\xi,\prime}_{{\rm s},\sigma_1,\sigma_2,(1)} - \frak e_{{\rm s},\sigma_1,\sigma_2,(1)}^{\xi}), \\
{\rm Err}^{\xi}_{{\rm D},\sigma_1,\sigma_2,(1)}
&= \chi_{\mathcal X}^{\rightarrow}(\overline \partial u^{\xi,\prime}_{{\rm D},\sigma_1,\sigma_2,(1)}
- \frak e_{{\rm D},\sigma_1,\sigma_2,(1)}^{\xi}).
\endaligned
\end{equation}
\par\smallskip
\noindent{\bf (Step 2-1) (Approximate solution for linearization)}
\par
We will next define 
\begin{equation}\label{new67777}
\frak u^{\xi,\prime\prime}_{\sigma_1,\sigma_2,(1)}
=
(u^{\xi,\prime\prime}_{{\rm d},\sigma_1,\sigma_2,(1)},
u^{\xi,\prime\prime}_{{\rm s},\sigma_1,\sigma_2,(1)}, 
U^{\xi,\prime\prime}_{{\rm D},\sigma_1,\sigma_2,(1)})
\end{equation}
satisfying the properties 
in Lemmas \ref{conds626}, \ref{conds627}. This step is essentially the same as (Step 1-1). We mention a few points where the two steps slightly differ.
\par
Let $v^{\xi}_{\infty,{\rm d},\sigma_1,\sigma_2,(1)} \in T_{p^{\xi}_{{\rm d},\sigma_1,\sigma_2,(0)}}
\mathcal D$ 
and $v^{\xi}_{\infty,{\rm s},\sigma_1,\sigma_2,(1)}\in T_{p^{\xi}_{{\rm s},\sigma_1,\sigma_2,(0)}}
\mathcal D$
be the element appearing at the beginning of Step 1-2.
We put
\begin{equation}
\aligned
p^{\xi}_{{\rm d},\sigma_1,\sigma_2,(1)} 
&= {\rm Exp}(p^{\xi}_{{\rm d},\sigma_1,\sigma_2,(0)},v^{\xi}_{\infty,{\rm d},\sigma_1,\sigma_2,(1)})
\\
p^{\xi}_{{\rm s},\sigma_1,\sigma_2,(1)} 
&= {\rm Exp}(p^{\xi}_{{\rm s},\sigma_1,\sigma_2,(0)},v^{\xi}_{\infty,{\rm s},\sigma_1,\sigma_2,(1)}).
\endaligned
\end{equation}
We next define
$$
\aligned
c^{\xi}_{{\rm d},(1)} &= c^{\xi}_{{\rm d}} \exp(-(\frak r^{\xi}_{\infty,{\rm d},\sigma_1,\sigma_2,(1)}+
\sqrt{-1} \frak s^{\xi}_{\infty,{\rm d},\sigma_1,\sigma_2,(1)})) \\
c^{\xi}_{{\rm D,d},(1)} &= c^{\xi}_{{\rm D,d}} \exp(-(\frak r^{\xi}_{\infty,{\rm D,d},\sigma_1,\sigma_2,(1)}+
\sqrt{-1} \frak s^{\xi}_{\infty,{\rm D,d},\sigma_1,\sigma_2,(1)})) \\
c^{\xi}_{{\rm s},(1)} &= c^{\xi}_{{\rm s}} \exp(-(\frak r^{\xi}_{\infty,{\rm s},\sigma_1,\sigma_2,(1)}+
\sqrt{-1} \frak s^{\xi}_{\infty,{\rm s},\sigma_1,\sigma_2,(1)})) \\
c^{\xi}_{{\rm D,s},(1)} &= c^{\xi}_{{\rm D,s}} \exp(-(\frak r^{\xi}_{\infty,{\rm D,s},\sigma_1,\sigma_2,(1)}+
\sqrt{-1} \frak s^{\xi}_{\infty,{\rm D,s},\sigma_1,\sigma_2,(1)})).
\endaligned
$$
Then formulas similar to (\ref{shiki655}), (\ref{shiki656}), (\ref{shiki655rev}) and (\ref{shiki656rev}) hold.
\par
In (\ref{new682new}),(\ref{new683new}),(\ref{new685new}),(\ref{new686new}), we replace $c^{\xi}_{{\rm d}}$ with $c^{\xi}_{{\rm d},(1)}$ and so on. In these formulas, we also replace $(0)$ with $(1)$. We thus define
$u^{\xi,\prime\prime}_{{\rm d},\sigma_1,\sigma_2,(1)}$, $u^{\xi,\prime\prime}_{{\rm s},\sigma_1,\sigma_2,(1)}$, $U^{\xi,\prime\prime}_{{\rm D},\sigma_1,\sigma_2,(1)}$  and $\frak u^{\xi,\prime\prime}_{\sigma_1,\sigma_2,(1)}$.
Then
$({\rm Err}^{\xi}_{{\rm d},\sigma_1,\sigma_2,(1)}, 
{\rm Err}^{\xi}_{{\rm s},\sigma_1,\sigma_2,(1)},
{\rm Err}^{\xi}_{{\rm D},\sigma_1,\sigma_2,(1)})$
determines an element of the space
$L^{2}_{k,\delta}(\frak u^{\xi,\prime\prime}_{\sigma_1,\sigma_2,(1)};TX\otimes \Lambda^{0,1})$.
(Lemma \ref{lem63030}.)

We can then formulate an analogue of Lemma \ref{lem63631} where $\frak u^{\xi,\prime\prime}_{\sigma_1,\sigma_2,(0)}$ is replaced by $\frak u^{\xi,\prime\prime}_{\sigma_1,\sigma_2,(1)}$. Using this lemma, we can obtain ${\rm V}_{\sigma_1,\sigma_2,(2)}^{\xi}$, $\Delta\frak e_{{\rm d},\sigma_1,\sigma_2,(2)}^{\xi}$, $\Delta\frak e_{{\rm s},\sigma_1,\sigma_2,(2)}^{\xi}$, $\Delta\frak e_{{\rm D},\sigma_1,\sigma_2,(2)}^{\xi}$. The counterpart of the estimate in \eqref{estimate} can be used to give appropriate bounds for these four terms. This completes (Step 2-1). (Step 2-2) and (Step 2-3) can be carried out in the same way as in (Step 1-2) and (Step 1-3).

More generally, we can perform (Step $\frak i$-1), (Step $\frak i$-2) and (Step $\frak i$-3) in the case that $\vert \sigma_1\vert ,\vert \sigma_2\vert $ are smaller than a positive number $\epsilon_0$ and obtain a sequence of inconsistent maps:
$$
\frak u^{\xi,\prime}_{\sigma_1,\sigma_2,(\frak i)}: =
(u^{\xi,\prime}_{{\rm d},\sigma_1,\sigma_2,(\frak i)},u^{\xi,\prime}_{{\rm s},\sigma_1,\sigma_2,(\frak i)},U^{\xi,\prime}_{{\rm D},\sigma_1,\sigma_2,(\frak i)},\sigma_1,\sigma_2,\rho_{1,(\frak i)}^{\xi},
\rho_{2,(\frak i)}^{\xi}),
$$
and a triple: 
\[
  \frak e_{\sigma_1,\sigma_2,(\frak i)}^{\xi}= (\frak e_{{\rm d},\sigma_1,\sigma_2,(\frak i)}^{\xi}, \frak e_{{\rm s},\sigma_1,\sigma_2,
  (\frak i)}^{\xi}, \frak e_{{\rm D},\sigma_1,\sigma_2,(\frak i)}^{\xi})\in E_{\rm d}\oplus E_{\rm s} \oplus E_{\rm D}
\] 
such that
\begin{equation}\label{form697}
\aligned
\Vert \overline \partial\frak u^{\xi,\prime}_{\sigma_1,\sigma_2,(\frak i + 1)} - \frak e_{\sigma_1,\sigma_2,(\frak i + 1)}^{\xi}
\Vert^2_{L^2_{m,\delta}}
&\le
\mu
\Vert \overline \partial\frak u^{\xi,\prime}_{\sigma_1,\sigma_2,(\frak i)} - \frak e_{\sigma_1,\sigma_2,(\frak i)}^{\xi}\Vert^2_{L^2_{m,\delta}}
\\
&\le
\mu^{\frak i+1}
\Vert \overline \partial\frak u^{\xi,\prime}_{\sigma_1,\sigma_2,(0)} - \frak e_{\sigma_1,\sigma_2,(0)}^{\xi}\Vert^2_{L^2_{m,\delta}}
\endaligned
\end{equation}
and 
\begin{equation}\label{form698}
	\Vert \frak e_{\sigma_1,\sigma_2,(\frak i+1)}^{\xi} - \frak e_{\sigma_1,\sigma_2,(\frak i)}^{\xi}
	\Vert \le \mu^{\frak i} C.
\end{equation}
Moreover, we have:
\begin{equation}\label{form699}
	d_{W^{2}_{m+1,\delta}}(\frak u^{\xi,\prime}_{\sigma_1,\sigma_2,(\frak i+1)},\frak u^{\xi,\prime}_{\sigma_1,\sigma_2,(\frak i)})
	\le C  \Vert \overline \partial\frak u^{\xi,\prime}_{\sigma_1,\sigma_2,(\frak i)} - \frak e_{\sigma_1,\sigma_2,(\frak i)}^{\xi}\Vert_{L^2_{m,\delta}}.
\end{equation}

We make a remark that the constants $\epsilon_0$ and $C$ may be taken independent of $\frak i$. But these constants might depend on $m$, the exponent in the weighted  Sobolev space $L^2_{m,\delta}$.

The estimates in (\ref{form697}) and (\ref{form699}) imply that the sequence $\{\frak u^{\xi,\prime}_{\sigma_1,\sigma_2,(\frak i)}\}_\frak i$ converges in $W^{2}_{m+1,\delta}$. We denote the limit by:
\begin{equation}\label{constructedfamilyof}
        \aligned
        \frak u^{\xi,\prime}_{\sigma_1,\sigma_2,(\infty)}: =
        (u^{\xi,\prime}_{{\rm d},\sigma_1,\sigma_2,(\infty)},
        &u^{\xi,\prime}_{{\rm s},\sigma_1,\sigma_2,(\infty)},U^{\xi,\prime}_{{\rm D},\sigma_1,\sigma_2,(\infty)},\\
        &\sigma_1,\sigma_2,\rho_{1,(\infty)}^{\xi},
        \rho_{2,(\infty)}^{\xi}).
        \endaligned
\end{equation}

The estimate in (\ref{form698}) implies that $\frak e_{{\rm d},\sigma_1,\sigma_2,(\frak i)}^{\xi} \in E_{\rm d}$, $\frak e_{{\rm s},\sigma_1,\sigma_2,(\frak i)}^{\xi} \in E_{\rm s}$, $\frak e_{{\rm D},\sigma_1,\sigma_2,(\frak i)}^{\xi} \in E_{\rm D}$ converges as $\frak i$ goes to infinity. We denote the limit by:
$$
\frak e_{\sigma_1,\sigma_2,(\infty)}^{\xi}
= (\frak e_{{\rm d},\sigma_1,\sigma_2,(\infty)}^{\xi}, 
\frak e_{{\rm s},\sigma_1,\sigma_2,(\infty)}^{\xi},
\frak e_{{\rm D},\sigma_1,\sigma_2,(\infty)}^{\xi}).
$$
As a consequence of (\ref{form697}), we have:
$$
\Vert
\overline\partial\frak u^{\xi,\prime}_{\sigma_1,\sigma_2,(\infty)} - 
\frak e_{\sigma_1,\sigma_2,(\infty)}^{\xi}\Vert_{L^2_{m},\delta} = 0.
$$
In other words, $\frak u^{\xi,\prime}_{\sigma_1,\sigma_2,(\infty)}$ satisfies (\ref{eq630}). Thus $\frak u^{\xi,\prime}_{\sigma_1,\sigma_2,(\infty)}$ satisfies 
the requirements of an inconsistent solution (Definition \ref{defn615inconsis}) except possibly (\ref{eq631}) (the transversal constraint).

\subsection{Completion of the Proof}
\label{subsub:glucomple}

We are now in the position to complete the proof of 
Proposition \ref{prop617}. For any $\xi \in \mathcal U^+_{\rm d} \,\,{}_{{\rm ev}_{\rm d}}\times_{{\rm ev}^+_{\rm D,d}}  \mathcal U^+_{\rm D}
\,\,{}_{{\rm ev}_{\rm D,s}}\times_{{\rm ev}_{\rm s}}\mathcal U^+_{\rm s}$ and sufficiently small $\sigma_1,\sigma_2 \in \bbC$, we defined an inconsistent map 
$\frak u^{\xi,\prime}_{\sigma_1,\sigma_2,(\infty)}$ in (\ref{constructedfamilyof}). For $(\sigma_1,\sigma_2)$ we define

$$
{\rm EVw}_{\sigma_1,\sigma_2} : 
\mathcal U^+_{\rm d} \,\,{}_{{\rm ev}_{\rm d}}\times_{{\rm ev}^+_{\rm D,d}}  \mathcal U^+_{\rm D}
\,\,{}_{{\rm ev}_{\rm D,s}}\times_{{\rm ev}_{\rm s}}\mathcal U^+_{\rm s}
\to \mathcal D \times X^2
$$
by:
\begin{align*}
	{\rm EVw}_{\sigma_1,\sigma_2}&(\xi)=\\
	&((\pi \circ U^{\xi,\prime}_{{\rm D},\sigma_1,\sigma_2,(\infty)})(w_{{\rm D},1}),
	(\pi \circ U^{\xi,\prime}_{{\rm D},\sigma_1,\sigma_2,(\infty)})(w_{{\rm D},2}),
	U^{\xi,\prime}_{{\rm s},\sigma_1,\sigma_2,(\infty)}(w_{{\rm s}})).
\end{align*}
Here $w_{{\rm D},1}$, $w_{{\rm D},2}$, $w_{{\rm s}}$ are as in 
Definition \ref{conds610}.
\begin{lemma}
${\rm EVw}_{\sigma_1,\sigma_2}$ is transversal to 
$\mathcal N_{\rm D,1} \times \mathcal N_{\rm D,2} \times \mathcal N_{\rm s}$
for sufficiently small $\sigma_1,\sigma_2$.
\end{lemma}
\begin{proof}
	The map ${\rm EVw}_{\sigma_1,\sigma_2}$ converges to ${\rm EVw}_{0,0}$ in the $C^1$ sense as $\sigma_1,\sigma_2 \to 0$. 
	Moreover, ${\rm EVw}_{0,0}$ is
	transversal to $\mathcal N_{\rm D,1} \times \mathcal N_{\rm D,2} \times \mathcal N_{\rm s}$ by 
	assumption (Definition \ref{defn6969}). The lemma follows from these observations.
\end{proof}
By definition 
$$
\bigcup_{\sigma_1,\sigma_2} ({\rm EVw}_{\sigma_1,\sigma_2})^{-1}(\mathcal N_{\rm D,1} \times \mathcal N_{\rm D,2} \times \mathcal N_{\rm s})
\times \{(\sigma_1,\sigma_2)\}
$$
can be identified with $\mathcal U$, the set of inconsistent solutions. We also have:
$$
 ({\rm EVw}_{0,0})^{-1}(\mathcal N_{\rm D,1} \times \mathcal N_{\rm D,2} \times \mathcal N_{\rm s})
 \cong 
 \mathcal U_{\rm d} \,\,{}_{{\rm ev}_{\rm d}}\times_{{\rm ev}_{\rm D,d}}  \mathcal U_{\rm D}
\,\,{}_{{\rm ev}_{\rm D,s}}\times_{{\rm ev}_{\rm s}}\mathcal U_{\rm s}
$$
by definition. Proposition \ref{prop617} is a consequence of these facts.

\par\smallskip

Once Proposition \ref{prop617} is proved the proof of Proposition \ref{prop618} is similar to the proof of \cite[Theorem 6.4]{foooexp}. We have written the proof of Proposition \ref{prop617} so that the construction of the inconsistent solutions are parallel to the gluing construction in  \cite[Section 5]{foooexp}. Therefore, the proof of \cite[Section 6]{foooexp} can be applied with almost no change to prove Proposition \ref{prop618}. This completes the construction of the Kuranishi chart at the point $[\Sigma,z_0,u]$.

\section{Kuranishi Charts: the General Case}
\label{sub:kuraconst}

Up to point, we constructed a Kuranishi chart of the space $\mathcal M_1^{\rm RGW}(L;\beta)$ at the particular point $[\Sigma,z_0,u]$ described in Section \ref{subsec:gluing1}. In this section, we explain how this construction generalizes to an arbitrary point $\frak u$ of $\mathcal M_{k+1}^{\rm RGW}(L;\beta)$. There is a DD-ribbon tree $\mathcal R = (R,c,\alpha,m,\lambda)$ such that $\frak u$ belongs to ${\mathcal M}^{0}(\mathcal R)\subset \mathcal M_{k+1}^{\rm RGW}(L;\beta)$. (See \cite[Subsection 3.4]{part1:top}). Let $((\Sigma_{v},\vec z_v,u_{v});v \in C^{\rm int}_0(\mathcal R))$ be a representative for $\frak u$. In the case that $c(v)={\rm D}$, the image of $u_v$ is contained in $\mathcal D$, and we are given a meromorphic section $U_v$ of $u_v^*\mathcal N_{\mathcal D}(X)$. Recall that for each $i$, the set of all sections $\{U_v\}_{\lambda(v)=i}$ is well-defined up to an action of $\bbC_*$ \cite[Formula (3.78)]{part1:top}. We firstly, associate a combinatorial object to $\frak u$ which is called a {\it very detailed DD-ribbon tree} and is the refinement of the notion of detailed DD-ribbon trees defined in \cite[Subsection 3.4]{part1:top}. 

Let $\hat R$ be the detailed tree associated to $\mathcal R$. Recall that each interior vertex $v$ of $\hat R$ corresponds to a possibly nodal Riemann surface $\Sigma_v$. (See, for example, \cite[Figure 8]{part1:top}.) We refine the detailed DD-ribbon tree $\hat R$ further to the very detailed DD-ribbon tree $\check R$ so that each vertex of $\check R$ corresponds to an irreducible component of $\Sigma$.
To be more detailed, for each $v \in C^{\rm int}_0(\check R)$, we form a tree $\mathcal Q_v$ such that the following holds.
\begin{enumerate}
	\item  Each vertex corresponds to either an irreducible component of $\Sigma_{v}$
		or a marked point on it. The latter corresponds to an edge of $\hat R$, which 
		contains $v$. We call any such vertex an exterior vertex.
	\item There are two types of edges in $\mathcal Q_v$. An edge of the first type
		joins two edges such that the corresponding irreducible components intersect.
		An edge of the second type is called an exterior edge and connects a vertex corresponding 
		to a marked point to the vertex corresponding to the irreducible component 
		containing the marked point.
\end{enumerate}
We replace each interior vertex $v$ of the detailed tree $\hat R$ with $\mathcal Q_v$ and identify exterior edges of $\mathcal Q_v$ with the corresponding edges of $\hat R$ containing  $v$. We thus obtain a  tree $\check R$, called the very detailed DD-ribbon tree associated to $\frak u$, or the very detailed tree associated to $\frak u$ for short. Figure \ref{surfaceforverydetail} sketches an element $\frak u$ of our moduli space. The associated detailed DD-ribbon tree $\hat R$ and the very detailed DD-ribbon tree $\check R$ are given in Figures \ref{treRRRR} and \ref{verydetailedtree}.

\begin{figure}[h]
\includegraphics[scale=0.6]{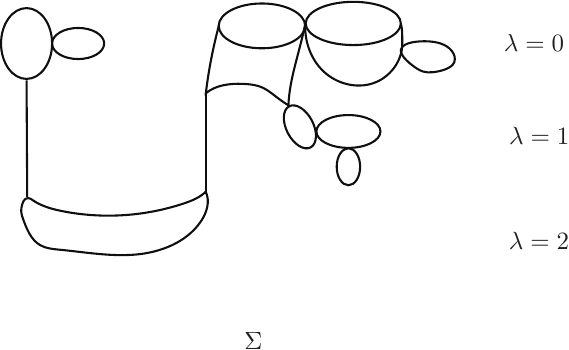}
\caption{An element of the moduli space $\mathcal M_{k+1}^{\rm RGW}(L;\beta)$}
\label{surfaceforverydetail}
\end{figure}
\begin{figure}[h]
\includegraphics[scale=0.6]{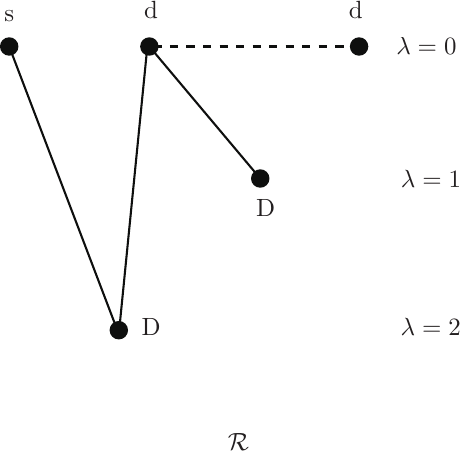}
\caption{The detailed DD-ribbon tree $\hat R$}
\label{treRRRR}
\end{figure}
\begin{figure}[h]
\includegraphics[scale=0.6]{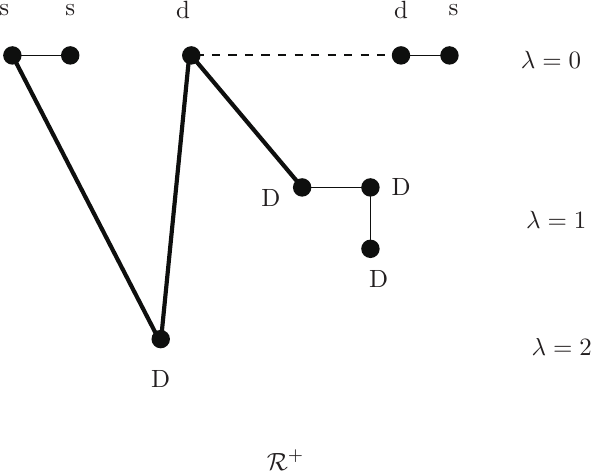}
\caption{The very detailed DD-ribbon tree $\check R$}
\label{verydetailedtree}
\end{figure}

We say an edge of $\check R$ is a {\it fine edge} if it does not correspond to an edge of the detailed DD-ribbon tree $\hat R$. In Figure \ref{verydetailedtree}, the fine edges are illustrated by narrow lines and level $0$ edges are illustrated by dotted lines. An edge of $\check R$, which is not fine, is called a {\it thick edge}. We denote by $C^{\rm int}_{\rm fi}(\check R)$ and $C^{\rm int}_{\rm th}(\check R)$ the set of all fine and thick edges of $\check R$, respectively. The level of a vertex of $\check R$ induced by a vertex of $\mathcal Q_v$ is defined to be $\lambda(v)$. We do not associate a multiplicity number to a fine edge. Homology class of a vertex is the homology class of the map $u$ on this component.
The color of an interior vertex $v$ of $\check R$, denoted by $c(v)$,  is ${\rm D}$ if its level is positive. If this vertex has level $0$, then its color is either ${\rm s}$ or ${\rm d}$ depending on whether $\Sigma_v$ is a sphere or a disk.

The notion of level shrinking and level 0 edge shrinking for very detailed DD-ribbon trees can be defined as in the case of detailed DD-ribbon trees. We define a {\it fine edge shrinking} as follows. We remove a fine edge $e$ and identify the two vertices connected to each other by $e$. For two very detailed DD-ribbon trees $\check R$, $\check R'$, we say $\check R' \le \check R$ if $\check R$ is obtained from $\check R'$ by a sequence of level shrinkings, level 0 edge shrinkings and fine edge shrinkings. Note that there might be a fine edge joining two vertices of level $0$. We do {\it not} call any such edge a level $0$ edge. The level $0$ edges are limited to those joining vertices of color ${\rm d}$.

We can stratify the moduli space $\mathcal M^{\rm RGW}_{k+1}(L;\beta)$ using very detailed DD-ribbon trees $\check R$. Namely, we define $\mathcal M^{\rm RGW}_{k+1}(L;\beta)(\check R)$ to be the subset of $\mathcal M^{\rm RGW}_{k+1}(L;\beta)$ consists of elements $\frak u$ whose associated very detailed DD-ribbon tree is $\check R$. If $\check R' \le \check R$, then the closure of $\mathcal M^{\rm RGW}_{k+1}(L;\beta)(\check R)$ contains $\mathcal M^{\rm RGW}_{k+1}(L;\beta)(\check R')$. 


Let $\frak u=((\Sigma_{v},\vec z_v,u_{v});v \in C^{\rm int}_0(\check R))$ be an element of $\mathcal M^{\rm RGW}_{k+1}(L;\beta)(\check R)$ as above. For an interior vertex $v$ of $\check R$, the triple $\frak u_v = (\Sigma_{v},\vec z_v,u_{v})$ is stable by definition. If $(\Sigma_{v},\vec z_v)$ is not stable, then we may add auxiliary interior marked points $\vec w_{v}$ so that $(\Sigma_{v},\vec z_v\cup \vec w_v)$ is stable. We assume that $\vec w_v$ is chosen such that the following symmetry assumption holds. Suppose $\Gamma_{\frak u}$ denotes the group of automorphisms of $\frak u$. Given $\gamma \in \Gamma_{\frak u}$, for each interior vertex $v$, there exists a vertex $\gamma(v)$ and a bi-holomorphic map $\gamma_v: (\Sigma_{v},\vec z_v) \to  (\Sigma_{\gamma(v)},\vec z_{\gamma(v)})$ such that $u_{\gamma(v)}\circ \gamma_v = u_v$. We assume $\vec w_v$ is mapped to $\vec w_{\gamma(v)}$ via $\gamma_v$. Note that the case $\gamma(v) = v$ is also included. For each member $w_{v,i}$ of $\vec w_v$, we take a codimension 2 submanifold $\mathcal N_{v,i}$ of $X$ (resp. $\mathcal D$) if $c(v) = {\rm d}$ or $\rm s$ (resp. if $c(v) = {\rm D}$.) We assume that the same condition as Condition \ref{conds610} holds for these choices of {\it transversals}. If $\gamma \in \Gamma_{\frak u}$ and $\gamma_v(w_{v,i})=w_{v',i'}$, then we require that $\mathcal N_{v,i} = \mathcal N_{v',i'}$.

In order to define Cauchy-Riemann operators, we introduce function spaces similar to those of Section \ref{sub:Fred}. For an interior vertex $v$ of $\check R$, if the color of $v$ is ${\rm d}$, ${\rm s}$ or ${\rm D}$, the Hilbert space $W^2_{m,\delta}(\frak u_v;T)$ is respectively defined as in Definition \ref{defn6262}, Definition \ref{defn64444} or Definition \ref{defn66666}. Here $T$ is a placeholder for the pull-back of the tangent bundle of $X$ (if $c(v)={\rm d}$,  ${\rm s}$) or $\mathcal N_{\mathcal D}X \setminus \mathcal D$ (if $c(v)={\rm D}$). Similarly, for each $v$, we define the Weighted Sobolev spaces $L^2_{m,\delta}(\frak u_v;T \otimes \Lambda^{0,1})$ as in Section \ref{sub:Fred}.

\begin{remark}
	In  the case that $c(v) = \rm d$, the space $\Sigma_{v}$ may have boundary nodes.
	In that case we take cylindrical coordinates on a neighborhood of each 
	boundary node and use a cylindrical metric on this neighborhood. 
	The approach here is very similar to the case of interior nodes which is discussed in 
	Section \ref{sub:Fred}. 
	See \cite{foooexp} for the case of boundary nodes 
	in the context of the stable map compactification.
	
	We also need to fix cylindrical coordinates for nodes corresponding to 
	fine edges. In this case the target
	of the corresponding cylindrical end is contained in a compact subset of 
	$X \setminus \mathcal D$ (for fine edges connecting level $0$ vertices)
	or $\mathcal N_{\mathcal D}X \setminus \mathcal D$ 
	(for fine edges connecting positive level vertices).
	In the first case we use the metric $g$ given in Section \ref{sub:Fred}. 
	In the latter case, we use the metric
	on $\mathcal N_{\mathcal D}X \setminus \mathcal D$ with the form given in \eqref{g-cylinder-end}.
\end{remark}

Let $W^{2,\sim}_{m,\delta}(\frak u;T)$ be the subspace of the direct sum:
\begin{equation}\label{form6101}
	\bigoplus_{v \in C^{\rm int}_0(\check R)} W^2_{m,\delta}(\frak u_v;T)
\end{equation}
consisting of elements $(V_{v};v \in  C^{\rm int}_0(\check R))$ with the following properties. Let $e$ be an interior edge of $\check R$ joining $v_1$ and $v_2$. The source curve of the element $\frak u_{v_i}$ contains a nodal point $z_{v_i,e}$ corresponding to the edge $e$. Suppose $e$ is not a level 0 edge or a fine edge. By definition $V_{v_i}$ has an asymptotic value $(\frak r_{v_i,e},\frak s_{v_i,e},v_{v_i,e})\in \bbR\oplus \bbR \oplus T_{p_{v_i,e}}\mathcal D$ where $p_{v_i,e}$ is the point of $\mathcal D$ such that $u_v(z_{v_i,e}) = p_{v_i,e}$ and $\bbR\oplus \bbR$ corresponds to the tangent space of the partial $\bbC_*$-action. (See Definition \ref{defn6262}.) We require:
\[
v_{v_1,e} = v_{v_2,e}.
\]
This condition is the counterpart of part (4) of Definition \ref{defn62888}. In the case of a level 0 edge (resp. a fine edge), the corresponding asymptotic values are tangent vectors of $L$ (resp. tangent vectors of $X$ or $\mathcal N_{X}\mathcal D \setminus \mathcal D$) and we require that these two tangent vectors agree with each other. (See \cite[Definition 3.4]{foooexp}.) 

Analogous to Definition \ref{defn62888}, there is an action of $(\bbR\oplus \bbR)^{{\vert}\lambda{\vert}}$ on $W^{2,\sim}_{m,\delta}(\frak u;T)$ with ${\vert}\lambda{\vert}$ being the number of levels of $\check R$. We define $W^{2}_{m,\delta}(\frak u;T)$ to be the quotient space with respect to this action. We also write $L^2_{m,\delta}(\frak u;T\otimes \Lambda^{0,1})$ for the direct sum of $L^2_{m,\delta}(\frak u_v;T \otimes \Lambda^{0,1})$ 
for $v  \in C^{\rm int}_0(\check R)$.

The linearization of the Cauchy-Riemann equation associated to each vertex $v$ of the very detailed tree $\check R$, defines the linear operator: 
\[
  D_{\frak u_v}\overline \partial:W^2_{m+1,\delta}(\frak u_v;T) \to 
  L^2_{m,\delta}(\frak u_v;T \otimes \Lambda^{0,1}).
\]
The direct sum of these operators together determines a Fredholm operator:
\begin{equation}\label{form6103}
	D_{\frak u}\overline\partial :
	W^2_{m+1,\delta}(\frak u;T) \to L^2_{m,\delta}(\frak u;T \otimes \Lambda^{0,1}).
\end{equation}
In the case that this operator is not surjective, we need to introduce obstruction spaces as in Section \ref{sub:Obst}. 
\begin{definition}\label{cond643}
{\it Obstraction data} $E: = \{E_v\}$ assign,
for each interior vertex $v$, a vector space $E_v$ such that the following conditions are satisfied:

\begin{enumerate}
	\item $E_{v}$ is a finite dimensional subspace of 
	$L^2_{m,\delta}(\Sigma_v\setminus \vec z_v;u_v^*T(X\setminus \mathcal D) \otimes \Lambda^{0,1})$
	if $c(v)={\rm d}$ or ${\rm s}$, and is a finite dimensional subspace of 
	$L^2_{m}(\Sigma_v;u_v^*T\mathcal D \otimes \Lambda^{0,1})$
	if $c(v)={\rm D}$. Moreover, $E_{v}$ consists of smooth sections.
	Using the decomposition in \eqref{decom-tan-bdle}, we can also regard $E_v$ as a subspace of
	$L^2_{m,\delta}(\frak u_v;T \otimes \Lambda^{0,1})$.
	\item Elements of $E_v$ have compact supports away from  nodal points and boundary.
	\item If $u_{v}$ is a constant map, then $E_v$  is $0$.
	\item If $\gamma \in \Gamma_{\frak u}$, then
		$$
		  (\gamma_v)_* E_v = E_{\gamma(v)}.
		$$ 
		Here $(\gamma_v)_* : L^2_{m,\delta}(\frak u_v;T \otimes \Lambda^{0,1})
		\to L^2_{m,\delta}(\frak u_{\gamma(v)};T \otimes \Lambda^{0,1})$
		is the map induced by $\gamma_v : \Sigma_v \to \Sigma_{\gamma(v)}$. 
		(Recall that $u_{\gamma(v)} \circ \gamma_v = u_v$.)
		\item The operator $D_{\frak u}\overline\partial$ in \eqref{form6103} is transversal to:
			\begin{equation}
				E_0 = \bigoplus_{v \in C^{\rm int}_0(\check R)} E_v 
				\subset L^2_{m,\delta}(\frak u;T \otimes \Lambda^{0,1}).
			\end{equation}
\end{enumerate}
\end{definition}

It is straightforward to see that there are obstruction data satisfying the conditions in Definition \ref{cond643}. Since each operator $D_{\frak u_v}\overline \partial$ is Fredholm, we can fix $E_v$, which satisfies part (1). This choice also would imply the required transversality in part (5). In the case that $u_v$ is constant, we can pick $E_v$ to be the trivial vector space because $\Sigma_v$ has genus $0$. (It is either a disk or a sphere.) Unique continuation implies that we can assume that the supports of the elements of $E_v$ is contained in a compact subset of $\Sigma_v$ away from the nodal points and boundary. By taking direct sums over the action of $\Gamma_{\frak u}$ if necessary, we may also assume that (4) holds. Using $E_0$, we can define a thickened moduli space which gives a Kuranishi neighborhood of $\frak u$ in a stratum of $\mathcal M_{k+1}^{\rm RGW}(L;\beta)$ which contains $\frak u$. (See Definition \ref{defn64545}.) In the upcoming sections, we give a systematic construction of the obstruction spaces $E_0$ which satisfy further compatibility assumptions.

We next discuss the process of gluing the irreducible components of $\frak u$. We firstly need to explain how the deformation of source curves is parametrized. The mathematical content here is classical and we follow the approaches in \cite[Section 8]{foooexp} and \cite[Section 3]{fooo:const1}.

For an interior vertex $v$, we consider $(\Sigma_v,\vec z_v \cup \vec w_v)$. This is a disk or a sphere with marked points, which is stable. We may regard it as an element of the moduli space $\mathcal M_{v}^{\rm source}$. The space $\mathcal M_{v}^{\rm source}$ is metrizable and we fix one metric on it for our purposes later. The moduli space $\mathcal M_{v}^{\rm source}$ comes with a universal family: 
\begin{equation}\label{sourceunifami}
	\pi : \mathcal {C}_{v}^{\rm source}\to \mathcal M_{v}^{\rm source}
\end{equation}
and $\#\vec z_v + \#\vec w_v$ sections which are in correspondence with the marked points. (See, for example, \cite[Section 2]{fooo:const1}.) For $\frak x \in \mathcal M_{v}^{\rm source}$, the fiber $\pi^{-1}(\frak x)$ together with the values of the sections at $\frak x$ determines a representative for $\frak x$, which we denote it by $(\Sigma_{\frak x,v},\vec z_{\frak x,v} \cup \vec w_{\frak x,v})$.

Since $\Sigma_v$ has no singularity, \eqref{sourceunifami} is a $C^\infty$-fiber bundle near the point $(\Sigma_v,\vec z_v \cup \vec w_v)$. We fix a neighborhood $\mathcal V_{v}^{{\rm source}}$ of 
$[\Sigma_{v},\vec z_{v} \cup \vec w_{v}]$ 
and a trivialization 
\begin{equation}\label{form10505}
\phi_v : \mathcal V_{v}^{{\rm source}} \times \Sigma_v \to \mathcal {C}_{v}^{\rm source}.
\end{equation}
of  \eqref{sourceunifami} over this neighborhood.
We assume that these trivializations are compatible with the automorphisms of $\frak u$. For $\frak x_{v} \in \mathcal V_{v}^{{\rm source}}$, we define a complex structure $j_{\frak x_v}$ on $\Sigma_v$ such that 
the restriction of the trivialization \eqref{form10505} to $\{\frak x_{v}\} \times \Sigma_v$ defines a bi-holomorphic map 
$(\Sigma_v,j_{\frak x_v}) \to \pi^{-1}(\frak x_{v})$.

Let $e$ be an interior edge of $\check R$ containing $v$. There is a nodal point of  $(\Sigma_v,\vec z_v)$ associated to $e$. Let $\frak s_{v,e}$ be the section of \eqref{sourceunifami}
corresponding to this marked point.
In the case of an interior node, an 
{\it analytic family of coordinates} at this nodal point is a holomorphic map
\begin{equation}\label{form19009}
	\varphi_{v,e} :   \mathcal V_{v}^{{\rm source}}  \times {\rm Int} (D^2)\to \mathcal {C}_{v}^{\rm source}
\end{equation}
such that for each $\frak x\in \mathcal V_{v}^{{\rm source}} $, we have $\varphi_{v,e}(\frak x,0) = \frak s_{v,e}(\frak x)$ and the restriction of $\varphi_{v,e}$ to $\{\frak x\} \times {\rm Int} D^2$ determines a holomorphic coordinate for $\Sigma_{\frak x} = \pi^{-1}(\frak x)$ around  $\varphi_{v,e}(\frak x,0) = \frak s_{v,e}(\frak x)$. Thus $\varphi_{v,e}$ commutes with the projection map to $\mathcal {M}_{v}^{\rm source}$ and $\varphi_{v,e}$ is a bi-holomorphic map onto an open subset of $\mathcal {C}_{v}^{\rm source}$. When $z_{v,e}$ is a boundary node, we replace $D^2$ by $D^2_+ = \{z \in D^2 \mid {\rm Im} z \ge 0\}$ and define the notion of analytic family of coordinates in a similar way. (See \cite[Section 3]{fooo:const1} for more details.) We require that the images of the maps $\varphi_{v,e}$ are disjoint and away from the image of the sections of $\mathcal {C}_{v}^{\rm source}$ corresponding to the auxiliary marked points $\vec w_{v}$. We also assume that the chosen analytic families are compatibile with automorphisms of $\frak u$.

We use analytic family of coordinates $\varphi_{v,e}$ to desingularize the nodal points as follows. Fix an element:
\begin{equation}\label{vectorsigma}
	\vec\sigma = (\sigma_e ; e \in  C^{\rm int}_{1}(\check R)) \in\prod_{e  \in  C^{\rm int}_{1}(\check R)}\mathcal V_{e}^{{\rm deform}}.
\end{equation}
Here $\sigma_e \in D^2 =: \mathcal V_{e}^{{\rm deform}}$ if $z_{v,e}$ is an interior node, and $\sigma_e \in [0,1] =:  \mathcal V_{e}^{{\rm deform}}$ if $z_{v,e}$ is a boundary node.\footnote{Note that $z_{v,e}$ is a  boundary node if and only if $e$ is a level $0$ edge.} Let
\begin{equation}\label{defnuniverse}
	\vec{\frak x} = (\frak x_v ; v  \in  C^{\rm int}_0(\check R))\in \prod_{v  \in  C^{\rm int}_0(\check R)}\mathcal V_{v}^{{\rm source}}.
\end{equation}
We put
\begin{equation}\label{form610}
\aligned
\Sigma^+_{v}(\vec{\frak x},\vec \sigma)
= 
\Sigma_{\frak x_v,v} &\setminus \bigcup_{(v,e) : 
\text{$e$ is not a level $0$ edge}} \varphi_{v,e}(\frak x_v,D^2(\vert \sigma_{e}\vert))
\\
&\setminus \bigcup_{(v,e) : \text{$e$ is a level $0$ edge}} \varphi_{v,e}(\frak x_v,D_+^2(\sigma_{e}))
\\
\Sigma^-_{v}(\vec{\frak x},\vec \sigma) = 
\Sigma_{\frak x_v,v} &\setminus \bigcup_{(v,e) : \text{$e$ is not a level $0$ edge}} \varphi_{v,e}(\frak x_v,D^2(1))
\\
&\setminus \bigcup_{(v,e) : \text{$e$ is a level $0$ edge}} \varphi_{v,e}(\frak x_v,D_+^2(1)).
\endaligned
\end{equation}
Recall that a fine edge $e$ connecting two level $0$ vertices is not a level $0$ edge by definition.The auxiliary marked points $\vec w_v$ determine a set of marked points on $\Sigma^-_{v}(\vec{\frak x},\vec \sigma)$, which is also denoted by $\vec w_v$.

We define an equivalence relation $\sim$ on:
\begin{equation}\label{form6107}
	\bigcup_{v \in C^{\rm int}_0(\check R)}\Sigma^+_{v}(\vec{\frak x},\vec \sigma)
\end{equation}
as follows. Let $e$ be an edge which is not a level $0$ edge and connects the vertices $v_1,v_2$. Suppose $z_1,z_2 \in {\rm Int}(D^2)$ with $z_1z_2 = \sigma_e$. Then:
$$
\varphi_{v_1,e}(\frak x_{v_1},z_1) \sim \varphi_{v_2,e}(\frak x_{v_2},z_2).
$$
Let $e$ be a level $0$ edge connecting the vertices $v_1,v_2$. Suppose $z_1,z_2 \in {\rm Int}(D_+^2)$ with $z_1z_2 =-\sigma_e$. Then:
$$
\varphi_{v_1,e}(\frak x_{v_1},z_1) \sim \varphi_{v_2,e}(\frak x_{v_2},z_2).
$$
We divide the space (\ref{form6107}) by the equivalence relation
$\sim$ and denote the quotient space by
\begin{equation}
\Sigma(\vec{\frak x},\vec \sigma).
\end{equation}

Let:
$$
\aligned
\sigma_e &= \exp(-(10T_e + \theta_e\sqrt{-1})) \qquad\qquad
&\text{$e$ is not a level $0$ edge}, \\
\sigma_e &= \exp(-10T_e) \qquad\qquad
&\text{$e$ is a level $0$ edge}.
\endaligned
$$
For each $e \in C^{\rm int}_1(\check R)$, there is a corresponding neck region in $\Sigma(\vec{\frak x},\vec \sigma)$. We define coordinates $r_e$, $s_e$ on this region as follows. Suppose $e$ is not a level $0$ edge.
We choose $v_1,v_2$ so that $v_1$ and the root of $\check R$ (corresponding to the zero-th exterior marked point $z_0$ of $\frak u$) are in the same connected component of $\check R \setminus e$. Let:
$$
\Sigma^+_{v}(\vec{\frak x},\vec \sigma) 
\setminus \Sigma^-_{v}(\vec{\frak x},\vec \sigma)
=
[-5T_e,5T_e]_{r_e} \times S^1_{s_e}
$$
where
$$
  \hspace{3cm}(-5T_e,s_e)  \in {\rm Closure}\,(\Sigma^-_{v_1}(\vec{\frak x},\vec \sigma))\hspace{1cm}\forall s_e\in S^1.
$$
The coordinate $r_e, s_e$ is defined in the same way as in (\ref{form643}), (\ref{form644}). 

If {$e$ is a level $0$ edge}, then we take $v_1$, $v_2$ so that $v_1$ and the root of $\check R$ are in the same connected component of $\check R \setminus e$. Then  $\Sigma^+_{v_1}(\vec{\frak x},\vec \sigma) \setminus \Sigma^-_{v_1}(\vec{\frak x},\vec \sigma)$ has a connected component corresponding to each edge which is incident to $v_1$. We identify the connected component corresponding to the edge $e$ with:
\begin{equation}\label{rectangle}
[-5T_e,5T_e]_{r_e} \times [0,\pi]_{s_e}
\end{equation}
where the point $\varphi_{v_1,e}(\frak x_v,\exp(-(r_e+5T_e)-\sqrt{-1}s_e))$ is identified with $(r_e,s_e)$ in \eqref{rectangle}. Similarly, $\Sigma^+_{v_2}(\vec{\frak x},\vec \sigma) \setminus \Sigma^-_{v_2}(\vec{\frak x},\vec \sigma)$ has a connected component corresponding to each edge which is incident to $v_2$. We identify the connected component corresponding to the edge $e$ with \eqref{rectangle} where the point $\varphi_{v_2,e}(\frak x_v,\exp((r_e-5T_e)+\sqrt{-1}s_e))$ is identified with $(r_e,s_e)$. These identifications are compatible with the equivalence relation $\sim$.

We thus have the decomposition:
\begin{equation}\label{form6110}
\aligned
\Sigma(\vec{\frak x},\vec \sigma)
= 
&\bigcup_{v\in  C^{\rm int}_0(\check R)}\Sigma^-_{v}(\vec{\frak x},\vec \sigma)\\
&\cup
\bigcup_{e\in  C^{\rm int}_1(\check R), \text{$e$ is not a level $0$ edge}}[-5T_e,5T_e]_{r_e} \times S^1_{s_e}
\\
&\cup 
\bigcup_{e\in  C^{\rm int}_1(\check R), \text{$e$ is a level $0$ edge}}[-5T_e,5T_e]_{r_e} \times [0,\pi]_{s_e}.
\endaligned
\end{equation}
This is the thick and thin decomposition which is used frequently in various kinds of Gromov-Witten theory. The inclusion of $\vec w_v$ in $\Sigma^-_{v}(\vec{\frak x},\vec \sigma)$ induces a set of marked points in $\Sigma(\vec{\frak x},\vec \sigma)$, which is also denoted by $\vec w_v$.

\begin{definition}\label{TSD}
We call $\Xi= (\vec w_v,(\mathcal N_{v,i}),(\phi_{v}),(\varphi_{v,e}),\kappa)$ 
	a choice of {\it trivialization and stabilization data} ({\it TSD}) for $\frak u$.
	Here $w_{v,i}$ is  the choice of the additional marked points
	and $\mathcal N_{v,i}$ the set of transversals, $\phi_{v}$ 
	the trivializations of the universal family
	 and analytic family of coordinates $\varphi_{v,e}$.
	The size of $\Xi$ is the sum of a small number $\kappa$, 
	the diameters of $\mathcal V_v^{\rm source}$ and the images of the maps $\varphi_{v,e}$. 
	When we say $\Xi$ is {\it small enough},
	we mean that the size of $\Xi$ is small enough.
	The way they are used is explained in Definition \ref{defn64545}
	below.
	\par
	We call a pair $(\Xi,E)$ of trivialization and stabilization data 
	$\Xi$ and obstraction data $E: = \{E_v\}$ 
	(as in Definition \ref{cond643}), 
	trivialization, stalibization and obstruction data. 
	(TSO).
	\par
	We remark in Section \ref{sub:Obst} we introduced 
	the notions of stabilization data and of stabilization 
	and obstruction data. In the situation of Section \ref{sub:Obst}
	there is an obvious choice of trivializations and coordinates at the nodes.
\end{definition}

Now we introduce several thickened moduli spaces which are used in the definition of our Kuranishi structures. We firstly define the stratum corresponding to $\check R$:
\begin{definition}\label{defn64545}
        Given a TSO $\Upsilon = (\Xi,E)$ 
        ($\Xi= (\vec w_v,(\mathcal N_{v,i}),(\phi_{v}),(\varphi_{v,e}),\kappa)$), the space 
        $\widetilde{\mathcal U}(\frak u,{\check R},\Upsilon)$
        consists of triples $(\vec{\frak x},u',U')$
        with the following properties:
        \begin{enumerate}
        \item $\vec{\frak x} = (\frak x_v ; v  \in  C^{\rm int}_0(\check R))\in 
        \prod_{v  \in  C^{\rm int}_0(\check R)}\mathcal V_{v}^{{\rm source}}$.
         For each interior vertex $v$, $\frak x_v $ belongs to the 
	$\kappa$-neighborhood of the point of $\mathcal V_{v}^{{\rm source}}$ 
	induced by $\frak u$. 
        A representative 
        $(\Sigma_{v},\vec z_{\frak x,v} \cup \vec w_{\frak x,v})$ of $\frak x_v$ is also given
        where the irreducible component $\Sigma_{v}$ is equipped with an almost complex structure 
        $j_{\frak x_v}$.
        \item $u' : \Sigma \to X$ is a continuous map,
        whose restriction $u'_v$ to $\Sigma_v$ is smooth.
	If $c(v)={\rm D}$, then we require that $u'(\Sigma_v)\subset \mathcal D$.
        Moreover, a meromorphic\footnote{The complex line bundle $(u_v')^*(\mathcal N_{\mathcal D}(X))$
        becomes a holomorphic line bundle by using the $(0,1)$ part of the (given) $U(1)$ connection. This 
        is because the base space is complex one dimensional.} section $U_v'$ of $(u_v')^*(\mathcal N_{\mathcal D}(X))$
        is also fixed such that the data of the zeros and poles of $U_v'$ is determined by 
        the multiplicity of the thick edges connected to $v$.
        If $c(v)={\rm d}$, then the restriction of $u'_v$ to the boundary of the
        disc $\Sigma_v$ is mapped to $L$.
       \item 
        The $C^2$-distance\footnote{If $c(v)={\rm d}$ or ${\rm s}$, then the $C^2$-distance is defined
        using the metric $g$ on $X$, and if $c(v)={\rm D}$, then the $C^2$-distance is defined
        using the metric $g'$ on $\mathcal D$.} between $u'_v$ and $u_v$ is less than $\kappa$.
        If $c(v)={\rm D}$, then the $C^2$-distance\footnote{The 
        $C^2$-distance is defined with respect to a 
        metric which has the form given in \eqref{g-cylinder-end}. 
        Note that the set of sections $\{U_v\}_v$ is defined up to action of $\bbC_*^{{\vert}\lambda{\vert}}$, and 
        here we mean than there is a representative for $\{U_v\}_v$ such that 
        the distance between $U'_v$ and $U_v$ is less than $\kappa$.} 
        between $U'_v$ and $U_v$ is less than $\kappa$.
        \item
        We require
        \begin{equation}
        \overline\partial_{j_{\frak x_v}} u'_v \in E_v(u'_v).
        \end{equation}
	Here $j_{\frak x_v}$ is the complex structure of $\Sigma_v$ corresponding 
        to $\frak x_v$. (See the discussion proceeding (\ref{form10505}).)
       Using the complex structure $j_{\frak x_v}$ on $\Sigma_v$, we may define the target parallel transportation 
        in the same way as in Section \ref{sub:Obst}, and obtain $E_v(u'_v)$ from $E_v$.
        (We will explain the definition of $E_v(u'_v)$ more after Definition \ref{defn64545}.)
        We also require:
        \begin{equation}
        		\overline\partial_{j_{\frak x_v}} U'_v \in E_v(u'_v).
        \end{equation}
        if $c(v)={\rm D}$. Here we use \eqref{decom-tan-bdle} to regard $E_v(u'_v)$ as a subspace
        of the function space $L^2_{m,\delta}(\Sigma_v;(U_v')^*T\mathcal N_{\mathcal D}(X) \otimes \Lambda^{0,1})$.
        \item
        We require
        \begin{equation}\label{trans-cons}
        u'_v(w_{v,i}) \in \mathcal N_{v,i}.
        \end{equation}
        \item
        If $e$ is a fine edge connecting vertices $v_1$ and $v_2$ with color d or s, then 
        the values of $u_{v_1}'$ and $u_{v_2}'$ at the node $\Sigma_{v_1}\cap \Sigma_{v_2}$
        are equal to each other. 
	If $e$ is a fine edge connecting vertices $v_1$ and $v_2$ with color D, then 
        the values of $U_{v_1}'$ and $U_{v_2}'$ at the node $\Sigma_{v_1}\cap \Sigma_{v_2}$
        are equal to each other.
        \end{enumerate}
        \par
        We define an equivalence relation $\sim$ on 
        $\widetilde{\mathcal U}(\frak u, {\check R},\Upsilon)$ as follows.
        Let $\vert\lambda\vert$ be the number of levels of the DD-ribbon tree associated to $\frak u$. 
        For $i=1,\dots,\vert\lambda\vert$, we take $a_i \in \bbC_*$.
        We define $(\vec{\frak x},u',U_{(0)}') \sim (\vec{\frak x},u',U_{(1)}')$ 
        \begin{equation}
		U_{(1),v}' = {\rm Dil}_{a_{\lambda(v)}}\circ U_{(0),v}'.
        \end{equation}
        We denote the quotient space with respect to this equivalence relation by 
        $\widehat{\mathcal U}(\frak u,{\check R},\Upsilon)$.
        The group of automorphisms $\Gamma_{\frak u}$ acts on 
        $\widehat{\mathcal U}(\frak u,{\check R},\Upsilon)$ in an obvious way.
        We write ${\mathcal U}(\frak u,{\check R},\Upsilon)$ for
      	the quotient space 
        $\widehat{\mathcal U}(\frak u,{\check R},\Upsilon)/\Gamma_{\frak u}$.
\par
We replace the condition (4) to  
\begin{equation}
        \overline\partial_{j_{\frak x_v}} u'_v =0,
        \qquad
        \overline\partial_{j_{\frak x_v}} U'_v =0
\end{equation}
to define $\widetilde{\frak U}(\frak u,{\check R},\Xi)$, $\widehat{\frak U}(\frak u,{\check R},\Xi)$
and ${\frak U}(\frak u,{\check R},\Xi)$.
Note that they depend only on TSD $\Xi$ and is independent of 
the choice of $E$.
\end{definition}

We now elaborate on the definition of $E_v(u'_v)$.
Let $z \in \Sigma_v$ be a point in the support of $E_v$. 
There exists a map $I^0_v: {\rm Supp} E_v \to \Sigma'_v$ since $\Sigma'_v$ is close to
$\Sigma_v$. 
The map $I^0_v$ is not much canonical. However we can 
modify it to more canonical map $I^t_v : {\rm Supp} E_v \to \Sigma$ as follows.
There exists a unique point $I^t_v(z)$ in $\Sigma$ which is close to $I^0_v$ and 
that the minimal geodesic joining $u_v(z)$ to $u'_v(I_v(z))$ is perpendicular to the image of $u_v$.
Note that this condition is a generalization of Conditions (1)(2) 
of Definition \ref{defn481}, where the
identity map (that is not canonical) played the role of $I^0_v$.
Now using $I^t_v$ we define $E_v(u'_v)$ in the same way as (\ref{formula621}).
\par
The space ${\mathcal U}(\frak u,{\check R},\Upsilon)$ is a generalization of $\mathcal U_{\rm d} \,\,{}_{{\rm ev}_{\rm d}}\times_{{\rm ev}_{\rm D,d}}  \mathcal U_{\rm D}\,\,{}_{{\rm ev}_{\rm D,s}}\times_{{\rm ev}_{\rm s}}\mathcal U_{\rm s}$ appearing in (\ref{form633}) and is a thickened version of a neighborhood of $\frak u$ in the stratum ${\mathcal M}^0(\mathcal R)$ of $\mathcal M^{\rm RGW}_{k+1}(L;\beta)$ defined in  \cite[(3.78)]{part1:top}. The following lemma is a consequence of Definition \ref{cond643} and the implicit function theorem.

\begin{lemma}
	If $\kappa$ is small enough, then $\widehat{\mathcal U}(\frak u,{\check R},\Upsilon)$ is a smooth 
	manifold and ${\mathcal U}(\frak u,{\check R},\Upsilon)$ is a smooth orbifold.
\end{lemma}

We next introduce the generalization of the space 
$\mathcal U_0$ in Definition \ref{defnn614}.

\begin{definition}\label{defn647}
        Let $\Xi= (\vec w_v,(\mathcal N_{v,i}),(\phi_{v}),(\varphi_{v,e}),\kappa)$ be a TSD at
        the element $\frak u$ of $\mathcal M_{k+1}^{\rm RGW}(L,\beta)$
        and $\Upsilon = (\Xi,E)$ 
        a TSO.
        The space $\widehat{\mathcal U}_0(\frak u,\Upsilon)$ consists of 
        $(\vec{\frak x},\vec{\sigma},u')$ with the following properties:
        \begin{enumerate}
        			\item $\vec{\frak x} (\frak x_v ; v  \in  C^{\rm int}_0(\check R))\in 
			\prod_{v  \in  C^{\rm int}_0(\check R)}\mathcal V_{v}^{{\rm source}}$ and 
		        	$\vec\sigma = (\sigma_e ; e \in  C^{\rm int}_{1}(\check R)) \in
			\prod_{e  \in  C^{\rm int}_{1}(\check R)}\mathcal V_{e}^{{\rm deform}}$. 
			 Furthermore, for each interior vertex $v$, $\frak x_v $ belongs to the 
			$\kappa$-neighborhood of the point of $\mathcal V_{v}^{{\rm source}}$ 
			induced by $\frak u$. Similarly, for each $e$, we have ${\vert}\sigma_e{\vert}<\kappa$.
	        \item $u' : (\Sigma(\vec{\frak x},\vec{\sigma}),\partial(\Sigma(\vec{\frak x},\vec{\sigma}))) 
		        \to (X\backslash \mathcal D,L)$ is a continuous map
        			and is smooth on each irreducible component.
	        \item If $e$ is (resp. is not) a level $0$ edge, 
	        		then the image of the restriction to $u'$ to
			$[-5T_e,5T_e]_{r_e} \times [0,\pi]_{s_e}$ (resp. $[-5T_e,5T_e]_{r_e} \times S^1_{s_e}$)
			has a diameter\footnote{The diameter is defined
			with respect to the metric $g_{NC}$.} less than $\kappa$. 
			If $c(v)={\rm D}$, then the restriction of $u'$ to  $\Sigma^+_{v}(\vec{\frak x},\vec \sigma)$
			is included in the open neighborhood $\frak U$ of $\mathcal D$.
                \item If $c(v)={\rm d}$ or ${\rm s}$, then the $C^2$-distance between the 
			restrictions of $u'$ and $u_v$ to $\Sigma^-_{v}(\vec{\frak x},\vec \sigma)$ is less than 
			$\kappa$. If $c(v)={\rm D}$, then the previous part implies that the restriction of 
			$u'$ to $\Sigma^-_{v}(\vec{\frak x},\vec \sigma)$ may be regarded as a map to 
			$\mathcal N_{\mathcal D}(X)\setminus \mathcal D$. We also demand that 
			the $C^2$-distance between this map and $U_v$ is less than $\kappa$.\footnote{
			Here again we use the convention that the distance between an object and 
			$\{U_v\}_{\lambda(v)>0}$, is defined to be the minimum of the relevant distance 
			between that object and all representatives of $\{U_v\}_{\lambda(v)>0}$.}
		\item We require
        			\begin{equation}
			        \overline\partial_{j_{\vec{\frak x},\vec \sigma}} u' \in E_0(u').
		        \end{equation}
		        Here $j_{\vec{\frak x},\vec \sigma}$ is the complex structure of 
		        $\Sigma(\vec{\frak x},\vec \sigma)$, and $E_0(u')$ is defined from $E_v$ by 
		        target parallel transportation in the same way as in Section \ref{sub:Obst}.
		\item We require
        			\begin{equation}
			        u'(w_{v,i}) \in \mathcal N_{v,i}.
		        \end{equation}
	\end{enumerate}
        The group of automorphisms $\Gamma_{\frak u}$ acts on 
        $\widehat{\mathcal U}_0(\frak u,\Upsilon)$ in the obvious way.
        We write ${\mathcal U}_0(\frak u,\Upsilon)$
        for the quotient space $\widehat{\mathcal U}_0(\frak u,\Upsilon)/\Gamma_{\frak u}$.
\end{definition}

\begin{remark}
	The above definition needs to be slightly modified if some 
	of the components of $\vec\sigma$ are zero. 
	Let $e$ be an edge connecting a vertex of level $i$ to a vertex of level $i+1$ such 
	that $\sigma_e=0$. If $e'$ is another edge that 
	connects a vertex of level $i$ to a vertex of level $i+1$, 
	then $\sigma_{e'}=0$. Next, we decompose $\check R$ 
	into several blocks such that $\sigma_e = 0$ for the edges $e$ joining two different blocks 
	and $\sigma_e \ne 0$ for an edge $e$, which is inside a block and is not a fine edge.
	In each block, we use Definition \ref{defn647} and 
	join spaces associated to various blocks in the same way as in Definition \ref{defn64545}.
	We omit the details of this process because the actual space we use for 
	the definition of our Kuranishi structure is not ${\mathcal U}_0(\frak u,\Upsilon)$
	but ${\mathcal U}(\frak u,\Upsilon)$, introduced in Definition \ref{defn6488}.
	We can also define ${\mathcal U}_0(\frak u,\Upsilon)$ as a subspace of ${\mathcal U}(\frak u,\Upsilon)$.
	We brought firstly Definition \ref{defn647} because its geometric meaning is more clear.
\end{remark}

The space ${\mathcal U}_0(\frak u,\Upsilon)$ in general is singular (not an orbifold). We introduce the notion of inconsistent solutions to thicken ${\mathcal U}_0(\frak u,\Upsilon)$ into an orbifold.

\begin{definition}\label{defn6488}
	Let $\Xi= (\vec w_v,(\mathcal N_{v,i}),(\phi_{v}),(\varphi_{v,e}),\kappa)$ be a TSD at
        the element $\frak u$ of $\mathcal M_{k+1}^{\rm RGW}(L,\beta)$
        and $\Upsilon = (\Xi,E)$ 
        a TSO.
	We say $(\vec{\frak x},\vec{\sigma},(u'_{v}),(U'_{v}),(\rho_e),(\rho_i))$ is an 
	{\it inconsistent solution near $\frak u$ with respect to $\Upsilon$}
	 if it satisfies the following properties:
        \begin{enumerate}
        \item $\vec{\frak x} (\frak x_v ; v  \in  C^{\rm int}_0(\check R))\in 
        \prod_{v  \in  C^{\rm int}_0(\check R)}\mathcal V_{v}^{{\rm source}}$,
		$\vec\sigma = (\sigma_e ; e \in  C^{\rm int}_{1}(\check R)) \in
		\prod_{e  \in  C^{\rm int}_{1}(\check R)}\mathcal V_{e}^{{\rm deform}}$ and 
	        $\rho_e \in \bbC$ for each edge $e \in C^{\rm int}_{\rm th}(\check R)$ 
	        that is not a level $0$ edge, and 
	        $\rho_i \in D^2$ for each level $i =1 ,\dots,\vert\lambda\vert$.
	        Furthermore, for each interior vertex $v$, $\frak x_v $ belongs to the 
		$\kappa$-neighborhood of the point of $\mathcal V_{v}^{{\rm source}}$ 
		induced by $\frak u$. Similarly, for each $e$, we have ${\vert}\sigma_e{\vert}<\kappa$.
        \item
        If $c(v) = {\rm d}$ (resp. ${\rm s}$), then 
        $u'_{v} : (\Sigma^+_{v}(\vec{\frak x},\vec \sigma),\partial \Sigma^+_{v}(\vec{\frak x},\vec \sigma))
        \to (X\setminus\mathcal D,L)$ (resp. $u'_{v} : \Sigma^+_{v}(\vec{\frak x},\vec \sigma)\to X \setminus\mathcal D$)
        is a smooth map.
        \item If $c(v) = {\rm D}$, then $U'_{v} : \Sigma^+_{v}(\vec{\frak x},\vec \sigma)
        \to \mathcal N_{\mathcal D}X \setminus\mathcal D$
        is a smooth map, and $u'_{v}=\pi\circ U'_{v}$.
        \item $\rho_e = 0$ if and only if $\sigma_e = 0$. 
        \item Suppose $e$ is an edge connecting vertices $v_0$ and $v_1$ 
        such that $\lambda(v_0)=0$ and $\lambda(v_1)\geq1$.
	Then we require:
        \begin{equation}\label{form621}
    	    u'_{v_0} = {\rm Dil}_{\rho_e} \circ U'_{v_1}
        \end{equation}
        on $[-5T_e,5T_e]_{r_e} \times S^1_{s_e}
        = \Sigma^+_{v_1}(\vec{\frak x},\vec \sigma)\cap \Sigma^+_{v_2}(\vec{\frak x},\vec \sigma)$
        if $\sigma_e \ne 0$. In particular, we assume that the restriction of $u'_{v_0}$ to 
        $[-5T_e,5T_e]_{r_e} \times S^1_{s_e}$ is contained in the open neighborhood $\frak U$ of
        $\mathcal D$.
        If $\sigma_e = 0$, then the values of $u'_{v_0}$ and 
        $\pi \circ U'_{v_1}$ at the nodal points corresponding to $e$ are equal to each other.
        \item Suppose $e$ is an edge connecting vertices $v_1$ and $v_2$ such that 
        $\lambda(v_1)=i>0$ and $\lambda(v_2)\geq i+1$.
        We require
        \begin{equation}\label{form6222}
        		U'_{v_1} = {\rm Dil}_{\rho_e} \circ U'_{v_2}
        \end{equation}
        on $[-5T_e,5T_e]_{r_e} \times S^1_{s_e}
        = \Sigma^+_{v_1}(\vec{\frak x},\vec \sigma)\cap \Sigma^+_{v_2}(\vec{\frak x},\vec \sigma)$
        if $\sigma_e \ne 0$.
        If $\sigma_e = 0$, then the values of $U'_{v_1}$ and $\pi\circ U'_{v_2}$
        at the nodal points corresponding to $e$ are equal.
        \item Suppose $e$ is a level $0$ edge connecting the vertices $v_1$ and $v_2$. If $\sigma_e \ne 0$, then we require:
	\begin{equation}\label{form623}
	        u'_{v_1} = u'_{v_2}
	\end{equation}
        on $[-5T_e,5T_e]_{r_e} \times [0,1]_{s_e}
        = \Sigma^+_{v_1}(\vec{\frak x},\vec \sigma)\cap \Sigma^+_{v_2}(\vec{\frak x},\vec \sigma)$. If $\sigma_e =0$, then \eqref{form623} holds at the nodal 
        point corresponding to $e$.
        \item Suppose $e$ is a fine edge connecting the vertices $v_1$ and $v_2$ with level zero (resp. with the same positive level). If $\sigma_e \ne 0$, then we require:
	\begin{equation}\label{form623rev}
	        \hspace{1cm} u'_{v_1} = u'_{v_2}  \quad {\rm (resp.} \,\,\, \quad 
        U'_{v_1} = U'_{v_2})
	\end{equation}
        on  $[-5T_e,5T_e]_{r_e} \times S^1_{s_e}= \Sigma^+_{v_1}(\vec{\frak x},\vec \sigma)\cap \Sigma^+_{v_2}(\vec{\frak x},\vec \sigma)$. If $\sigma_e =0$, then \eqref{form623rev} holds at the nodal 
        point corresponding to $e$.
	\item If $e$ is (resp. is not) a level $0$ edge, 
	        	then the image of the restriction to $u_v'$ to
		$[-5T_e,5T_e]_{r_e} \times [0,\pi]_{s_e}$ (resp. $[-5T_e,5T_e]_{r_e} \times S^1_{s_e}$)
		has a diameter\footnote{The diameter is defined
		with respect to the metric $g_{NC}$.} less than $\kappa$.
	\item If $c(v)={\rm d}$ or ${\rm s}$, then the $C^2$-distance between the 
		restrictions of $u_v'$ and $u_v$ to $\Sigma^-_{v}(\vec{\frak x},\vec \sigma)$ is less than 
		$\kappa$. If $c(v)={\rm D}$, then we demand that 
		the $C^2$-distance between the restrictions of $U_v'$ and $U_v$ 
		to $\Sigma^-_{v}(\vec{\frak x},\vec \sigma)$ is less than $\kappa$.\footnote{
		Here again we use the convention that the distance between an object 
		and $\{U_v\}_{\lambda(v)>0}$, is defined to be the minimum of the relevant distance 
		between that object and all representatives of $\{U_v\}_{\lambda(v)>0}$.}
	        \item
        If $c(v) = {\rm d}$ or $\rm s$, then we require:
        \begin{equation}\label{form6119}
        \overline\partial_{j_{\vec{\frak x},\vec \sigma}} u'_v \in E_v(u'_v).
        \end{equation}
        Here $j_{\vec{\frak x},\vec \sigma}$ is the complex structure of $\Sigma(\vec{\frak x},\vec \sigma)$, and
        $E_v(u'_v)$ is defined from $E_v$ by target parallel transportation in the same way as in Section \ref{sub:statement}.
        \item
        If $c(v) = {\rm D}$, then we require:
        \begin{equation}\label{form6120}
       		\overline\partial_{j_{\frak x_v}} U'_v \in E_v(U'_v).
        \end{equation}
        Here $j_{\vec{\frak x},\vec \sigma}$ is the complex structure of 
        $\Sigma(\vec{\frak x},\vec \sigma)$, and
        we use \eqref{decom-tan-bdle} to obtain $E_v(U'_v)$ from $E_v(u'_v)$ as a subspace
        of $L^2_{m,\delta}(\Sigma_v;(U_v')^*T\mathcal N_{\mathcal D}(X) \otimes \Lambda^{0,1})$.
        \item We have:
        \begin{equation}\label{form6121}
	        u'_v(w_{v,i}) \in \mathcal N_{v,i}.
        \end{equation}
        \end{enumerate}

        We denote by $\widetilde{\mathcal U}(\frak u,\Upsilon)$ the set of all
        $(\vec{\frak x},\vec{\sigma},(u'_{v}),(U'_{v}),(\rho_e),(\rho_i))$ satisfying the above properties.
        We define an equivalence relation $\sim$ on $\widetilde{\mathcal U}(\frak u,\Upsilon)$ in the following way. 
        Let ${\bf x}_j =(\vec{\frak x}_{(j)},\vec{\sigma}_{(j)},(u'_{v,(j)}),(U'_{v,(j)}),(\rho_{e,(j)}),(\rho_{i,(j)}))$
        be elements of $\widetilde{\mathcal U}(\frak u,\Upsilon)$ for $j=1,2$. We say that 
        ${\bf x}_1 \sim {\bf x}_2$ if there exists $a_i \in \bbC_*$ ($i=1,\dots,\vert\lambda\vert$)
        with the following properties.
        Let:
        $$
        b_i = a_1 \cdots  a_i \in \bbC_*.
        $$
        \begin{enumerate}
        \item[(i)]
        $\vec{\frak x}_{(1)} = \vec{\frak x}_{(2)}$, $\vec{\sigma}_{(1)} = \vec{\sigma}_{(2)}$, 
        $u'_{v,(1)} = u'_{v,(2)}$.
        \item[(ii)] $\rho_{i,(2)} = a_i \rho_{i,(1)}$.
        \item[(iii)] $U'_{v,(1)} = {\rm Dil}_{b_{\lambda(v)}}\circ U'_{v,(2)}$
        \item[(iv)] Suppose $e$ is an edge connecting a vertex $v_0$ with $\lambda(v_0)=0$ to 
        a vertex $v_1$ with $\lambda(v_1)\geq 1$. Then we require:
        $$
        \rho_{e,(2)} =b_{\lambda(v_2)}\rho_{e,(1)}.
        $$
        \item [(v)] Suppose $e$ is an edge connecting a vertex $v_1$ with $\lambda(v_1)\geq 1$ to 
        a vertex $v_2$ with $\lambda(v_2)\geq 2$. Then we require:
        $$
        \rho_{e,(2)} = a_{\lambda(v_1)+1}\cdots a_{\lambda(v_2)}\rho_{e,(1)}.
        $$
        \end{enumerate}
        We denote by  $\widehat{\mathcal U}(\frak u,\Upsilon)$ the quotient space  
        $\widetilde{\mathcal U}(\frak u,\Upsilon)/\sim$. The group $\Gamma_{\frak u}$ acts on 
        $\widehat{\mathcal U}(\frak u,\Upsilon)$ in an obvious way.
        We denote by ${\mathcal U}(\frak u,\Upsilon)$ the quotient space 
        $\widehat{\mathcal U}(\frak u,\Upsilon)/\Gamma_{\frak u}$.
        We say an element of  ${\mathcal U}(\frak u,\Upsilon)$ is an {\it inconsistent solution} near $\frak u$
        with respect to $\Upsilon$.
        When it does not make any confusion, the elements of 
        $\widehat{\mathcal U}(\frak u,\Upsilon)$ or $\widetilde{\mathcal U}(\frak u,\Upsilon)$ 
        are also called inconsistent solutions near $\frak u$ with respect to $\Upsilon$.
        \par
        We replace Conditions (11) and (12) by 
        pseudo-holomorphicity to define 
        $\widehat{\frak U}(\frak u,\Xi)$, $\widetilde{\frak U}(\frak u,\Xi)$
        and ${\frak U}(\frak u,\Xi)$.
\end{definition}
\begin{remark}
	Initially it might seem that the complex numbers $\rho_i$ do not play any role in the definition of 
	the elements of $\mathcal U(\frak u,\Upsilon)$.
	However, later they make it slightly easier for us to define the obstruction maps.
\end{remark}
 
Our generalization of Proposition \ref{prop617} claims that ${\mathcal U}(\frak u,\Upsilon)$ is a smooth orbifold.
Before stating this result, we elaborate on the relationship between $\mathcal U(\frak u,\Upsilon)$ and $\mathcal U_0(\frak u,\Upsilon)$.

\begin{definition}\label{inconssitentmap} 
In the situation of Definition \ref{defn6488},
we say $(\vec{\frak x},\vec{\sigma},(u'_{v}),(U'_{v}),(\rho_e),(\rho_i))$ 
is an 
	{\it inconsistent map near $\frak u$ with respect to TSD $\Xi$}
	if Conditions (1)-(10), (13) of Definition \ref{defn6488}
	is satisfied. (We do not require (11) and (12) here.)
\end{definition}
 
\begin{definition}
	Let $(\vec{\frak x},\vec{\sigma},(u'_{v}),(U'_{v}),(\rho_e),(\rho_i))$ be 
	an inconsistent solution near $\frak u$ with respect to $\Upsilon$. We say that it satisfies 
	{\it consistency equation} if for each edge $e$ connecting vertices $v_1$ and $v_2$ with
	$0\leq\lambda(v_1)< \lambda(v_2)$, we have:
	\begin{equation}\label{form6123}
		\rho_{e} = \rho_{\lambda(v_1)+1}\cdots \rho_{\lambda(v_2)}.
	\end{equation}
	It is easy to see that the consistency equation (\ref{form6123}) is independent of the 
	choice of the representative with respect to the relations given by $\sim$ and the action of 
	$\Gamma_{\frak u}$.
\end{definition}
\begin{lemma}\label{lem650}
	The set of inconsistent solutions near $\frak u$ satisfying consistency equation 
	can be identified with ${\mathcal U}_0(\frak u,\Upsilon)$.
\end{lemma}
\begin{proof}
	Let  $(\vec{\frak x},\vec{\sigma},(u'_{v}),(U'_{v}),(\rho_e),(\rho_i))$ be
	an inconsistent solution near $\frak u$ satisfying consistency equations.
	For the simplicity of exposition, 
	we consider the case that all the components of $\vec{\sigma}$ are nonzero.\footnote{
	This is the case that we gave a detailed definition of ${\mathcal U}_0(\frak u,\Upsilon)$, after all.
	For other cases, this lemma can be used as the definition.}
	Define $\tau_i$ to be the product $\rho_1\cdot \rho_2 \dots \rho_i$. For each vertex $v$ 
	with $c(v) = D$, we also define:
	\[
	  U^{\frak m}_{v} = {\rm Dil}_{\tau_{\lambda(v)}} \circ U^{\prime}_{v}.
	\]
	Then the maps $U^{\frak m}_{v}$
	for $c(v)={\rm D}$ and $u'_{v}$ for $c(v)={\rm d}$ or ${\rm s}$ are compatible on the overlaps
	and by gluing them together, we obtain an element of ${\mathcal U}_0(\frak u,\Upsilon)$.
	The reverse direction is clear.
\end{proof}
\begin{example}
	We consider the case of detailed DD-ribbon tree in Figure \ref{FIgsec6-1}.
	This tree has two edges (whose multiplicities are $p_1$ and $p_2$, respectively). We denote them 
	by $e_{\rm d}$ and $e_{\rm s}$, respectively. 
	Two parameters $\rho_{\rm d}$ and $\rho_{\rm s}$ are associated to these edges. 
	(In Section \ref{sub:proofmain}, $\rho_{\rm d}$ and $\rho_{\rm s}$ are denoted by 
	$\rho_1$ and $\rho_2$, respectively). 
	The total number of levels is 1. So there is a parameter $\rho$ associated to this level.
	The consistency equation (\ref{form6123}) implies that:
	$$
	  \rho_{\rm d}=\rho =\rho_{\rm s}.
	$$
	which is the same as the equation in \eqref{rho1rho2hito}.
\end{example}
For any $\ell \le m-2$, we fix a $C^{\ell}$ structure on $\widehat{\mathcal U}(\frak u,\Upsilon)$ in the following way. For an interior vertex $v$ of $\Sigma_v$, let $\Sigma_v^-$ be the space $\Sigma_v^-(\vec{\frak x}, \vec{\sigma})$ in the case that $\vec{\sigma}=0$ and $\vec{\frak x}$ is induced by $\frak u$. The trivialization of the universal family allows us also to identify $\Sigma_v^-$ with $\Sigma_v^-(\vec{\frak x}, \vec{\sigma})$ for different choices of $\vec{\frak x}$, $\vec{\sigma}$. Define maps:
\begin{align}
  {\rm Res}_v : \widetilde{\mathcal U}(\frak u,\Upsilon)&\to L^2_{m+1}(\Sigma_v^-,X\setminus \mathcal D)
  \qquad &\text{if $\lambda(v) = 0$}, \\
  {\rm Res}_v : \widetilde{\mathcal U}(\frak u,\Upsilon)
  &\to L^2_{m+1}(\Sigma_v^-,\mathcal N_{\mathcal D}X \setminus \mathcal D)
  \qquad &\text{if $\lambda(v) > 0$},\label{lambda-v-p}
\end{align}
such that ${\rm Res}_v(\vec{\frak x},\vec{\sigma},(u'_{v}),(U'_{v}),(\rho_e),(\rho_i))$ is the restriction of $u'_v$ or $U'_v$ to $\Sigma_v^-(\vec{\frak x},\vec{\sigma}) \cong \Sigma_v^-$. By unique continuation, ${\rm Res}_v$ and the obvious projection maps induce an embedding:
\begin{equation}\label{ccc}
	\aligned
	\widetilde{\mathcal U}(\frak u,\Upsilon)\to 
	&\prod_{v  \in  C^{\rm int}_0(\check R)}\mathcal V_{v}^{{\rm source}} 
	\times \prod_{e  \in  C^{\rm int}_1(\check R)}\mathcal V_{e}^{{\rm deform}} \times 
	(D^2)^{\vert\lambda\vert}\\
	&\times\prod_{v  \in  C^{\rm int}_0(\check R), \lambda(v) = 0}L^2_{m+1}
	(\Sigma_v^-,X\setminus \mathcal D)\\
	&\times\prod_{v  \in  C^{\rm int}_0(\check R), \lambda(v) > 0}L^2_{m+1}
	(\Sigma_v^-,\mathcal N_{\mathcal D}X\setminus \mathcal D)
	\endaligned
\end{equation}

We use this embedding to fix a $C^{\ell}$-structure on $\widetilde{\mathcal U}(\frak u,\Upsilon)$. The group $\bbC_*^{{\vert}\lambda{\vert}}$ acts freely on the target and the domain of \eqref{ccc}, and the above embedding is equivariant with respect to this action. We use the induced map at the level of the quotients to define a $C^{\ell}$-structure on $\widehat{\mathcal U}(\frak u,\Upsilon)$. Note that we can define a slice for $\widehat{\mathcal U}(\frak u,\Upsilon)$ using the following idea. For each $1\leq i \leq {\vert}\lambda{\vert}$, we fix an interior vertex $v_i$ with $\lambda(v_i)=i$ and a base point $x_i\in\Sigma_{v_i}^{-}$. We also trivialize the bundle $\mathcal N_{\mathcal D}(X)$ in a neighborhood of $U_{v_i}(x_i)$. Each element of $\widehat{\mathcal U}(\frak u,\Upsilon)$ has a unique representative $(\vec{\frak x},\vec{\sigma},(u'_{v}),(U'_{v}),(\rho_e),(\rho_i))$ such that $U_{v_i}'(x_i)=1\in \bbC$. Here we assume that $\kappa$ is small enough such that $U_{v_i}'(x_i)$ belongs to the neighborhood of $U_{v_i}(x_i)$ that 
the pull back of $\mathcal N_{\mathcal D}(X)$ is trivialized.
\begin{prop}\label{prop652}
	The space $\widehat{\mathcal U}(\frak u,\Upsilon)$ is a $C^{\ell}$-manifold 
	and ${\mathcal U}(\frak u,\Upsilon)$ is a $C^{\ell}$-orbifold.
	There exists a $\Gamma_{\frak u}$-invariant open $C^{\ell}$-embedding for $\ell \le m-2$:
	\[
	  \Phi : \prod_{e  \in  C^{\rm int}_1(\check R)}\mathcal V_{e}^{{\rm deform}} 
	  \times \widehat{\mathcal U}(\frak u,{\check R},\Upsilon)\times D^2(\epsilon)^{\vert\lambda\vert}
	  \to \widehat{\mathcal U}(\frak u,\Upsilon)
	\]
	with the following properties:
	\begin{enumerate}
		\item
			\[
			  \Phi(\vec{\sigma},\xi,(\rho_i))=[\vec{\frak x}, \vec{\sigma},(u'_{\vec{\sigma},\xi,v}),
			  (U'_{\vec{\sigma},\xi,v}),(\rho_{e}(\vec{\sigma},\xi)),(\rho_i)]
			\]
		Namely, the gluing parameters $\vec{\sigma}$  
		are preserved by the map $\Phi$. Moreover, the deformation parameter $\vec{\frak x}$ is 
		the same as the one for the source curve of $\xi$.
		\item
		For each edge $e \in C^{\rm int}_{\rm th}(\check R)$ 
	        that is not a level $0$ edge, there exists a nonzero smooth function $f_e$ such that: 
		\[
		  \rho_{e}(\vec{\sigma},\xi)=f_e(\vec{\sigma},\xi) \sigma_e^{m(e)} 
		\]
		where $m(e)$ is the multiplicity of the edge $e$.
		\item Let $\xi = (\vec{\frak x},u',U')\in \widehat{\mathcal U}(\frak u,{\check R},\Upsilon)$  and 
			$\vec{\sigma}_0$ be the vector that $\sigma_e$ are all zero.
			Then we have:
			\[
			  \Phi(\vec{\sigma}_0,\xi,(\rho_i))=[\vec{\frak x},\vec{\sigma}_0,(u'_{\vec{\sigma}_0,\xi,v}),
			  (U'_{\vec{\sigma}_0,\xi,v}),(\rho_{e}(\vec{\sigma}_0,\xi)),(\rho_i)],
			\]
			where $u'_{\vec{\sigma}_0,\xi,v}$ is the restriction $u'_v$ of $u'$,
			$U'_{\vec{\sigma}_0,\xi,v}$ is the restriction of $U^{\prime}_{v}$ 
			and $\rho_{e}(\vec{\sigma}_0,\xi)=0$.
	\end{enumerate}
\end{prop}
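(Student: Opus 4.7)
The plan is to extend the gluing construction carried out in Section \ref{sub:proofmain} for the three-component tree of Section \ref{subsec:gluing1} to the arbitrary very detailed DD-ribbon tree $\check R$. The overall structure follows Proposition \ref{prop617}, but three new combinatorial features must be handled: there are many thick edges rather than two, multiple levels contributing independent $\rho_i$ parameters, and two additional kinds of edges (level $0$ edges, for which the target-side gluing takes place in $L$, and fine edges, for which both endpoints sit in a compact region of $X\setminus\mathcal D$ or $\mathcal N_{\mathcal D}(X)\setminus\mathcal D$).

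First, I would set up the global weighted Sobolev spaces on $\Sigma(\vec{\frak x},\vec\sigma)$ generalizing Definition \ref{formula648}, using the neck decomposition \eqref{form6110}. For a thick edge $e$ with neck coordinates $(r_e,s_e)$, the weight is equal to $e^{\delta(5T_e-|r_e|)}$ in the neck and is equal to $1$ on the thick pieces $\Sigma_v^-$; the asymptotic value of a section in the neck is a triple $((\frak r_\infty,\frak s_\infty), v)\in (\R\oplus\R)\oplus T_{p_e}\mathcal D$ with the matching condition on the $T\mathcal D$-part only, exactly as in Definition \ref{defn62888}(4). For level $0$ edges the asymptotic values are constrained to lie in $TL$, and for fine edges the asymptotic value lies in the full tangent space of the ambient target (since gluing happens in the compact part); here the classical matching condition is imposed. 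Second, for the pre-gluing, starting from $\xi\in\widehat{\mathcal U}(\frak u,\check R,\Xi)$, for each thick edge $e$ joining vertices $v_1$ and $v_2$ with $\lambda(v_1)<\lambda(v_2)$, I would define an initial value $\rho_e^{(0)}$ exactly as in \eqref{newold664}, using the fact that the leading term of the map at the node on the lower-level side is of order $z^{m(e)}$ while the meromorphic section $\frak s^\xi$ has a pole of order $m(e)$. Then I would use the exponential map and bump functions from Subsection \ref{subsub:Bump} on each neck to glue the components into an approximate inconsistent map.

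Third, the analytic backbone is a generalization of Lemma \ref{lem63631}: for $\vec\sigma$ small, the linearized Cauchy–Riemann operator on the glued Sobolev space $W^2_{m+1,\delta}(\frak u'';TX)$ admits a right inverse modulo $\bigoplus_v E_v$, bounded uniformly in $\vec\sigma$. Surjectivity of the vertex-wise operators modulo $E_v$ is guaranteed by Condition \ref{cond643}(5); the gluing is possible because the transversals $\mathcal N_{v,i}$ impose the mapping transversality of the $T\mathcal D$-components needed to assemble asymptotic data across thick edges (and the usual matching on fine and level $0$ edges), just as in \eqref{form61616161}. Given the right inverse, Newton's iteration is carried out as in Subsection \ref{subsub:newton}, producing the inconsistent solution $\frak u^{\xi,\prime}_{\vec\sigma,(\infty)}$ and hence the map $\Phi$. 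The exponential decay estimate from \cite[\S6]{foooexp}, adapted as in the comments after Proposition \ref{prop618}, yields the $C^\ell$ smoothness.

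The main obstacle will be the bookkeeping of the multi-level structure and the equivalence relation of Definition \ref{defn6488}: each time one dilates by $a_i\in\C_*$ at level $i$, the parameters $\rho_e$ for edges crossing level $i$ rescale by the partial products $b_i$. I would deal with this by fixing a slice for the $\C_*^{|\lambda|}$-action as indicated above Proposition \ref{prop652} (normalizing $U'_{v_i}(x_i)=1$ at a chosen base point on each level), carrying out the Newton iteration inside that slice, and only then translating the result by the $\C_*^{|\lambda|}$-action to recover the $(\rho_i)$-dependence of $\Phi$. Part (2) of the proposition will then be immediate: the leading term $\rho_e^{(0)}$ has the form $f_e^{(0)}(\vec{\frak x},\xi)\,\sigma_e^{m(e)}$ with $f_e^{(0)}\neq 0$ by \eqref{shiki655}–\eqref{shiki656}, and the Newton corrections at each step are exponentially smaller than $\sigma_e^{m(e)}$, so $\rho_e(\vec\sigma,\xi)=f_e(\vec\sigma,\xi)\,\sigma_e^{m(e)}$ with $f_e$ smooth and nowhere vanishing. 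Part (3) holds by construction, since at $\vec\sigma=\vec\sigma_0$ the pre-gluing is trivial and all Newton corrections vanish.
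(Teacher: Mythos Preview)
Your proposal is correct and follows essentially the same approach as the paper; indeed, the paper's own proof of Proposition \ref{prop652} is a single sentence stating that it is the same as Proposition \ref{prop617} but only notationally more involved, and you have spelled out precisely those notational details. One small correction: the mapping transversality at the thick nodes is not imposed by the transversal constraints $\mathcal N_{v,i}$ (those only kill the freedom of the auxiliary marked points, as in Condition \ref{conds610}) but is built directly into Condition \ref{cond643}(5), since the domain $W^2_{m+1,\delta}(\frak u;T)$ already carries the matching requirement $v_{v_1,e}=v_{v_2,e}$ on the $T\mathcal D$-parts of the asymptotic values.
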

The proof of Proposition \ref{prop652} is essentially the same as the proof of Proposition \ref{prop617}, and it is only notationally more involved.

We next state a generalization of Proposition \ref{prop618}. For a thick edge $e$ which is not of level $0$, we define $T_e$, $\theta_e$, $\frak R_e$, $\eta_e$ using the following identities:
\begin{equation}\label{form635rev}
	\aligned
	\sigma_e &= \exp(-(T_e+\sqrt{-1}\theta_e)), \\
	\rho_e  &= \exp(-(\frak R_e+\sqrt{-1} \eta_e)).
	\endaligned
\end{equation}
If $e$ is a level $0$ edge, then we define $T_e$ using:
\begin{equation}\label{form635rev-2}
	\sigma_e = \exp(-T_e).
\end{equation}
We may also define $T_e$ and $\theta_e$ for a fine edge as in \eqref{form635rev}. Using $\Phi$, we regard $\frak R_e$, $\eta_e$ as functions of $T_{e'}$, $\theta_{e'}$ and $\xi$.
We again use the trivialization of the universal family to identify $\Sigma_v^-(\vec{\frak x}_0,\vec{\sigma})$
(see \eqref{form610}.) for various choices of $\vec{\frak x}_0$, $\vec{\sigma}$. For the purpose of the next proposition, we also regard $u'_{\vec{\sigma},\xi,v}$, $U'_{\vec{\sigma},\xi,v}$ as maps
  \begin{align*}
  &u'_{\vec{\sigma},\xi,v} :\Sigma_v^-(\vec{\sigma}) \to X \setminus \mathcal D \\
  &U'_{\vec{\sigma},\xi,v} :\Sigma_v^-(\vec{\sigma}) \to \mathcal N_{\mathcal D}X \setminus \mathcal D. 
  \end{align*}
In particular, the domain of these maps are independent of $T_e$, $\theta_e$ and $\xi$.
\begin{prop}\label{prop653}
	Let $\ell$ be an arbitrary positive integer
	and $k_e,k'_e$ be non-negative integers. Let
	$\upsilon_e = 0$ if $k_e,k'_e = 0$. Otherwise we define $\upsilon_e = 1$.
	\begin{enumerate}
		\item We have the following exponential decay estimates:
			\begin{align}\label{}
				  \phantom{i + j + k}
				  &\begin{aligned}
   				&\left\Vert
				\prod_e  \frac{\partial^{k_e}}{\partial^{k_e} T_e}\frac{\partial^{k'_e}}
				{\partial^{k'_e} \theta_e}u'_{\vec{\sigma},\xi,v}
				\right\Vert_{L^2_{\ell}(\Sigma_v^-(\vec{\sigma}))}\le
				C \exp(-c\sum \upsilon_eT_e).\\
				&\left\Vert\prod_e  \frac{\partial^{k_e}}{\partial^{k_e} T_e}\frac{\partial^{k'_e}}
				{\partial^{k'_e} \theta_e}U'_{\vec{\sigma},\xi,v}
				\right\Vert_{L^2_{\ell}(\Sigma_v^-(\vec{\sigma}))}\le C \exp(-c\sum \upsilon_eT_e).
				  \end{aligned}
			\end{align}
			Here $C,c$ are positive constants depending on $\ell$, $k_e$, $k_e'$. 
			The same estimate holds for the $\xi$ derivatives of 
			$u'_{v,\vec{\sigma},\xi}$, $U'_{v,\vec{\sigma},\xi}$.
		\item For any thick edge $e_0$ which is not a level $0$ edge, 
			we also have the following exponential decay estimates:
						\begin{align}\label{}
				  \phantom{i + j + k}
				  &\begin{aligned}
				                              &\left\vert
                                \prod_e  \frac{\partial^{k_e}}{\partial^{k_e} T_e}
                                \frac{\partial^{k'_e}}{\partial^{k'_e} \theta_e}
                                {(\frak R_{e_0} - m({e_0})T_{e_0})}
                                \right\vert
                                \le
                                C \exp(-c\sum \upsilon_eT_e) \\
                                &\left\vert
                                \prod_e  \frac{\partial^{k_e}}{\partial^{k_e} T_e}
                                \frac{\partial^{k'_e}}{\partial^{k'_e} \theta_e}
                                {(\eta_{e_0} - m({e_0})\theta_{e_0})}
                                \right\vert
                                \le
                                C \exp(-c\sum \upsilon_eT_e).
				  \end{aligned}
			\end{align}
			Here $C,c$ are positive constants depending on $\ell$, $k_e$, $k_e'$. 
			The same estimate holds for the $\xi$ derivatives of $\frak R_{e_0}$, $\eta_{e_0}$.
	\end{enumerate}
\end{prop}
Similar to Proposition \ref{prop618}, Proposition \ref{prop653} can be verified using the same argument as in the proof of \cite[Section 6]{foooexp}. 

We now use Propositions \ref{prop652} and \ref{prop653} to produce a Kuranishi chart at $\frak u$.
Let $\frak y = (\vec{\frak x},\vec{\sigma},(u'_{v}),(U'_{v}),(\rho_e),(\rho_i))$ be a representative of an element of $\widehat{\mathcal U}(\frak u,\Upsilon)$. Recall that we fix vector spaces $E_v$ for each $\frak u$, and use target parallel transportation to obtain the vector spaces $E_v(u'_{v})$ and $E_v(U'_{v})$. We define:
\begin{equation}\label{form63131}
\mathcal E_{0,\frak u}(\frak y)
= 
\bigoplus_{v \in C_0^{\rm int}(\check R),\,\lambda(v) = 0} E_v(u'_{v}) 
\oplus
\bigoplus_{v \in C_0^{\rm int}(\check R),\,\lambda(v) > 0} E_v(U'_{v}).
\end{equation}
Using Proposition \ref{prop653}, it is easy to see that (\ref{form63131}) defines a $\Gamma_{\frak u}$-equivariant $C^{\ell}$ vector bundle on $\widehat{\mathcal U}(\frak u,\Upsilon)$.

We define the other part of the obstruction bundle as follows. Let $e$ be a thick edge which connects vertices $v_1$ and $v_2$ with $0\leq \lambda(v_1)<\lambda(v_2)$. We fix the trivial line bundle $\bbC_e$ on $\widetilde{\mathcal U}(\frak u,\Upsilon)$. Let ${\bf x}_1 \sim {\bf x}_2$ and $a_i \in \bbC_*$ ($i=1,\dots,\vert\lambda\vert$) be as in Definition \ref{defn6488} (i)-(v). Then define an equivalence relation on $\bbC_e$ where $({\bf x}_1,V_1) \sim ({\bf x}_2,V_2)$ if:
\[
  V_2 = a_{\lambda(v_1)+1}\dots a_{\lambda(v_2)}  V_1.
\]
We thus obtain a line bundle $\mathscr L_e$ on $\widehat{\mathcal U}(\frak u,\Upsilon)$. The group $\Gamma_{\frak u}$ acts on $\bigoplus_e\mathscr L_e$ in an obvious way. Our obstruction bundle $\mathcal E_{\frak u}$ on $\widehat{\mathcal U}(\frak u,\Upsilon)$ is defined to be:
\begin{equation}\label{form1162}
	\mathcal E_{\frak u} = \mathcal E_{0,\frak u} \oplus 
	\bigoplus_{e \in C^{\rm int}_{\rm th}(\check R),\, \lambda(e) > 0} \mathscr L_e.
\end{equation}
It induces an orbi-bundle on ${\mathcal U}(\frak u,\Upsilon)$. By an abuse of notation, this orbi-bundle is also denoted by $\mathcal E_{\frak u}$.

Next, we define Kuranishi maps. If $v$ is a vertex with $\lambda(v) = 0$, then we define:
\begin{equation}\label{form630}
	\frak s_{\frak u,v}(\vec{\frak x},\vec{\sigma},(u'_{v}),(U'_{v}),(\rho_e),(\rho_i))= 
	\overline{\partial} u'_v \in  E_v(u'_{v}).
\end{equation}
If $v$ is a vertex with $\lambda(v) >0$, then we define:
\begin{equation}\label{form631}
\frak s_{\frak u,v}(\vec{\frak x},\vec{\sigma},(u'_{v}),(U'_{v}),(\rho_e),(\rho_i))
= 
\overline{\partial} U'_v \in  E_v(U'_{v}).
\end{equation}
If $e$ is a thick edge connecting vertices $v_1$ and $v_2$ with $0\leq \lambda(v_1)<\lambda(v_2)$, then we define:
\begin{equation}\label{form632}
	\frak s_{\frak u,e}(\vec{\frak x},\vec{\sigma},(u'_{v}),(U'_{v}),(\rho_e),(\rho_i))
	= \rho_{e} - \rho_{\lambda(v_1)+1}\cdots \rho_{\lambda(v_2)}.
\end{equation}
We define:
\[
  \frak s_{\frak u} = ((\frak s_{\frak u,v};v \in C^{\rm int}_0(\check R)),
  (\frak s_{\frak u,e};e \in C^{\rm int}_{\rm th}(\check R), \lambda(e) > 0)).
\]
It is easy to see that $\frak s_{\frak u}$  induces a $\Gamma_{\frak u}$-invariant section of $\mathcal E_{\frak u}$.
Using Proposition \ref{prop653}, we can show that the section $\frak s_{\frak u}$ is smooth.

Suppose $\frak s_{\frak u}(\vec{\frak x},\vec{\sigma},(u'_{v}),(U'_{v}),(\rho_e),(\rho_i)) = 0$.
By (\ref{form632}) and Lemma \ref{lem650},
it induces an element $(\vec{\frak x},\vec{\sigma},u')$ of $\widehat{\mathcal U}_0(\frak u,\Upsilon)$. By (\ref{form630}) and (\ref{form631}) the map $u'$ is pseudo-holomorphic. Therefore, $(\Sigma({\vec{\frak x}},\vec{\sigma}),u')$ and marked points on $\Sigma_{\vec{\frak x}}$ determine an element of $\mathcal M_{k+1}^{\rm RGW}(L;\beta)$. This element does not change if we change $(\vec{\frak x},\vec{\sigma},(u'_{v}),(U'_{v}),(\rho_e),(\rho_i))$ by the $\Gamma_{\frak u}$-action.
We thus obtained:
\begin{equation}\label{paramatp}
\psi _{\frak u}: \frak s_{\frak u}^{-1}(0)/\Gamma_{\frak u} \to \mathcal M_{k+1}^{\rm RGW}(L;\beta),
\end{equation}
which is a homeomorphism onto an open neighborhood of $\frak u$.
We thus proved:
\begin{theorem}\label{pro65411}
$\mathcal U_{\frak u} = (\mathcal U(\frak u),\mathcal E_{\frak u},\Gamma_{\frak u},\frak s_{\frak u},\psi_{\frak u})$ givecs a Kuranshi chart for the 
moduli space $\mathcal M_{k+1}^{\rm RGW}(L;\beta)$ at $\frak u$.
\end{theorem}

\section{Construction of Kuranishi Structures}
\label{sub:kuracont}

So far, we constructed a Kuranishi chart at each point of $\mathcal M_{k+1}^{\rm RGW}(L;\beta)$.
In this section we construct a global Kuranishi structure. We follow similar arguments as in \cite{foootech, fooo:const1}. However, there are certain points that our treatment is different. We discuss the construction emphasizing on those differences. 

\subsection{Compatible Trivialization and Stabilization Data}
\label{subsub:Nplustri}

Throughout this subsection
, we fix:
$$
\hspace{3cm}\frak u_{(j)} = ((\Sigma_{(j),v},\vec z_{(j),v},u_{(j),v});v \in C^{\rm int}_0(\check R_{(j)}))\hspace{1cm} j=1,\,2
$$
an element of $\mathcal M_{k+1}^{\rm RGW}(L;\beta)$ which is contained in the stratum corresponding to the very detailed DD-ribbon trees $\check R_{(j)} = (c_{(j)},\alpha_{(j)},m_{(j)},\lambda_{(j)})$. We denote the union of the irreducible components $\Sigma_{(j),v}$ by $\Sigma_{(j)}$. The map $u_{(j)}$ are also defined similarly, and $\vec z_{(j)}$ is the set of boundary marked points of $\Sigma_{(j)}$. We use a similar convention several times in this section.  We assume that $\frak u_{(2)}$ belongs to a small neighborhood of $\frak u_{(1)}$ in the RGW-topology. To be more precise, for 
$\frak u_{(j)}$, let $\Xi_{(j)}= (\vec w_{(j)},(\mathcal N_{(j),v,i}),(\phi_{(j),v}),(\varphi_{(j),v,e}),\kappa_{(j)})$ be a fixed TSD. We assume that $\frak u_{(2)}$ is represented by an element of the space ${\frak U}(\frak u_{(1)},\Xi_{(1)})$. 

This assumption implies that $\check R_{(2)}$ is obtained from $\check R_{(1)}$ by level shrinkings, level $0$ edge shrinkings and fine edge shrinkings. In particular, we may regard:
\[
  C^{\rm int}_1(\check R_{(2)})\subseteq C^{\rm int}_1(\check R_{(1)}).
\]
There also exists a surjective map $\pi : \check R_{(1)} \to \check R_{(2)}$ inducing:
\[
  \pi : C^{\rm int}_0(\check R_{(1)}) \to C^{\rm int}_0(\check R_{(2)})
\]
such that the irreducible component corresponding to $v \in C^{\rm int}_0(\check R_{(2)})$ is obtained by gluing the irreducible components corresponding to $\hat v \in \pi^{-1}(v) \subset C^{\rm int}_0(\check R_{(1)})$.  There also exists a surjective map:
\[
  \nu : \{0,1,\dots,\vert\lambda_{(1)}\vert\} \to \{0,1,\dots,\vert\lambda_{(2)}\vert\}
\]
such that $i\le j$ implies $\nu(i) \le \nu(j)$, and $\lambda_{(2)}(\pi(\hat v)) =\nu(\lambda_{(1)}(\hat v))$  for inside vertices $\hat v$ of $\check R_{(1)}$. The maps $\pi$ and $\nu$ are the analogue of ${\rm treesh}$ and ${\rm levsh}$ in \cite[Lemma 4.47]{part1:top} defined for detailed trees.


To describe the coordinate change, it is convenient to start with the case that the TSDs $\Xi_{(1)}$ and $\Xi_{(2)}$ satisfy some compatibility conditions. In this subsection
, we discuss these compatibility conditions and in Subsection \ref{subsub:coordinatechage1}, we explain how a coordinate change can be constructed assuming these conditions. In Subsection \ref{subsub:coordinatechage2}, we consider the case that $\Xi_{(1)}$ and $\Xi_{(2)}$ are two (not necessarily compatible) TSDs associated to the same element of the moduli space. We combine the results of Subsections \ref{subsub:coordinatechage1} and \ref{subsub:coordinatechage2} in Subsection \ref{subsub:coordinatechage3} to define coordinate changes in the general case and verify the co-cycle condition for these coordinate changes.

The assumption that $\frak u_{(2)}$ belongs to a small neighborhood of $\frak u_{(1)}$ implies that we can find:
$$\vec\sigma_0 = (\sigma_{0,e} ; e \in  C^{\rm int}_1(\check R_{(1)})) \in
\prod_{e  \in  C^{\rm int}_1(\check R_{(1)})}\mathcal V_{(1),e}^{{\rm deform}}
$$
and
$$
\vec{\frak x}_{0} = (\frak x_{0,v} ; v  \in  C^{\rm int}_0(\check R_{(1)}))
\in 
\prod_{v  \in  C^{\rm int}_0(\check R_{(1)})}\mathcal V_{(1),v}^{{\rm source}}
$$
such that the inconsistent map:
\begin{equation}\label{incon-basis-change}
	(\Sigma_{(1)}(\vec{\frak x}_0,\vec \sigma_0),\vec z_{(1)}(\vec{\frak x}_0,\vec{\sigma}_0),u_{(1)}
	(\vec{\frak x}_0,\vec{\sigma}_0))
\end{equation}
is isomorphic to $(\Sigma_{(2)},\vec z_{(2)},u_{(2)})$. Although it is not clear from the notation, the map $u_{(1)}(\vec{\frak x}_0,\vec{\sigma}_0)$ in \eqref{incon-basis-change} depends on $\frak u_{(2)}$ and not just on $\vec{\frak x}_0$ and $\vec{\sigma}_0$. We assume that the additional marked points and transversals in $\Xi_{(2)}$ satisfy the following conditions:

\begin{conds}\label{choi655}
	Since \eqref{incon-basis-change} is induced by an element of ${\mathfrak U}(\frak u_{(1)},\Xi_{(1)})$,
	there is a set of marked points $\vec w_{(1)}(\vec{\frak x}_0,\vec \sigma_0)\subset 
	\Sigma_{(1)}^{-}(\vec{\frak x}_0,\vec \sigma_0)$ determined by $\vec w_{(1)}$, $\vec{\frak x}_0$ and
	$\vec \sigma_0$.
 	Then we require that the marked points $\vec w_{(2)}$ of $\Xi_{(2)}$ are chosen such that:
	\begin{equation}\label{6134form}
		(\Sigma_{(1)}(\vec{\frak x}_0,\vec \sigma_0),\vec z_{(1)}(\vec{\frak x}_0,\vec \sigma_0)
		\cup \vec w_{(1)}(\vec{\frak x}_0,\vec \sigma_0))\cong
		(\Sigma_{(2)},\vec z_{(2)}\cup \vec w_{(2)}).
	\end{equation}
	Furthermore, if $w_{(2),v,i}$ corresponds to $w_{(1),\hat v,\hat i}$, then we require:\footnote{
	Here we use the correspondence between $\vec w_{(1)}$ $\vec w_{(2)}$ given 
	by the identification in \eqref{6134form} and the correspondence between the elements of 
	$\vec w_{(1)}$ and $\vec w_{(1)}(\vec{\frak x}_0,\vec \sigma_0)$.}
	\begin{equation}
		\mathcal N_{(2),v',i'} = \mathcal N_{(1),v,i}.
	\end{equation}	
%
%
\end{conds}

Next, we impose some constraints on the choices of the maps $\phi_{(2),v}$ and $\varphi_{(2),v,e}$. Let $v$ be an interior vertex of $\check R_{(2)}$. We consider the moduli space $\mathcal M^{\rm source}_v$ of deformation of the irreducible component $(\Sigma_{(2),v},\vec z_{(2),v}\cup \vec w_{(2),v})$. We firstly fix a neighborhood $\mathcal V^{\rm source}_{(2),v}$ of $[\Sigma_{(2),v},\vec z_{(2),v}\cup \vec w_{(2),v}]$ in $\mathcal M^{\rm source}_v$  as follows. The Riemann surface $\Sigma_{(2),v}$ is obtained by gluing spaces $\Sigma_{(1),\hat v}$ for $\hat v \in \pi^{-1}(v)$. Here the complex structure on $\Sigma_{(1),\hat v}$ is given by $\frak x_{0,\hat v}$ and the gluing parameters are $\sigma_{(0),\hat e}$ for edges $\hat e$ in $\pi^{-1}(v)$.
There is a neighborhood $\mathcal U_{(1),\hat v}^{{\rm source}}$ of $\frak x_{0,\hat v}$ in $\mathcal V_{(1),\hat v}^{{\rm source}}$ and a neighborhood $\mathcal V_{(1),(2),e}^{{\rm deform}}$ of $\sigma_{(0),\hat e}\in \mathcal V_{(1),e}^{{\rm deform}}$ such 
that the following map:
\[	
\prod_{\hat v \in C^{\rm int}_0(\check R_{(1)}),\, \pi(\hat v) = v}
\mathcal U_{(1),\hat v}^{{\rm source}}\times \prod_{e \in C^{\rm int}_1(\check R_{(1)}),\, \pi(e) = v}
\mathcal V_{(1),(2),e}^{{\rm deform}}\to \mathcal M^{\rm source}_v.
\]
is an isomorphism onto an open neighborhood of the point determined by $(\Sigma_{(2),v},\vec z_{(2),v}\cup \vec w_{(2),v})$. Therefore, we may define:
\begin{equation}\label{form6143}
	\mathcal V_{(2),v}^{{\rm source}}:=\prod_{\hat v \in C^{\rm int}_0(\check R_{(1)}),\, \pi(\hat v) = v}
	\mathcal U_{(1),\hat v}^{{\rm source}}\times \prod_{\hat e \in C^{\rm int}_1(\check R_{(1)}),\, \pi(\hat e) = v}
	\mathcal V_{(1),\hat e}^{{\rm deform}}
\end{equation}

Let $\frak x_{2,v} = ((\frak x_{1,\hat v}),(\sigma_{1,\hat e}))$ be an element of \eqref{form6143}. Then $\Sigma_{(2),v}(\frak x_{2,v})$, the Riemann surface $\Sigma_{(2),v}$ with the complex structure induced by $\frak x_{2,v}$, has the following decomposition:
\begin{align}
	\Sigma_{(2),v}(\frak x_{2,v})=
	&\coprod_{\hat v \in C^{\rm int}_0(\check R_{(1)}),\, \pi(\hat v) = v} \Sigma^-_{(1),\hat v}(\frak x_{1,\hat v})
	\label{form614444pre}\\
	&\cup \coprod_{e \in C^{\rm int}_1(\check R_{(2)}),\, v \in \partial e} D^2 
	\label{form614444pre-2}\\
	&\cup \coprod_{\hat e \in C^{\rm int}_1(\check R_{(1)}),\, \pi(\hat e) = v} [-5T_{\hat e},5T_{\hat e}] 
	\times S^1.
	\label{form614444pre-3}
\end{align}	
The following comments about the above decomposition is in order. In \eqref{form614444pre}, $\Sigma^-_{(1),\hat v}(\frak x_{(1),\hat v})$ denotes the subspace of $\Sigma_{(1),\hat v}(\frak x_{(1),\hat v})$ given by the complements of $\varphi_{(1),\hat v,\hat e}(\frak x_{(1),\hat v},D^2(1))$ where $\hat e$ runs among the edges of $\check R_{(1)}$ which are connected to $\hat v$. For each edge $e \in C^{\rm int}_1(\check R_{(2)})$ which is incident to $v\in C^{\rm int}_0(\check R_{(2)})$, there is a unique edge $\hat e\in C^{\rm int}_1(\check R_{(1)})$ which is mapped to $e$. In particular, one of the endpoints of $\hat e$, denoted by $\hat v$, is mapped to $v$. The disc corresponding to $e$ in \eqref{form614444pre-2} is given by the space $\varphi_{(1),\hat v,\hat e}(\frak x_{(1),\hat v},D^2(1))$. Finally if an edge $\hat e\in C^{\rm int}_1(\check R_{(1)})$ is mapped to a vertex $v\in C^{\rm int}_0(\check R_{(2)})$ by $\pi$, then the space in \eqref{form614444pre-3} is identified with the neck region associated to $\hat e$. In particular, the positive number $T_{\hat e}$ is determined by $\sigma_{(1),\hat e}$. The union of the spaces in \eqref{form614444pre} and \eqref{form614444pre-2} is called the thick part of $\Sigma_{(2),v}(\frak x_{(2),v})$, and the spaces in \eqref{form614444pre-3} form the thin part of $\Sigma_{(2),v}(\frak x_{(2),v})$. The above decomposition can be used in an obvious way to define the map $\varphi_{(2),v,e}$ on $\mathcal V_{(2),v}^{{\rm source}}\times {\rm Int}(D^2)$.

We have the following decomposition of $ \Sigma_{(2),v}$ as a special case of the above decomposition applied to the point $((\frak x_{0,\hat v}),(\sigma_{0,\hat e}))$:
\begin{align}
	\Sigma_{(2),v}=&\coprod_{\hat v \in C^{\rm int}_0(\check R_{(1)}),\, \pi(\hat v) = v} 
	\Sigma^-_{(1),v}(\frak x_{0,\hat v})\label{form614444}\\
	&\cup \coprod_{e \in C^{\rm int}_1(\check R_{(2)}),\, v \in \partial e} D^2
	\label{form614444-2} \\
	&\cup \coprod_{\hat e \in C^{\rm int}_1(\check R_{(1)}),\, \pi(\hat e) = v} [-5T'_{\hat e},5T'_{\hat e}] 
	\times S^1.
	\label{form614444-3}
\end{align}
The trivialization $\phi_{(2),v}$ that we intend to define is a family (parametrized by $\frak x_{2,v}$) of diffeomorphisms from $\Sigma_{(2),v}$ to $\Sigma_{(2),v}(\frak x_{2,v})$. The trivialization $\phi_{(1),v}$ defines a diffeomorphism between the subspaces in \eqref{form614444pre} and \eqref{form614444}. We then use the coordinate at nodal points, $\varphi_{(1),\hat v,\hat e}$, to extend it to a diffeomorphism from the unions of the subspaces in \eqref{form614444} and \eqref{form614444-2} to the union of the subsapces in \eqref{form614444pre} and \eqref{form614444pre-2}. Finally we extend this family of diffeomorphisms in an arbitrary way to the neck region to complete the construction of $\phi_{(2),v}$. This construction of the maps  $\varphi_{(2),v,e}$ and $\phi_{(2),v}$ is analogous to \cite[Sublemma 10.15]{fooo:const1}. 

\begin{conds}\label{choi655-2}
	We require that the maps $\phi_{(2),v}$ and $\varphi_{(2),v,e}$ of the TSD $\Xi_{(2)}$ 
	are obtained form the TSD $\Xi_{(1)}$ as above.
\end{conds}

\begin{definition}
We say that a TSD $\Xi_{(2)}$ is {\it induced} from a TSD $\Xi_{(1)}$
if Conditions \ref{choi655} and \ref{choi655-2} are satisfied.
\end{definition}

\begin{definition}\label{incon-map-near-u-gen}
	Let $\frak u$ be an element of $\mathcal M_{k+1}^{\rm RGW}(L;\beta)$ and $\Xi$ be a TSD at $\frak u$. An {\it inconsistent map near $\frak u$ with respect to $\Xi$} is 
	an object similar to the ones in Definition \ref{inconssitentmap} where we do 
	not require the Cauchy-Riemann equations.
\end{definition}
This definition is almost a straightforward generalization of Definition \ref{defn625625} with the difference that we also include transversal constraints in \eqref{eq631} as one of the requirements for an inconsistent map near $\frak u$. 

Suppose $\Xi_{(2)}$ is induced from $\Xi_{(1)}$. For any:
\begin{equation}\label{form136}
	\aligned
	\vec\sigma_{(2)} &= (\sigma_{2,e} ; e \in  C^{\rm int}_1(\check R_{(2)})) \in
	\prod_{e  \in  C^{\rm int}_1(\check R_{(2)})}\mathcal V_{(2),e}^{{\rm deform}}\\
	\vec{\frak x}_{(2)} &= (\frak x_{2,v} ; v  \in  C^{\rm int}_0(\check R_{(2)}))\in 
	\prod_{v  \in  C^{\rm int}_0(\check R_{(2)})}\mathcal V_{(2),v}^{{\rm source}},
	\endaligned
\end{equation}
satisfying Definition \ref{defn6488} (1) with respect to $\Xi_{(2)}$, there exist:
\begin{equation}
	\aligned
	\vec\sigma_{(1)} &= (\sigma_{1,e} ; e \in  C^{\rm int}_1(\check R_{(1)})) \in
	\prod_{e  \in  C^{\rm int}_1(\check R_{(1)})}\mathcal V_{(1),e}^{{\rm deform}}\\
	\vec{\frak x}_{(1)} &= (\frak x_{1,v} ; v  \in  C^{\rm int}_0(\check R_{(1)}))\in 
	\prod_{v  \in  C^{\rm int}_0(\check R_{(1)})}\mathcal V_{(1),v}^{{\rm source}}.
	\endaligned
\end{equation}
satisfying Definition \ref{defn6488} (1) with respect to $\Xi_{(1)}$ such that:
\begin{equation}\label{form6138}
	(\Sigma_{(2)}(\vec{\frak x}_{(2)},\vec \sigma_{(2)}),\vec z_{(2)}(\vec{\frak x}_{(2)},
	\vec \sigma_{(2)})\cup \vec w_{(2)}(\vec{\frak x}_{(2)},\vec \sigma_{(2)}))\cong\hspace{2cm}
\end{equation}
\begin{equation*}
	\hspace{2cm}(\Sigma_{(1)}(\vec{\frak x}_{(1)},\vec \sigma_{(1)}),\vec z_{(1)}(\vec{\frak x}_{(1)},\vec
	\sigma_{(1)}) \cup \vec w_{(1)}(\vec{\frak x}_{(1)},\vec \sigma_{(1)})).
\end{equation*}
Here $\vec{\frak x}_{(1)},\vec \sigma_{(1)}$ depend on $\vec{\frak x}_{(2)},\vec \sigma_{(2)}$. (See Figure \ref{Figuresec6p164}.)

\begin{figure}[h]
\centering
\includegraphics[scale=0.5]{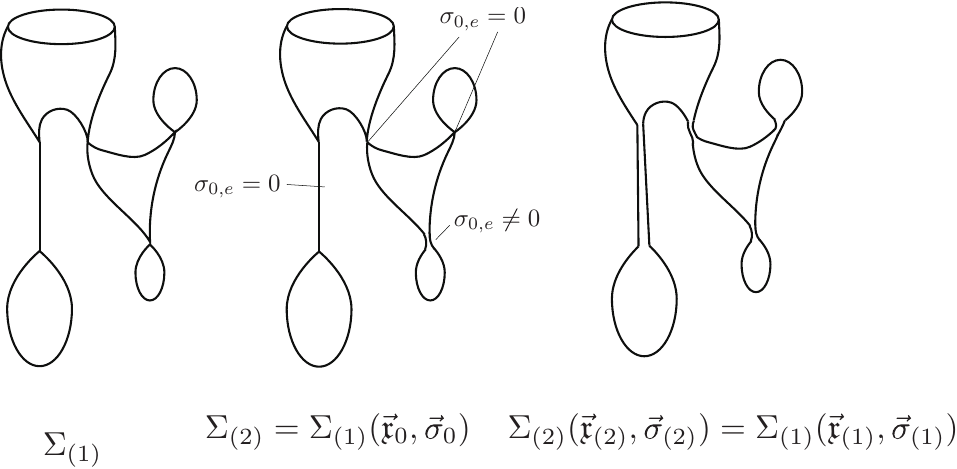}
\caption{$\Sigma_{(1)}$, $\Sigma_{(2)}$ and $\Sigma_{(2)}(\vec {\frak x}_{(2)},{\vec \sigma}_{(2)})$.}
\label{Figuresec6p164}
\end{figure}

Next, let $\frak y_{(2)} = (\vec{\frak x}_{(2)},\vec{\sigma}_{(2)},(u'_{(2),v}),(U'_{(2),v}),(\rho_{(2),e}),(\rho_{(2),i}))$ be an inconsitent map near $\frak u_{(2)}$ with respect to $\Xi_{(2)}$. Let $\vec{\frak x}_{(1)}$ and $\vec\sigma_{(1)}$ be chosen as in the previous paragraph. Let $\hat v$ be an interior vertex of $\check R_{(1)}$. Identification in (\ref{form6138}) and Condition \ref{choi655-2} imply:
\begin{equation}\label{form614343bef}
	\Sigma^+_{(1),\hat v}(\vec{\frak x}_{(1)},\vec{\sigma}_{(1)})\subseteq \Sigma^+_{(2),\pi(\hat v)}
	(\vec{\frak x}_{(2)},\vec{\sigma}_{(2)}).
\end{equation}
We write $I_{\hat v}$ for this inclusion map. Define:
\begin{equation}\label{form614343}
	\aligned
	U'_{(1),\hat v} &= 
	\begin{cases}
	u'_{(2),\pi(\hat v)}\circ I_{\hat v} &\text{if $\lambda_{(2)}(\pi(\hat v)) = 0$} \\
	U'_{(2),\pi(\hat v)}\circ I_{\hat v} &\text{if $\lambda_{(2)}(\pi(\hat v)) > 0$}
	\end{cases}
	\\
	u'_{(1),\hat v} &=u'_{(2),\pi(\hat v)}\circ I_{\hat v}.
	\endaligned
\end{equation}
We also define:
\begin{equation}\label{newform614154}
	\rho_{(1),e}= 
	\begin{cases}
		\rho_{(2),e} &\text{if $e \in C^{\rm int}_{\rm th}(\check R_{(2)})\subset
		C^{\rm int}_{\rm th}(\check R_{(1)})$} \\
		&\text{and $e$ is not a level $0$ edge,}\\
		1 &\text{otherwise}.
	\end{cases}
\end{equation}
We next define:
\begin{equation}
	\rho_{(1),i} = 
	\begin{cases}
		1   \qquad &\text{if $ \nu(i-1) = \nu(i)$} \\
		\rho_{(2),i} \qquad &\text{otherwise}.
        \end{cases}
\end{equation}
It is easy to check that $\frak y_{(1)} = (\vec{\frak x}_{(1)},\vec{\sigma}_{(1)},(u'_{(1),\hat v}),(U'_{(1),\hat v}),(\rho_{(1),e}),(\rho_{(1),i}))$ is an inconsistent map near $\frak u_{(1)}$ with respect to $\Xi_{(1)}$. In fact, \eqref{form621}-\eqref{form623rev} for $\frak y_{(1)}$ follow from the corresponding identities for $\frak y_{(2)}$ and the definition. This discussion is summarized in the following lemma:
\begin{lemma}\label{lem65865777}
	Suppose $\frak u_{(1)}$ and $\frak u_{(2)}$ are as above and $\Xi_{(j)}$ is a TSD at 
	$\frak u_{(j)}$. We assume that $\Xi_{(2)}$ is induced from $\Xi_{(1)}$.
	Then an inconsistent map near $\frak u_{(2)}$ with respect to $\Xi_{(2)}$ can be regarded as 
	an inconsistent map near $\frak u_{(1)}$ with respect to $\Xi_{(1)}$.
\end{lemma}

\begin{remark}
	The equality (\ref{newform614154}) in the second case shows that the consistency equation is satisfied for such edges.
\end{remark}

Using the above argument, we can also verify the following lemma:
\begin{lemma}
	Suppose $\frak u_{(1)}$, $\frak u_{(2)}$, $\Xi_{(1)}$ and $\Xi_{(2)}$ are 
	given as in Lemma \ref{lem65865777}.
	In particular, $\frak u_{(2)}$ can be regarded as an inconsistent solution\footnote
	{More precisely it is an element of $\widetilde{\frak U}(\frak u,\Xi)$.}: 
	$$(\vec{\frak x}_{(0)},\vec{\sigma}_{(0)},(u'_{(0),\hat v}),(U'_{(0),\hat v}),(\rho_{(0),e}),(\rho_{(0),i}))$$
	with respect to $\Xi_{(1)}$.
	An inconsistent map: 
	$$\frak y_{(1)} = (\vec{\frak x}_{(1)},\vec{\sigma}_{(1)},(u'_{(1),\hat v}),(U'_{(1),\hat v}),(\rho_{(1),e}),(\rho_{(1),i}))$$
	near $\frak u_{(1)}$ with respect to $\Xi_{(1)}$ may be regarded as an inconsistent map near 
	$\frak u_{(2)}$ with respect to $\Xi_{(2)}$
	if and only if the following conditions hold:
	\begin{itemize}
		\item[(i)] For each vertex $\hat v\in C^{\rm int}_0(\hat R_{(1)})$, 
			the distance between ${\frak x}_{(0),\hat v}$ and ${\frak x}_{(1),\hat v}$ is 
			less than $\kappa_{(2)}$. 
			(Recall that $\kappa_{(2)}$ is the size of $\Xi_{(2)}$).
		\item[(ii)] For each edge $e\in C^{\rm int}_1(\hat R_{(2)})\subset C^{\rm int}_1(\hat R_{(1)})$, 
		we have ${\vert}\sigma_{(1),e}{\vert}<\kappa_{(2)}$. 
		\item[(iii)] If $e\in C^{\rm int}_1(\hat R_{(2)})\subset C^{\rm int}_1(\hat R_{(1)})$ is (resp. is not) a level $0$ edge, 
	        	then the image of the restriction of $u'_{(1),\hat v}$ to
		$[-5T_e,5T_e]_{r_e} \times [0,\pi]_{s_e}$ (resp. $[-5T_e,5T_e]_{r_e} \times S^1_{s_e}$)
		has a diameter less than $\kappa_{(2)}$.
	\item[(iv)] If $c(v)={\rm d}$ or ${\rm s}$, then the $C^2$-distance between the 
		restrictions of $u'_{(0),\hat v}$ and $u'_{(1),\hat v}$ to $\Sigma^-_{v}(\vec{\frak x},\vec \sigma)$ is less than 
		$\kappa_{(2)}$. If $c(v)={\rm D}$, then we demand that 
		the $C^2$-distance between the restrictions of $U'_{(0),\hat v}$ and $U'_{(1),\hat v}$ 
		to $\Sigma^-_{v}(\vec{\frak x},\vec \sigma)$ is less than $\kappa_{(2)}$.
		\item[(v)] The consistency equation 
				$$
				  \rho_{(1),e} = \rho_{(1),\lambda_{(1),\hat v_1}+1}\dots \rho_{(1),\lambda_{(1),\hat v_2}}
				$$
				are satisfied for any edge 
				$e \in C^{\rm int}_{\rm th}(\check R_{(1)})\setminus C^{\rm int}_{\rm th}(\check R_{(2)})$ 
				which is not a level $0$ edge.
	\end{itemize}
\end{lemma}

\subsection{The Choice of Obstruction Spaces}
\label{subsub:chooseobst}
In order to define an inconsistent {\it solution}, we need to fix obstruction spaces. For any inconsistent map $\frak u$ and for any other inconsistent map $\frak y$ which is close to $\frak u$, we explained how to make a choice of $\mathcal E_{0,\frak u}(\frak y)$ in Section \ref{sub:kuraconst}. We need to insure that we can arrange for such choices such that they satisfy some nice properties when we move $\frak u$. More precisely, we need to pick them so that they are {\it semi-continuous} with respect to $\frak u$. We will prove this property in the next subsection. In this subsection, we explain how we modify our choice of obstruction spaces.

For $j=1,2$, let $\frak u_{(j)} = ((\Sigma_{(j),v},\vec z_{(j),v},u_{(j),v});v \in C^{\rm int}_0(\check R_{(j)}))$
be representatives of two elements of $\mathcal M_{k+1}^{\rm RGW}(L;\beta)$ in the strata corresponding to very detailed DD-ribbon trees $\check R_{(j)}$. We fix a TSD $\Xi_{(j)}$ at $\frak u_{(j)}$. We do {\it not} assume that they are related as in Subsection \ref{subsub:Nplustri}. We also fix 
obstruction bundle data $E = \{E_{\frak u_{(1)},v}\}$, where
$E_{\frak u_{(1)},v} \subset L^2_{m,\delta}(\frak u_{(1),v})$ satisfying the conditions in 
Definition \ref{cond643}. We wish to use $E$ to define obstruction spaces for an inconsistent map with respect to $\Xi_{(2)}$, under the assumption that $\frak u_{(2)}$ is close to $\frak u_{(1)}$. In particular, we assume that $\check R_{(2)}$ is obtained from $\check R_{(1)}$ by level shrinking, level $0$ edge shrinking and fine edge shrinking. As in the previous subsection, we may define a surjective map $\pi : \check R_{(1)} \to \check R_{(2)}$. Note that in Section \ref{sub:kuraconst0}, we studied the case $\frak u_{(2)} = \frak u_{(1)}$. 

The following lemma is a straightforward consequence of the implicit function theorem: (See \cite[Lemma 9.9]{fooo:const1}.)

\begin{lemma}\label{lemma660}
	If $\frak u_{(2)}$ is close enough to $\frak u_{(1)}$ with respect to the $C^1$ distance,
	then for any $\hat v\in C_0^{\rm int}(\check R_{(1)})$, there exists a unique choice of 
	$\vec w_{(2),(1),\hat v} \subset \Sigma_{(2),v}$ with $v$ being $\pi(\hat v)$ such that:
	\begin{enumerate}
		\item $(\Sigma_{(2),v},\vec z_{(2),v} \cup \coprod_{\pi(\hat v)=v} \vec w_{(2),(1),v})$
		is close to: 
		$$\coprod_{\pi(\hat v)=v} (\Sigma_{(1),\hat v},\vec z^{\,\prime}_{(1),\hat v} \cup \vec w_{(1),\hat v})$$
		in the moduli space of stable curves with marked points. 
		Here $\vec z^{\,\prime}_{(1),\hat v}$ is the subset of $\vec z_{(1),\hat v}$ 
		that is the set of all marked points on $\Sigma_{(1),v}$ and nodal points 
		on $\Sigma_{(1),v}$ which correspond to the edges $e$ incident to $v$ with 
		$\pi(e) \ne v$.
		\item $u_{(2),v}(w_{(2),(1),\hat v,i}) \in \mathcal N_{(1),\hat v, i}$.
	\end{enumerate}
\end{lemma}


From now on, we assume that $\frak u_{(2)}$ is close enough to $\frak u_{(1)}$ such that the claim in Lemma \ref{lemma660} holds. Furthermore,  let also an inconsistent map:
$$
\frak y = (\vec{\frak x},\vec{\sigma},(u'_{v}),(U'_{v}),(\rho_{e}),(\rho_{i}))
$$
with respect to  $\Xi_{(2)} = (\vec w_{(2)},(\mathcal N_{(2),v,i}),(\phi_{(2),v}),(\varphi_{(2),v,e}),\kappa_{(2)})$ be fixed. Suppose also $(\Sigma(\vec{\frak x},\vec{\sigma}),\vec z(\vec{\frak x},\vec{\sigma}) \cup \vec w(\vec{\frak x},\vec{\sigma}))$ denotes the representative of $\vec{\frak x}$, as a part of the data of $\frak y$.

Let $\hat v$ be a vertex of $\check R_{(1)}$ and $v = \pi(\hat v)$. Lemma \ref{lemma660} allows us to find $w_{(2),(1),\hat v,i} \in \Sigma_{(2),v}$. If $\Xi_{(2)}$ is small enough, then we can regard $w_{(2),(1),\hat v,i}$ as an element of $\Sigma_{(2),v}^-$, and hence an element of $\Sigma_{(2),v}^-(\vec{\frak x},\vec{\sigma})$. This implies that if we replace $\vec w(\vec{\frak x},\vec{\sigma})$ with the points $w_{(2),(1),\hat v,i}$ to obtain: 
\begin{equation}\label{surgered-element}
	(\Sigma_v(\vec{\frak x},\vec{\sigma}),\vec z_v(\vec{\frak x},\vec{\sigma}) \cup \coprod_{\pi(\hat v)=v}
	\vec w_{(2),(1),\hat v})
\end{equation}
then \eqref{surgered-element} is close to $\coprod_{\pi(\hat v)=v} (\Sigma_{(1),\hat v},\vec z^{\,\prime}_{(1),\hat v} \cup \vec w_{(1),\hat v})$.


We use this fact and the target parallel transportation in the same way as in Section \ref{sub:Obst} to obtain the following map for any $\hat v \in C^{\rm int}_0(\check R_{(1)})$:
$$
\mathcal P_{\hat v} : E_{\frak u_{(1)},\hat v} \to L^2_{m,\delta}(\Sigma^-_{(2),v}(\vec{\frak x},\vec{\sigma})).
$$
We then define for any $v\in \check R_{(2)}$:
$$
E_{\frak u_{(2)},\frak u_{(1)},v}(u'_v) = \bigoplus_{\hat v, \pi(\hat v) = v} \mathcal P_{\hat v}(E_{\frak u_{(1)},\hat v})
$$
for $\lambda(v) = 0$.
We also define $E_{\frak u_{(2)},\frak u_{(1)},v}(U'_v)$ for $\lambda(v) > 0$ by a similar formula.

Now we replace \eqref{form63131} by
\begin{equation}\label{form6313122}
	\aligned
	\mathcal E_{0,\frak u_{(2)},\frak u_{(1)}}(\frak y)= 
	&\bigoplus_{v \in C_0^{\rm int}(\check R_{(2)}),\,\lambda(v) = 0} E_{\frak u_{(2)},\frak u_{(1)},v}(u'_v)  \\
	&\oplus\bigoplus_{v \in C_0^{\rm int}(\check R_{(2)}),\,\lambda(v) > 0} E_{\frak u_{(2)},\frak u_{(1)},v}(U'_v).
	\endaligned
\end{equation}
\begin{lemma}\label{lemn6661}
	Suppose $\Xi_{(1)}$ is small enough such that we can apply the construction of 
	Section \ref{sub:kuraconst} to $(\Xi_{(1)},E)$	to obtain a 
	Kuranishi chart at $\frak u_{(1)}$. 
	Then for $\Xi_{(2)}$ small enough, 
	applying the construction of Section \ref{sub:kuraconst} to 
	$\{E_{\frak u_{(2)},\frak u_{(1)},v}\}$ (instead of 
	$\{E_{\frak u_{(2)},v}\}$) gives rise 
	to a Kuranishi chart at $\frak u_{(2)}$.
\end{lemma}
This is immediate from the construction of Section \ref{sub:kuraconst}. In fact, the choice of obstruction bundles we take here satisfies the `smoothness' condition. See \cite[Definition 3.11 (5)]{part3:FH} or \cite[Definition 5.1 (2)]{foooexp}. (Smoothness here means smoothness with respect to $\frak y$.)
\begin{remark}
We remark the obstruction spaces we used in Section \ref{sub:kuraconst} to 
construct a Kuranishi neighborhood (in the case of $
\frak u = \frak u_{(2)}$) is slightly different from our 
choice here.
The obstruction space in Section \ref{sub:kuraconst} 
is defined by a target parallel transformation 
from the obstruction space defined on $\frak u_{(2)}$.
Here the obstruction space (\ref{form6313122}) 
is defined by a target parallel transformation 
from the obstruction space defined on $\frak u_{(1)}$.
\end{remark}

For each $\frak p \in \mathcal M^{\rm RGW}_{k+1}(L;\beta)$, 
we consider $\Upsilon_{\frak p} = (\Xi_{\frak p},E_{\frak p})$, where $E_{\frak p}:=\{E_{\frak p,v}\}$ gives obstruction spaces at $\frak p$ and $\Xi_{\frak p}=(\vec w_{\frak p}, (\mathcal N_{\frak p,v,i}), (\phi_{\frak p,v}),(\varphi_{\frak p,v,e}),\kappa_{\frak p})$ is a TSD for $\frak p$. We assume that $\Xi_{\frak p}$ is small enough such that the assumption of Lemma \ref{lemn6661} holds. Let $\frak U(\frak p) := 
\frak U(\frak p,\Xi_{\frak p}) \cap \mathcal M^{\rm RGW}_{k+1}(L;\beta)$ be a neighborhood of $\frak p$ in $\mathcal M^{\rm RGW}_{k+1}(L;\beta)$ determined by the TSD $\Xi_{\frak p}$. We also fix a compact neighborhood $\mathcal K(\frak p)$ of $\frak p$ which is a subset of $\frak U(\frak p)$.


Compactness of $\mathcal M^{\rm RGW}_{k+1}(L;\beta)$ implies that we can find a finite subset 
\begin{equation}\label{formfrakJ}
{\frak J} = \{\frak p_j : j=1,\dots,J\} \subset \mathcal M^{\rm RGW}_{k+1}(L;\beta)
\end{equation}
such that
\begin{equation}
\mathcal M^{\rm RGW}_{k+1}(L;\beta)
\subseteq 
	\bigcup_{j=1}^{J} {\rm Int}\,\mathcal (K(\frak p_j)).
\end{equation}
For $\frak u \in \mathcal M^{\rm RGW}_{k+1}(L;\beta)$, we define:
\begin{equation}
{\frak J}(\frak u) = \{ \frak p_j \mid \frak u \in \mathcal K(\frak p_j)\}.
\end{equation}
\begin{lemma} \label{sum=direct-sum}
	Let $\check R_j$ be the very detailed tree associated to $\frak p_j$.
	We can perturb $\{E_{\frak p_j,v} \mid v \in C^{\rm int}_0(\check R_j)\}$
	by an arbitrary small amount so
	that the following holds. For any $\frak u \in \mathcal M^{\rm RGW}_{k+1}(L;\beta)$,
	the vector spaces $\mathcal E_{0,\frak u,\frak p_j}(\frak u)$ for $\frak p_j \in {\frak J}(\frak u)$
	are transversal, i.e.,
	the sum of $\mathcal E_{0,\frak u,\frak p_j}(\frak u)$ for $\frak p_j \in {\frak J}(\frak u)$
	is the direct sum:
	\begin{equation}
		\bigoplus_{\frak p_j \in {\frak J}(\frak u)} \mathcal E_{0,\frak u,\frak p_j}(\frak u).
	\end{equation}
\end{lemma}
\begin{proof}
	The proof is the same as the proof of the analogous statement in the case of 
	the stable map compactification.
	See the proof of \cite[Lemma 11.7]{fooo:const1} in \cite[Subsection 11.4]{fooo:const1}.
\end{proof}

Now we define a Kuranishi chart at each point $\frak u \in \mathcal M^{\rm RGW}_{k+1}(L;\beta)$ as follows:
\begin{definition}\label{Kura-chart}
	Let $\Xi_{\frak u} = (\vec w_{\frak u},(\mathcal N_{\frak u,v,i},\kappa_{\frak u}), 
	(\phi_{\frak u,v}),(\varphi_{\frak u,v,e}),\kappa_{\frak u})$ be a TSD, which is small enough such that 
	the conclusion of Lemma \ref{lemn6661} holds for $\frak u_{(1)}=\frak p_j$, $\frak u_{(2)}=\frak u$,
	$\Xi_{(1)}=\Xi_{\frak p_j}$ and $\Xi_{(2)}=\Xi_{\frak u}$ with $\frak p_j$ being an arbitrary element in 
	${\frak J}(\frak u)$.
	We put $\Upsilon_{(j)} = (\Xi_{(j)},E_{(j)})$,
	where $E_{(j)}$ is the obstruction space at $\frak u_{(1)}=\frak p_j$.
We define $E_{\frak u} = \{E_{(j)} \mid \frak p_j \in {\frak J}(\frak u)\}$	
and $\Upsilon_{\frak u} = (\Xi_{\frak u},E_{\frak u})$.
Hereafter we always take this choice of $E_{\frak u}$.
	The Kuranishi neighborhood $\mathcal U(\frak u,\Upsilon_{\frak u})$
	is the set of the equivalence classes of inconsistent maps 
	$\frak y = (\vec{\frak x},\vec{\sigma},(u'_{v}),(U'_{v}),(\rho_e),(\rho_i))$ near $\frak u$
	such that
	\begin{equation}\label{form6157}
		\aligned
		\overline\partial_{j_{\vec{\frak x},\vec \sigma}} u'_v &\in 
		\bigoplus_{\frak p_j \in {\frak J}(\frak u)} \mathcal E_{0,\frak u,\frak p_j}(\frak y),\\
		\overline\partial_{j_{\vec{\frak x},\vec \sigma}} U'_v &\in 
		\bigoplus_{\frak p_j \in {\frak J}(\frak u)} \mathcal E_{0,\frak u,\frak p_j}(\frak y).
		\endaligned
	\end{equation}
	In other words, it is the set of inconsistent solutions (Definition \ref{defn6488}) where 
	\eqref{form6119} and \eqref{form6120}
	are replaced with \eqref{form6157}.
	
	We also define $\widehat{\mathcal U}(\frak u,\Upsilon_{\frak u})$,
	$\widetilde{\mathcal U}(\frak u,\Upsilon_{\frak u})$
	in the same way as Definition \ref{defn6488}.
	
	The obstruction bundle $\mathcal E_{\frak u}$ is defined in the same way as in
	\eqref{form1162} in the following way:
	\begin{equation}\label{form116200}
		\mathcal E_{0,\frak u}(\frak y)=
		\bigoplus_{\frak p_j \in {\frak J}(\frak u)} \mathcal E_{0,\frak u,\frak p_j}(\frak y) 
	\end{equation}
	\begin{equation}\label{form1162rev}
		\mathcal E_{\frak u,\Xi_{\frak u}}(\frak y) = \mathcal E_{0,\frak u}(\frak y) 
		\oplus \bigoplus_{e \in C^{\rm int}_1(\check R),\, \lambda(e) > 0} \mathscr L_e.
	\end{equation}
	where $\check R$ is the very detailed tree associated to $\frak u$.
	The Kuranishi map $\frak s_{\frak u}$ is defined in the same way as in 
	(\ref{form630}), (\ref{form631}), (\ref{form632}), and the parametrization map $\psi _{\frak u}$ is defined 
	in the same way as in \eqref{paramatp}. 
\end{definition}

\subsection{Construction of Coordinate Change I}
\label{subsub:coordinatechage1}

In this and the next subsections, we construct coordinate changes. The next two lemmas state the semi-continuity of our obstruction spaces, a property that we hinted at the beginning of the last subsection.\footnote{Compare to \cite[Definition 5.1 (4)]{fooo:const1} or \cite[Definition 3.11  (4)]{part3:FH}.}

\begin{lemma}\label{loem668}
	For any $\frak u_{(1)} \in \mathcal M^{\rm RGW}_{k+1}(L;\beta)$, there exists a neighborhood 
	$U(\frak u_{(1)})$ of $\frak u_{(1)}$ in $\mathcal M^{\rm RGW}_{k+1}(L;\beta)$ such that
	for any $\frak u_{(2)} \in U(\frak u_{(1)})$:
	\begin{equation}\label{form6162}
		{\frak J}(\frak u_{(2)}) \subseteq {\frak J}(\frak u_{(1)}).
	\end{equation}
\end{lemma} 
\begin{proof}
	This is obvious because we pick the subspaces $\mathcal K(\frak p_j)$ to be closed.
\end{proof}

\begin{lemma}\label{lem6666}
	Let $\frak u_{(2)} \in U(\frak u_{(1)})$. Let 
	$\Xi_{(j)} = (\vec w_{(j)},(\mathcal N_{(j),v,i}), (\phi_{(j),v}),(\varphi_{(j),v,e}))$, for $j=1,2$,  be a TSD.
	We assume that $\Xi_{(2)}$ is induced from $\Xi_{(1)}$.
	Let
	$$
	\frak y_{(2)} = (\vec{\frak x}_{(2)},\vec{\sigma}_{(2)},(u'_{(2),v}),(U'_{(2),v}),(\rho_{(2),e}),(\rho_{(2),i}))
	$$
	be an inconsistent map near $\frak u_{(2)}$ with respect to $\Xi_{(2)}$ and 
	$$
	\frak y_{(1)} = (\vec{\frak x}_{(1)},\vec{\sigma}_{(1)},(u'_{(1),\hat v}),(U'_{(1),\hat v}),
	(\rho_{(1),e}),(\rho_{(1),i}))
	$$
	be the inconsistent map near $ \frak u_{(1)}$ with respect to $\Xi_{(1)}$, 
	constructed by Lemma \ref{lem65865777}.
	Let $\frak p_j \in {\frak J}(\frak u_{(2)})$. Then we have an isomorphism
	\begin{equation}\label{formnew6163}
		\mathcal E_{0,\frak u_{(2)};\frak p_j}(\frak y_{(2)})\cong 
		\mathcal E_{0,\frak u_{(1)};\frak p_j}(\frak y_{(1)}).
	\end{equation}
\end{lemma}
\begin{proof}
	Let $I_{\hat v}$ be the inclusion  map (\ref{form614343bef}). This is a holomorphic embedding.
	Then (\ref{form614343}) induces the required isomorphism.
	The fact that transversality constraint  (\ref{form6121}) is preserved is 
	a consequence of (\ref{form614343})
	and our choice of transversals $\mathcal N_{(j),v,i}$.
\end{proof}
\begin{lemma}
	Suppose $\frak u_{(1)}$, $\frak u_{(2)}$, $\Xi_{(1)}$, $\Xi_{(2)}$, $\frak y_{(1)}$ and $\frak y_{(2)}$ are given as
	 in Lemma \ref{lem6666}. 
	 We put 
	 $\Upsilon_{\frak u_{(j)}} = (\Xi_{\frak u_{(j)}},E_{\frak u_{(j)}})$,
	 where $E_{\frak u_{(j)}}$ is as in Definition \ref{Kura-chart}.
	  If $\frak y_{(2)}$ is an element of  
	 $\widehat {\mathcal U}(\frak u_{(2)},\Upsilon_{(2)})$,
	then $\frak y_{(1)}$ is an element of  $\widehat{\mathcal U}(\frak u_{(1)},\Upsilon_{(1)})$.
\end{lemma}
\begin{proof}
	The isomorphism induced by $I_{\hat v}$
	send $\overline\partial u'_{(2),\hat v}$ (resp.  $\overline\partial U'_{(2),\hat v}$)
	to $\overline\partial u'_{(1),\hat v}$ (resp. $\overline\partial U'_{(1),\hat v}$).
	This is a consequence of (\ref{form614343}).
	Therefore, if $\frak y_{(2)}$ satisfies (\ref{form6157})
	then $\frak y_{(1)}$ satisfies (\ref{form6157}).
\end{proof}
We thus constructed a $\Gamma_{\frak u_{(2)}}$-invariant map:
$$
\varphi_{\frak u_{(1)}\frak u_{(2)}} :  \widehat{\mathcal U}(\frak u_{(2)},\Upsilon_{(2)})
\to \widehat{\mathcal U}(\frak u_{(1)},\Upsilon_{(1)}).
$$
It is clear from the construction that the above map can be lifted to a map $\widetilde{\varphi}_{\frak u_{(1)}\frak u_{(2)}}$ from $\widetilde{\mathcal U}(\frak u_{(2)},\Upsilon_{(2)})$ to $\widetilde{\mathcal U}(\frak u_{(1)},\Upsilon_{(1)})$.

\begin{lemma}\label{lem668888}
	The maps  $\varphi_{\frak u_{(1)}\frak u_{(2)}}$ and $\widetilde \varphi_{\frak u_{(1)}\frak u_{(2)}}$ are 
	$C^{\ell}$ embeddings.
\end{lemma}
\begin{proof}
	It follows from the definition of $\widetilde \varphi_{\frak u_{(1)},\frak u_{(2)}}$ and the choices of 
	$\Xi_{(j)}$, $\Upsilon_{(j)}$ that the following two diagrams commute:
	\begin{equation}\label{dia6164}
	\begin{tikzcd}
		\widetilde{\mathcal U}(\frak u_{(2)},\Upsilon_{(2)})\ar[r,"F_1"]\ar[dd,"\widetilde{\varphi}_{\frak u_{(1)}\frak u_{(2)}}"] 
		& \displaystyle{\prod_{v  \in  C^{\rm int}_0(\check R_{(2)})}\mathcal V_{(2),v}^{\rm source}\times \prod_{e  \in  C^{\rm int}_1(\check R{(2)})}\mathcal V_{(2),e}^{{\rm deform}} 
		\times (D^2)^{\vert\lambda_{(2)}\vert}} \ar[dd,"R_1"]\\ \\
		 \widetilde{\mathcal U}(\frak u_{(1)},\Upsilon_{(1)}) \ar[r]& 
		 \displaystyle{
		 \prod_{v  \in  C^{\rm int}_0(\check R_{(1)})}\mathcal V_{(1),v}^{\rm source} \times \prod_{e  \in  C^{\rm int}_1(\check R{(1)})}\mathcal V_{(1),e}^{{\rm deform}} 
		\times (D^2)^{\vert\lambda_{(1)}\vert}} 
	\end{tikzcd}
	\end{equation}
	\begin{equation}\label{dia6165}
	\begin{tikzcd}
		\widetilde{\mathcal U}(\frak u_{(2)},\Upsilon_{(2)})\ar[r,"F_2"]\ar[dd,"\widetilde{\varphi}_{\frak u_{(1)}\frak u_{(2)}}"] 
		& \displaystyle{\prod_{v  \in  C^{\rm int}_0(\check R_{(2)}), \lambda_{(2)}(v) = 0} L^2_{m+1}(\Sigma_{(2),v}^{-},X\setminus \mathcal D)\atop \quad\quad
		\times \prod_{v  \in  C^{\rm int}_0(\check R_{(2)}), \lambda_{(2)}(v) > 0} L^2_{m+1}(\Sigma_{(2),v}^-, \mathcal N_{\mathcal D}X\setminus \mathcal D)} \ar[dd,"R_2"]\\ \\
		 \widetilde{\mathcal U}(\frak u_{(1)},\Upsilon_{(1)}) \ar[r]& 
		 \displaystyle{\prod_{v  \in  C^{\rm int}_0(\check R_{(1)}), \lambda_{(1)}(v) = 0} L^2_{m+1}(\Sigma_{(1),v}^-,X\setminus \mathcal D)\atop \quad\quad
		 \times\prod_{v  \in  C^{\rm int}_0(\check R_{(1)}), \lambda_{(1)}(v) > 0}L^2_{m+1}(\Sigma_{(1),v}^-,\mathcal N_{\mathcal D}X\setminus \mathcal D)} 
	\end{tikzcd}
	\end{equation}
	Here the horizontal arrows $F_1$ and $F_2$ are as in \eqref{ccc}.
	The right vertical arrow $R_1$ of Diagram \eqref{dia6164} is obtained by 
	requiring (\ref{form6138}) and is a smooth embedding.
	Diagram (\ref{dia6164}) commutes since $\widetilde \varphi_{\frak u_{(1)}\frak u_{(2)}}$
	does not change the conformal structure of source (marked) curves.
	The right vertical arrow $R_2$ of Diagram (\ref{dia6165}) is obtained by restriction 
	of domain and is a smooth map.
	Diagram (\ref{dia6165}) commutes because of Condition \ref{choi655-2}.
	Now the definitions of the $C^{\ell}$ structures on $\widetilde {\mathcal U}(\frak u_{(1)},\Upsilon_{(1)})$ and 
	$\widetilde{\mathcal U}(\frak u_{(2)},\Upsilon_{(2)})$ imply that 
	$\widetilde \varphi_{\frak u_{(1)}\frak u_{(2)}}$ is a $C^{\ell}$ map.
	Unique continuation implies that the differential of the map $(R_1\circ F_1,R_2\circ F_2)$ is injective. In particular, this 
	 implies that $\widetilde \varphi_{\frak u_{(1)}\frak u_{(2)}}$ is an embedding.
	A similar argument applies to the map $\varphi_{\frak u_{(1)}\frak u_{(2)}}$.
\end{proof}

We next define a bundle map $\overline \varphi_{\frak u_{(1)}\frak u_{(2)}} : \mathcal E_{\frak u_{(2)}} \to \mathcal E_{\frak u_{(1)}}$ which lifts $\varphi_{\frak u_{(1)}\frak u_{(2)}}$. Using (\ref{form6162})  and (\ref{formnew6163}), we obtain a linear embedding:
\begin{equation}\label{form166633}
	\bigoplus_{\frak p_j \in {\frak J}(\frak u_{(2)})} \mathcal E_{0,\frak u_{(2)},\frak p_j}(\frak y_{(2)})\to
	\bigoplus_{\frak p_j \in {\frak J}(\frak u_{(1)})} \mathcal E_{0,\frak u_{(1)},\frak p_j}(\frak y_{(1)})
\end{equation}
if $\frak y_{(1)} = \varphi_{\frak u_{(1)}\frak u_{(2)}}(\frak y_{(2)})$. The map :
\begin{equation}\label{form166634}
	\bigoplus_{e \in C^{\rm int}_{\rm th}(\check R_{(2)}), \lambda(e) > 0} \mathscr L_e\to 
	\bigoplus_{e \in C^{\rm int}_{\rm th}(\check R_{(1)}), \lambda(e) > 0} \mathscr L_e
\end{equation}
is defined as identity on  $e \in C^{\rm int}_{\rm th}(\check R_{(2)}) \subset C^{\rm int}_{\rm th}(\check R_{(1)})$ and is zero on the other factors. The bundle map $\overline \varphi_{\frak u_{(1)}\frak u_{(2)}}$ is defined using \eqref{form166633} and \eqref{form166634}. Analogous to Lemma \ref{lem668888}, we can prove that $\overline \varphi_{\frak u_{(1)}\frak u_{(2)}}$ is $C^{\ell}$.
\begin{lemma}
$$
\frak s_{(1)} \circ \varphi_{\frak u_{(1)}\frak u_{(2)}} 
= \overline \varphi_{\frak u_{(1)}\frak u_{(2)}} \circ \frak s_{(2)}. 
$$
\end{lemma}
\begin{proof}
	On the factor in \eqref{form166633} this is a consequence of the 
	definitions of the map $\varphi_{\frak u_{(1)}\frak u_{(2)}}$ and \eqref{form166633}.
	Namely, it follows from \eqref{form614343} and the fact that $I_{\hat v}$ is bi-holomorphic.
	For the factor in \eqref{form166634}, this is a consequence of 
	(\ref{newform614154}).
\end{proof}
Compatibility of the parametrization map with $\varphi_{\frak u_{(1)}\frak u_{(2)}}$ is also an immediate consequence of the definitions. We thus proved that:

\begin{prop}
	Let  $\frak u_{(2)} \in U(\frak u_{(1)})$.
	We assume that $\Xi_{(2)}$ is induced from $\Xi_{(1)}$.
	Then the pair $(\varphi_{\frak u_{(1)}\frak u_{(2)}},\overline\varphi_{\frak u_{(1)}\frak u_{(2)}})$
	is a coordinate change of Kuranishi charts. 
\end{prop}

\subsection{Construction of Coordinate Change II}
\label{subsub:coordinatechage2}

Let $\frak u=((\Sigma_{v},\vec z_{v},u_{v});v \in C^{\rm int}_0(\check R))$ be an element of $\mathcal M^{\rm RGW}_{k+1}(L;\beta)$.
We fix two TSDs
$$
   \hspace{3cm} \Xi_{(j)} = (\vec w_{(j)},(\mathcal N_{(j),v,i}),(\phi_{(j),v}),(\varphi_{(j),v,e}),\kappa_{(j)})
   \hspace{1cm} j=1,2
$$ 
at $\frak u$ such that we can use Definition \ref{Kura-chart}, to form Kuranishi charts
$$
\mathcal U_{\frak u,\Upsilon_{(j)}} = (\mathcal U(\frak u,\Upsilon_{(j)}),\mathcal E_{\frak u,\Upsilon_{(j)}},\Gamma_{\frak u},\frak s_{\frak u,\Upsilon_{(j)}},\psi_{\frak u,\Upsilon_{(j)}}),
$$
where $\Upsilon_{(j)} = (\Xi_{(j)},E)$ and $E= \{E_{\frak u}\}$ is as in Definition \ref{Kura-chart}.

These Kuranishi charts depend on the choices of the subset $\{\frak p_j\}$ of the moduli space $ \mathcal M^{\rm RGW}_{k+1}(L;\beta)$, the TSOs $\{\Upsilon_{\frak p_j}\}$, the vector spaces $E_{\frak p_j,v}$ and the open sets $\mathcal K(\frak p_j)$. We assume that these choices agree with each other for the above two charts. In this subsection, we will construct a coordinate change from $\mathcal U_{\frak u,\Upsilon_{(2)}}$ to $\mathcal U_{\frak u,\Upsilon_{(1)}}$.\footnote{In fact, these two coordinate charts are isomorphic after possibly shrinking $\mathcal U(\frak u,\Upsilon_{(j)})$ into appropriate open subspaces.} The TSD $\Xi_{(j)}$ determines the subspace $\Sigma_{(j),v}^-$ of $\Sigma_{v}$ for each interior vertex $v$ of $\check R$. We assume that $\Xi_{(2)}$ is small enough such that $\vec w_{(1)} \cap\Sigma_v$ is a subset of $\Sigma^-_{(2),v}$.

We pick an inconsistent map with respect to $\Upsilon_{(2)}$ denoted by:
$$
\frak y_{(2)} = (\vec{\frak x}_{(2)},\vec{\sigma}_{(2)},(u'_{(2),v}),(U'_{(2),v}),(\rho_{(2),e}),(\rho_{(2),i}))
$$
Associated to $\frak y_{(2)}$, we have $\Sigma_{(2),v}^-(\vec{\frak x}_{(2)},\vec{\sigma}_{(2)})$, which comes with marked points:
\[\vec z_{(2),v}(\vec{\frak x}_{(2)},\vec{\sigma}_{(2)}) \cup \vec w_{(2),v}(\vec{\frak x}_{(2)},\vec{\sigma}_{(2)}).\] 
Here the elements of $\vec z_{(2),v}(\vec{\frak x}_{(2)},\vec{\sigma}_{(2)})$ are in correspondence with the boundary marked points of $\Sigma_v$ and $\vec w_{(2),v}(\vec{\frak x}_{(2)},\vec{\sigma}_{(2)})$ are in correspondence with the additional marked points $\vec w_{(2),v}$ given by $\Xi_{(2)}$. We will write $\vec z_{(2)}(\vec{\frak x}_{(2)},\vec{\sigma}_{(2)})$ for the union of 
all boundary marked points of $\Sigma_{(2)}(\vec{\frak x}_{(2)},\vec{\sigma}_{(2)})$.
The following lemma is the analogue of Lemma \ref{lemma660}:

\begin{lemma}\label{lem67111}
	There exists 
	$\vec w_{(2);(1),v}(\frak y_{(2)}) \subset \Sigma^-_{(2),v}(\vec{\frak x}_{(2)},\vec{\sigma}_{(2)})$ 
	such that:
	\begin{enumerate}
	\item $w_{(2);(1),v,i}(\frak y_{(2)})$ is close to $w_{(1),v,i}$. Here we identify 
	$\Sigma^-_{(2),v}(\vec{\frak x}_{(2)},\vec{\sigma}_{(2)})$ and $\Sigma^-_{(2),v}$ using $\Xi_{(2)}$.
	\item $u'_{(2),v}(w_{(2);(1),v,i}(\frak y_{(2)})) \in \mathcal N_{(1),v,i}$.
	\end{enumerate}
\end{lemma}

We define:
\[
  \vec w_{(2);(1)}(\vec{\frak x}_{(2)},\vec{\sigma}_{(2)}) =
  \bigcup_{v\in C_0^{\rm int}(\check R)} \vec w_{(2);(1),v}(\vec{\frak x}_{(2)},\vec{\sigma}_{(2)}).
\]
Then $(\Sigma_{(2)}(\vec{\frak x}_{(2)},\vec{\sigma}_{(2)}),\vec z_{(2)}(\frak y_{(2)}) \cup \vec w_{(2);(1)}(\frak y_{(2)}))$
is close to $(\Sigma,\vec z \cup \vec w_{(1)})$ in the moduli space of bordered nodal curves.
Therefore, there exists $\vec{\frak x}_{(1)}, \vec\sigma_{(1)}$ such that:
\begin{equation}\label{form6168}
	(\Sigma_{(2)}(\vec{\frak x}_{(2)},\vec{\sigma}_{(2)}),\vec z_{(2)}(\vec{\frak x}_{(2)},\vec{\sigma}_{(2)})
	\cup \vec w_{(2);(1)}(\vec{\frak x}_{(2)},\vec{\sigma}_{(2)})) 
	\cong \hspace{2cm}
\end{equation}
\[	
  \hspace{2cm}(\Sigma_{(1)}(\vec{\frak x}_{(1)},\vec{\sigma}_{(1)}),\vec z_{(1)}(\vec{\frak x}_{(1)},\vec{\sigma}_{(1)})\cup \vec w_{(1)}(\vec{\frak x}_{(1)},\vec{\sigma}_{(1)})). 
\]
Here we use $\Xi_{(1)}$ to define the right hand side. Let $I$ be an isomorphism from the right hand side of \eqref{form6168} to the left hand side. Note that the choices of $\vec{\frak x}_{(1)}, \vec\sigma_{(1)}$ and $I$ are unique up to an element of $\Gamma_{\frak u}$.

We consider decompositions:
\begin{equation}\label{form614444prerev1}
	\aligned
	\Sigma_{(j)}(\vec{\frak x}_{(j)},\vec{\sigma}_{(j)})=
	&\coprod_{v \in C^{\rm int}_0(\check R)}  \Sigma^-_{(j),v}(\vec{\frak x}_{(j)},\vec{\sigma}_{(j)})\\
	&\cup \coprod_{e \in C^{\rm int}_1(\check R)} [-5T_{(j),e},5T_{(j),e}] \times S^1,
	\endaligned
\end{equation}
for $j=1,2$. 
In the above identity, we define $T_{(j),e}$ by requiring $e^{-10T_{(j),e}} = \vert\sigma_{(j),e}\vert$. Here for simplicity, we assume that $\sigma_{(2),e}$ is non-zero for all interior edges $e$ of $\check R$. A similar discussion applies to the case that $\sigma_{(2),e}=0$ with minor modifications. For example, in \eqref{form614444prerev1} we need to include two half cylinders for each $e$ that $\sigma_{(2),e}=0$. We also have:
\begin{equation}\label{form614444prerev12}
	\aligned
	\Sigma_{(j)}(\vec{\frak x}_{(j)},\vec{\sigma}_{(j)})=
	\bigcup_{v \in C^{\rm int}_0(\check R)}\Sigma^+_{(j),v}(\vec{\frak x}_{(j)},\vec{\sigma}_{(j)}).
	\endaligned
\end{equation}
Although $I$ in \eqref{form6168} is an isomorphism, it does not respect the decompositions in \eqref{form614444prerev1} or \eqref{form614444prerev12} for $j=1,2$.  This is because $\Xi_{(1)} \ne \Xi_{(2)}$.\footnote{Conditions \ref{choi655} and \ref{choi655-2} are used in Subsection \ref{subsub:coordinatechage1} to show the compatibility of the similar decompositions. We do not assume them here.} Nevertheless, one can easily prove:
\begin{lemma}
	If $\Xi_{(2)}$ is small enough, then $I$ can be chosen such that the following holds. Let $v \in C^{\rm int}_0(\check R)$ and 
	$\frak z \in \Sigma^+_{(1),v}(\vec{\frak x}_{(1)},\vec{\sigma}_{(1)})$.
	Then at least one of the following conditions holds:
	\begin{enumerate}
		\item [(I)] $I(\frak z) \in \Sigma^+_{(2),v}(\vec{\frak x}_{(2)},\vec{\sigma}_{(2)})$.
		\item[(II)] There exists $e \in C^{\rm int}_1(\check R)$ with $\partial e = \{v,v'\}$ such that
				$I(\frak z) \in \Sigma^+_{(2),v',\frak x_{(2),v'}}$.
	\end{enumerate}
\end{lemma}
This is the consequence of the fact that the decomposition (\ref{form614444prerev12}) is `mostly preserved' by $I$. Now we define $u'_{(1),v}$, $U'_{(1),v}$ as follows. If $\lambda(v)=0$, we have:
\begin{equation}\label{form17166}
u'_{(1),v}(\frak z) = 
\begin{cases}
u'_{(2),v}(\frak z) \quad &\text{if (I) holds,} 
\\
U'_{(2),v'}(\frak z)\quad &\text{if (II) holds, $\lambda(v) < \lambda(v')$,}
\\
u'_{(2),v'}(\frak z)\quad &\text{if (II) holds,  $\lambda(v)=\lambda(v')$},
\end{cases}
\end{equation}
and if $\lambda(v)>0$, we have:
\begin{equation}\label{form17266}
	U'_{(1),v}(\frak z) = 
	\begin{cases}
		U'_{(2),v}(\frak z) \quad &\text{if (I) holds,} \\
		U'_{(2),v'}(\frak z)\quad &\text{if (II) holds, $\lambda(v)=\lambda(v')$,}\\
		 {\rm Dil}_{\rho_{{(2)},e}} \circ U'_{(2),v'}(\frak z)\quad &\text{if (II) holds, $\lambda(v) < \lambda(v')$,}\\
		{\rm Dil}_{1/\rho_{{(2)},e}} \circ U'_{(2),v'}(\frak z)\quad &\text{if (II) holds,  
		$0 <\lambda(v') < \lambda(v)$}, \\
		{\rm Dil}_{1/\rho_{{(2)},e}} \circ u'_{(2),v'}(\frak z)\quad &\text{if (II) holds, 
		$0 =\lambda(v') < \lambda(v)$.}
	\end{cases}
\end{equation}

Using the fact that $\frak y_{(2)}$ 
satisfies (\ref{form621}), (\ref{form6222}), (\ref{form623}),
we can easily check that in the case that (I) and (II) are both satisfied the right hand sides 
coincide.
\par
We also define
$$
\rho_{(1),e} = \rho_{(2),e},
\qquad \rho_{(1),i} = \rho_{(2),i}.
$$
\begin{lemma}\label{lem676}
The 6-tuple
$$
\frak y_{(1)} = (\vec{\frak x}_{(1)},\vec{\sigma}_{(1)},(u'_{(1),v}),(U'_{(1),v}),(\rho_{(1),e}),(\rho_{(1),i}))
$$
is an inconsistent solution near $\frak u$ with respect to $\Xi_{(1)}$.
\end{lemma}
\begin{proof}
	Definition \ref{defn6488} (1), (2), (3) are obvious. (4)-(8), (11) and (12) follow from the definition of $\frak y_{(1)}$ and the corresponding conditions 
	for $\frak y_{(2)}$. (9) and (10) hold by shrinking the size of $\Xi_{(2)}$ if necessary. 
	(13) is a consequence of Lemma \ref{lem67111}.
\end{proof}
Thus after shrinking the size of $\Xi_{(2)}$ if necessary, we may define:
\begin{equation}
	\widetilde \varphi_{(\frak u,\Xi_{(1)})(\frak u,\Upsilon_{(2)})}:\widetilde {\mathcal U}(\frak u,\Upsilon_{(2)}) \to \widetilde{\mathcal U}(\frak u,\Upsilon_{(1)})
\end{equation}
by:
\begin{equation}
	\widetilde \varphi_{(\frak u,\Upsilon_{(1)})(\frak u,\Upsilon_{(2)})}(\frak y_{(2)})   = \frak y_{(1)}.
\end{equation}
Similarly, we can define $\varphi_{(\frak u,\Upsilon_{(1)})(\frak u,\Upsilon_{(2)})}:\widehat {\mathcal U}(\frak u,\Upsilon_{(2)}) \to \widehat{\mathcal U}(\frak u,\Upsilon_{(1)})$.
\begin{lemma}
	The maps $\widetilde \varphi_{(\frak u,\Upsilon_{(1)})(\frak u,\Upsilon_{(2)})}$ and $\varphi_{(\frak u,\Upsilon_{(1)})(\frak u,\Upsilon_{(2)})}$ are $C^{\ell}$
	 diffeomorphisms into their images. 
\end{lemma}
\begin{proof}
	We cannot apply the same proof as in Lemma \ref{lem668888}. In fact, Diagram \eqref{dia6165} does not commute anymore because
	our TSD $\Xi_{(2)}$ may not be induced from $\Xi_{(1)}$. 
	In order to resolve this issue, we need to modify the definition of right vertical arrow in Diagram \eqref{dia6165}.

	Assuming $\Xi_{(2)}$ is small enough, we define a map:
	\[
	  \frak I_{v_0} : \prod_{v  \in  C^{\rm int}_0(\check R)}\mathcal V_{(2),v}^{\rm source} \times
	  \prod_{e  \in  C^{\rm int}_1(\check R)}\mathcal V_{(2),e}^{{\rm deform}}\times \Sigma_{(1),v_0}^{-}\to \Sigma_{(2),v_0}^{-}
	\]
	for any interior vertex $v_0$ of $\check R$ as follows. Fix an element  $(\vec{\frak x}_{(2)},\vec{\sigma}_{(2)})$ of 
	$\prod_{v  \in  C^{\rm int}_0(\check R)}\mathcal V_{(2),v}^{\rm source} \times\prod_{e  \in  C^{\rm int}_1(\check R)}\mathcal V_{(2),e}^{{\rm deform}}$, and let 
	$(\vec{\frak x}_{(1)},\vec{\sigma}_{(1)})\in \prod_{v  \in  C^{\rm int}_0(\check R)}\mathcal V_{(1),v}^{\rm source} \times\prod_{e  \in  C^{\rm int}_1(\check R)}\mathcal V_{(1),e}^{{\rm deform}}$
	satisfy \eqref{form6168}. By taking $\Xi_{(2)}$ small enough, we can form the following composition:
	\begin{equation}\label{form6174}
		\Sigma_{(1),v_0}^{-} \to\Sigma^-_{(1),v_0}(\vec{\frak x}_{(1)},\vec{\sigma}_{(1)})\to \Sigma^-_{(2),v_0}(\vec{\frak x}_{(2)},\vec{\sigma}_{(2)}) \to \Sigma^-_{(2),v_0}
	\end{equation}
	Here the first map is defined using $\Xi_{(1)}$, 
	the second map is induced by the isomorphism \eqref{form6168},
	and the last map is defined using $\Xi_{(2)}$. For $\frak z\in \Sigma_{(1),v_0}^{-}$, we define 
	$\frak I_{v_0}(\vec{\frak x}_{(2)},\vec{\sigma}_{(2)},\frak z)$ to be the image of $\frak z$ by the map \eqref{form6174}. It is clear that $\frak I_{v_0}$ is a smooth map.

	For a vertex $v_0$ with $\lambda(v_0)=0$, define:
	\[
	  \aligned
	  \frak I_{v_0}^* : 
	  &\prod_{v  \in  C^{\rm int}_0(\mathcal R)}\mathcal V_{(2),v}^{\rm source} \times\prod_{e  \in  C^{\rm int}_1(\mathcal R)}\mathcal V_{(2),e}^{{\rm deform}}\\
	  &\times L^2_{m+\ell+1}(\Sigma_{(2),v_0}^-,X\setminus \mathcal D)\to 
	  L^2_{m+1}(\Sigma_{(1),v_0}^{-},X\setminus \mathcal D)
	  \endaligned	
	\]
	as follows:
	\[
	  \frak I_{v_0}^*(\vec{\frak x}_{(2)},\vec{\sigma}_{(2)},u')(\frak z)=u'(\frak I_{v_0}(\vec{\frak x}_{(2)},\vec{\sigma}_{(2)},\frak z)).
	\]
	Note that we pick different Sobolev exponents for the Sobolev spaces on the domain and the target of $\frak I_{v_0}^*$.
	This allows us to obtain a $C^{\ell}$ map $\frak I_{v_0}^*$. 
	Similarly, for a vertex $v_0$ with $\lambda(v_0)>0$, we can define a $C^{\ell}$ map:
	\[
	  \aligned
	  \frak I_{v_0}^* : 
	  &\prod_{v  \in  C^{\rm int}_0(\mathcal R)}\mathcal V_{(2),v}^{\rm source} \times\prod_{e  \in  C^{\rm int}_1(\mathcal R)}\mathcal V_{(2),e}^{{\rm deform}}\\
	  &\times L^2_{m+\ell+1}(\Sigma_{(2),v_0}^-,\mathcal N_{\mathcal D}X\setminus \mathcal D)\to 
	  L^2_{m+1}(\Sigma_{(1),v_0}^{-},\mathcal N_{\mathcal D}X\setminus \mathcal D)
	  \endaligned	
	\]
	Now we replace Diagram \eqref{dia6165} with the following:
	\begin{equation}\label{dia6165rev}
	\xymatrix{
	\widetilde{\mathcal U}(\frak u,\Upsilon_{(2)}) \ar[dd]^{\widetilde \varphi_{(\frak u,\Upsilon_{(1)})(\frak u,\Upsilon_{(2)})}}\ar[rr]&
	&\txt{$\displaystyle{\prod_{v  \in  C^{\rm int}_0(\check R)}\mathcal V_{(2),v}^{\rm source} \times\prod_{e  \in  C^{\rm int}_1(\check R)}\mathcal V_{(2),e}^{{\rm deform}}}$\\
	$\displaystyle{\times \prod_{v  \in  C^{\rm int}_0(\check R), \atop \lambda(v) > 0}L^2_{m+\ell+1}(\Sigma_{(2),v}^{-},\mathcal N_{\mathcal D}X\setminus \mathcal D)}$\\
	$\displaystyle{\times \prod_{v  \in  C^{\rm int}_0(\check R), \atop \lambda(v) > 0}L^2_{m+\ell+1}(\Sigma_{(2),v}^{-},\mathcal N_{\mathcal D}X\setminus \mathcal D)}$
	}\ar[dd]^{\frak I_{v}^*}\\
        &&\\
        \widetilde{\mathcal U}(\frak u,\Upsilon_{(1)})\ar[rr]&&\txt{$\displaystyle{\prod_{v  \in  C^{\rm int}_0(\check R), \atop \lambda(v) = 0}L^2_{m+1}(\Sigma_{(1),v}^{-},X\setminus \mathcal D)}$\\
	$\displaystyle{\times \prod_{v  \in  C^{\rm int}_0(\check R), \atop \lambda(v) > 0}L^2_{m+1}(\Sigma_{(1),v}^{-},\mathcal N_{\mathcal D}X\setminus \mathcal D)}$}\\
	}
        \end{equation}
	Here horizontal arrows are defined as in \eqref{ccc}.
	Commutativity of (\ref{dia6165rev}) is immediate from the definition.
	We can also form a diagram similar to Diagram \eqref{dia6164}, which is commutative by the same reason as in Lemma \ref{lem668888}.
	Commutativity of these two diagrams and the fact that $\frak I_{v}^*$ is $C^{\ell}$ implies that $\widetilde \varphi_{(\frak u,\Xi_{(1)})(\frak u,\Xi_{(2)})}$ is also 
	$C^{\ell}$. A similar argument applies to $\varphi_{(\frak u,\Xi_{(1)})(\frak u,\Xi_{(2)})}$.
	
	By changing the role of $\Xi_{(1)}$ and $\Xi_{(2)}$, we can similarly obtain $C^{\ell}$ maps in different directions.
	To be more precise we can define maps 
	$\widetilde \varphi_{(\frak u,\Upsilon_{(2)})(\frak u,\Upsilon_{(1)}')}$ and $\varphi_{(\frak u,\Upsilon_{(2)})(\frak u,\Upsilon_{(1)}')}$
	where $\Upsilon_{(1)}' = (\Xi'_{(1)},E_{\frak u_{(1)}})$ 
	and $\Xi'_{(1)}$ is given by a small enough shrinking of $\Xi_{(1)}$. The compositions:
	\[
	  \widetilde \varphi_{(\frak u,\Upsilon_{(1)})(\frak u,\Upsilon_{(2)})}\circ \widetilde \varphi_{(\frak u,\Upsilon_{(2)})(\frak u,\Upsilon_{(1)}')}\hspace{1cm}
	  \varphi_{(\frak u,\Upsilon_{(1)})(\frak u,\Upsilon_{(2)})} \circ \varphi_{(\frak u,\Upsilon_{(2)})(\frak u,\Upsilon_{(1)}')}
	\]
	are equal to the identity map. Moreover, the compositions 
	\[
	   \widetilde \varphi_{(\frak u,\Xi_{(2)})(\frak u,\Upsilon_{(1)}')} \circ \widetilde \varphi_{(\frak u,\Upsilon_{(1)})(\frak u,\Upsilon_{(2)})}\hspace{1cm}
	  \varphi_{(\frak u,\Upsilon_{(2)})(\frak u,\Upsilon_{(1)}')} \circ \varphi_{(\frak u,\Upsilon_{(1)})(\frak u,\Upsilon_{(2)})} 
	\]
	are also equal to the identity map, wherever they are defined. This implies that 
	$\widetilde \varphi_{(\frak u,\Upsilon_{(1)})(\frak u,\Upsilon_{(2)})}$ and $\varphi_{(\frak u,\Upsilon_{(1)})(\frak u,\Upsilon_{(2)})}$ are 
	diffeomorphisms, after possibly shrinking $\Xi_{(2)}$.
\end{proof}
\begin{remark}
	By following an argument similar to the case of stable maps, one can show that 
	$\widetilde \varphi_{(\frak u,\Upsilon_{(1)})(\frak u,\Upsilon_{(2)})}$ and $\varphi_{(\frak u,\Upsilon_{(1)})(\frak u,\Upsilon_{(2)})}$ 
	are $C^{\infty}$ using the above $C^{\ell}$ property for all values of $\ell$. We omit this argument 
	and refer the reader to \cite[Section 12]{foooexp}
	for details of the proof. (See also \cite[Remark 3.19]{part3:FH}.)
\end{remark}

We thus constructed a $C^{\ell}$ embedding $\varphi_{(\frak u,\Upsilon_{(1)})(\frak u,\Upsilon_{(2)})}$. One can easily define a lift $\overline \varphi_{(\frak u,\Upsilon_{(1)})(\frak u,\Upsilon_{(2)})}$ of $ \varphi_{(\frak u,\Upsilon_{(1)})(\frak u,\Upsilon_{(2)})}$ and obtain embedding of obstruction bundles. The compatibility of the Kuranishi maps and the parametrization maps with the maps $\varphi_{(\frak u,\Upsilon_{(1)})(\frak u,\Upsilon_{(2)})}$ and $\overline\varphi_{(\frak u,\Upsilon_{(1)})(\frak u,\Upsilon_{(2)})}$ are immediate from the construction. In summary, we have coordinate change:
\[
  \Phi_{(\frak u,\Upsilon_{(1)})(\frak u,\Upsilon_{(2)})} = (\varphi_{(\frak u,\Upsilon_{(1)})(\frak u,\Upsilon_{(2)})},\hat\varphi_{(\frak u,\Upsilon_{(1)})(\frak u,\Upsilon_{(2)})})
 :\mathcal U_{\frak u,\Upsilon_{(2)}} \to \mathcal U_{\frak u,\Upsilon_{(1)}}.
\]

\subsection{Co-cycle Condition for Coordinate Changes}
\label{subsub:coordinatechage3}

For $j=1,\,2$, let $\frak u_{(j)}$ be an element of $\mathcal M^{\rm RGW}_{k+1}(L;\beta)$, and $\Xi_{(j)}$ be a TSD at $\frak u_{(j)}$. We assume that $\Xi_{(j)}$ is small enough such that we can form the Kuranish chart $\mathcal U_{\frak u_{(j)},\Upsilon_{(j)}}$ as in Definition \ref{Kura-chart}. 
(Here $\Upsilon_{(j)} = (\Xi_{(j)},E)$.) We also assume that $\frak u_{(2)}$ is sufficiently close to $\frak u_{(1)}$ in the sense that it belongs to the open subset of $\mathcal M^{\rm RGW}_{k+1}(L;\beta)$ determined by $\mathcal U_{\frak u_{(1)},\Upsilon_{(1)}}$. Therefore, we may use the constructions of Subsection \ref{subsub:Nplustri} to obtain a TSD $\Xi_{(2);(1)}$ at $\frak u_{(2)}$ which is compatible with  $\Xi_{(1)}$, namely, it satisfies Conditions \ref{choi655} and \ref{choi655-2}.
We put $\Upsilon_{(2);(1)} = (\Xi_{(2);(1)},E)$. Finally by shrinking $\Xi_{(2)}$, we can assume that we can define the coordinate change $\Phi_{(\frak u_{(2)},\Upsilon_{(2);(1)})(\frak u_{(2)},\Upsilon_{(2)})}$ following the construction of the previous subsection. Now we define:
\begin{definition}\label{defn6766}
	We define the coordinate change:
	\[
	  \Phi_{(\frak u_{(1)},\Upsilon_{(1)})(\frak u_{(2)},\Upsilon_{(2)})} : \mathcal U_{\frak u_{(2)},\Upsilon_{(2)}} \to \mathcal U_{\frak u_{(1)},\Upsilon_{(1)}}.
	\]
	as the composition
	\begin{equation}
		\Phi_{(\frak u_{(1)},\Upsilon_{(1)})(\frak u_{(2)},\Upsilon_{(2)})} = \Phi_{(\frak u_{(1)},\Upsilon_{(1)})(\frak u_{(2)},\Upsilon_{(2);(1)})}
		\circ \Phi_{(\frak u_{(2)},\Xi_{(2);(1)})(\frak u_{(2)},\Upsilon_{(2)})}.
	\end{equation}
	Here
	\[
	  \Phi_{(\frak u_{(1)},\Upsilon_{(1)})(\frak u_{(2)},\Upsilon_{(2);(1)})} : \mathcal U_{\frak u_{(2)},\Upsilon_{(2);(1)}} \to \mathcal U_{\frak u_{(1)},\Upsilon_{(1)}}
	\]
	is defined in Subsection \ref{subsub:coordinatechage1} and 
	\[
	  \Phi_{(\frak u_{(2)},\Upsilon_{(2);(1)})(\frak u_{(2)},\Upsilon_{(2)})} : \mathcal U_{\frak u_{(2)},\Upsilon_{(2)}} \to\mathcal U_{\frak u_{(2)},\Upsilon_{(2);(1)}} 
	\]
	is defined in Subsection \ref{subsub:coordinatechage2}.
\end{definition}
To complete the construction of the Kuranishi structure on $\mathcal M_{k+1}^{\rm RGW}(L;\beta)$, we need to prove the next lemma.
\begin{lemma}
	For $j=1,\,2,\,3$, let $\frak u_{(j)} \in \mathcal M_{k+1}^{\rm RGW}(L;\beta)$, and $\Xi_{(j)}$ be a TSD at 
	$\frak u_{(j)}$ 
	such that we can use Definition \ref{defn6766}, to define the coordinate changes $\Phi_{(\frak u_{(1)},\Upsilon_{(1)})(\frak u_{(2)},\Upsilon_{(2)})}$, $\Phi_{(\frak u_{(2)},\Upsilon_{(2)})(\frak u_{(3)},\Upsilon_{(3)})}$,
	$\Phi_{(\frak u_{(1)},\Upsilon_{(1)})(\frak u_{(3)},\Upsilon_{(3)})}$.
	Then we have:
	\begin{equation}\label{formula6177}
		\Phi_{(\frak u_{(1)},\Upsilon_{(1)})(\frak u_{(2)},\Upsilon_{(2)})} \circ \Phi_{(\frak u_{(2)},\Upsilon_{(2)})(\frak u_{(3)},\Upsilon_{(3)})}=
		\Phi_{(\frak u_{(1)},\Upsilon_{(1)})(\frak u_{(3)},\Upsilon_{(3)})}  .
	\end{equation}
\end{lemma}
\begin{proof}
	We use the constructions of Subsection \ref{subsub:Nplustri} 
	to find TSDs $\Xi_{(3);(1)}$, $\Xi_{(3);(2)}$ at $\frak u_{(3)}$
	such that the pairs $(\Xi_{(1)},\Xi_{(3);(1)})$ $(\Xi_{(2)},\Xi_{(3);(2)})$ 
	both satisfy Conditions \ref{choi655} and \ref{choi655-2}. We similarly choose the TSD $\Xi_{(2);(1)}$ at $\frak u_{(2)}$.
	We can easily check the following three formulas:
	\[
	  \aligned
	  \Phi_{(\frak u_{(3)},\Xi_{(3);(1)})(\frak u_{(3)},\Upsilon_{(3);(2)})} \circ \Phi_{(\frak u_{(3)},\Upsilon_{(3);(2)})(\frak u_{(3)},\Upsilon_{(3)})}
	  &=\Phi_{(\frak u_{(3)},\Upsilon_{(3);(1)})(\frak u_{(3)},\Upsilon_{(3)})} \\
	  \Phi_{(\frak u_{(1)},\Upsilon_{(1)})(\frak u_{(2)},\Upsilon_{(2);(1)})}\circ \Phi_{(\frak u_{(2)};\Upsilon_{(2);(1)})(\frak u_{(3)},\Upsilon_{(3);(1)})} 
	  &=\Phi_{(\frak u_{(1)},\Upsilon_{(1)})(\frak u_{(3)},\Upsilon_{(3);(1)})}
	  \endaligned
	\]
	\[
	  \aligned
	  &\Phi_{(\frak u_{(2)},\Upsilon_{(2);(1)})(\frak u_{(3)},\Upsilon_{(3);(1)})}\circ \Phi_{(\frak u_{(3)},\Upsilon_{(3);(1)})(\frak u_{(3)},\Upsilon_{(3);(2)})} \\
	  &= \Phi_{(\frak u_{(2)},\Upsilon_{(2);(1)})(\frak u_{(2)},\Upsilon_{(2)})}\circ \Phi_{(\frak u_{(2)},\Upsilon_{(2)})(\frak u_{(3)},\Upsilon_{(3);(2)})}
	  \endaligned
	\]
	Then \eqref{formula6177} is a consequence of these three formulas and Definition \ref{defn6766}. See the diagram below. In this diagram, the notation $\Phi_{(\frak u_{(3)},\Upsilon_{(3)})(\frak u_{(3)},\Upsilon_{(3);(2)})}$
	is simplified to $\Phi_{\Upsilon_{(3)}\Upsilon_{(3);(2)}}$. Similar notations for other coordinate changes are used.
        \begin{equation}
        \tiny{
        \xymatrix{
        \mathcal U_{\frak u_{(3)},\Upsilon_{(3)}} 
        \ar[rr]_{\Phi_{\Upsilon_{(3);(2)}\Upsilon_{(3)}}} 
        \ar@/^30pt/[rrrrr]^{\Phi_{\Upsilon_{(3);(1)}\Upsilon_{(3)}}}
        \ar@/_20pt/[rrdd]_{\Phi_{\Upsilon_{(2)}\Upsilon_{(3)}}}
        \ar@/_100pt/[rrrrrdddd]_{\Phi_{\Upsilon_{(1)}\Upsilon_{(3)}}}
        &&    \mathcal U_{\frak u_{(3)},\Upsilon_{(3);(2)}}  
        \ar[rrr]_{\Phi_{\Upsilon_{(3);(1)}\Upsilon_{(3);(2)}}}
        \ar[dd]_{\Phi_{\Upsilon_{(2)}\Upsilon_{(3);(2)}}}
        &&&   \mathcal U_{\frak u_{(3)},\Upsilon_{(3);(1)}}
        \ar[dd]_{\Phi_{\Upsilon_{(2);(1)}\Upsilon_{(3);(1)}}} 
        \ar@/^30pt/[dddd]^{\Phi_{\Upsilon_{(1)}\Upsilon_{(1);(3)}}}
        \\
        \\
        &&
        \mathcal U_{\frak u_{(2)},\Upsilon_{(2)}}
        \ar[rrr]_{\Phi_{\Upsilon_{(2);(1)}\Upsilon_{(2)}}}
        \ar@/_20pt/[rrrdd]_{\Phi_{\Upsilon_{(1)}\Upsilon_{(2)}}}
        &&&
        \mathcal U_{\frak u_{(2)},\Upsilon_{(2);(1)}}
        \ar[dd]_{\Phi_{\Upsilon_{(1)}\Upsilon_{(2);(1)}}}
        \\
        \\
        &&&&&
        \mathcal U_{\frak u_{(1)},\Upsilon_{(1)}}
        }
        \nonumber}
        \end{equation}
\end{proof}

This lemma completes the proof of the following result.
\begin{theorem}\label{sing-Kuranishi-str}
	The space $\mathcal M_{k+1}^{\rm RGW}(L;\beta)$ carries a Kuranishi structure.
\end{theorem}

We can prove the existence of Kuranishi structure for 
$\mathcal M_{k_1,k_0}^{\rm RGW}(L_1,L_0;p,q;\beta)$ in the same way.
The proof of Theorem \ref{Kura-exists}
is now complete.
\qed

\bibliography{references}
\bibliographystyle{alpha.bst}
\end{document}